\newcommand{\A}{\mathbb{A}}
\newcommand{\C}{\mathbb{C}}
\newcommand{\G}{\mathbb{G}}
\renewcommand{\H}{\mathbb{H}}
\renewcommand{\P}{\mathbb{P}}
\newcommand{\Q}{\mathbb{Q}}
\newcommand{\R}{\mathbb{R}}
\newcommand{\Z}{\mathbb{Z}}
\newcommand{\caA}{\mathcal{A}}
\newcommand{\caC}{\mathcal{C}}
\newcommand{\caD}{\mathcal{D}}
\newcommand{\caH}{\mathcal{H}}
\newcommand{\caL}{\mathcal{L}}
\newcommand{\caP}{\mathcal{P}}
\newcommand{\caZ}{\mathcal{Z}}
\newcommand{\fA}{\mathfrak{A}}
\newcommand{\fB}{\mathfrak{B}}
\newcommand{\fD}{\mathfrak{D}}
\newcommand{\bfj}{{\boldsymbol{j}}}
\newcommand{\fg}{{\mathfrak{g}}}
  \newcommand {\p} {{\prime}}
 \DeclareMathOperator {\coker}{coker}
 \DeclareMathOperator {\Div}{div}
 \DeclareMathOperator{\codim}{codim}
 \DeclareMathOperator{\im}{Im}
 \DeclareMathOperator {\cl}{cl}
 \DeclareMathOperator {\CH}{CH} 
\DeclareMathOperator{\Spec}{Spec}
\DeclareMathOperator{\Id}{Id}
\DeclareMathOperator{\tfA}{\tau \fA}
\DeclareMathOperator{\tfD}{\tau \fD}
\DeclareMathOperator{\fDTW}{\fD_{TW}}
\DeclareMathOperator{\tfDTW}{\tau \fD_{TW}}
\DeclareMathOperator{\DB}{\fD}
\DeclareMathOperator{\vol}{Vol}
\DeclareMathOperator{\amap}{\boldsymbol{a}}
\DeclareMathOperator{\bmap}{\boldsymbol{b}}
\newcommand{\Go}{\text{\rm Go}}
\newcommand{\BF}{\text{\rm BF}}
\newcommand{\Be}{\text{\rm Be}}
\newcommand{\TW}{\text{\rm TW}}
\newcommand{\an}{\text{\rm an}}
\newcommand{\sm}{\text{\rm sm}}
\newcommand{\dg}{\text{\rm dg}}
\newcommand{\rat}{\text{\rm rat}}
\newcommand{\Top}{\text{\rm top}}
\newcommand{\sing}{\text{\rm sing}}
\newcommand{\Zar}{\text{\rm Zar}}
\numberwithin{equation}{section}
\theoremstyle{plain}
\newtheorem{prop}{Proposition}[section]
\newtheorem{cor}[prop]{Corollary}
\newtheorem{lem}[prop]{Lemma}
\newtheorem{thm}[prop]{Theorem}
\theoremstyle{definition}
\newtheorem{df}[prop]{Definition}
\newtheorem{notation}[prop]{Notation}
\theoremstyle{remark}
\newtheorem{rmk}[prop]{Remark}
\newtheorem{ex}[prop]{Example}
\begin{document}

\title{Higher Arithmetic Intersection Theory}

\author{Jos\'e Ignacio Burgos}

\address{Instituto de Ciencias Matem\'aticas (CSIC-UAM-UCM-UCM3).
  Calle Nicol\'as Ca\-bre\-ra~15, Campus UAM, Cantoblanco, 28049 Madrid,
  Spain} 
\email{burgos@icmat.es}

\author{Souvik Goswami}

\address{Office 601D, Department of Mathematics\\
Texas A\&M University\\
College Station, TX, USA}
\email{souvikjoy@math.tamu.edu}
\date{\today}
\subjclass{14C17, 14C25, 14C30, 14C35, 14G40}
\keywords{Higher arithmetic Chow group, Beilinson regulator, Deligne cohomology, Height pairing}
\date{\today}
\newif\ifprivate
\privatetrue
\thanks{Burgos Gil was partially supported by the
  MINECO research projects MTM2016-79400-P and SEV-2015-0554 (ICMAT
  Severo Ochoa). Burgos Gil and Goswami were partially supported by
  the FP7-MC-IRSES project num. 612534 (Moduli).}

\begin{abstract}
We give a new definition of higher
arithmetic Chow groups for smooth projective varieties defined over an
arithmetic field, which is similar to Gillet and Soul\'e's definition of
arithmetic Chow groups. We also give a compact description of the
intersection theory of such groups. A consequence of this theory is the
definition of a heigh pairing between two higher algebraic cycles, of
complementary dimensions, whose real regulator
class is zero. This description agrees  with Beilinson's 
height pairing for the classical arithmetic Chow groups. We also give
examples of the higher arithmetric intersection pairing in dimension
zero that, assuming a conjecture by 
Milnor on the independence of the values of the dilogarithm, are
non-zero.  
\end{abstract}

\maketitle

\tableofcontents{}

\section{Introduction}
\label{sec:introduction}

\subsection{Higher arithmetic Chow groups}
Let $X$ be a smooth and projective variety of dimension $d$, defined
over an arithmetic field $F$. In \cite{GilletSoule:ait}, Gillet and Soul\'e
defined the \textit{arithmetic Chow groups} of $X$, denoted by
$\widehat{\CH}^p(X)$. In fact, in \emph{loc. cit.} the arithmetic Chow
groups are defined in much greater generality for regular
quasi-projective schemes over an 
arithmetic ring, but in this paper we will only treat the case of
smooth and projective varieties defined 
over an arithmetic field $F$. The elements of $\widehat{\CH}^p(X)$ are classes
of pairs $(Z, g_Z)$, with $Z$ a codimension $p$ subvariety of $X$ and
$g_Z$ a Green current for $Z$. The groups $\widehat{\CH}^p(X)$
satisfy the following properties (see \cite{Burgos:CDB} for
details): 
\begin{itemize}
\item
They fit into an exact sequence
\begin{equation}\label{eq:1001}
  \CH^p(X,1)\xrightarrow{\rho_{\Be}}\widetilde{\fD}^{2p-1}(X,p)
  \xrightarrow{a}\widehat{\CH}^p(X)\xrightarrow{\zeta }
  \CH^p(X)\rightarrow 0,  
\end{equation}
where the group $\CH^p(X,1)$ is Bloch's higher Chow group \cite{Bloch:achK},
the map $\rho_{\text{Be}}$ is Beilinson's 
regulator, $\fD^\ast(X,p)$ is the Deligne complex computing the
real Deligne cohomology $H^{\ast}_{\fD}(X,\R(p))$ of $X$ and
\begin{displaymath}
  \widetilde{\fD}^{2p-1}(X,p)=\fD^{2p-1}(X,p)/\im d_{\fD}.
\end{displaymath}
\vspace{0.2cm}
\item
There is an intersection pairing
\begin{displaymath}
\widehat{\CH}^p(X)\otimes \widehat{\CH}^q(X)\xrightarrow{\cdot}
\widehat{\CH}^{p+q}(X)
\end{displaymath}
turning $\oplus_{p\geq 0}\widehat{\CH}^p(X)$ into a commutative
graded unitary algebra. Note that, since we are working with varieties
over a field, there is no need to tensor with $\Q$ to have a well
defined product.
\vspace{0.2cm}
\item
If $f\colon X\rightarrow Y$ is a morphism of smooth projective
varieties, then there exists a pullback
$$f^\ast\colon \widehat{\CH}^p(Y)\rightarrow \widehat{\CH}^p(X),$$
and if $f$ is smooth, since it is also proper, there exists a pushforward map
$$f_{\ast}\colon \widehat{\CH}^p(X)\rightarrow \widehat{\CH}^{p-l}(Y),$$
where $l=\dim(X)-\dim(Y)$. The inverse image is an algebra
homomorphism and, together with the direct images, it satisfies the
projection formula.
\vspace{0.2cm}
\item The intersection product, the direct image and the inverse image
  are compatible with the corresponding operations in the classical
  Chow groups. 
\end{itemize}
In parallel with Deligne and Soul\'e's proposal for a higher arithmetic
$K$-theory, Goncharov \cite{Goncharov:prAmc} introduced the higher
arithmetic Chow groups. Temporarily, in this introduction, we denoted them as
$\widehat{\CH}^p(X,n)_{\Go}$. These groups satisfy
$\widehat{\CH}^p(X,0)_{\Go}=\widehat{\CH}^p(X)$ and were 
constructed to extend the exact sequence \ref{eq:1001} to a long exact 
sequence 
\begin{multline*}
\cdots \xrightarrow{\amap} \widehat{\CH}^{p}(X,n)_{\Go} \xrightarrow{\zeta}
  \CH^{p}(X,n) \xrightarrow{\rho}  H_{\fD}^{2p-n}(X,\R(p))
 \xrightarrow{\amap} \cdots \\
  \rightarrow \CH^p(X,1) 
   \xrightarrow{\rho_{\Be}}
\widetilde{\fD}^{2p-1}(X,p)  \xrightarrow{\amap}
\widehat{\CH}^p(X) \xrightarrow{\zeta} \CH^p(X)  \rightarrow 0.
\end{multline*}
The construction of Goncharov uses a morphism of complexes
$$Z^p(X,\ast)_{0}\xrightarrow{\mathcal{P}}\fD^{2p-\ast}_D(X,p),$$
where $Z^p(X,\ast)_{0}$ is the normalized cubical Bloch complex, whose
homology computes the higher Chow groups $\CH^{p}(X,\ast)$, and
$\fD^\ast_D(X,p)$ is the Deligne 
complex of currents, which computes the real Deligne cohomology of
$X$. Due to the use of the Deligne complex of currents, this construction
left open the following questions: 
\begin{enumerate}
\item
Does the composition of the isomorphism
\begin{displaymath}
  K_n(X)_{\Q}\cong
\oplus_{p\geq 0}\CH^p(X,n)_{\Q}
\end{displaymath}
with the morphism induced by
$\mathcal{P}$ agree with the Beilinson regulator?
\item
Can one define a product structure on
$\oplus_{p,n}\widehat{\CH}^p(X,n)_{\Go}$? 
\item
Are there well-defined pull-back morphisms? 
\end{enumerate}
\vspace{0.5cm}
All these questions were answered positively by the first author
together with Feliu \cite{BurgosFeliu:hacg} and with Feliu and Takeda
\cite{BurgosFeliuTakeda}. Namely, in \cite{BurgosFeliu:hacg}, there is
a new construction of higher arithmetic Chow groups, that we
temporarily denote $\widehat {\CH}^{p}(X,n)_{\BF}$. These groups
satisfy $\widehat {\CH}^{p}(X,0)_{\BF}=\widehat {\CH}^{p}(X)$ and fit
in
the same long exact sequence as the groups $\widehat
{\CH}^{p}(X,n)_{\Go}$, but they are defined using only differential
forms with logarithmic singularities. This allows to define
inverse images and products. Latter in \cite{BurgosFeliuTakeda} they
proved the existence of a natural isomorphism
\begin{displaymath}
  \widehat {\CH}^{p}(X,n)_{\BF}\xrightarrow{\simeq}\widehat
  {\CH}^{p}(X,n)_{\Go} 
\end{displaymath}
and the fact that the morphism induced by $\caP$ agrees with
Beilinson's regulator.

Following Takeda's alternative definition of higher arithmetic
$K$-theory (\cite{Takeda:haKt}), one can envision a different
kind of higher arithmetic Chow groups, that we denote by
$\widehat{\CH}^p(X,n)$. These new higher arithmetic Chow groups should
fit in exact sequences of the  form  
\begin{equation}\label{eq:1002}
\CH^p(X,n+1)\xrightarrow{\rho_{\Be}}\widetilde{\fD}^{2p-n-1}(X,p)
\xrightarrow{\amap} \widehat{\CH}^p(X,n)\xrightarrow{\zeta}\CH^p(X,n)\rightarrow
0.
\end{equation}
In particular, for this groups the map
$\widehat{\CH}^p(X,n)\xrightarrow{\zeta}\CH^p(X,n)$ is surjective. 
Such a definition was already proposed in Elisenda
Feliu's thesis (\cite{Feliu:Thesis}), but was not developed
further. The definition given there is based on homological
algebra constructions and is not very well suited for concrete 
computations.

The relationship between the groups $\widehat{\CH}^p(X,n)$ and the
previously defined $\widehat{\CH}^p(X,n)_{\BF}$ or $\widehat{\CH}^p(X,n)_{\Go}$, is as follows. There is a map
\begin{displaymath}
  \omega \colon \widehat{\CH}^p(X,n) \to \fD^{2p-n}(X).
\end{displaymath}
Writing $\widehat{\CH}^p(X,n)^{0}=\ker(\omega )$, we get
\begin{displaymath}
  \widehat {\CH}^{p}(X,n)_{\Go}=\widehat {\CH}^{p}(X,n)_{\BF}=
  \begin{cases}
    \widehat{\CH}^p(X,0),&\text{ for }n=0,\\
    \widehat{\CH}^p(X,n)^{0},&\text{ for }n>0.
  \end{cases}
\end{displaymath}

The aim of this paper is to give a new presentation of higher
arithmetic Chow groups that is closer to the original definition of
arithmetic Chow groups, and to develop an intersection
theory which can be seen as a natural generalization of the
intersection theory of arithmetic Chow groups.

We emphasize that the use of Bloch's cubical complex for higher Chow
groups restricts us to varieties over an arithmetic field. The
question whether one can develop a theory for higher arithmetic Chow
groups for arithmetic varieties over general arithmetic rings, is
still open.

\subsection{The main definitions}

We now give a brief description of the construction of the higher
arithmetic Chow groups, as presented in this paper. For more details
the reader is referred to the main text.

There are several complexes that compute Deligne cohomology of $X$.
Using differential forms, we will consider the
complexes $\fD$, $\fD_{t}$, $\fD_{\TW}$ described in Section
\ref{sec:absol-hodge-deligne}. The first is the simplest 
one and was introduced in \cite{Deligne:dc}, the second is the
closest to the original definition of Deligne cohomology, while the
third, called the Thom-Whitney version of the Deligne complex, is
the more involved, but has the advantage of having a 
product that is graded commutative and associative at the level of
complexes. In order to define the groups $\widehat {\CH}^{p}(X,n)^{0}$
it does not matter which complex one uses, but the groups
$\widehat {\CH}^{p}(X,n)$ depend on the complex of forms used to
define them. Since we want an intersection product that is associative
and graded commutative we will mainly use  the complex $\fD_{\TW}$.

There are
variants of the three complexes computing Deligne cohomology that are
defined using currents. They are denoted denoted $\fD_{D}$,
$\fD_{t,D}$ and $\fD_{\TW,D}$. In particular, $\fD_{\TW,D}$ is a
module over the algebra $\fD_{\TW}$.

The cubical version of Goncharov regulator can be modified easily to
give a morphism of complexes 
\begin{displaymath}
  \caP\colon Z^{p}(X,\ast)_0 \to  \fD_{TW,D}^{2p-\ast}(X,p).
\end{displaymath}
Using this map we can extend the notion of Green current for higher
cycles.

\begin{df}
  Let $Z\in Z^{p}(X,n)_0$ be a cycle, that is $\delta Z=0$, then a
  \emph{$\fD_{\TW}$-Green 
  current} for $Z$ is a current $g_{Z}\in \fD^{2p-n-1}_{\TW,D}(X,p)$
  such that
  \begin{displaymath}
    \caP(Z)+d g_{Z}=[\omega ],
  \end{displaymath}
  with $\omega \in \fD^{2p-n}_{\TW}(X,n)$ a smooth form. The class
  of $g_{Z}$ in
  \begin{displaymath}
    \widetilde{\fD}^{2p-n-1}_{\TW,D}(X,p)=\fD^{2p-n-1}_{\TW,D}(X,p)/\im d_{\fD}
  \end{displaymath}
  is denoted by $\widetilde g_{Z}$ and is called a class of Green
  currents. Given a Green current $g_{Z}$ we write $\omega (g_{Z})$
  for the only form such that 
  \begin{displaymath}
    \caP(Z)+d g_{Z}=[\omega (g_{Z})].
  \end{displaymath}
\end{df}

\begin{df}
  A \emph{codimension $p$ higher arithmetic cycle} is a pair
  $(Z,\widetilde g_{Z})$ 
  with $Z\in Z^{p}(X,n)_{0}$ a cycle and $\widetilde g_{Z}$ a class of
  Green currents for $Z$. The group of codimension $p$ higher
  arithmetic cycles will be denoted by $\widehat {Z}^{p}(X,n)_0$.\\

  A higher arithmetic cycle is called rationally equivalent to zero if
  it is of the form $(\delta T,-\widetilde{\caP}(T))$ for $T\in
  Z^{p}(X,n+1)_{0}$. The subgroup of cycles rationally equivalent to
  zero is denoted $\widehat {Z}^{p}_{\rat}(X,n)_0$. Finally the \emph{higher
  arithmetic Chow groups} are defined as
\begin{displaymath}
  \widehat{\CH}^{p}(X,n)=\widehat {Z}^{p}(X,n)_0/\widehat {Z}^{p}_{\rat}(X,n)_0.
\end{displaymath}
\end{df}
As in the theory of arithmetic Chow groups $\widehat{\CH}^p(X)$, we
show that these groups  are functorial and have a product
structure. The definition above is very well suited to define direct
images and one
can also define inverse images for smooth maps. But in order to define
products and inverse images we need to define the analogue of a Green
form with logarithmic singularities. The main difficulty here is that
the cycle $Z$ and the current $g_{Z}$ live in different spaces.

Let $\square = \P^{1}\setminus \{1\}$ and write $\square^{n}=\square
\times \underset{i}{\cdots}\times \square$. The varieties
$\square^{\cdot}$ form a cocubical scheme (see Section
\ref{cubab}). In particular we have face maps $\delta ^{i}_{j}\colon
\square^{n}\to \square ^{n+1}$, for $j=0,1$ and $i=1,\dots,n+1$.  

An element $Z\in
Z^{p}(X,n)_{0}$ is an algebraic cycle in $X\times \square^{n}$ that
intersects properly all the faces of $X\times \square ^{n}$ with the
extra condition that $Z\in \cap^{n+1}_{i=1}\ker (\delta^i_1)^\ast$, see \S \ref{sec:higher-chow-groups} for
more details. We write $|Z|\subset X\times \square^{n}$ for its
support and let $|Z|_{k}\subset X\times \square^{k}$ be the
codimension $p$ 
subset given as
\begin{displaymath}
  |Z|_{k}=\bigcup_{i_{1},\dots,i_{n-k}} (\delta
  ^{i_{1}}_{0})^{-1}\dots (\delta ^{i_{n-k}}_{0})^{-1}|Z|.
\end{displaymath}

For any quasi-projective variety $Y$ we denote as
$\fD_{\TW,\log}^{\ast}(Y,p)$ the Thom-Whitney version of the Deligne
complex using differential forms with logarithmic singularities at
infinity.

A Green form with logarithmic singularities for a cycle $Z\in
Z^{p}(X,n)_0$ is a staircase that allows 
us to go from the current $\delta _{Z}$ of integration along the cycle $Z$ in $X\times \square^{n}$ to the
smooth form $\omega _{Z}$ on $X$.

\begin{df}
  Given a cycle $Z \in Z^p(X,n)_0$, a  \emph{$\fD_{\TW}$-Green form
    (with logarithmic singularities)} 
for $Z$ is an $n$-tuple
\begin{displaymath}
  \fg_{Z}\coloneqq (g_n, g_{n-1},\cdots , g_0)\in \oplus^0_{k=n}\fD_{\TW,\log}
  ^{2p-n+k-1}(X\times \square ^k\setminus |Z|_{k},p),
\end{displaymath}
with
\begin{displaymath}
  (\delta ^{i}_{1})^{\ast}g_{k}=0,\quad i=1,\dots,k,
\end{displaymath}
and such that, if $n>0$,
\begin{enumerate}
\item $\delta _{Z}+d[g_n]=0$, where $\delta _{Z}$ is viewed as an  element in $\fD_{\TW,D}$
(see Example \ref{exm:4}). 
\item $(-1)^{n-k+1}\delta g_k+dg_{k-1}=0,\quad k=2,\cdots ,n$.
\item $(-1)^{n}\delta g_1+dg_0\in
  \fD^{2p-n}_{\TW}(X,p)$. 
\end{enumerate}
where in the previous equations  $\delta $ is the differential in the
cocubical direction 
\begin{displaymath}
  \delta g_k= \sum _{i=1}^{k}(-1)^{i}(\delta _{0}^{i})^{\ast}g_{k}.
\end{displaymath}
In this case we write $\omega (\fg_{Z})=(-1)^{n}\delta g_1+dg_0$.

While, if $n=0$ the previous conditions collapse to the classical condition
\begin{enumerate}
\item [(1)] $\delta _{Z}+d[g_n]\in [\fD^{2p}_{\TW}(X,p)],$
\end{enumerate}
and write $[\omega(\fg _{Z})]=\delta _{Z}+d[g_n]$.\\

Further, we call $\fg_Z$ a \textit{basic} Green form if $g_n$ is a basic Green form for the algebraic cycle $Z$ \cite[4.6]{Burgos:Gftp}.
\end{df}
Every Green form $\fg_{Z}$ for a cycle $Z\in
Z^{p}(X,n)_0$ gives rise to a Green current for $Z$, denoted by
$[\fg_{Z}]$, see Proposition \ref{GreenCurrent} . Moreover, each class
of Green currents $\widetilde g_{Z}$ 
 contains a representative of the type $[\fg_{Z}]$ for a basic Green form
$\fg_{Z}$, see Proposition \ref{prop:8}. 

If $Z\in Z^{p}(X,n)_{0}$ and $W\in  Z^{q}(X,m)_{0}$ are two cycles
that intersect properly (Definition  \ref{def:19}), $g_{Z}$
is a Green current for $Z$ and $\fg_{W}$ a basic Green form with
logarithmic singularities for $W$, one can define in a formal way the
current $\caP(Z)\cdot [\fg_{W}]$, see Definition \ref{def:16}, and the $\ast$-product between the
Green current and the Green form is defined as
\begin{displaymath}
  g_{Z}\ast \fg_{W}=\caP(Z)\cdot [\fg_{W}]+g_{Z}\cdot \omega (\fg_{W}). 
\end{displaymath}
This product is well defined on the class of Green currents, see Definition \ref{def:6.21}. Finally, using a moving lemma suited for our purpose (Lemma \ref{lemm:10}), the $\ast$-product above, and the product of higher Chow groups, in Theorem \ref{thm:int} we define a product for higher arithmetic Chow groups which is graded commutative and associative.\\
A fallout of this theory of higher arithmetic Chow groups, is a definition of height pairing between higher cycles with trivial real regulators. This has been developed in \S \ref{HigherChow:Product} (Definition \ref{def:17}). It is a very interesting direction, and will be the subject of a future project.

\subsection{Examples of intersection pairing in dimension zero.}

In order to produce the first examples of non-zero higher arithmetic
products, we study the case of dimension zero. That is, we consider
$X=\Spec (F)$ as a smooth projective variety over $F$.

Higher Chow groups of number fields have been extensively
studied. They are the subject of  Goncharov's programme
\cite{Goncharov:prAmc}.  For instance in \cite{Petras:dilog} there
are many concrete examples of computations of higher Chow groups for
particular fields.

The case of dimension zero is special because, in dimension zero, there
is no difference between forms and currents. Therefore for cycle $Z\in
Z^{p}(F,n)_{0}$, the current $\caP(Z)$ is already a form and $0$ is a
Green current for the cycle $Z$. This gives us many examples of higher
arithmetic cycles. One has to be careful that, if $Z_{1}$ and $Z_{2}$
are rationally  equivalent, that is they define the same class in
$\CH^{p}(F,n)_{0}$, then $(Z_{1},0)$ and $(Z_{2},0)$ do not need to be
rationally equivalent in the arithmetic setting.

In consequence we can define a pairing
\begin{displaymath}
  (\ ,\ )_{2p-1,2q-1}\colon Z^p(F, 2p-1)_{0}\times Z^q(F, 2q-1)_{
    0}\rightarrow \frac{H^1_{\fD}(F, \R(p+q))}{\im(\rho_{\text{Be}})}\
  p,q\geq1,
\end{displaymath}
given, for two cycles $\alpha\in Z^p(F, 2p-1)_{0}$ and $\beta \in
Z^q(F, 2q-1)_{0}$, by
\begin{displaymath}
  (\alpha,\beta)_{2p-1,2q-1}\coloneqq \pi _{\ast}([(\alpha,0)]\cdot[(\beta,0)]).
\end{displaymath}
We note that this pairing is defined at the level of cycles but does
not descend to the level of Chow groups. 

A crucial observation needed to compute this pairing is the fact that
$[(\alpha,0)]\cdot[(\beta,0)]=[(\alpha\cdot\beta,0)]$ and that the
cycle $\alpha\cdot\beta$ is torsion. For
instance, if $p=q=1$ and $p=1,q=2$, such intersection pairing is given
by the Bloch-Wigner polylogarithm functions. This is the subject of
our study in the last section.

\subsection{Layout of the paper}
\label{sec:layout-paper}

Now we briefly describe the layout of this paper. Sections two to five
are largely preliminary in nature. Here we fix notations
and state results that we need for the rest of the paper. In section 2
we recall the zig-zag diagrams of complexes. Such diagrams are useful
in two different ways. First, to define Deligne
cohomology and second, to define higher arithmetic Chow groups. We
also explain the Thom
Whitney simple of such diagrams, that will be used to give a graded
commutative and associative differential graded algebra that computes
Deligne cohomology. In this section we also recall the theory of
cubical abelian groups and complexes.    

In Section 3 we recall the cubical version of Bloch's Higher
Chow groups as well as the moving lemmas that we will need in the
paper.

Section 4 is devoted to recalling Deligne-Beilinson cohomology and the different
complexes that we can use to compute it. Of particular importance for
us is the Thom-Whitney complex because the product is graded
commutative and associative at the level of complexes and not just at
the level of cohomology. We include a key lemma (Lemma \ref{lemm:13}),
which is used in later sections to prove different properties of Green
currents.

In Section 5 we recall the cycle class map from higher cycles to
Deligne cohomology through the cubical version of Goncharov regulator.

In Section 6 we develop the theory of higher Green current and form in
detail. We show the functorial properties of Green currents, and
develop a product of Green currents for two cycles intersecting
properly. 

Section 7 is devoted towards developing the theory of
higher arithmetic Chow groups, and an intersection theory which
generalizes the one developed by Gillet and Soul\'e for arithmetic
Chow groups. It also includes a definition of a higher height
pairing which can be seen as a generalization of Beilinson's height
pairing, for higher Chow groups.

Finally, in section 8, we give examples of
intersection product in case of dimension zero. This section gives the
reader a recipe to compute the higher arithmetic intersection pairing
for a particular choice of Green current, and it is given using 
Goncharov's regulator at the level of complexes.\\

\textbf{Acknowledgements.} This paper originated during the second
author's stay at ICMAT, Madrid. He is extremely grateful for the
wonderful hospitality provided by ICMAT. During further elaboration
and writing of the paper, the second author was at Texas
A\&M University as a postdoctoral fellow. He wants to thank Greg
Pearlstein for being a wonderful host, a great mentor, and for his suggestions on improving the paper. We would also like to
thank James Lewis for his constant encouragement and willingness
to participate in this project, Herbert Gangl for his help in the
concrete computations, Vincent Maillot for the initial discussions about this
project, and Matt Kerr for the numerous email discussions they had
during the shaping of this project.

\section{Preliminaries on homological algebra}
\label{sec:prelim}
\subsection{Complexes and simples}

In this paper we use the standard conventions on (co)-chain
complexes and on differential graded
algebras ($\dg$-algebras for short). If $\caA$ is an abelian category,
a \emph{cochain complex} in $\caA$ is a pair $A=(A^{\ast},d_{A})$,
where $A^{\ast}=\bigoplus_{n\in \Z}A^{n}$ is a graded object and
$d_{A}\colon A^{\ast}\to A^{\ast}$ is a homogeneous map of degree 1
such that $d_{A}^{2}=0$. The cochain complex $(A^{\ast},d_{A})$ will
usually be denoted by
the letter $A$ without decoration unless we want to stress the fact that it is a complex
or we want to emphasize the degree. 

Given a cochain complex 
$A=(A^{\ast},d_{A})$  in an
abelian category, the \emph{shift} by an integer $m$, 
denoted by $A[m]=(A[m]^{\ast},d_{A}[m])$, is the shifted graded object
$A[m]^n=A^{m+n}$ with differential $d_{A}[m]=(-1)^{m}d_A$.

The \emph{simple} of a cochain map $f\colon A\rightarrow B$ is the
cochain complex $s(f)=(s(f)^{\ast}, d_s)$ with
\begin{equation}
  \label{eq:71}
 s(f)^n=A^n\oplus B^{n-1},\qquad d_s(a,b)=(d_Aa, f(a)- d_Bb). 
\end{equation}
Note that this is same as
the cone of $-f$ shifted by $-1$. There are maps
\begin{displaymath}
  \begin{matrix}
    \bmap\colon & B&\to &s(f)[1]\\
    & b&\mapsto&(0,-b)
  \end{matrix},\qquad
  \begin{matrix}
    \amap\colon & s(f)&\to & A\\
    &(a,b)&\mapsto & a
  \end{matrix}
\end{displaymath}
such that
\begin{displaymath}
  s(f)\xrightarrow{\amap}A\xrightarrow{f}B
  \xrightarrow{\bmap} s(f)[1]
\end{displaymath}
is a distinguished triangle in the derived category. Therefore there
is an associated long exact sequence
\begin{displaymath}
  \cdots \rightarrow H^n(s(f))\xrightarrow{\amap}
  H^n(A)\xrightarrow{f}H^n(B)\xrightarrow{\bmap}
  H^{n+1}(s(f))\rightarrow \cdots
\end{displaymath}
If $f$ is injective, there is a quasi-isomorphism
\begin{displaymath}
  s(f)[1]\xrightarrow {\pi }\coker(f)=B/A,\hspace{0.2cm}(a,b)\mapsto [-b],
\end{displaymath}
while if $f$ is surjective, there is a quasi-isomorphism
\begin{displaymath}
  \ker(f)\xrightarrow {\iota }s(f),\hspace{0.2cm}a\mapsto (a,0).
\end{displaymath}

Let $n\in \Z$. The \emph{canonical truncation} of $A$ at degree $n$ is
the cochain complex $\tau
_{\leq n}A=(\tau
_{\leq n}A^{\ast},d_{\tau
_{\leq n}A})$ given by 
\begin{equation}\label{eq:30}
(\tau _{\leq n}A)^r =
\begin{cases}
  A^r & r<n, \\
  \ker(d_A\colon A^n\rightarrow A^{n+1}) & r=n,\\
  0 & r>n,
\end{cases}
\end{equation}
and differential induced by $d_A$. It follows from the definition that
\begin{displaymath}
H^r(\tau _{\leq n}A) =
\begin{cases}
H^r(A), & r\leq n, \\
0, & r>n.  
\end{cases}
\end{displaymath}

\begin{df}\label{Itcochain}(\cite{BurgosFeliu:hacg}, definition 1.2.2)
A $k$-iterated cochain complex $A^*=(A^*, d_1,\cdots , d_k)$ of abelian groups is a $k$-graded object together with $k$ endomorphisms $d_1,\cdots, d_k$ of multidegrees $l_1,\cdots ,l_k$, such that, for all $i,j$, $d_id_i=0$ and $d_id_j=d_jd_i$. The endomorphism $d_i$ is called the $i$-th differential of $A^*$. 
\end{df}
A cochain morphism between two $k$-iterated cochain complexes is a
morphism of $k$-graded objects that respects the degrees and that
commutes with all the differentials.\\ 

When dealing with $k$-iterated complexes we will use the multi-index
notation
\begin{displaymath}
  \bfj=(j_{1},\dots,j_{k}),\ |\bfj|=\sum_{i=1}^{k}j_{i},\ ,\
  |\bfj|_{l}=\sum_{i=1}^{l}j_{i},  
\end{displaymath}
in particular $|\bfj|_{0}=0$.

Generalizing the case of a double complex, from a $k$-iterated complex
we can construct a simple complex.

\begin{df}\label{SIt}(definition 1.2.4 of \cite{BurgosFeliu:hacg})
Let $(A^*, d_1,\cdots ,d_k)$ be a $k$-iterated cochain complex. The
\textit{simple complex} of $A^*$ is a cochain complex $s(A)^*$ whose
graded groups are 
$$s(A)^n=\bigoplus _{|\bfj|=n}A^\bfj,$$
and whose differential $s(A)^n\xrightarrow {d_s}s(A)^{n+1}$ is defined
by, for every $a_\bfj\in A^\bfj$ with $|\bfj|=n$ 
$$d_s(a_{\bfj})=\sum ^k_{l=1}(-1)^{|\bfj|_{l-1}}d_l(a_\bfj)\in s(A)^{n+1}.$$

\end{df}
If $\mathcal{I}^k$ denote the category of all $k$-iterated cochain
complexes, then the simple associated as above describes a functor
$s(-)\colon \mathcal{I}^k\rightarrow \mathcal{I}^1$. Likewise, one can
also define the category $\mathcal{I}_k$ of $k$-iterated chain
complexes using a similar methodology. 

\begin{ex}
  Let $f\colon (A^{\ast},d)\to (B^{\ast},d)$ be a morphism of
  complexes, then $f$ determines a
  $2$-iterated complex, also denoted by $f$, that is given by
  \begin{displaymath}
    f^{0,n}=A^{n},\ f^{1,n}=B^{n},\  d_{1}=f,\  d_{2}=d.
  \end{displaymath}
  with this convention the simple of $f$ as a cochain morphism
  of complexes described in \eqref{eq:71}, agrees with its simple as a
  2-iterated complex. 
\end{ex}

\subsection{Cubical abelian groups}\label{cubab}
Cubical objects on a category are modeled on the cube as simplicial
objects are modeled on a simplex (see \cite{GNPP} for details). We
describe the holomogical 
version (with chain complexes) because the higher Chow groups 
form a homology theory.  

Let $C_{\cdot}=\{C_n\}_{n\geq 0}$ be a cubical abelian group with face maps
$\delta_i^j: C_n\rightarrow C_{n-1}$, for $i=1,\dots,n$ and $j=0,1$,
and degeneracy maps $\sigma_i:
C_n\rightarrow C_{n+1}$, for $i=1,\dots,n+1$. Let $D_n\subset C_n$ be
the subgroup of \emph{degenerate elements} of $C_n$, and let
$\widetilde C_{n}=C_{n}/D_{n}$.

Let $C_{\ast}$ denote the \emph{associated chain complex}, that is, the
chain complex whose $n$-th graded
piece is $C_n$ and whose differential is given by
$$\delta= \sum_{i=1}^n \sum_{j=0,1}(-1)^{i+j}\delta_i^j.$$
Thus
$D_{\ast}$ is a subcomplex and $\widetilde C_{\ast}$ is a quotient
complex.
We fix the \emph{normalized chain complex} associated to $C_{\cdot}$,
$NC_{\ast}$, to be the chain
complex whose $n$-th graded group is
\begin{displaymath}
 NC_n \coloneqq \bigcap_{i=1}^{n} \ker \delta_i^1, 
\end{displaymath}
and whose
differential is $ \delta=\sum_{i=1}^n (-1)^{i}\delta_i^{0}.$
It is well-known that there is a decomposition of chain complexes
\begin{equation}
  \label{eq:25}
  C_{\ast} \cong NC_{\ast} \oplus D_{\ast}
\end{equation}
giving an isomorphism
$NC_{\ast}\cong \widetilde C_{\ast}$. In general the complex
$D_{\ast}$ is not acyclic and the homology of the complex $C_{\ast}$
does not agree with the homology of $NC_{\ast}$. But it turns out
that the homology of $NC_{\ast}$ is the ``right one''. See for
instance \cite[Chapter VII]{Massey:top}.

For certain cubical abelian groups, the normalized chain complex
can be further simplified, up to homotopy equivalence, by considering the
elements which belong to the kernel of all faces but $\delta_1^0$.

\begin{df}\label{normalized2}
Let $C_{\cdot}$ be a cubical abelian  group. The \emph{refined
  normalized complex}, denoted $N_0C_{\ast}$, is the
complex defined by
\begin{equation}
N_0C_n= \bigcap_{i=1}^{n} \ker \delta_i^1 \cap \bigcap_{i=2}^{n}
\ker \delta_i^0,\quad \textrm{and differential }\
\delta=-\delta_1^0.\end{equation}
\end{df}

The proof of the next proposition is analogous to the proof of
Theorem 4.4.2 in \cite{Bloch:nephcg}. The result is proved there only
for the cubical abelian group defining  the higher Chow complex
(see $\S$\ref{cubBl} below). We give here the abstract version of
the statement whose proof can be found in \cite[Prop.1.20]{BurgosFeliu:hacg}. 

\begin{prop}\label{normalized3}
Let $C_{\cdot}$ be a cubical abelian group. Assume that it comes
equipped with a collection of maps
$$h_j: C_n \rightarrow C_{n+1},\qquad j=1,\dots,n, $$
such that, for any $l=0,1$, the following identities are satisfied:
\begin{eqnarray}
\delta_j^1h_j &=& \delta_{j+1}^1h_j =s_j\delta_j^1, \notag \\
\delta_j^0
h_j &=& \delta_{j+1}^0 h_j=\Id, \label{eq:3}\\
\notag \delta_i^lh_j &=&
\left\{\begin{array}{ll} h_{j-1}\delta_i^l & i<j, \\
h_j \delta_{i-1}^l & i>j+1.
\end{array} \right.
\end{eqnarray}
Then, the inclusion of complexes
$$i\colon N_0C_{\ast} \hookrightarrow NC_{\ast} $$
is a homotopy equivalence. More precisely, there is a map $\pi \colon
NC_{\ast} \hookrightarrow N_{0}C_{\ast}$ such that $\pi\circ i=\Id$
and $i\circ \pi $ is homotopically equivalent to the identity. In
particular $N_0C_{\ast}$ is a direct summand of $N C_{\ast}$. 
\end{prop}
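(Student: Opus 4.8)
The plan is to build an explicit contracting homotopy out of the maps $h_j$ and the iterated structure of the faces, mimicking the standard argument that passes from $NC_\ast$ to $N_0C_\ast$. Concretely, for each $j=1,\dots,n$ let $\phi_j\colon NC_n\to NC_n$ be the endomorphism $\phi_j = \Id - \delta_{j+1}^0 h_j$ (well-defined on $NC_\ast$ once one checks, using the identities \eqref{eq:3}, that $\delta_i^1(\delta_{j+1}^0 h_j x)=0$ for all $i$ when $\delta_i^1 x=0$). The idea is that $\phi_j$ "kills the $j+1$-st zero-face" without disturbing the conditions already imposed, and that the composite $\pi \coloneqq \phi_{n-1}\circ\phi_{n-2}\circ\cdots\circ\phi_1$ lands in $N_0C_n = \bigcap_{i=1}^n\ker\delta_i^1\cap\bigcap_{i=2}^n\ker\delta_i^0$. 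First I would verify $\pi\circ i = \Id$: on $N_0C_\ast$ each $h_j$-term vanishes because $h_j$ is applied after a face that is already zero (again via the third line of \eqref{eq:3}), so each $\phi_j$ restricts to the identity there.

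Next I would check that $\pi$ is a chain map, i.e. commutes with $\delta = \sum_i(-1)^i\delta_i^0$ on $NC_\ast$ and with $\delta=-\delta_1^0$ on $N_0C_\ast$; this is where the relations $\delta_i^0 h_j = h_j\delta_{i-1}^0$ (for $i>j+1$), $\delta_i^0 h_j=h_{j-1}\delta_i^0$ (for $i<j$) and $\delta_j^0h_j=\delta_{j+1}^0h_j=\Id$ are used to commute faces past the $h_j$'s and telescope the sum, so that on the image everything but $-\delta_1^0$ cancels. Then I would produce the homotopy $H\colon NC_n\to NC_{n+1}$ with $i\circ\pi - \Id = \delta H + H\delta$. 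The natural candidate is an alternating sum $H = \sum_{j} (-1)^{?}\, h_j\circ(\text{partial composite of the }\phi\text{'s})$: each elementary step $\phi_j - \Id = -\delta_{j+1}^0 h_j$ is chain-homotopic to zero via $h_j$ itself (this is exactly the content of the two identities $\delta_j^0h_j=\delta_{j+1}^0h_j=\Id$, which say $h_j$ is a homotopy between two faces), and one assembles these elementary homotopies for the composite $\pi$ by the usual product-of-homotopies formula $H = \sum_{j=1}^{n-1} (\phi_{n-1}\cdots\phi_{j+1})\, h_j'$ where $h_j'$ is the elementary homotopy for $\phi_j$; one must insert signs coming from the degree shift so that the telescoping works.

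The main obstacle I anticipate is purely bookkeeping: checking at each stage that the partial composites $\phi_{k}\circ\cdots\circ\phi_1$ actually map $NC_n$ into the intermediate groups $\bigcap_{i=1}^n\ker\delta_i^1\cap\bigcap_{i=2}^{k+1}\ker\delta_i^0$ — that is, that applying $\phi_{k}$ does not destroy the zero-face conditions already achieved by $\phi_{k-1},\dots,\phi_1$, nor any of the one-face conditions. This requires repeatedly invoking the third line of \eqref{eq:3} to move the newly introduced face $\delta_i^l$ across $h_j$ and land on a face of the input that is assumed to vanish, and it is easy to get an index or a sign wrong. A secondary nuisance is getting the signs in $H$ consistent with the sign conventions for the differential of $NC_\ast$ (the $(-1)^i$) versus $N_0C_\ast$ (a bare $-$), but once the elementary homotopy for a single $\phi_j$ is pinned down, the composite formula is forced. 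Since the statement asserts this is "analogous to the proof of Theorem 4.4.2 in \cite{Bloch:nephcg}" and is proved abstractly in \cite[Prop.~1.20]{BurgosFeliu:hacg}, I would in fact organize the write-up to follow that reference, only spelling out the verification of \eqref{eq:3}-compatibility at each inductive step and leaving the sign-chase as a routine (if tedious) check. The final sentence — that $N_0C_\ast$ is a direct summand — is then immediate: $i\circ\pi$ is an idempotent up to homotopy, but more simply $\pi\circ i=\Id$ already exhibits $N_0C_\ast$ as a retract, hence a direct summand, of $NC_\ast$.
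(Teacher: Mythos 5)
Your overall strategy---an explicit retraction $\pi$ built from the $h_j$ together with an assembled homotopy---is exactly the route of the reference to which the paper defers (the paper itself gives no argument beyond citing \cite[Prop.~1.20]{BurgosFeliu:hacg}). However, as written the construction fails at two concrete points. First, the elementary operator $\phi_j=\Id-\delta_{j+1}^0h_j$ is the \emph{zero} map, because $\delta_{j+1}^0h_j=\Id$ is one of the hypotheses \eqref{eq:3}; your prose (``$h_j$ is applied after a face that is already zero'') shows you intend $\phi_j=\Id-h_j\delta_{j+1}^0$, and with that operator the checks you list do go through: $\delta_i^1h_j\delta_{j+1}^0x=0$ for $x\in NC_n$, and $\phi_j=\Id$ on $N_0C_n$, so $\pi\circ i=\Id$. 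Second, and more seriously, with the corrected $\phi_j$ the composite $\pi=\phi_{n-1}\circ\cdots\circ\phi_1$ (i.e.\ $\phi_1$ applied first) does \emph{not} land in $N_0C_n$: if $\delta_k^0y=0$, then $\delta_k^0\phi_ky=\delta_k^0y-\delta_k^0h_k\delta_{k+1}^0y=-\delta_{k+1}^0y$ by $\delta_k^0h_k=\Id$, so applying $\phi_k$ destroys the vanishing achieved at the previous step, and your intermediate groups $\bigcap_{i=1}^n\ker\delta_i^1\cap\bigcap_{i=2}^{k+1}\ker\delta_i^0$ are not preserved. The cure is to compose in the opposite order (kill the top faces first): if $\delta_i^0y=0$ for $i\ge k+2$, then $\delta_{k+1}^0\phi_ky=0$ and, for $i\ge k+2$, $\delta_i^0\phi_ky=-h_k\delta_{i-1}^0\delta_{k+1}^0y=-h_k\delta_{k+1}^0\delta_i^0y=0$ by the third line of \eqref{eq:3} and the cubical identities; hence $\phi_1\circ\cdots\circ\phi_{n-1}$ does map $NC_n$ into $N_0C_n$.

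The homotopy step is also not yet a proof. On $NC_n$ one computes
\begin{displaymath}
\delta h_j+h_j\delta=(-1)^jh_j\delta_j^0+\sum_{i<j}(-1)^i\,(h_{j-1}+h_j)\,\delta_i^0,
\end{displaymath}
so $h_j$ is \emph{not} a homotopy between $\Id$ and $\Id-h_j\delta_{j+1}^0$: the face that appears is $\delta_j^0$, not $\delta_{j+1}^0$, and there are correction terms in the lower faces. Moreover the individual $\phi_j$ are not chain maps, so the ``product of homotopies'' formula cannot be applied verbatim; both the chain-map property of $\pi$ and the homotopy $i\circ\pi\simeq\Id$ must be produced by a genuine inductive construction rather than a sign chase---this is precisely what \cite[Prop.~1.20]{BurgosFeliu:hacg} (following Bloch's Theorem 4.4.2) carries out, and if you intend to defer to that reference you should say so for this step rather than claim $h_j$ itself does the job. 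Your closing remark is correct: once $\pi$ is known to be a chain map with $\pi\circ i=\Id$, the subcomplex $N_0C_\ast$ is a retract, hence a direct summand, of $NC_\ast$.
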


\begin{rmk} To every cubical abelian group $C_{\cdot}$
there are associated four chain complexes: $C_*,NC_*,N_0C_*$ and
$\widetilde{C}_*$. In some situations it will be necessary to
consider the cochain complexes associated to these chain
complexes. In this case we will denote them, respectively, by
$C^*,NC^*,N_0C^*$ and $\widetilde{C}^*.$
\end{rmk}

\subsection{Cubical cochain complexes}
\label{sec:cubic-coch-compl}

Let $X^*_{\cdot}$ be a cubical cochain complex, where $\ast$ denotes
the cochain degree and $\cdot$ the cubical degree. Then, for every
$m$, we have defined 
the cochain complexes $NX_m^*, N_0X_m^*$ and $\widetilde{X}_m^*$.
\begin{prop}\label{cubchain}
Let $X^*_{\cdot},Y^*_{\cdot}$ be two cubical cochain complexes and
let $f:X^*_{\cdot}\rightarrow Y^*_{\cdot}$ be a morphism. Assume
that for every $m$, the cochain morphism
$$X^*_m\xrightarrow{f_m} Y^{*}_m $$ is a quasi-isomorphism.
Then, the induced morphisms
$$ NX^{*}_m \xrightarrow{f_m}  NY^{*}_m \quad \textrm{and}\quad
\widetilde{X}^{*}_m
\xrightarrow{f_m}  \widetilde{Y}^{*}_m $$
are quasi-isomorphisms.
\end{prop}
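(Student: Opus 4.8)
The plan is to reduce everything to the well-known behaviour of the two canonical complexes $NC_\ast$ and $\widetilde C_\ast$ attached to a cubical abelian group, applied degree-by-degree in the cochain direction. The key structural fact is the decomposition of chain complexes recalled in \eqref{eq:25}: for a cubical abelian group $C_\cdot$ one has $C_\ast\cong NC_\ast\oplus D_\ast$, and moreover the functor sending a cubical abelian group to its normalized (resp. quotient-by-degenerates) complex is additive and exact, being built entirely out of kernels of face maps (resp. cokernels of the span of degeneracies), i.e. out of finite limits and colimits of the face/degeneracy structure maps. I would phrase this as: for each fixed cochain degree $q$, the assignment $C_\cdot\mapsto NC$ and $C_\cdot\mapsto \widetilde C$ are exact functors from cubical abelian groups to graded abelian groups, commuting with the formation of the cubical differential $\delta$.

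First I would fix a cochain degree $q$ and consider the cubical abelian groups $m\mapsto X^q_m$ and $m\mapsto Y^q_m$ together with the cochain differentials $d_X, d_Y$ acting on them as morphisms of cubical abelian groups. Applying the normalization construction in the cubical variable produces, for each $q$, abelian groups $NX^q_m=\bigcap_{i=1}^m\ker\delta_i^1$ and similarly $NY^q_m$; since $f$ commutes with the face maps and with $d_X,d_Y$, the induced map $NX^q_m\to NY^q_m$ is a morphism of cochain complexes (in $q$, for each fixed cubical degree $m$). Because taking the intersection of kernels of the face maps is an exact operation on the short exact sequences coming from $f$ — more precisely, because $NX^q_m$ is a direct summand of $X^q_m$ by \eqref{eq:25}, naturally in $X^q_\cdot$ — the map on cohomology $H^\ast(NX^\ast_m)\to H^\ast(NY^\ast_m)$ is a direct summand of the isomorphism $H^\ast(X^\ast_m)\xrightarrow{\cong} H^\ast(Y^\ast_m)$ guaranteed by hypothesis, hence is itself an isomorphism. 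The same argument with $\widetilde X^q_m=X^q_m/D^q_m$ in place of $NX^q_m$ works verbatim, using that $D_\ast$ is likewise a natural direct summand; alternatively one invokes the natural isomorphism $NC_\ast\cong\widetilde C_\ast$ to deduce the $\widetilde{(\cdot)}$ statement from the $N(\cdot)$ one. This gives the claimed quasi-isomorphisms.

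The only subtlety — and the step I would be most careful about — is the bookkeeping of \emph{two} gradings and \emph{two} differentials: the cubical face differential $\delta$ (lowering cubical degree) and the cochain differential $d$ (raising cochain degree). One must be clear that the quasi-isomorphism asserted in the proposition is with respect to the cochain differential $d$, with the cubical degree $m$ held fixed, so that no total-complex or spectral-sequence argument is actually needed; the cubical structure enters only through the \emph{functoriality} of $C_\cdot\mapsto NC$ and $C_\cdot\mapsto\widetilde C$, which lets us pass the ``direct summand, naturally'' property through $f$. In other words, the heart of the matter is the naturality of the splitting \eqref{eq:25} in the cubical abelian group, and everything else is formal. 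I expect no genuine obstacle here; this is the routine cubical analogue of the classical (Dold–Kan-type) statement that normalization of a simplicial object preserves objectwise quasi-isomorphisms, and a short proof citing \eqref{eq:25} together with the exactness/additivity of the normalization functors suffices.
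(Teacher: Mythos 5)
Your proposal is correct, and it is essentially the argument behind the result: the paper itself gives no proof beyond citing \cite[Prop.~1.24]{BurgosFeliu:hacg}, and the standard proof there is exactly the degreewise splitting argument you describe. The one point to state carefully is that the decomposition \eqref{eq:25} is \emph{natural}: this is what licenses the step ``$H^{\ast}(NX^{\ast}_m)\to H^{\ast}(NY^{\ast}_m)$ is a retract (direct summand) of the isomorphism $H^{\ast}(X^{\ast}_m)\to H^{\ast}(Y^{\ast}_m)$, hence an isomorphism.'' Naturality does hold, because the projector onto the normalized part is built from the structure maps (a composite of the idempotents $\mathrm{id}-\sigma_i\delta^1_i$, using the cubical identities), and since the cochain differential $d$ is a morphism of cubical abelian groups in each degree, the splitting is also compatible with $d$; but your side remark that $N$ and $\widetilde{(\cdot)}$ are exact ``because they are built from finite limits and colimits'' would by itself only give left, respectively right, exactness, so the retract argument via the natural splitting (or, equivalently, exactness deduced from that splitting) is the load-bearing step, and with it your proof of both the $N$ and the $\widetilde{(\cdot)}$ statements is complete.
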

\begin{proof}
  See \cite[Prop. 1.24]{BurgosFeliu:hacg}.
\end{proof}

A similar result holds true for the refined normalized complex.

\begin{prop}\label{prop:7}  Let $X^*_{\cdot},Y^*_{\cdot}$ be two
  cubical cochain complexes and $f:X^*_{\cdot}\rightarrow Y^*_{\cdot}$
  a morphism of complexes such that for every $m$, the cochain
  morphism
  \begin{displaymath}
   X^*_m\xrightarrow{f_m} Y^{*}_m  
 \end{displaymath}
 is a quasi-isomorphism. Assume furthermore that, for each $n$, there are morphisms
 of complexes
 \begin{displaymath}
   h_{j}\colon X^{\ast}_{n}\to X^{\ast}_{n+1},\
   h'_{j}\colon Y^{\ast}_{n}\to Y^{\ast}_{n+1},\
   j=1,\dots,n
 \end{displaymath}
 satisfying the identities \eqref{eq:3} and the commutativity
 $f_{n}\circ h_{j}=h'_{j}\circ f_{n}$. Then the induced morphisms
 \begin{displaymath}
   N_{0}X^{\ast}_{n}\longrightarrow N_{0}Y^{\ast}_{n}
 \end{displaymath}
are quasi-isomorphisms.
\end{prop}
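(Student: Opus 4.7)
The plan is to compare the refined normalized complex with the normalized complex on each side and reduce to Proposition \ref{cubchain} via a two-out-of-three argument. Specifically, I would first observe that since $f$ is a morphism of cubical cochain complexes, it commutes with all the face maps $\delta_i^l$. Hence $f$ restricts both to a morphism $Nf\colon NX^{\ast}_{\ast}\to NY^{\ast}_{\ast}$ and to a morphism $N_0 f\colon N_0 X^{\ast}_{\ast}\to N_0 Y^{\ast}_{\ast}$, and these assemble into a commutative square
\begin{equation*}
\begin{CD}
N_0 X^{\ast}_{\ast} @>{N_0 f}>> N_0 Y^{\ast}_{\ast} \\
@VV{i}V @VV{i'}V \\
NX^{\ast}_{\ast} @>{Nf}>> NY^{\ast}_{\ast}
\end{CD}
\end{equation*}
in which the vertical maps are the inclusions of the refined normalized subcomplex into the normalized complex.

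Next, I would invoke the hypotheses to apply Proposition \ref{normalized3} to each of the cubical cochain complexes $X^{\ast}_{\cdot}$ and $Y^{\ast}_{\cdot}$ separately. The assumption that the $h_j$ and $h_j'$ are morphisms of cochain complexes (not just of abelian groups) and satisfy the identities \eqref{eq:3} means that Proposition \ref{normalized3} applies in each cochain degree, and because the homotopies and the retraction $\pi$ constructed there are built out of the $h_j$ and the face/degeneracy maps, they are compatible with the cochain differential. Thus the inclusions $i$ and $i'$ are homotopy equivalences, hence quasi-isomorphisms on the total complexes.

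Then I would apply Proposition \ref{cubchain} directly: since $f_m$ is a quasi-isomorphism for every $m$, the induced map $Nf\colon NX^{\ast}_{\ast}\to NY^{\ast}_{\ast}$ is a quasi-isomorphism. By the two-out-of-three property for quasi-isomorphisms in the commutative square above, $N_0 f$ is a quasi-isomorphism as well, which is the desired conclusion.

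The step I expect to require the most care is verifying that the homotopy equivalence produced by Proposition \ref{normalized3} is compatible with the cochain differential, so that it gives a homotopy equivalence of total complexes rather than just of the cubical pieces. This is where the extra hypothesis that the $h_j$ and $h_j'$ are morphisms of cochain complexes (and not merely set-theoretic maps of the underlying groups) becomes essential; without it, one could not promote the pointwise homotopy equivalences to a homotopy equivalence after taking totalizations.
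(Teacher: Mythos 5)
Your argument proves a different statement from the one asserted. Proposition \ref{prop:7}, like Proposition \ref{cubchain}, is a claim for each fixed cubical degree $n$: the morphism of cochain complexes (in the $\ast$-direction) $N_0X^{\ast}_{n}\to N_0Y^{\ast}_{n}$ is a quasi-isomorphism. Your two-out-of-three square cannot be run at a fixed $n$, because the vertical inclusions $N_0X^{\ast}_{n}\hookrightarrow NX^{\ast}_{n}$ are not quasi-isomorphisms there: the homotopy equivalence of Proposition \ref{normalized3} is an equivalence of the complexes formed in the \emph{cubical} direction, and its homotopies (built from the $h_j$) raise the cubical degree by one, so they say nothing about a single cubical degree, where $i$ is merely a split monomorphism. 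What your argument yields, after the additional unstated step of passing from the degree-wise quasi-isomorphisms of Proposition \ref{cubchain} to totalizations, is that the total complexes of $N_0X$ and $N_0Y$ are quasi-isomorphic --- a different (and for the intended use weaker) conclusion than the proposition.

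The repair is short and is exactly the paper's argument: Proposition \ref{normalized3} also provides a retraction $\pi$ with $\pi\circ i=\Id$, built out of the $h_j$ and the face maps. Since the $h_j$ are morphisms of cochain complexes, for each fixed $n$ this exhibits $N_0X^{\ast}_{n}$ as a direct summand of $NX^{\ast}_{n}$ in the category of cochain complexes, and likewise for $Y$ using the $h'_j$. Because $f$ commutes with the $h_j$ and with the faces, it commutes with $\pi$, so $NX^{\ast}_{n}\to NY^{\ast}_{n}$ respects these decompositions. That map is a quasi-isomorphism by Proposition \ref{cubchain}, and a quasi-isomorphism which is diagonal with respect to direct sum decompositions induces an isomorphism on the cohomology of each summand; in particular $N_0X^{\ast}_{n}\to N_0Y^{\ast}_{n}$ is a quasi-isomorphism for every $n$. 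You correctly isolated the key hypothesis (the $h_j$, $h'_j$ are cochain morphisms commuting with $f$), but it must be used to produce this compatible splitting degree by degree, not a homotopy equivalence of totalized complexes.
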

\begin{proof}
  Since the maps $h_{j}$ and $h_{j}'$ are morphisms of complexes,
  then $N_{0}X^{\ast}_{n}$ is a direct summand as a complex of
  $NX^{\ast}_{n}$. Similarly for $N_{0}Y^{\ast}_{n}$. By the
  commutation of the $h$ with $f$, the induced morphism
  $NX^{\ast}_{n}\longrightarrow NY^{\ast}_{n}$ respects these direct
  factors. Thus this proposition follows from Proposition
  \ref{cubchain}.  
\end{proof}


Let $X^*_{\cdot}$ be a cubical cochain complex, then normalization and
cohomology commute with each other. More precisely, if we normalize
with respect to the cubical direction,  for each $m$, we obtain a
cochain complex
$NX^{\ast}_{m}$ with $r$-th cohomology $H^{r}(NX^{\ast}_{m})$. On the
other hand, if we first take $r$-th cohomology in the cochain
direction we obtain a cubical object $H^{r}(X^{\ast}_{\cdot})$, whose
normalization is given, in degree $m$ by $NH^{r}(X^{\ast}_{\cdot})_{m}$.   

\begin{prop}\label{cubchain2}
With the above setting, the natural
morphism
$$ H^r(NX^*_{m}) \xrightarrow{f} NH^r(X^*_{\cdot})_{m}$$
is an isomorphism for all $m\geq 0$.
\end{prop}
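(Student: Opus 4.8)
The plan is to reduce the statement to a spectral-sequence computation for a double complex, using the decomposition \eqref{eq:25}. Recall that $X^{*}_{\cdot}$ gives rise to a double complex: one differential is the cochain differential $d$ on each $X^{*}_{m}$, the other is the cubical differential $\delta$ from Section~\ref{cubab}. Normalization in the cubical direction replaces $C_{\ast}$ by $NC_{\ast}$, and by \eqref{eq:25} this is a direct summand, so the double complex $(X^{*}_{\cdot},d,\delta)$ splits as a direct sum of the sub-double-complex $(NX^{*}_{\cdot},d,\delta)$ and $(DX^{*}_{\cdot},d,\delta)$. Taking cohomology of this double complex in two ways gives two spectral sequences, and the map in the statement is the edge morphism comparing them; the content is that in this particular ``non-mixed'' situation the comparison is an isomorphism.

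First I would make precise that both sides are computed by the same double complex restricted to a single column. Concretely, fix $m$. On the left we form $NX^{*}_{m}$ — this is one column of the double complex $NX^{*}_{\cdot}$, with only the $d$-differential active (the $\delta$-differential changes $m$) — and take $H^{r}$. On the right we first take $H^{r}$ in the cochain direction of each $X^{*}_{m'}$, getting the cubical abelian group $H^{r}(X^{*}_{\cdot})$ in cubical degree $m'$, and then normalize: $NH^{r}(X^{*}_{\cdot})_{m} = \bigcap_{i=1}^{m}\ker\bigl(\delta^{1}_{i}\colon H^{r}(X^{*}_{m})\to H^{r}(X^{*}_{m-1})\bigr)$. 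So the natural map $f$ is induced by the tautological map: a class in $H^{r}(NX^{*}_{m})$ is represented by $a\in NX^{*}_{m}$ with $da=0$, hence $a$ defines a class in $H^{r}(X^{*}_{m})$, and since $\delta^{1}_{i}a=0$ already at the chain level, this class lies in the intersection $\bigcap_i\ker\delta^{1}_{i}$.

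The key step is to show $f$ is an isomorphism, which I would do by purely exact-sequence arguments, one face map at a time. Since the face maps $\delta^{j}_{i}\colon X^{*}_{m}\to X^{*}_{m-1}$ are morphisms of cochain complexes admitting a common section (the degeneracy $\sigma_{i}$, with $\delta^{1}_{i}\sigma_{i}=\Id$), each $\delta^{1}_{i}$ is a split surjection of complexes, so $X^{*}_{m}\cong \ker(\delta^{1}_{i})\oplus \im(\sigma_{i})$ compatibly with $d$; iterating over $i=1,\dots,m$ gives a splitting $X^{*}_{m}\cong NX^{*}_{m}\oplus(\text{complement})$ as complexes, where the complement is a direct sum of terms each of which is a copy (via a degeneracy) of $X^{*}_{m-1}$, $X^{*}_{m-2}$, etc. Because this splitting is a direct sum of cochain complexes, cohomology commutes with it: $H^{r}(X^{*}_{m})\cong H^{r}(NX^{*}_{m})\oplus H^{r}(\text{complement})$. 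On the other hand, the same splitting performed at the level of the cubical abelian group $m'\mapsto H^{r}(X^{*}_{m'})$ — using the \emph{induced} face and degeneracy maps on cohomology, which still satisfy the cubical identities and still have $\delta^{1}_{i}\sigma_{i}=\Id$ — identifies $NH^{r}(X^{*}_{\cdot})_{m}$ with exactly the first summand $H^{r}(NX^{*}_{m})$, and one checks these two identifications are intertwined by $f$. Thus $f$ is an isomorphism. (Alternatively, and perhaps more cleanly for exposition, one can cite the standard fact that normalization of a cubical object in an abelian category is exact — it is the image of an idempotent built from the face and degeneracy maps — and apply it to the functor $H^{r}$; this is essentially the same argument packaged differently, and is the analogue for cubical objects of the Dold–Kan statement that $N$ is exact.)

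The main obstacle I anticipate is purely bookkeeping: making sure the iterated splitting of $X^{*}_{m}$ into $NX^{*}_{m}$ plus degenerate pieces is carried out in a way that is simultaneously compatible with $d$ (so cohomology distributes over it) and functorial enough that applying $H^{r}(-)$ reproduces precisely the normalization filtration on the cubical abelian group $H^{r}(X^{*}_{\cdot})$. The cubical identities among the $\delta^{j}_{i}$ and $\sigma_{i}$ must be invoked carefully to guarantee that the idempotent endomorphism of $X^{*}_{\cdot}$ projecting onto the normalized part is a morphism of cochain complexes in each cubical degree; granting that, the isomorphism falls out formally since an idempotent and its image are preserved by the additive functor $H^{r}$. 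Everything else — exactness of the section maps, the commutation $f\circ(\text{edge}) = (\text{edge})\circ f$ — is routine.
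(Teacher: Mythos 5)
Your argument is correct: the natural splitting \eqref{eq:25} is induced by an idempotent built from the faces and degeneracies, hence is a splitting of cochain complexes in each cubical degree and is preserved by the additive functor $H^r$, which identifies $H^r(NX^*_m)$ with the normalized part $NH^r(X^*_\cdot)_m$ and intertwines with the tautological map $f$. This is essentially the same route as the proof the paper relies on (it simply cites \cite[Prop.~1.25]{BurgosFeliu:hacg}), so no further comparison is needed.
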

\begin{proof}
  See  \cite[Prop. 1.25]{BurgosFeliu:hacg}
\end{proof}


Raising the cubical degree and taking normalization we obtain a
2-iterated cochain complex
\begin{displaymath}
  NX^{r,n}=NX^{r}_{-n}.
\end{displaymath}
and an associated simple cochain complex $sNX^{\ast}$. In this situation
we will denote by by $d$ the
cochain (or chain differential) and by $\delta $ the cubical
differential. We will always put the cochain  
differential as the first one, therefore the differential in the
simple complex, denoted $d_{s}$ is given, for $\omega \in NX^{r,n}$ by
\begin{equation}
  \label{eq:23}
  d_{s}\omega =d \omega +(-1)^{r}\delta \omega.
\end{equation}

\section{ Higher Chow groups}
\label{sec:higher-chow-groups}

We recall here the definition and main properties of the
\emph{higher Chow groups} defined by Bloch in \cite{Bloch:achK}.
Initially, they were defined using the chain complex associated to
a simplicial abelian group. However, since we are interested in the
product structure,
it is more convenient to use the
cubical presentation, as given by Levine in \cite{Levine:BhCgr} .

\subsection{The cubical Bloch complex for higher Chow groups
}\label{cubBl}

Fix a base field $k$ and let $\P^1$ be the projective line over
$k$. Let $ \square = \P^1\setminus \{1\}(\cong \A^1).$
The cartesian
product $(\P^1)^{\cdot}$ has a cocubical scheme structure.
For $i=1,\dots,n$, we denote by $t_{i}\in (k\cup\{\infty\})\setminus
\{1\}$  the absolute coordinate of the $i$-th factor. Then the
coface and codegeneracy maps
are defined as
\begin{eqnarray*}
  \delta_0^i(t_1,\dots,t_n) &=& (t_1,\dots,t_{i-1},0,t_{i},\dots,t_n), \\
\delta_1^i(t_1,\dots,t_n) &=& (t_1,\dots,t_{i-1},\infty,t_{i},\dots,t_n), \\
\sigma^i(t_1,\dots,t_n) &=& (t_1,\dots,t_{i-1},t_{i+1},\dots,t_n).
\end{eqnarray*}
Then, $\square^{\cdot}$ inherits a cocubical scheme structure from that of $(\P^1)^{\cdot}$.
An $r$-dimensional \emph{face} of $\square^n$ is any subscheme of the form
$\delta^{i_1}_{j_1}\cdots \delta^{i_{n-r}}_{j_{n-r}}(\square^{r})$. According to the convention used in \cite{AmalenduLevine:ahcg}, we regard $\square^n$ as a face.

In the definition of higher Chow groups, it is customary to represent
$\A^{1}$ as $\P^{1}\setminus \{1\}$, so 
that the face maps are represented by the inclusion at zero and the
inclusion at infinity as in \cite{Levine:BhCgr}. In this way the
cubical structure of 
$\square^{\cdot}$ is compatible with the cubical structure of
$(\P^{1})^{\cdot}$ in \cite{BurgosWang:hBC}. In the literature, the
usual representation $\A^{1}=\P^{1}\setminus \{\infty\}$ is
used sometimes. One
can translate from one definition to the other by using the involution
\begin{equation}
  \label{eq:5}
  x\longmapsto \frac{x}{x-1}.
\end{equation}
This involution has the fixed points $\{0,2\}$ and interchanges the
points $1$ and $\infty$.

Let $X$ be an equidimensional quasi-projective scheme of
dimension $d$ over the field $k$. Let $Z^{p}(X,n)$ be the free
abelian group generated by the codimension $p$ closed irreducible
subvarieties of $X\times \square^{n}$, which intersect properly
all the faces of $\square^n$.
The pull-back by the coface and codegeneracy maps of $\square^{\cdot}$
endow $Z^{p}(X,\cdot)$ with a cubical abelian group structure, given by
\begin{displaymath}
  \delta ^{j}_{i}=(\delta ^{i}_{j})^{\ast},\ \sigma_{i}=(\sigma ^{i})^{\ast}.
\end{displaymath}
Note that the indexes are raised or lowered to reflect the change from
cocubical to cubical structures.

Let
$(Z^{p}(X,*),\delta)$ be the associated chain complex (see $\S$\ref{cubab}) 
 and consider the \emph{normalized chain complex} associated to $Z^p(X,*)$,
$$Z^{p}(X,n)_{0}\coloneqq NZ^p(X,n)=\bigcap_{i=1}^{n} \ker \delta^1_i.$$
An element $Z\in Z^p(X,n)_0$ will be called a \textit{pre-cycle}, and
a (higher) \textit{cycle} if it also satisfies $\delta(Z)=0$. We have
also the identification
\begin{displaymath}
  Z^p(X,n)_0=\widetilde Z^{p}(X,n)\coloneqq Z^{p}(X,n)/DZ^{p}(X,n),
\end{displaymath}
where $DZ^{p}(X,n)$ is the group of degenerate pre-cycles.

\begin{df} Let $X$ be a quasi-projective equidimensional scheme  over a field
$k$. The \emph{higher Chow groups} defined by Bloch are
$$\CH^p(X,n)\coloneqq H_n(Z^{p}(X,*)_{0}).$$
\end{df}

Let $N_0$ be the refined normalized
complex of Definition \eqref{normalized2}, and let $Z^p(X,*)_{00}$ be
the complex with
\begin{equation}
  \label{eq:38}
  Z^p(X,n)_{00} \coloneqq N_0Z^p(X,n)=\bigcap_{i=1}^{n} \ker \delta^1_i\cap
\bigcap_{i=2}^{n} \ker \delta^0_i.
\end{equation}

Fix $n\geq 0$. For every $j=1,\dots,n$, we define a map
\begin{eqnarray}\label{mapsh}
 \square^{n+1} & \xrightarrow{h^j} & \square^n  \\
(t_1,\dots,t_{n+1}) &\mapsto & (t_1,\dots,t_{j-1},1-(t_j-1)(t_{j+1}-1),t_{j+2},\dots,t_{n+1}). \nonumber
\end{eqnarray}
The maps $h^{j}$ are smooth, hence
flat, so they induce pull-back maps
\begin{equation}
  \label{eq:4}
  h_{j}: Z^p(X,n) \longrightarrow Z^p(X,n+1),\  j=1,\dots,n+1,
\end{equation}
that satisfy the conditions of Proposition
\ref{normalized3}. Therefore we obtain
\begin{lem}\label{lemm:12}
The inclusion
\begin{displaymath}
  Z^p(X,n)_{00} \coloneqq  N_0 Z^p(X,n)\rightarrow Z^p(X,n)_0 
\end{displaymath}
is a homotopy equivalence.   
\end{lem}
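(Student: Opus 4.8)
The plan is to derive the lemma from the abstract statement, Proposition \ref{normalized3}, applied to the cubical abelian group $C_{\cdot}=Z^{p}(X,\cdot)$. Since $Z^{p}(X,n)_{0}=NZ^{p}(X,n)$ and $Z^{p}(X,n)_{00}=N_{0}Z^{p}(X,n)$ by \eqref{eq:38}, the conclusion of that proposition is exactly the assertion of the lemma, and moreover produces an explicit retraction $\pi$ with $\pi\circ i=\Id$ and $i\circ\pi\sim\Id$, exhibiting $Z^{p}(X,\ast)_{00}$ as a direct summand of $Z^{p}(X,\ast)_{0}$. So everything reduces to verifying the hypotheses of Proposition \ref{normalized3} for the collection $h_{j}=(h^{j})^{\ast}$: namely that these maps are well defined $Z^{p}(X,n)\to Z^{p}(X,n+1)$ and satisfy the identities \eqref{eq:3}.

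For well-definedness, recall that $h^{j}\colon\square^{n+1}\to\square^{n}$ is the identity on all coordinates except the $j$-th and $(j+1)$-st, which it sends to $1-(t_{j}-1)(t_{j+1}-1)$; as $t_{j},t_{j+1}\neq 1$ this lies in $\square$, and a direct Jacobian computation shows $h^{j}$ is smooth, hence flat. Thus for an irreducible $V\subseteq X\times\square^{n}$ of codimension $p$ meeting all faces properly, $(\Id_{X}\times h^{j})^{\ast}[V]$ is a codimension $p$ cycle on $X\times\square^{n+1}$; to see it meets every face $X\times F$ of $X\times\square^{n+1}$ properly, write $F$ as the image of a composite of cofaces and use the identities of the next paragraph to factor $h^{j}|_{F}$ as a flat map $F\to F'$ followed by the inclusion of a face $F'$ of $\square^{n}$. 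Then the intersection with $X\times F$ is the flat pull-back of $V\cap(X\times F')$, which has codimension $p$ because $V$ meets $X\times F'$ properly. This is the classical argument of \cite{Bloch:nephcg,Levine:BhCgr}, and it yields the maps $h_{j}$ of \eqref{eq:4}.

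For the identities, \eqref{eq:3} is obtained by applying $(-)^{\ast}$ to the following identities of morphisms of schemes, each checked coordinate by coordinate from the explicit formulas for $\delta^{i}_{l}$, $\sigma^{i}$ and $h^{j}$:
\begin{align*}
 h^{j}\circ\delta^{j}_{1}=h^{j}\circ\delta^{j+1}_{1}&=\delta^{j}_{1}\circ\sigma^{j}, & h^{j}\circ\delta^{j}_{0}=h^{j}\circ\delta^{j+1}_{0}&=\Id,\\
 h^{j}\circ\delta^{i}_{l}&=\delta^{i}_{l}\circ h^{j-1}\quad(i<j), & h^{j}\circ\delta^{i}_{l}&=\delta^{i-1}_{l}\circ h^{j}\quad(i>j+1).
\end{align*}
For instance, $\delta^{j}_{0}$ inserts $0$ in slot $j$, after which $h^{j}$ merges slots $j,j+1$ into $1-(0-1)(t_{j}-1)=t_{j}$, giving the identity; and $\delta^{j+1}_{1}$ inserts $\infty$ in slot $j+1$, after which $h^{j}$ produces $1-(t_{j}-1)(\infty-1)=\infty$ in slot $j$, which is precisely $\delta^{j}_{1}\circ\sigma^{j}$. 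The remaining cases are of the same nature. With these verifications, Proposition \ref{normalized3} applies to $C_{\cdot}=Z^{p}(X,\cdot)$ and yields the lemma.

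I expect the only genuinely non-formal step to be the well-definedness of $h_{j}$ on cycles, i.e.\ that $h^{j}$-pull-back preserves proper intersection with all faces; the verification of \eqref{eq:3} is routine bookkeeping, and the last step is a pure invocation of the abstract proposition.
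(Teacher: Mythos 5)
Your proposal is correct and follows exactly the paper's route: the lemma is obtained by applying Proposition \ref{normalized3} to the cubical abelian group $Z^{p}(X,\cdot)$ with the maps $h_{j}=(h^{j})^{\ast}$ of \eqref{eq:4}, the only inputs being that $h^{j}$ is flat (so pull-back of cycles meeting the faces properly is defined) and that the coface identities dualize to \eqref{eq:3}. The paper states these verifications in one line and refers to Bloch's notes (modulo the involution \eqref{eq:5} translating conventions), whereas you write them out explicitly; this is the same proof, not a different one.
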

For more details see \cite[\S
4.4]{Bloch:nephcg}. Note that in \emph{loc. cit.} the refined
normalized complex is 
given by considering the elements in the kernel of all faces but
$\delta^1_1$, instead of
$\delta^0_1$ like here and $\square$ is defined as $\A^{1}$. After
applying  the involution 
\eqref{eq:5}, the map $h^{j}$ agrees with the map denoted $h^{n-j}$ in 
\emph{loc. cit.}

\subsection{Moving lemmas}
\label{movingchow}
We gather some moving lemmas that we will need for further discussions.
The master moving lemma is Theorem \ref{thm:9} below. 

\begin{df}
Let $X$ be an equi-dimensional $k$-scheme of finite type and denote by
$X_{\sm}$ the subscheme of smooth points. Let $\caC$ be a
finite set of locally closed irreducible subsets of $X_{\sm}$ and let
$e \colon \caC \to \Z_{\ge 0}$ be a function.
We define $Z^{p}(X, n)_{\caC,e} \subset  Z^{p} (X, n)$ to be the
subgroup  generated by integral closed subschemes $Z\subset X \times
\square^{n}$ such that, for 
each $C\in\caC$ and each face $F$ of $\square^{n}$ (including the
case $F=\square^{n}$),
\begin{displaymath}
  \codim_{C\times F} \left(Z \cap (C \times F ) \right)\ge  p - e(C).
\end{displaymath}
\end{df}

\begin{lem}\label{lemm:8} For each $p\ge 0$, the groups
  $Z_{\caC,e}^{p}(X,\ast)$ form a sub-cubical group of
  $Z^{p}(X,\ast)$. Moreover, this cubical subgroup is stable under the
  pull-back morphisms $h_{j}$   of \eqref{eq:4}.
\end{lem}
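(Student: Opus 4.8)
The plan is to verify the two assertions separately, both by reducing to the defining codimension inequality and the geometry of the cocubical maps on $\square^{\cdot}$. First I would check that $Z^p_{\caC,e}(X,\ast)$ is a sub-cubical group, i.e.\ that it is stable under the face maps $\delta^j_i = (\delta^i_j)^\ast$ and degeneracies $\sigma_i = (\sigma^i)^\ast$. For a face map, let $Z \subset X \times \square^n$ be a generator of $Z^p_{\caC,e}(X,n)$ and consider $(\delta^i_j)^\ast Z$, the pull-back along the inclusion of the codimension-one face $\delta^i_j(\square^{n-1}) \subset \square^n$. The key observation is that faces of $\square^{n-1}$ correspond bijectively (via $\delta^i_j$) to faces of $\square^n$ contained in $\delta^i_j(\square^{n-1})$; so for $C \in \caC$ and a face $F' \subset \square^{n-1}$, the intersection $(\delta^i_j)^\ast Z \cap (C \times F')$ is identified with $Z \cap (C \times F)$ where $F = \delta^i_j(F')$ is a face of $\square^n$. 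Since the codimension of this intersection inside $C \times F$ is already $\ge p - e(C)$ by hypothesis on $Z$, and codimension is preserved under this identification, we get $\codim_{C \times F'}\bigl((\delta^i_j)^\ast Z \cap (C\times F')\bigr) \ge p - e(C)$. (One must note that the pull-back $(\delta^i_j)^\ast Z$ is a well-defined cycle because $Z$ already meets all faces properly, in particular $\delta^i_j(\square^{n-1})$ itself, so no excess-intersection issues arise.) The same bookkeeping handles degeneracies, where $\sigma^i$ is a projection and $(\sigma^i)^\ast Z = Z \times \square$ sits over the extra coordinate; faces of $\square^{n+1}$ either pull back a face of $\square^n$ or involve the new coordinate trivially, and in both cases the codimension bound is inherited. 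This establishes the sub-cubical group structure.

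Next I would treat stability under the maps $h_j\colon Z^p(X,n) \to Z^p(X,n+1)$ from \eqref{eq:4}, which are pull-backs along the smooth (flat) maps $h^j\colon \square^{n+1}\to\square^n$ of \eqref{mapsh}. The point here is again compatibility with faces: one must understand how $h^j$ interacts with the face structure of the cube. From the explicit formula for $h^j$ — it is the identity on coordinates $t_1,\dots,t_{j-1},t_{j+2},\dots,t_{n+1}$ and maps $(t_j,t_{j+1}) \mapsto 1-(t_j-1)(t_{j+1}-1)$ in the $j$-th slot — one checks directly how a face $F$ of $\square^{n+1}$ (obtained by setting some coordinates to $0$ or $\infty$) is carried by $h^j$. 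Setting a coordinate outside $\{j,j+1\}$ to $0$ or $\infty$ just restricts $h^j$ to the corresponding face of $\square^n$; the interesting cases are setting $t_j$ or $t_{j+1}$ to $0$ (which makes the $j$-th output coordinate a copy of the other, giving a "diagonal-like" restriction onto a face of $\square^n$) or to $\infty$ (which sends the output to $\infty$). In each case $h^j$ restricted to $F$ factors as a composite of a flat map onto a face $F'$ of $\square^n$ with, possibly, a degeneracy. Then for $C\in\caC$, $h_j(Z)\cap (C\times F) = (h^j|_F)^{-1}\bigl(Z \cap (C\times F')\bigr)$ up to the identifications just described, and flatness preserves codimension, so $\codim_{C\times F}\bigl(h_j(Z)\cap(C\times F)\bigr) \ge p - e(C)$. (As a sanity check, the case $F = \square^{n+1}$ itself is just flatness of $h^j$, which already shows $h_j(Z) \in Z^p(X,n+1)$; this is how the maps in \eqref{eq:4} were defined in the first place.)

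The main obstacle — and the step deserving the most care — is the face-bookkeeping for $h^j$: one has to enumerate exactly which faces $F'$ of $\square^n$ arise as images of faces $F$ of $\square^{n+1}$ under $h^j$, and recognize each restriction $h^j|_F$ as a flat map onto $C \times F'$ possibly composed with a projection, so that the codimension bound transfers cleanly. Once this combinatorial dictionary is set up (it is essentially the same computation that shows the $h_j$ satisfy the simplicial-type identities \eqref{eq:3} of Proposition \ref{normalized3}, already invoked in the construction of $Z^p(X,n)_{00}$), the rest is a routine application of flatness preserving codimension and the identification of faces used in the first part. I would also remark that since each $Z^p_{\caC,e}(X,n)$ is by definition a subgroup of the free abelian group $Z^p(X,n)$ generated by the relevant integral subschemes, and all the operations ($\delta^i_j$, $\sigma^i$, $h^j$ pull-backs) are additive, it suffices throughout to verify the codimension condition on a single integral generator $Z$, which is what the argument above does.
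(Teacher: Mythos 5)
Your proposal is correct and follows essentially the same route as the paper's proof: for the cubical structure you identify faces of $\square^{n-1}$ with faces of $\square^{n}$ under $\delta^i_j$ and use that the codimension of $Z\cap(C\times F)$ is unchanged under this identification, and for the second part you use that $h^j$ carries each face of $\square^{n+1}$ flatly onto a face of $\square^{n}$ (possibly the total space, which the paper's convention counts as a face), flatness preserving the codimension bound. The only cosmetic slip is describing the restriction of $h^j$ to $\{t_j=0\}$ as ``diagonal-like onto a face'': it is simply the identity onto the whole $\square^{n}$, but since the defining condition of $Z^p_{\caC,e}$ includes $F=\square^{n}$ this does not affect the argument.
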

\begin{proof}
Let $Z\in Z^p_{\caC, e}(X,n)$. Then for a face $F$ of $\square^{n-1}$,
each $F_i\coloneqq \delta^i_{\epsilon}(F)$ is a face of $\square^n$, where
$\epsilon=0,1$. Now the first part follows from the fact
\begin{displaymath}
  \codim_{C\times F}(\delta^{\epsilon }_{i}(Z)\cap (C\times F))=
\codim_{C\times F_i}\left(Z\cap (C\times F_i\right)),  
\end{displaymath}
for any $C\in \caC$. For the second part, we use a similar argument,
now noticing the fact that for each face $F$ of $\square^{n+1}$,
$h_j(F)$ gives a face in $\square^n$ (including possibly the total
space $\square^n$ if either $t_j$ or $t_{j+1}$ equals zero) and that
the induced map $F\to h_{j}(F)$ is flat.
\end{proof}

As a consequence of Lemma \ref{lemm:8} we can define the complexes
$Z^{p}_{\caC,e}(X,\ast)_{0}$ and $Z^{p}_{\caC,e}(X,\ast)_{00}$.

\begin{thm}\label{thm:9}  Let $X$ be a quasi-projective scheme over $k$,
  $\caC$ a finite subset of locally closed irreducible
  subsets of $X_{\sm}$ and $e\colon \caC\to \Z_{\ge 0}$ a function. Then the
  injective morphism of  complexes
\begin{displaymath}
  Z^{p}_{\caC,e}(X,\ast)_{0}\to Z^{p}(X,\ast)_{0}
\end{displaymath}
is a quasi-isomorphism.
\end{thm}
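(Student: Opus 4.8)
The plan is to deduce this from Bloch's moving lemma for higher Chow groups (\cite{Bloch:achK}, see also \cite{Bloch:nephcg} and \cite{Levine:BhCgr}, and \cite{AmalenduLevine:ahcg} for the version involving a finite family of subvarieties), the role of the extra data $(\caC,e)$ being essentially bookkeeping. First I would reduce the assertion to the acyclicity of a quotient complex. By Lemma \ref{lemm:8} the groups $Z^{p}_{\caC,e}(X,\cdot)$ form a sub-cubical abelian group of $Z^{p}(X,\cdot)$, and since normalization is a natural direct summand of the identity functor (see \eqref{eq:25}) it is exact and commutes with forming the quotient; hence the cokernel of the inclusion in the statement is $Q_{\ast}\coloneqq Z^{p}(X,\ast)_{0}/Z^{p}_{\caC,e}(X,\ast)_{0}=N\bigl(Z^{p}(X,\cdot)/Z^{p}_{\caC,e}(X,\cdot)\bigr)$, and it suffices to prove that $Q_{\ast}$ is acyclic. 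Unwinding, this is the pair of familiar assertions: (i) every cycle $Z\in Z^{p}(X,n)_{0}$ is homologous, within $Z^{p}(X,\ast)_{0}$, to one lying in $Z^{p}_{\caC,e}(X,n)_{0}$; and (ii) a cycle in $Z^{p}_{\caC,e}(X,n)_{0}$ that bounds in $Z^{p}(X,\ast)_{0}$ already bounds in $Z^{p}_{\caC,e}(X,\ast)_{0}$. If it is convenient one may run the whole argument with the refined normalized complexes $Z^{p}_{\caC,e}(X,\ast)_{00}$, available thanks to Lemma \ref{lemm:8}, and transport conclusions back through the homotopy equivalence of Lemma \ref{lemm:12}.

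Second I would produce, in one stroke, the deformation yielding both (i) and (ii). Following \cite{Bloch:achK} and \cite{Levine:BhCgr}, after embedding $X$ in a projective space one disposes of a connected parameter variety $G$ with a distinguished point and a family of self-maps of $X\times\square^{n}$ parametrized by $G$ — built from a generic linear family acting on the $\square$-coordinates, compatibly with the cocubical structure, together with a projection/correspondence handling the $X$-direction — such that the member over the distinguished point is the identity, while the member over a general point of $G$ carries $Z$ to a cycle $Z'$ realizing every required bound $\codim_{C\times F}\ge p-e(C)$. Restricting the family to a generic one-parameter path $\square^{1}\to G$ through the distinguished point and pulling $Z$ back along the resulting rational map $\square^{1}\times X\times\square^{n}\dashrightarrow X\times\square^{n}$ (then taking closures), one obtains a cycle $H(Z)\in Z^{p}(X,n+1)$ whose boundary is $Z$ minus the general translate $Z'\in Z^{p}_{\caC,e}(X,n)$ plus terms supported on the faces of $\square^{n}$; these last are absorbed by the operators $H$ already built in lower cubical degree, so $H$ is constructed by induction on $n$. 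After normalizing, the collection of the $H$'s is a contracting homotopy of $Q_{\ast}$, which proves (i) and (ii).

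The step I expect to be the main obstacle is verifying that $H(Z)$ is an admissible higher cycle at all, i.e. that it meets \emph{every} face of $X\times\square^{n+1}$ properly — and that this can be arranged over $\Z$, not merely after $\otimes\Q$. This is precisely the point where moving by a single general translate fails, and where the gap in the original argument of \cite{Bloch:achK} was corrected in \cite{Bloch:nephcg} and \cite{Levine:BhCgr}: the remedy is to enlarge the finite family of ``bad'' subvarieties so that it also records the faces of $\square^{n}$ and all their relevant traces, to take $G$ large enough that good position along each of these is a dense open condition, and then to choose the one-parameter path generically enough to avoid simultaneously the finitely many proper closed conditions so imposed. Once Lemma \ref{lemm:8} is in hand the data $(\caC,e)$ adds nothing essential to this: the family $\caC$ is simply absorbed into the enlarged collection of bad subvarieties, and the argument of \emph{loc.\ cit.}\ applies; the remaining things I would still need to spell out are that absorption and the routine check that $H$ preserves the conditions cutting out the normalized (or refined normalized) complex.
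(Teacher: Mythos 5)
The paper does not actually prove this statement: its ``proof'' is the single line that this is \cite[Theorem 1.10]{AmalenduLevine:ahcg}, so your attempt has to be measured against the argument in the literature rather than against anything in the text. Your first reduction is fine: since $Z^{p}_{\caC,e}(X,\cdot)$ is a sub-cubical abelian group (Lemma \ref{lemm:8}) and normalization is an exact direct-summand functor by \eqref{eq:25}, the claim is equivalent to acyclicity of the normalized quotient complex, i.e.\ to your (i) and (ii).

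The gap is in the moving step itself. The conditions to be achieved, $\codim_{C\times F}\bigl(Z\cap (C\times F)\bigr)\ge p-e(C)$ with $C\subset X_{\sm}$, are failures of good position in the $X$-direction; a family built from a linear group acting on the $\square$-coordinates leaves the behaviour of $Z$ over $C$ essentially untouched and so cannot repair excess intersection with $C\times F$ --- that mechanism only addresses admissibility with respect to the faces, which is a different problem. The actual proofs (Bloch \cite{Bloch:achK}, \cite{Bloch:nephcg}, Levine \cite{Levine:BhCgr}, and \cite{AmalenduLevine:ahcg}, which rests on \cite[Part I, Chapter II]{Levine:mm}) move the cycle in the $X$-direction: translation by the additive group when $X$ is affine space, the projecting-cone construction when $X$ is projective, and a considerably more involved argument for general quasi-projective --- and here possibly \emph{singular} --- $X$ (the theorem only asks that the $C$'s lie in $X_{\sm}$), which your sketch does not engage with. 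Note also that for a general embedded $X\subset \P^{N}$ a ``generic linear family'' gives no self-maps of $X\times\square^{n}$ at all, since linear automorphisms of $\P^{N}$ do not preserve $X$; this is exactly why the projective case uses the projecting cone, a correspondence whose excess components and multiplicities have to be controlled, and that analysis (together with the usual finite-field caveat: genericity must be replaced by base change to $k(t)$ or to two extensions of coprime degree plus a norm argument, to get the statement integrally) is the real content of the theorem. So the proposal reduces the statement correctly but is missing the key construction; either cite \cite[Theorem 1.10]{AmalenduLevine:ahcg} as the paper does, or commit to reproducing the translation/projecting-cone argument in full.
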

\begin{proof}
  This is \cite[Theorem 1.10]{AmalenduLevine:ahcg}.
\end{proof}

\begin{cor}\label{cor:1} With the hypothesis of Theorem \ref{thm:9},
  the morphism of complexes
  \begin{displaymath}
     Z^{p}_{\caC,e}(X,\ast)_{00}\to Z^{p}(X,\ast)_{00}
  \end{displaymath}
is a quasi-isomorphism.
\end{cor}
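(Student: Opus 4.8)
The plan is to deduce this from Theorem~\ref{thm:9} by a two-out-of-three argument in a commutative square relating the $(\ )_{0}$ and $(\ )_{00}$ versions of the two groups. First I would record that, by Lemma~\ref{lemm:8}, the pull-back maps $h_{j}$ of \eqref{eq:4} restrict to the sub-cubical group $Z^{p}_{\caC,e}(X,\cdot)$, and that on this subgroup they still satisfy the identities \eqref{eq:3}: these identities only involve compositions of $h_{j}$ with face and degeneracy maps, all of which preserve $Z^{p}_{\caC,e}(X,\cdot)$ by Lemma~\ref{lemm:8}. Hence Proposition~\ref{normalized3} applies verbatim to the cubical abelian group $Z^{p}_{\caC,e}(X,\cdot)$, and exactly as in Lemma~\ref{lemm:12} the inclusion
\[
  Z^{p}_{\caC,e}(X,\ast)_{00}=N_{0}Z^{p}_{\caC,e}(X,\ast)\hookrightarrow
  NZ^{p}_{\caC,e}(X,\ast)=Z^{p}_{\caC,e}(X,\ast)_{0}
\]
is a homotopy equivalence.

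Next I would form the commutative square of complexes
\[
  \begin{array}{ccc}
    Z^{p}_{\caC,e}(X,\ast)_{00} & \longrightarrow & Z^{p}(X,\ast)_{00}\\[2pt]
    \downarrow & & \downarrow\\[2pt]
    Z^{p}_{\caC,e}(X,\ast)_{0} & \longrightarrow & Z^{p}(X,\ast)_{0},
  \end{array}
\]
in which the horizontal arrows are induced by the inclusion $Z^{p}_{\caC,e}(X,\cdot)\hookrightarrow Z^{p}(X,\cdot)$ and the vertical arrows are the refined‑normalization inclusions. Commutativity is immediate, since all four maps are inclusions of subgroups of $Z^{p}(X,\ast)$. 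The left vertical map is a homotopy equivalence by the previous paragraph, the right vertical map is a homotopy equivalence by Lemma~\ref{lemm:12}, and the bottom horizontal map is a quasi-isomorphism by Theorem~\ref{thm:9}. By the two-out-of-three property of quasi-isomorphisms, the top horizontal map is a quasi-isomorphism, which is the assertion.

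The only point that really requires care is the first step — that the refined normalized complex remains a homotopy retract of the normalized complex after passing to the subgroup $Z^{p}_{\caC,e}(X,\cdot)$ — and this is precisely what the stability statement in Lemma~\ref{lemm:8} provides, so it is not a serious obstacle; the remainder is a formal diagram chase.
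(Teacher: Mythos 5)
Your proposal is correct and follows essentially the same route as the paper: the key point, exactly as in the paper's proof, is that the maps $h_{j}$ preserve $Z^{p}_{\caC,e}(X,\ast)$ (Lemma \ref{lemm:8}), so the argument of Lemma \ref{lemm:12} applies to give the homotopy equivalence $Z^{p}_{\caC,e}(X,\ast)_{00}\simeq Z^{p}_{\caC,e}(X,\ast)_{0}$, after which the statement follows from Theorem \ref{thm:9}. Your explicit commutative square and two-out-of-three step is just a spelled-out version of the paper's concluding sentence.
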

\begin{proof}
  Since the morphisms $h_{j}$ respect the subcomplex
  $Z^{p}_{\caC,e}(X,\ast)$, the argument that shows that
  $Z^{p}(X,\ast)_{00}$ and  $Z^{p}(X,\ast)_{0}$ are quasi-isomorphic, also
  shows that  $Z^{p}_{\caC,e}(X,\ast)_{00}$ and
  $Z^{p}_{\caC,e}(X,\ast)_{0}$ are quasi-isomorphic. Therefore the
  corollary is a consequence of Theorem \ref{thm:9}.
\end{proof}

We next want to specialize Theorem \ref{thm:9} to the particular cases
we will use in the sequel.

\begin{df}\label{def:18}
  Let $f:X\rightarrow Y$ be an arbitrary map between two smooth
  quasi-projective varieties $X,Y$ over $k$. Let $Z_{f}^{p}(Y,n)\subset
  Z^{p}(Y,n)$ be the 
  subgroup generated by the codimension $p$ irreducible
  subvarieties $Z\subset Y\times \square^{n}$,
  such that, for every face $F$ of $\square^{n}$ 
  \begin{displaymath}
    \codim_{X\times F}\left((f\times \Id)^{-1}(Z)\cap (X\times F)\right)\ge p.
  \end{displaymath}
\end{df}

\begin{lem}\label{lemm:9} Let $f\colon X\to Y$ be a morphism of smooth
  quasi-projective varieties. Then there is a finite subset $\caC$ of locally
  closed irreducible subsets of $Y$ and a function $e\colon \caC \to
  \Z_{\ge 0}$ such that $Z_{f}^{p}(Y,\ast)=Z_{\caC,e}^{p}(Y,\ast)$.  
\end{lem}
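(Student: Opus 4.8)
The plan is to build the stratification $\caC$ by controlling the fibre dimension of $f$, and then to match the two systems of codimension inequalities by a single application of the theorem on the dimension of fibres.

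\emph{The stratification.} The function $Y\ni y\mapsto \dim f^{-1}(y)$ is constructible by Chevalley's theorem, so the image $f(X)$ admits a finite partition into locally closed irreducible subsets $C$ on each of which this function is constant, with value $m_C$ say; shrinking further via generic flatness we may assume in addition that $f^{-1}(C)\to C$ is flat, so that the fibre over the generic point of $C$ also has dimension $m_C$, i.e. \emph{every} fibre of $f^{-1}(C)\to C$, over any point of $C$, has dimension exactly $m_C$. Let $\caC=\{C_1,\dots,C_N\}$ be the resulting collection (all pieces already lie in $f(X)$, so $f^{-1}(C_i)\neq\emptyset$) and put
\begin{displaymath}
  e(C_i)\coloneqq \dim X-\dim C_i-m_i,\qquad m_i\coloneqq m_{C_i}.
\end{displaymath}
Since $f^{-1}(C_i)\to C_i$ is surjective and flat with $C_i$ irreducible and all fibres of dimension $m_i$, one has $\dim f^{-1}(C_i)=\dim C_i+m_i$, and as $f^{-1}(C_i)\subseteq X$ this gives $e(C_i)\in\Z_{\ge 0}$; thus $e\colon\caC\to\Z_{\ge0}$ is admissible.

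\emph{A dimension equality.} I will use the elementary consequence of the fibre-dimension theorem that a surjective finite-type morphism $g\colon A\to B$ with $B$ irreducible and every fibre of dimension exactly $m$ satisfies $\dim A=\dim B+m$: the inequality ``$\le$'' is the general bound, and for ``$\ge$'' one chooses an $m$-dimensional component of the generic fibre $g^{-1}(\eta_B)$ and takes the closure of its generic point in $A$. Applying this to $g=f\times\Id\colon f^{-1}(C_i)\times F\to C_i\times F$ (surjective, with every fibre $f^{-1}(c)\times\{\phi\}$ of dimension $m_i$), separately over each irreducible component of $Z\cap(C_i\times F)$, yields for any codimension-$p$ integral closed $Z\subseteq Y\times\square^{n}$ and any face $F$ of $\square^{n}$
\begin{displaymath}
  \dim\bigl((f\times\Id)^{-1}(Z)\cap(X\times F)\bigr)=\max_{1\le i\le N}\Bigl(\dim\bigl(Z\cap(C_i\times F)\bigr)+m_i\Bigr),
\end{displaymath}
because $(f\times\Id)^{-1}(Z)\cap(X\times F)$ is the disjoint union of the preimages of $Z\cap(C_i\times F)$ taken inside $f^{-1}(C_i)\times F$ (each point $x\in X$ has $f(x)$ in a unique $C_i$).

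\emph{Matching the conditions.} Both $Z^{p}_{f}(Y,n)$ and $Z^{p}_{\caC,e}(Y,n)$ are free on the codimension-$p$ integral closed subvarieties $Z\subseteq Y\times\square^{n}$ satisfying their respective defining inequalities, so it suffices to show these inequalities are equivalent for a fixed $Z$. Using $\dim(X\times F)=\dim X+\dim F$, $\dim(C_i\times F)=\dim C_i+\dim F$ and $\dim X-m_i=\dim C_i+e(C_i)$, the displayed equality translates, for each face $F$, into
\begin{displaymath}
  \codim_{X\times F}\bigl((f\times\Id)^{-1}(Z)\cap(X\times F)\bigr)\ge p\iff \codim_{C_i\times F}\bigl(Z\cap(C_i\times F)\bigr)\ge p-e(C_i)\ \text{for all }i.
\end{displaymath}
As this holds for every face $F$ (including $F=\square^{n}$), we get $Z\in Z^{p}_{f}(Y,n)$ iff $Z\in Z^{p}_{\caC,e}(Y,n)$, and the lemma follows.

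The only real content is the dimension equality: it must be an equality, not merely an inequality, which is precisely what forces the stratification to be fine enough that $f$ has equidimensional fibres over each stratum, including over its generic point — hence the appeal to generic flatness (or the flattening stratification). The remaining steps are routine bookkeeping with dimensions and codimensions.
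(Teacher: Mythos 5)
Your proof is correct. Note that the paper itself does not prove this lemma: it is quoted from Levine's \emph{Mixed motives} (Part I, Chapter II, Lemma 3.5.2), so your argument is a self-contained substitute for that reference, and in fact it runs closely parallel to the paper's own proof of the companion Lemma \ref{lemm:10}: stratify by fibre dimension, set $e(C)$ equal to the dimension defect $\dim X-\dim C-m_C$, and translate both systems of codimension conditions into one and the same dimension bound. Two small remarks. First, the appeal to generic flatness is superfluous and is the least rigorous step as phrased: the fibre-dimension function $y\mapsto\dim f^{-1}(y)$ is constructible (its superlevel sets are the images under $f$ of the closed sets furnished by Chevalley's semicontinuity theorem, hence constructible by Chevalley's theorem on images), and once its level sets are partitioned into irreducible locally closed pieces the function is constant at \emph{every} point of each piece, generic points included; so no flatness is needed, and the equality $\dim f^{-1}(C_i)=\dim C_i+m_i$ (hence $e(C_i)\ge 0$) already follows from your own fibre-dimension lemma. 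Second, the codimension bookkeeping tacitly uses that $X$ and $Y$ are equidimensional, i.e. $\codim_{X\times F}=\dim X+\dim F-\dim(\cdot)$, which is consistent with the paper's standing conventions for varieties. With these caveats, the key identity $\dim\bigl((f\times\Id)^{-1}(Z)\cap(X\times F)\bigr)=\max_i\bigl(\dim(Z\cap(C_i\times F))+m_i\bigr)$ and the resulting generator-by-generator equivalence of the defining inequalities are sound, so your argument gives a complete proof that avoids the external citation.
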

\begin{proof}
  This is proved in \cite[Part I, Chapter II, Lemma 3.5.2]{Levine:mm}.
\end{proof}

\begin{cor} \label{cor:3}
  The inclusions $Z^{p}_{f}(Y,\ast)_{0}\to Z^{p}(Y,\ast)_{0}$ and
  $Z^{p}_{f}(Y,\ast)_{00}\to Z^{p}(Y,\ast)_{00}$ are
  quasi-isomorphisms. 
\end{cor}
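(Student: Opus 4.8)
The plan is to deduce the statement directly from Theorem \ref{thm:9} and Corollary \ref{cor:1}, using the identification supplied by Lemma \ref{lemm:9}. First I would apply Lemma \ref{lemm:9}: since $f\colon X\to Y$ is a morphism of smooth quasi-projective varieties, there exist a finite set $\caC$ of locally closed irreducible subsets of $Y$ and a function $e\colon \caC\to\Z_{\ge 0}$ such that $Z^{p}_{f}(Y,\ast)=Z^{p}_{\caC,e}(Y,\ast)$ as sub-cubical abelian groups of $Z^{p}(Y,\ast)$. Since $Y$ is smooth we have $Y_{\sm}=Y$, so $\caC$ is admissible data for the construction of $Z^{p}_{\caC,e}$ in the sense required by Theorem \ref{thm:9}, and $Y$ is quasi-projective by hypothesis; thus all hypotheses of Theorem \ref{thm:9} and Corollary \ref{cor:1} are met with ambient variety $Y$ and the above pair $(\caC,e)$.

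Next, since the equality $Z^{p}_{f}(Y,\ast)=Z^{p}_{\caC,e}(Y,\ast)$ holds at the level of cubical abelian groups, degree by degree and compatibly with all face and degeneracy maps, it is preserved by the normalization functor $N$ and by the refined normalization $N_{0}$. Concretely, this gives $Z^{p}_{f}(Y,\ast)_{0}=Z^{p}_{\caC,e}(Y,\ast)_{0}$ and $Z^{p}_{f}(Y,\ast)_{00}=Z^{p}_{\caC,e}(Y,\ast)_{00}$ as subcomplexes of $Z^{p}(Y,\ast)_{0}$ and $Z^{p}(Y,\ast)_{00}$ respectively (that these are genuine subcomplexes, stable under the maps $h_{j}$, is exactly Lemma \ref{lemm:8}, which applies because the group is of the form $Z^{p}_{\caC,e}$).

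Finally I would conclude: Theorem \ref{thm:9} says $Z^{p}_{\caC,e}(Y,\ast)_{0}\to Z^{p}(Y,\ast)_{0}$ is a quasi-isomorphism, which under the identification above is precisely the inclusion $Z^{p}_{f}(Y,\ast)_{0}\to Z^{p}(Y,\ast)_{0}$; and Corollary \ref{cor:1} gives the same for the $00$-versions. There is essentially no obstacle here — the corollary is a formal specialization — and the only point deserving a line of care is the verification in the first paragraph that the hypotheses of Theorem \ref{thm:9} and Corollary \ref{cor:1} are literally satisfied, namely quasi-projectivity of $Y$ and the fact that, $Y$ being smooth, the set $\caC$ produced by Lemma \ref{lemm:9} lies in $Y_{\sm}$.
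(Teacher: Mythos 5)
Your proposal is correct and follows exactly the intended route of the paper: Corollary \ref{cor:3} is an immediate consequence of the identification $Z^{p}_{f}(Y,\ast)=Z^{p}_{\caC,e}(Y,\ast)$ from Lemma \ref{lemm:9}, combined with Theorem \ref{thm:9} and Corollary \ref{cor:1}. Your extra care in checking that the identification passes through $N$ and $N_{0}$ and that the hypotheses (quasi-projectivity, $Y_{\sm}=Y$) are met is exactly the routine verification the paper leaves implicit.
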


\begin{df}\label{def:19}
  Let $X$ be a smooth quasi-projective scheme over $k$, and let
  $p,q,n,m\ge 0$ be positive
integers. If  $Z\in Z^{p}(X,n)$, 
$W\in Z^{q}(X,m)$ are pre-cycles. We say that $Z$ and $W$
\emph{intersect properly} if, for any
face $F$ of $\square ^{n+m}$,
\begin{displaymath}
  \codim_{X\times F}\left( p_{12}^{-1}|Z| \, \cap \, p_{13}^{-1}|W|\,
    \cap \, (X\times
  F)\right)\ge p+q,
\end{displaymath}
where
\begin{displaymath}
  p_{12}\colon X\times \square ^{n}\times \square^{m}\to X\times
  \square ^{n},\quad 
  p_{13}\colon X\times \square ^{n}\times \square^{m}\to X\times
  \square ^{m}
\end{displaymath}
are the projections.

\begin{rmk}\label{casep0}
Suppose $q=0$. For $m\ge 0$ all the elemets of  $Z^{0}(X,m)$ are
degenerate. Therefore $Z^{0}(X,m)_{0}=0$  and the case $m=0$ is the
only non-trivial situation. If $W\in Z^{0}(X,0)_{0}$, then it is a
component of $X$ and if $Z$ intersects properly all the faces of
$X\times \square^{n}$, then $Z$ necessarily
intersects $W$ properly.
\end{rmk}

Let $W\in Z^{q}(X,m)$ be a pre-cycle. We denote by
  $Z_{W}^{p}(X,n)\subset Z^{p}(X,n)$ be the 
  subgroup generated by the codimension $p$ irreducible
  subvarieties $Z\subset X\times \square^{n}$, such that $Z$ and $W$
  intersect properly.
\end{df}

\begin{lem}\label{lemm:10} Let $X$ be a smooth
  quasi-projective scheme over $k$ and $W\in Z^{q}(X,m)$ a
  pre-cycle. Then there is a finite subset $\caC$ of locally 
  closed irreducible subsets of $X$ and a function $e\colon \caC \to
  \Z_{\ge 0}$ such that $Z_{W}^{p}(X,\ast)=Z_{\caC,e}^{p}(X,\ast)$.  
\end{lem}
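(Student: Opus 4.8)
The plan is to realize the "intersect properly with $W$" condition as a condition of the form appearing in Lemma \ref{lemm:8}, by choosing the finite set $\caC$ of locally closed subsets of $X$ to consist of the pieces cut out on $X$ by the faces of $W\subset X\times\square^m$, and by choosing the function $e$ to record the dimensions of the fibres of $W$ (and its faces) over these pieces. Concretely, for each face $F$ of $\square^m$ and each irreducible component $W'$ of $|W|\cap(X\times F)$, let $p_1\colon X\times F\to X$ be the projection; stratify $X$ into finitely many locally closed irreducible subsets $C$ over which the fibre dimension of $W'\to X$ is constant, say equal to $\dim W' - \dim \ol{p_1(W')} + \dim C$ restricted appropriately, and collect all such $C$ (over all faces $F$ and all components $W'$) into the finite set $\caC$. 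Then define $e(C)$ so that $e(C)$ measures exactly the "excess" dimension of $W$ over $C$; the precise value will be forced by matching the two codimension inequalities below.

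The key step is the dimension bookkeeping. Fix a face $F_n$ of $\square^n$ and an irreducible $Z\subset X\times\square^n$. Since $\square^{n+m}=\square^n\times\square^m$, every face of $\square^{n+m}$ is a product $F_n\times F_m$ of a face of $\square^n$ and a face of $\square^m$. Under the projections $p_{12}, p_{13}$ of Definition \ref{def:19}, the set $p_{12}^{-1}|Z|\cap p_{13}^{-1}|W|\cap(X\times F_n\times F_m)$ fibres over $Z\cap(X\times F_n)$ with fibres contained in the fibres of $|W|\cap(X\times F_m)$ over the relevant points of $X$. First I would check that for each $C\in\caC$ the inequality $\codim_{C\times F_n}(Z\cap(C\times F_n))\ge p - e(C)$ forces $\codim_{X\times F_n\times F_m}(p_{12}^{-1}|Z|\cap p_{13}^{-1}|W|\cap(X\times F_n\times F_m))\ge p+q$, using that over a point of $C$ the fibre of $W$ contributes at most $\dim F_m + e(C)$ to the dimension while $W$ itself has codimension $q$; the $e(C)$ is designed precisely so that $e(C)$ on the $Z$-side cancels the fibre-dimension surplus on the $W$-side. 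Conversely, I would show that if $Z$ meets some $C\times F_n$ in too small a codimension, then choosing $F_m$ and a component $W'$ realizing the fibre dimension over $C$ produces a component of the triple intersection of codimension $<p+q$, so the two conditions are genuinely equivalent, giving $Z^p_W(X,\ast)=Z^p_{\caC,e}(X,\ast)$.

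The main obstacle is verifying that the stratification producing $\caC$ is genuinely finite and that a single function $e$ works uniformly: one must handle all faces $F_m$ of $\square^m$ simultaneously, and the fibre-dimension stratification of $X$ attached to each face must be refined to a common finite stratification. This is a standard application of generic flatness / upper-semicontinuity of fibre dimension (Chevalley), applied to the finitely many morphisms $W'\to X$ indexed by pairs (face of $\square^m$, component of $|W|\cap(X\times F_m)$); I would cite this as the geometric input and then set $e(C)$ on the common refinement to be the maximum over all these morphisms of the fibre dimension over $C$ minus the generic fibre dimension, which is exactly the excess that the properness condition for $Z$ must absorb. Once $\caC$ and $e$ are in place, the equality of the two subgroups is the dimension count above, and Lemma \ref{lemm:8} then applies to give the cubical and refined structures, so the subsequent moving lemma (Theorem \ref{thm:9} and Corollary \ref{cor:1}) transfers automatically.
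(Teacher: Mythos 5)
Your overall strategy is the paper's: stratify $X$, for each face $H$ of $\square^m$, by the fibre dimension of $W\cap(X\times H)$ over $X$ (a constructible condition, by Chevalley), take $\caC$ to be the resulting finite collection of locally closed irreducible pieces, and let $e$ absorb the excess so that the two codimension conditions match. The gap is the value you finally commit to for $e$. If $C$ is a stratum over which the fibre of $W\cap(X\times H)$ has constant dimension $i$, then the piece of $p_{12}^{-1}|Z|\cap p_{13}^{-1}|W|\cap(X\times G\times H)$ lying over $C$ has dimension $\dim\bigl(|Z|\cap(C\times G)\bigr)+i$, so requiring this to be $\le d+g+h-p-q$ for all $Z$ forces
\begin{displaymath}
  \dim C+e(C)+i=d+h-q,\qquad d=\dim X,\ h=\dim H,
\end{displaymath}
i.e. $e(C)=d+h-q-\dim C-i$ (nonnegative because $\dim C+i\le\dim\bigl(W\cap(X\times H)\bigr)\le d+h-q$). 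This is not the fibre-dimension excess $i-i_{\mathrm{gen}}$ that you propose; the two agree only in special configurations (for instance when $W\cap(X\times H)$ dominates $X$ with codimension exactly $q$ and $\codim_X C$ happens to equal twice the excess), and in general your $e$ is too small, so it imposes spurious conditions and the asserted equality $Z^{p}_{W}(X,\ast)=Z^{p}_{\caC,e}(X,\ast)$ fails; only an inclusion survives.

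Concretely, take $X=\P^3$, $q=1$, $m=1$, and let $W\subset X\times\square$ be the hypersurface $A_0t_0^2+A_1t_0t_1+A_2t_1^2=0$, where $A_0,A_1,A_2$ are independent linear forms vanishing at a single point $x_0$ (irreducible for a general such choice). It meets the faces properly, is generically finite over $X$, and its unique positive-dimensional fibre is $\{x_0\}\times\square$. For the face $H=\square$ and the stratum $C=\{x_0\}$ one has $i=1$, so the correct value is $e(C)=3+1-1-0-1=2$, which for $p=2$ makes the condition at $C$ vacuous, whereas your formula gives $e(C)=1-0=1$. A cycle such as $Z=L\times\square^{n}$, with $L$ a line through $x_0$ not contained in $W\cap(X\times\{0\})$ or $W\cap(X\times\{\infty\})$, does intersect $W$ properly (over $x_0$ the extra fibre direction exactly saturates, but does not exceed, the bound $d+f-p-q$), yet it violates your condition $\codim_{\{x_0\}\times G}\bigl(Z\cap(\{x_0\}\times G)\bigr)\ge p-e(C)=1$, since $Z\supset\{x_0\}\times\square^{n}$. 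So with your $e$ one gets $Z^{p}_{\caC,e}\subsetneq Z^{p}_{W}$. The repair is simply to define $e(C)=d+h-q-\dim C-i$ on each stratum (taking the minimum of these values if the same subset of $X$ occurs for several faces or fibre dimensions); with that choice your dimension count closes and the rest of your argument (Lemma \ref{lemm:8}, Theorem \ref{thm:9}, Corollary \ref{cor:1}) goes through as you describe.
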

\begin{proof} For simplicity of the discussion we assume $W$ to be
  irreducible. Being a pre-cycle, it intersects all the faces of
  $X\times \square^m$ properly.  For every face $H$ of $\square^m$
  (including the total face $\square^m$), we define
  \begin{displaymath}
   C_{H,i}\coloneqq \left \{ x\in X\, \middle|\, \dim
     \left(\pi_m^{-1}(x)\cap W\cap (X\times H)\right)=i\right\}, 
  \end{displaymath}
where $\pi_m\colon X\times \square^m\rightarrow X$ is the
projection. Then, $C_{H,i}$ is a constructible subset of $X$. Let
$C^\ell_{H,i}$ be locally closed, irreducible subsets of $X$, such that
$C_{H,i}=\bigcup _{\ell}C^\ell_{H,i}$. Observe that, for every face $F$ of
$\square^m$, $\bigcup_{i,\ell}C^\ell_{H,i}=X$. From the condition of
proper intersection, for every face $H$ of $\square^m$,
\begin{displaymath}
  \codim_{X\times H}\left(W\cap (X\times H)\right)\geq q\implies
  \dim\left(W\cap (X\times H)\right)\leq d+h-q, 
\end{displaymath}
where $h=\dim(H)$. Let $d^\ell_{H,i}=\dim(C^\ell_{H,i})$. Then
\begin{displaymath}
  d^\ell_{H,i}+i\leq \dim\left(W\cap (X\times H)\right)\leq d+h-q.
\end{displaymath}
Let $\caC=\{C^\ell_{H,i}\}$, and $e(C^\ell_{H,i})=d+h-q-d^\ell_{H,i}-i\geq 0$.

We claim that $Z_{\caC,e}(X,n)=Z_W(X,n)$. Notice that, any face $F$ of
$\square^{n+m}$ is of the form $G\times H$, where $G$ is a face of
$\square^n$ and $H$ is a face of $\square^m$. Write $g=\dim(G)$
and $f=\dim(F)$, so $f=g+h$. For any face $H$ of $\square^m$
we have the decomposition $X\times
\square^n=(\bigcup_{i,\ell}C^\ell_{H,i})\times \square^n$. Let now $Z\in
Z^{p}(X,n)$ be a pre-cycle. 
Then
\begin{multline}\label{eq:73}
  p_{12}^{-1}|Z| \, \cap \, p_{13}^{-1}|W|\,
  \cap \, (X\times F)\\
  = p_{12}^{-1}\left(|Z|\cap (X\times G)\right)\,\cap\,
  p_{13}^{-1}\left(|W|\cap (X\times H)\right)\\
   =\bigcup_{i,\ell}p^{-1}_{12}\left(|Z|\cap (C^\ell_{H,i}\times
     G)\right)\cap p^{-1}_{13}\left(|W|\cap (X\times H)\right). 
\end{multline}
Now
\begin{align*}
\codim_{C^\ell_{H,i}\times G}\left(|Z|\cap (C^\ell_{H,i}\times G)\right)\geq p-e(C^\ell_{H,i})\\
\Leftrightarrow \dim\left(|Z|\cap (C^\ell_{H,i}\times G)\right)\leq d^\ell_{H,i}+g-p+e(C^\ell_{H,i}).
\end{align*}
Finally, $Z$ belongs to $Z^{p}_{\caC,e}(X,n)$ if and only if, for
every face $F=G\times H$ and every $i,\ell$, the condition 
\begin{equation}\label{eq:72}
  \dim(|Z|\cap C^\ell_{H,i}\times G) \le d^{\ell}_{H,i}+g - p+e(C^\ell_{H,i})
\end{equation}
holds. By the definition of $C^\ell_{H,i}$, the condition \eqref{eq:72}
is equivalent to the condition
\begin{multline*}
  \dim p^{-1}_{12}(|Z|\cap C^\ell_{H,i}\times G)\cap p^{-1}_{13}(W\cap X\times H)
\\ \leq d^{\ell}_{H,i}+g-p+e(C^\ell_{H,i})+i=d+f-p-q,
\end{multline*}
where, in the last equality we have used the definition of
$e(C^\ell_{H,i})$. Therefore, using the decomposition \eqref{eq:73} we
deduce that condition  \eqref{eq:72} is satisfied for every face $F$
and indices $\ell,i$ if and only if
\begin{displaymath}
  \codim_{X\times F}\left(p^{-1}_{12}|Z|\cap p^{-1}_{13}|W|\cap
    (X\times F)\right)\geq p+q.
\end{displaymath}
In other words, $Z\in Z_{\caC,e}^{p}(X,n)$ if and only if $Z\in
Z_W^{p}(X,n)$ proving the result. 
\end{proof}

\begin{cor} \label{cor:2}
  The inclusions $Z^{p}_{W}(X,\ast)_{0}\to Z^{p}(X,\ast)_{0}$ and
  $Z^{p}_{W}(X,\ast)_{00}\to Z^{p}(X,\ast)_{00}$ are
  quasi-isomorphisms. 
\end{cor}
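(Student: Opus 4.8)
The plan is to reduce the statement to the abstract moving lemma, Theorem \ref{thm:9}, via the identification furnished by Lemma \ref{lemm:10}. First I would apply Lemma \ref{lemm:10}: since $X$ is smooth and quasi-projective and $W\in Z^{q}(X,m)$ is a pre-cycle, there is a finite set $\caC$ of locally closed irreducible subsets of $X=X_{\sm}$ and a function $e\colon\caC\to\Z_{\ge 0}$ with $Z^{p}_{W}(X,\ast)=Z^{p}_{\caC,e}(X,\ast)$ as sub-cubical groups of $Z^{p}(X,\ast)$. Lemma \ref{lemm:10} is proved there only for irreducible $W$, so strictly I would first note the trivial reduction to that case: if $W=\sum_{i}n_{i}W_{i}$, then $Z$ intersects $W$ properly if and only if it intersects each $W_{i}$ properly, hence $Z^{p}_{W}(X,\ast)=\bigcap_{i}Z^{p}_{W_{i}}(X,\ast)=Z^{p}_{\caC,e}(X,\ast)$ with $\caC=\bigcup_{i}\caC_{i}$ and $e$ assembled from the $e_{i}$ (taking the maximum on overlaps, which only enlarges the admissible subgroup while keeping the intersection fixed).

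Next, by Lemma \ref{lemm:8}, $Z^{p}_{\caC,e}(X,\ast)$ is a sub-cubical group of $Z^{p}(X,\ast)$ stable under the pull-back maps $h_{j}$ of \eqref{eq:4}; therefore the normalized and refined-normalized complexes $Z^{p}_{\caC,e}(X,\ast)_{0}$ and $Z^{p}_{\caC,e}(X,\ast)_{00}$ are defined, and under the identification of Lemma \ref{lemm:10} they agree with $Z^{p}_{W}(X,\ast)_{0}$ and $Z^{p}_{W}(X,\ast)_{00}$ respectively. Now Theorem \ref{thm:9}, applied to the data $(\caC,e)$, gives that $Z^{p}_{\caC,e}(X,\ast)_{0}\to Z^{p}(X,\ast)_{0}$ is a quasi-isomorphism, and Corollary \ref{cor:1} gives that $Z^{p}_{\caC,e}(X,\ast)_{00}\to Z^{p}(X,\ast)_{00}$ is a quasi-isomorphism. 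Translating both statements back through Lemma \ref{lemm:10} yields the corollary. This is exactly the scheme already used to deduce Corollary \ref{cor:3} from Lemma \ref{lemm:9}.

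I do not expect a genuine obstacle here: every ingredient is already established in the excerpt, and the argument is a formal concatenation of Lemma \ref{lemm:10}, Lemma \ref{lemm:8}, Theorem \ref{thm:9} and Corollary \ref{cor:1}. The only points needing a line of care are the reduction from a general pre-cycle $W$ to the irreducible case treated in Lemma \ref{lemm:10}, and the observation that smoothness of $X$ makes $X_{\sm}=X$, so that the finite family $\caC$ produced by Lemma \ref{lemm:10} indeed lies in $X_{\sm}$ as required by Theorem \ref{thm:9}; both are routine.
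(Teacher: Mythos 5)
Your argument is exactly the paper's intended proof: Corollary \ref{cor:2} is deduced by combining Lemma \ref{lemm:10} with Theorem \ref{thm:9} and Corollary \ref{cor:1}, in the same way Corollary \ref{cor:3} follows from Lemma \ref{lemm:9}. One small correction in your reduction to irreducible $W$: on overlaps of the $\caC_i$ you should take the \emph{minimum} of the $e_i$, not the maximum, since the simultaneous conditions $\codim\ge p-e_i(C)$ amount to $\codim\ge p-\min_i e_i(C)$; with the maximum you only obtain an inclusion $Z^p_W(X,\ast)\subseteq Z^p_{\caC,e}(X,\ast)$ rather than the equality needed to invoke Theorem \ref{thm:9} for $Z^p_W(X,\ast)$ itself.
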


\begin{rmk}\label{rem:3}
  Lemma \ref{lemm:10} and Corollary \ref{cor:2} can easily be
  generalized to a finite family of pre-cycles.
\end{rmk}

\subsection{Functoriality}
\label{functchow}
It follows easily from the definition that the complex $Z^{p}(X,*)_0$
is covariant with respect to proper maps (with a shift in the
grading) and contravariant for flat maps.

Let $f\colon X\to Y$ be a morphism between smooth quasi-projective
schemes over $k$.  Then,
$Z_{f}^{p}(Y,*)_0$ is a chain complex and, by Corollary \ref{cor:3}
the inclusion  of complexes
\[Z^{p}_f(Y,*)_{0} \subseteq Z^{p}(Y,*)_0\]
is a quasi-isomorphism. Moreover, by the definition of $Z^{p}_f(Y,*)$,
the pull-back by  $f$ is defined for
algebraic cycles in $Z_{f}^{p}(Y,*)_0$ and hence there is a
well-defined pull-back morphism
\begin{displaymath}
  \CH^{p}(Y,n) \xrightarrow{f^*} \CH^p(X,n).
\end{displaymath}

\subsection{Product structure}\label{chowprod}
Let $X$ be a quasi-projective scheme over $K$ and $\alpha \in
\CH^{p}(X,n)$ and $\beta \in \CH^{q}(X,m)$. We can represent $\beta $
by an element $W\in Z^{q}(X,m)_{0}$ and, thanks to Corollary
\ref{cor:2}, $Z$ by an element $Z\in 
Z^{p}_{W}(X,n)_{0}$, which in turn implies that $W\in
Z^{q}_{Z}(X,m)$. Let $p_{12}$ and $p_{13}$ be as in Definition
\ref{def:19}.  
Since $Z$ and $W$ intersect properly, the intersection product
\begin{displaymath}
  p_{12}^{\ast}Z\cdot p_{23}^{\ast}W\in Z^{p+q}(X,n+m) 
\end{displaymath}
is well defined. This defines a pairing
\begin{equation}
  \label{eq:41}
  \alpha \cdot \beta =[p_{12}^{\ast}Z\cdot p_{23}^{\ast}W].
\end{equation}

 \begin{prop} Let $X$ be a smooth quasi-projective scheme over
   $k$. The pairing \eqref{eq:41}
 defines an associative product on
 \begin{displaymath}
   \CH^\ast(X,\ast):=\bigoplus_{p,n}\CH^p(X,n).
 \end{displaymath}
 This product
 is graded commutative with respect to the degree
 given by $n$. That is, if $\alpha \in \CH^p(X,n)$ and $\beta \in
 \CH^q(X,m)$, then
 \begin{displaymath}
   \alpha \cdot \beta =(-1)^{nm}\beta \cdot \alpha .
 \end{displaymath}
 \end{prop}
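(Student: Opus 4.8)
The plan is to establish associativity, graded commutativity, and well-definedness of the pairing \eqref{eq:41} by reducing everything to the known intersection-theoretic properties of cycles meeting properly, together with the moving lemmas of \S\ref{movingchow}. First I would observe that the pairing is well-defined: given classes $\alpha\in\CH^p(X,n)$, $\beta\in\CH^q(X,m)$, Corollary \ref{cor:2} (and Remark \ref{rem:3}) lets us choose representatives $Z\in Z^p_W(X,n)_0$ and $W\in Z^q_Z(X,m)_0$ which intersect properly, so that $p_{12}^\ast Z\cdot p_{23}^\ast W$ is a well-defined element of $Z^{p+q}(X,n+m)_0$; independence of the choices follows because any two sets of proper representatives can be connected by a homotopy inside the quasi-isomorphic subcomplex $Z^p_{\caC,e}(X,\ast)_0$, using that if $Z\sim Z'$ via $\delta T = Z-Z'$ with $T$ in the moving subcomplex then $p_{12}^\ast T\cdot p_{23}^\ast W$ provides the required homotopy after checking compatibility with the cubical differential $\delta$ and the sign conventions of \eqref{eq:23}. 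This is the standard argument that the flat pullbacks $p_{12}^\ast,p_{23}^\ast$ and the proper intersection product are chain maps on the relevant subcomplexes.

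Next I would prove graded commutativity. The key point is that the exchange isomorphism $\tau\colon X\times\square^n\times\square^m\xrightarrow{\sim} X\times\square^m\times\square^n$ swapping the two cubical blocks carries $p_{12}^\ast Z\cdot p_{23}^\ast W$ to $p_{13}^\ast W\cdot p_{12}^\ast Z$ (after renaming the projections appropriately), and on the normalized complex $\tau$ acts on homology as multiplication by $(-1)^{nm}$ — this is the usual sign picked up when transposing an $n$-cube past an $m$-cube, and it is exactly the sign appearing in the statement. So $\alpha\cdot\beta$ and $(-1)^{nm}\beta\cdot\alpha$ are represented by cycles that differ by the action of $\tau$, hence define the same class. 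I would cite the analogous computation in Levine's cubical formalism (\cite{Levine:BhCgr}) or in \cite{BurgosWang:hBC} rather than redo the combinatorics of the faces.

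For associativity, given $\alpha\in\CH^p(X,n)$, $\beta\in\CH^q(X,m)$, $\gamma\in\CH^r(X,\ell)$, I would use Remark \ref{rem:3} to pick representatives $Z,W,V$ that pairwise intersect properly and moreover are in ``general position'' with respect to all the triple intersections simultaneously (apply Lemma \ref{lemm:10}/Corollary \ref{cor:2} to the finite family), so that both $(\alpha\cdot\beta)\cdot\gamma$ and $\alpha\cdot(\beta\cdot\gamma)$ are computed by the single honest cycle $p_{12}^\ast Z\cdot p_{13}^\ast W\cdot p_{14}^\ast V$ in $Z^{p+q+r}(X,n+m+\ell)_0$; associativity then follows from the associativity of the intersection product of cycles in general position, which is classical. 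I expect the main obstacle to be precisely the verification that one can choose a \emph{single} family of representatives making all the needed intersections proper at once, and that the various projection-and-intersection operations are genuinely chain maps with the signs of \eqref{eq:23} working out — i.e. the bookkeeping that the moving lemma outputs a subcomplex closed under everything in sight. Once the moving lemma is invoked correctly, the algebraic identities (chain map, associativity, the $(-1)^{nm}$ sign) are formal consequences of the corresponding statements for cycles, so I would keep this part brief and refer to \cite{Levine:BhCgr} for the details of the sign analysis.
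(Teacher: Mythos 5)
Your proposal is correct and follows essentially the same route as the paper: well-definedness via the moving lemmas (Remark \ref{rem:3}) and the relation $\delta(T\cdot Z')=W\cdot Z'-W'\cdot Z'$, associativity from associativity of the intersection of properly intersecting cycles, and graded commutativity delegated to the cubical swap acting by $(-1)^{nm}$ on homology, for which you cite \cite{Levine:BhCgr}. The paper only differs in making that last step explicit: it recalls the homotopy $H_{n,m}$ of \eqref{eq:45} satisfying \eqref{eq:44}, and notes that since this homotopy is defined only on the refined normalized complex $Z^{p}(X,\ast)_{00}$, one must first reduce to it via Lemma \ref{lemm:12} --- a technical point worth acknowledging, since the sign is genuinely absent at the level of cycles.
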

 \begin{proof}
   We first show that the intersection product is well defined. If
   $W'$ is another representative of $\beta $ with $\delta T=W-W'$, we
   can represent $\alpha $ by a cycle $Z'$ that intersects properly
   $T$, $W$ and $W'$ (see Remark \ref{rem:3}). Since
   \begin{displaymath}
     \delta (T\cdot Z')=W\cdot Z'-W\cdot Z',
   \end{displaymath}
   the product is independent on the choice of the representative
   $W$. A similar argument shows the independence on the choice of the
   representative $Z$.

   The associativity follows from the fact that the intersection
   product  of cycles intersecting properly is associative already at
   the level of cycles.

   The commutativity is more involved because writing
   \begin{displaymath}
     \xymatrix{ && X\times \square ^{m}\\
       X\times \square ^{n+m}\ar@{=}[r]&X \times \square ^{m}\times \square ^{n}
       \ar[ur]^{p'_{12}}\ar[dr]_{p'_{23}}&\\
       && X\times \square ^{n},}
   \end{displaymath}
   it is evident that in general,
   \begin{displaymath}
     p_{12}^{\ast}Z\cdot p_{23}^{\ast}W\not =
     (-1)^{nm}(p'_{12})^{\ast}W\cdot (p'_{23})^{\ast}Z.
   \end{displaymath}
   Thus, it is not graded commutative at the level of cycles but only
   at the level of Chow groups. The proof of the graded commutativity
   in \cite{Levine:BhCgr} uses a explicit homotopy. By technical
   reasons, this homotopy is only defined for cycles in the refined
   normalized complex $Z^p(X,\ast)_{00}$. This is harmless thanks to
   Lemma \ref{lemm:12}.
   
   We recall briefly the construction of the homotopy $H$. For
   details, the reader is encouraged to consult \S 5.4 of
   \cite{BurgosFeliu:hacg}.
   Let
   \begin{displaymath}
     h^\ast_{n+m}\colon Z^p(X, n+m)_{0}\rightarrow Z^p(X, n+m+1)_{0}
   \end{displaymath}
denote the morphism induced by
\begin{equation}\label{eq:46}
\begin{matrix}
h_{n+m}\colon\!\! & X\times \square^{n+m+1}&\rightarrow& X\times \square^{n+m}\\
&\!\!(p, x_1,\cdots, x_{n+m+1})\!\!\!\!&\mapsto &\!\!\!\!(p, x_2,\cdots ,x_{n+m}, x_1+x_{n+m+1}-x_1x_{n+m+1}).
\end{matrix}
\end{equation}
and let $\tau$  be the automorphism of $X\times \square^{n}$, given by
\begin{displaymath}
  (p, x_1,\cdots, x_n)\mapsto (p, x_2,\cdots, x_n, x_1).
\end{displaymath}
Finally, for each $n,m\geq 0$, the morphism
\begin{displaymath}
  H_{n,m}\colon Z^{p}(X,n+m)_{0}\rightarrow Z^p(X, n+m+1)_0
\end{displaymath}
is defined by (see Proposition 5.35 of \cite{BurgosFeliu:hacg})
\begin{equation}\label{eq:45}
H_{n,m}(Z)=\left\{ \begin{array} {ll}
\sum_{i=0}^{n-1}(-1)^{(m+i)(n+m-1)}
h^\ast_{n+m}((\tau^*)^{m+i}(Z)), & n\neq 0, \\ 0 & n=0,
\end{array} \right.
\end{equation}
for $Z\in Z^p(X, n+m)_{0}$. By \cite[Lemma 5.35]{BurgosFeliu:hacg}, if
$Z\in Z^{p}(X,n)_{00}$ and $W\in Z^{q}(X,m)_{00}$ are cycles
intersecting properly, then
\begin{equation}
  \label{eq:44}
  \delta H_{n,m}(Z\cdot W) = Z\cdot W - (-1)^{nm}W\cdot Z,
\end{equation}
showing the graded commutativity.
 \end{proof}

\section{Deligne-Beilinson cohomology} 
\label{sec:absol-hodge-deligne}

In this section we will recall the definition real Deligne-Beilinson
cohomology and describe several complexes to compute
it. Since we will work mainly with smooth projetive varieties,
real Deligne cohomology agrees with real Deligne-Beilinson cohomology
and with absolute Hodge cohomology in
the range of interest \cite{Beilinson:naHc}. Note that to extend the
constructions given here to quasi-projective varieties it may be
useful to work directly with real absolute Hodge cohomology instead of
real Deligne-Beilinson cohomology..

As a reference for mixed Hodge structures we will use 
\cite{MHS}.

\subsection{Conventions on differential forms and currents}
\label{sec:conv-diff-forms}

When dealing with differential forms, currents and cohomology
classes,  one can use the topologist convention, where the emphasis is
put on having real or integral valued classes. For instance, in this
convention the first Chern class of 
a line bundle will have integral coefficients. In algebraic geometry,
the fact that rational de Rham classes are not rational in the
topological, the ubiquitous appearance of the period $2\pi
i$, and the fact the choice of a particular square root of $-1$ is non
canonical makes it useful to use a different convention. In fact, let
$X$ be a 
projective smooth variety 
of dimension $d$ defined over $\Q$ and $X(\C)$ be the associated
complex manifold. Then
there is a canonical isomorphism in top degree
\begin{displaymath}
  \H_{\Zar}^{2d}(X,\Omega _{X}^{\ast})
  =H_{\sing}^{2d}(X(\C),(2\pi i)^{d}\Q)
\end{displaymath}
given by integration of differential forms. One has to take care that
integration of differential forms depends on the choice of a global
orientation and the standard choice of global orientation n a complex
manifold depends on the choice of the square root of $1$.

The algebro-geometric convention aims to control these obvious powers
of $2\pi i$ and make the above isomorphism canonical. For instance, in
this convention the first Chern
class of a line bundle has coefficients in $(2\pi i)\Z$.

Of course
using one convention or the other is a matter of taste and one can go
easily from one to the other by a normalization factor. In this paper
we will follow the algebraic geometry convention. Therefore, it is
useful to incorporate different powers of $2\pi i$
in the standard operations regarding forms and currents as in  \cite[\S
  5.4]{BurgosKramerKuehn:cacg}. We summarize here the conventions used
  because they differ from commonly used notations. 

Let $X$ be a complex manifold. We will denote by
$E_{X}^{\ast}$ the
differential graded algebra of complex valued differential forms on
$X$, by $E_{X,\R}^{\ast}$ the subalgebra of real valued forms and by
$E_{X,c}^{\ast}$ and $E_{X,\R,c}^{\ast}$ the subalgebras of
differential forms with compact support. The complexes of currents are
defined as the topological dual of the latter ones. Namely
$E^{\prime-n}_{X}$ and $E^{\prime-n}_{X,\R}$  are the topological dual
of $E_{X,c}^{n}$ and $E_{X,\R,c}^{n}$ respectively, with differential
given by
\begin{displaymath}
  dT(\eta)=(-1)^{n+1}T(d\eta).
\end{displaymath}
Assume that $X$ is equidimensional of dimension $d$.  
We then write
\begin{displaymath}
  D_{X}^{n}=E_{X,c}^{\prime n-2d},\qquad D_{X,\R}^{n}=(2\pi
  i)^{-d}E_{X,\R,c}^{\prime n-2d}. 
\end{displaymath}
In other words
\begin{displaymath}
  D_{X}=E_{X}'[-2d](-d),
\end{displaymath}
where the symbol $(-d)$ refers to the above change of real
structures. This implies that 
  \begin{displaymath}
    D^{n}_{X,\R}=\{T\in D^{n}_{X}\mid \forall \eta\in
    E^{2d-n}_{X,\R},\ T(\eta)\in (2\pi i)^{-d}\R\}.
  \end{displaymath}

  To be consistent with these choices we need to adjust the definition
  of the current associated to a locally integrable form and a
  cycle. Given a locally integrable differential form
  $\omega $ of degree $n$, we will denote by $[\omega ]\in D^{n}_{X}$
  the current defined by
  \begin{equation}\label{eq:7}
    [\omega ](\eta)=\frac{1}{(2\pi i)^{d}}\int_{X}\omega \land \eta.
  \end{equation}
  With this convention, the morphism of complexes 
  \begin{math}
    [\cdot]\colon E^{\ast}_{X}\to D^{\ast}_{X} 
  \end{math}
  sends $E^{\ast}_{X,\R}$ to $D^{\ast}_{X,\R}$.

  If $f\colon X\to Y$ is a proper map of complex manifolds, of
  dimensions $d,d'$ and relative
  dimension $e=d-d'$, then the
  push-forward of currents $f_{\ast}\colon D^{\ast}_{X}\to
  D^{\ast-2e}_{Y}$ is defined, for $T\in D^{n}_{X}$ and $\eta\in
  E^{2d-n}_{Y,c}$ by
  \begin{displaymath}
    f_{\ast}T(\eta)=T(f^{\ast}\eta).
  \end{displaymath}
  Then $f_{\ast}$ sends $D^{\ast}_{X,\R}$ to
  $(2\pi i)^{-e} D^{\ast}_{Y,\R}$.

  Finally, assume that $X$ is algebraic and $Y\subset X$ is a
  codimension $p$ subvariety of $X$. Let $\iota \colon \widetilde Y\to X$ be
  a resolution of singularities of $Y$. Then the current integration
  along $Y$ is defined as $\delta _{Y}=\iota_{\ast}[1_{\widetilde
    Y}]$, where $1_{\widetilde Y}$ is the constant function $1$ on
  $\widetilde Y$. Therefore
  \begin{equation}
    \label{eq:8}
    \delta _{Y}(\eta)=\frac{1}{(2\pi i)^{d-p}}\int_{\widetilde Y}\iota ^{\ast} \eta.
  \end{equation}
  Then $\delta _{Y}\in (2\pi i)^{p}D^{2p}_{X,\R}$. Given any cycle
  $\zeta\in Z^{p}(X)$ we define $\delta _{\zeta}$ by linearity.

  \begin{rmk}\label{rem:15} We stress the fact that the sign of the
    integral depends on the choice of an orientation. If
    $z_{1},\dots,z_{d}$ are local complex coordinates with
    $z_{j}=x_{j}+i y_{j}$, then the standard orientation is given by
    the volume form
    \begin{displaymath}
      \vol=dx_{1}\land d y_{1}\land \dots \land dx_{d}\land d y_{d}.
    \end{displaymath}
    If we change the choice of the square root of $-1$ from $i$ to
    $-i$ then $\vol $ is sent to $(-1)^{d}\vol$, which is the same
    change of sign suffered by $(2\pi i)^{d}$. Therefore the symbols
    $[\omega ]$ and $\delta _{Y}$ as used here do not depend on a
    particular choice of $\sqrt{-1}$. 
  \end{rmk}

  \begin{ex}\label{exm:5} To see how this conventions work in
    practice, we review the classical example of the
    logarithm. Consider the case $X=\P^{1}(\C)$ with absolute
    coordinate $t$. So $\Div(t)=[0]-[\infty]$. Then
    \begin{align}
      \partial\bar \partial [\log t\bar t]&=-\delta _{\Div t}=\delta
                                            _{\infty}-\delta _{0} \notag\\
      d \left[\frac{dt}{t}\right ]&=\delta _{\Div t}=\delta
                                    _{0}-\delta _{\infty},\label{eq:77}\\
      d \left[\frac{d\bar t}{\bar t}\right ]&=-\delta _{\Div t}=\delta
                                              _{\infty}-\delta _{0}. \notag
    \end{align}
    More generally, if $X$ is a complex manifold, $L$ is a line
    bundle provided with a smooth hermitian metric $\|\cdot\|$ and $s$
    is a nonzero rational section of $L$, then the Poincar\'e-Lelong
    formula reads
    \begin{equation}\label{eq:74}
      \partial\bar \partial [\log
      \|s\|^{2}]=[c_{1}(L,\|\cdot\|)]-\delta _{\Div s}, 
    \end{equation}
    where $c_{1}(L,\|\cdot\|)\in (2\pi i)E^{2}_{X,\R}$ is the first
    Chern form of $L$.
  \end{ex}

\subsection{Dolbeault complexes}
\label{sec:dolbeault-complexes}

We recall from \cite{Burgos:CDB} the notion of Dolbeault complex.

\begin{df}\label{def:10}
A \emph{Dolbeault complex} $A=(A^{\ast}_{\mathbb{R}},d_{A})$ is 
a graded complex of real vector spaces, which is bounded from below 
and equipped with a bigrading on $A_{\mathbb{C}}=A_{\mathbb{R}}
\otimes_{\mathbb{R}}{\mathbb{C}}$, i.e.,
\begin{displaymath}
A^{n}_{\mathbb{C}}=\bigoplus_{p+q=n}A^{p,q},
\end{displaymath}  
satisfying the following properties:
\begin{enumerate}
\item[(i)]
The differential $d_{A}$ can be decomposed as the sum $d_{A}=
\partial+\bar{\partial}$ of operators $\partial$ of type $(1,0)$, 
resp. $\bar{\partial}$ of type $(0,1)$.
\item[(ii)] 
It satisfies the symmetry property $\overline{A^{p,q}}=A^{q,p}$,
where $\overline{\phantom{M}}$ denotes the complex conjugation induced
by the real structure of $A_{\C}$. In other words, if $\omega \in
A_{\R}$ and $\alpha \in \C$ then $\overline{\omega \otimes \alpha }=\omega
\otimes \overline {\alpha} $. 
\end{enumerate}
 \end{df}

\begin{notation}
\label{def:13}
Given a Dolbeault complex $A=(A^{\ast}_{\mathbb{R}},d_{A})$, we 
will use the following notations. The Hodge filtration $F$ of $A$ 
is the decreasing filtration of $A_{\mathbb{C}}$ given by
\begin{displaymath}
F^{p}A^{n}\coloneqq F^{p}A^{n}_{\mathbb{C}}\coloneqq \bigoplus_{p'\geq p}A^{p',n-p'}.
\end{displaymath}
The filtration $\overline F$ of $A$ is the complex conjugate of $F$, 
i.e.,
\begin{displaymath}
\overline{F}^{p}A^{n}=\overline{F}^{p}A^{n}_{\mathbb{C}}=\overline
{F^{p}A^{n}_{\mathbb{C}}}.
\end{displaymath}
For an element $x\in A_{\mathbb{C}}$, we write $x^{i,j}$ for its 
component in $A^{i,j}$. For $k,k'\geq 0$, we define an operator 
$F^{k,k'}:A_{\mathbb{C}}\longrightarrow A_{\mathbb{C}}$ by the 
rule 
\begin{displaymath}
F^{k,k'}(x)\coloneqq \sum_{l\geq k,l'\geq k'}x^{l,l'}.
\end{displaymath}
We note that the operator $F^{k,k'}$ is the projection of $A^{\ast}_
{\mathbb{C}}$ onto the subspace $F^{k}A^{\ast}\cap\overline{F}^{k'}
A^{\ast}$. We will write $F^{k}=F^{k,-\infty}$. 

We denote by $A^{n}_{\mathbb{R}}(p)$ the subgroup $(2\pi i)^{p}\cdot
A^{n}_{\mathbb{R}}\subseteq A^{n}_{\mathbb{C}}$, and we define the 
operator
\begin{displaymath}
\pi_{p}:A_{\mathbb{C}}\longrightarrow A_{\mathbb{R}}(p)
\end{displaymath}
by setting $\pi_{p}(x)\coloneqq \frac{1}{2}(x+(-1)^{p}\bar{x})$.
\end{notation}

\begin{df}
A \emph{Dolbeault algebra} $A=(A^{\ast}_{\mathbb{R}},d_{A},\land)$
is a Dolbeault complex equipped with an associative and graded 
commutative product 
\begin{displaymath}
\land:A^{\ast}_{\mathbb{R}}\times A^{\ast}_{\mathbb{R}}\longrightarrow
A^{\ast}_{\mathbb{R}} 
\end{displaymath}
such that the induced multiplication on $A^{\ast}_{\mathbb{C}}$ is 
compatible with the bigrading, i.e.,
\begin{displaymath}  
A^{p,q}\land A^{p',q'}\subseteq A^{p+p',q+q'}.
\end{displaymath}
If $A$ is a Dolbeault algebra, the a \emph{Dolbeault module} over $A$
is a Dolbeault complex $B=(B^{\ast}_{\mathbb{R}},d_{B})$ together
with an $A$-module structure 
\begin{displaymath}
\land:A^{\ast}_{\mathbb{R}}\times B^{\ast}_{\mathbb{R}}\longrightarrow
B^{\ast}_{\mathbb{R}} 
\end{displaymath}
compatible with the bigrading,
\end{df}



\begin{ex}\label{ex:1} Let $X$ be a smooth variety over
  $\C$ of dimension $d$.
  \begin{enumerate}
  \item \label{item:1}   Let $E^{\ast}_{X,\R}$ denote the complex of smooth real
  valued  differential forms on $X^{\an}=X(\C)$ while $E^{\ast}_{X}$ denote the one
  of smooth complex valued differental forms. It is a Dolbeault
  complex with the usual bigrading
  \begin{displaymath}
    E^{n}_{X}=\bigoplus _{p+q=n}E^{p,q}_{X}.
  \end{displaymath}
  This Dolbeault complex will be denoted $E_{X}$. It is a Dolbeault
  algebra. When $X$ is projective, 
  $E^{\ast}_{X}$ coputes the cohomology of $X$ with its real structure
  and Hodge filtration.
\item \label{item:3} Let $D^{\ast}_{X}$ be the complex of currents on
  $X(\C)$ with the real subcomplex $D^{\ast}_{X,\R}$ as in section
  \ref{sec:conv-diff-forms}.  The complex of currents has also a
  bigrading and is another example of a Dolbeault complex that we
  denote $D_{X}$. It is a Dolbeault module over $E_{X}$.
  
\item \label{item:2}   Assume that $X$ is projective and let $Y\subset
  X$ be a simple normal crossing divisor on $X$. Let
  $E^{\ast}_{X,\R}(\log Y)$ denote the complex of real valued smooth
  differential forms on $U=X(\C)\setminus Y(\C)$ with logarithmic
  singularities along $Y$ as in \cite{Burgos:CDc}.  This complex is
  also a Dolbeault complex and computes the cohomology of $U$ with its
  real structure and its Hodge filtration. In fact it also has a
  weight filtration that allow us to compute the real mixed Hodge
  structure of the cohomology of $X$.
  Some times it is useful to have a complex
  that depends only on $U$ and not in a particular
  compactification. To this end we will write
  \begin{displaymath}
    E_{U,\log}=\lim_{\substack{\longrightarrow\\(X',Y')}}E_{X'}(\log Y'),
  \end{displaymath}
  were the limit is taken for all smooth compactifications
  $U\hookrightarrow X'$ with $Y'=X'\setminus U$  simple normal
  crossing divisor.
\end{enumerate}
\end{ex}







\subsection{Deligne cohomology}
\label{sec:absol-hodge-cohom}

We recall now the definition of Deligne cohomology.

\begin{df}\label{def:2} Let $A$ be a Dolbeault complex. For each $p\in
  \Z$, consider the obvious inclusions
  morphims $\iota _{p}\colon F^{p}A_{\C}^{\ast}\to
  A_{\C}^{\ast}$ and $\alpha _{p}\colon A_{\R}^{\ast}(p)\to
  A_{\C}^{\ast}$.  
  
The \emph{total real Deligne complex} of  $A$ twisted
by $p$ is the
simple complex associated to the map $(\iota _{p},-\alpha _{p})$:
\begin{displaymath}
  \fD_{t}(A,p)=s\left(F^{p}A_{\C}^{\ast}\oplus A_{\R}^{\ast}(p)
  \xrightarrow{(\iota _{p},-\alpha _{p})} A_{\C}^{\ast}\right).
\end{displaymath}
More explicitly, 
\begin{displaymath}
  \fD^{n}_{t}(A,p)=(2\pi i)^{p}A_{\R}^{n}\, \oplus\,
  F^{p}A_{\C}^{n}\, \oplus \,
  A_{\C}^{n-1},
\end{displaymath}
with differential
\begin{displaymath}
  d(r,f,\omega )=(dr,df,\iota_{p} (f)-\alpha_{p} (r)-d\omega ).
\end{displaymath}
The \emph{Deligne cohomology} of $A$ twisted by $p$ is the
cohomology of the complex $\fD_{t}(A,p)$,
\begin{displaymath}
  H^{n}_{\fD}(A,p)=H^{n}(\fD_{t}(A,p)).
\end{displaymath}
\end{df}

By its construction as the simple of a diagram, Deligne cohomology
fits in a long exact sequence.
\begin{prop}\label{prop:5}
  Let $A$ be a Dolbeault complex. Then there is a long exact sequence
  \begin{multline*}
    \dots \to H^{n-1}(A^{\ast}_{\R}(p))\oplus
    H^{n-1}(F^{p}A^{\ast}_{\C})\to H^{n-1}(A^{\ast}_{\C}) \to\\
    H^{n}_{\fD}(A,p) \to 
    H^{n}(A^{\ast}_{\R}(p))\oplus
    H^{n}(F^{p}A^{\ast}_{\C})\to H^{n}(A_{\C}^{\ast})\to\dots
  \end{multline*}
\end{prop}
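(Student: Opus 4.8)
The plan is to deduce the statement directly from the general machinery of simples of cochain maps recalled in Section \ref{sec:prelim}. By Definition \ref{def:2}, the complex $\fD_{t}(A,p)$ is by construction the simple $s(g)$ of the cochain map
\begin{displaymath}
  g=(\iota _{p},-\alpha _{p})\colon F^{p}A_{\C}^{\ast}\oplus A_{\R}^{\ast}(p)\longrightarrow A_{\C}^{\ast}.
\end{displaymath}
Hence, by the discussion following \eqref{eq:71}, the complex $\fD_{t}(A,p)$ fits into the distinguished triangle
\begin{displaymath}
  \fD_{t}(A,p)\xrightarrow{\amap}F^{p}A_{\C}^{\ast}\oplus A_{\R}^{\ast}(p)\xrightarrow{g}A_{\C}^{\ast}\xrightarrow{\bmap}\fD_{t}(A,p)[1],
\end{displaymath}
and therefore carries an associated long exact sequence in cohomology
\begin{displaymath}
  \cdots\to H^{n-1}(A_{\C}^{\ast})\xrightarrow{\bmap}H^{n}(\fD_{t}(A,p))\xrightarrow{\amap}H^{n}\bigl(F^{p}A_{\C}^{\ast}\oplus A_{\R}^{\ast}(p)\bigr)\xrightarrow{g}H^{n}(A_{\C}^{\ast})\to\cdots .
\end{displaymath}

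The second step is simply to identify the terms appearing in this sequence with those in the statement. By Definition \ref{def:2} one has $H^{n}(\fD_{t}(A,p))=H^{n}_{\fD}(A,p)$, and since cohomology commutes with finite direct sums, $H^{n}\bigl(F^{p}A_{\C}^{\ast}\oplus A_{\R}^{\ast}(p)\bigr)=H^{n}(A_{\R}^{\ast}(p))\oplus H^{n}(F^{p}A_{\C}^{\ast})$ (the two summands may be listed in either order). Substituting these identifications, and writing the sequence so that it is displayed around the term $H^{n}_{\fD}(A,p)$, reproduces verbatim the long exact sequence claimed in the proposition.

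I expect there to be essentially no obstacle here: the content of the proposition is entirely formal, being the instance for the map $(\iota_{p},-\alpha_{p})$ of the long exact sequence attached to the simple of an arbitrary cochain morphism. The only point worth recording explicitly is that, under these identifications, the map $\amap$ becomes the natural "forget the Deligne datum" projection $\fD^{n}_{t}(A,p)\to (2\pi i)^{p}A^{n}_{\R}\oplus F^{p}A^{n}_{\C}$ and $\bmap$ becomes the connecting homomorphism; both statements are immediate from the explicit formulas for $\amap$ and $\bmap$ given in Section \ref{sec:prelim}.
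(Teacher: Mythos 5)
Your proposal is correct and is precisely the argument the paper uses: Proposition \ref{prop:5} is proved there in one line by invoking the standard long exact sequence attached to the simple of a morphism, exactly as you do for the map $(\iota_{p},-\alpha_{p})$. The identifications of the terms and of the maps $\amap$, $\bmap$ that you spell out are the same ones left implicit in the paper.
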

\begin{proof}
The result follows from the standard long exact sequence associated to
the simple of a morphism. 
\end{proof}

\begin{rmk}\label{rem:10} If the spectral sequence associated to the
  Hodge filtration $F$ degenerates at the term $E_{1}$ as happens with
  the Dolbeault complexes $E_{X}$ and $D_{X}$ for $X$ projective, the
  above exact sequence can be written as
  \begin{multline*}
    \dots \to H^{n-1}(A^{\ast}_{\R}(p))\oplus
    F^{p}H^{n-1}(A^{\ast}_{\C})\to H^{n-1}(A^{\ast}_{\C}) \to\\
    H^{n}_{\fD}(A,p) \to 
    H^{n}(A^{\ast}_{\R}(p))\oplus
    F^{p}H^{n}(A^{\ast}_{\C})\to H^{n}(A_{\C}^{\ast})\to\dots
  \end{multline*}
\end{rmk}

\begin{ex}\label{exm:2} Let $X$ be a smooth projective variety over
  $\C$.
  \begin{enumerate}
  \item \label{item:6} The complex $\fD_{t}(E_{X},p)$ will be denoted
    as $\fD_{t}(X,p)$  and its cohomology by
    $H^{\ast}_{\fD}(X,\R(p))$. It is called the 
    Deligne cohomology of $X$. The complex $\fD_{t}(D_{X},p)$ will
    be denoted  $\fD_{t,D}(X,p)$. This complex is quasi-isomorphic to
    $\fD_{t}(X,p)$, thus it also computes the Deligne cohomology of $X$.
  \item  \label{item:7} Let $Y$ be a simple normal
  crossing divisor of $X$ and $U=X\setminus Y$. Then the cohomology
  of the complex $\fD_{t}(E_{X}(\log Y),p)$ only depends on $U$ and not on
  the compactification $X$. We will denote it as
  $H^{\ast}_{\fD}(U,\R(p))$.  This cohomology is called
  Deligne-Beilinson cohomology of $U$. In fact,
  \begin{displaymath}
    H^{\ast}_{\fD}(U,\R(p))=H^{\ast}(\fD_{t}(E_{U,\log},p)),
  \end{displaymath}
  where $E_{U,\log}$ was introduced in Example
  \ref{ex:1}~\eqref{item:2}.
  We will write $\fD_{t,\log}(U,p)\coloneqq \fD_{t}(E_{U,\log},p)$. 
\item \label{item:8} Let now $Z\subset X$ be a proper closed subset and write
  $V=X\setminus Z$. Then there is a morphism of Dolbeault complexes 
  $E_{X}\to E_{V,\log}$ that induces morphisms $\fD_{t}(X,p)\to
  \fD_{t,\log}(V,p)$. We denote
  \begin{displaymath}
    \fD_{t,Z}(X,p)=s(\fD_{t}(X,p)\to \fD_{t,\log}(V,p)),
  \end{displaymath}
  the simple complex associated to these morphisms. We define the Deligne-Beilinson
  cohomology of $X$ with support on $Z$ as
  \begin{displaymath}
    H^{\ast}_{\fD,Z}(X,\R(p))=H^{\ast}(\fD_{t,Z}(X,p)).
  \end{displaymath}
\item \label{item:9} Let $\caZ^{p}$ denote the directed set of algebraic closed
  subsets of $X$ of codimension at least $p$ ordered
  by inclusion. Then we will write
  \begin{displaymath}
    \fD_{t,\caZ^{p}}(X,p)=\lim_{\substack{\longrightarrow\\Z\in
        \caZ^{p}}}s(\fD_{t}(X,p)\to \fD_{t,\log}(X\setminus Z,p))
  \end{displaymath}
  and
  \begin{displaymath}
    H^{\ast}_{\fD,\caZ^{p}}(X,\R(p))=H^{\ast}(\fD_{t,\caZ^{p}}(X,p)). 
  \end{displaymath}
  \end{enumerate}
\end{ex}

\begin{prop}\label{prop:4}
  Let $X$ be a smooth complex variety.
  \begin{enumerate}
  \item \label{item:4} Let $Z\subset X$ be an irreducible
  subvariety of codimension $p$. Then
  \begin{displaymath}
    H^{n}_{\fD,Z}(X,\R(p))=
    \begin{cases}
      0,& \text{ if }n < 2p,\\
      \R\cdot [Z],& \text{ if }n= 2p,
    \end{cases}
  \end{displaymath}
    where $[Z]$ is the fundamental class of $Z$.
  \item \label{item:5} Moreover
  \begin{displaymath}
    H^{n}_{\fD,\caZ^{p}}(X,\R(p))=
    \begin{cases}
      0,& \text{ if }n < 2p,\\
      Z^{p}(X)\otimes \R, &\text{ if }n= 2p,
    \end{cases}
  \end{displaymath}
  where $Z^{p}(X)$ is the group of codimension $p$ cycles on $X$.
  \end{enumerate}
\end{prop}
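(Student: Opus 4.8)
The plan is to derive both parts from two structural features of Deligne--Beilinson cohomology with supports, namely a \emph{localization long exact sequence} and the \emph{purity (Gysin) isomorphism}, together with a Noetherian induction on $\dim Z$; part (2) then follows by passing to the colimit over $\caZ^{p}$. For the localization sequence I would first observe that, for closed subsets $Z'\subseteq Z$ of $X$ with $U=X\setminus Z'$ and $V=X\setminus Z$, setting $\fD_{t,Z\setminus Z'}(U,p):=s\!\left(\fD_{t,\log}(U,p)\to\fD_{t,\log}(V,p)\right)$ as in Example~\ref{exm:2}~\eqref{item:8}, the octahedral axiom applied to $\fD_{t}(X,p)\to\fD_{t,\log}(U,p)\to\fD_{t,\log}(V,p)$ produces a distinguished triangle
\[
\fD_{t,Z'}(X,p)\longrightarrow\fD_{t,Z}(X,p)\longrightarrow\fD_{t,Z\setminus Z'}(U,p)\xrightarrow{+1},
\]
hence a long exact sequence relating $H^{\ast}_{\fD,Z'}(X,\R(p))$, $H^{\ast}_{\fD,Z}(X,\R(p))$ and $H^{\ast}_{\fD,Z\setminus Z'}(U,\R(p))$. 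I would also record the elementary remark that $\fD_{t}(A,0)$ is quasi-isomorphic to $A^{\ast}_{\R}$ for any Dolbeault complex $A$ (since $F^{0}A^{\ast}_{\C}=A^{\ast}_{\C}$, the structure map defining $\fD_{t}(A,0)$ is surjective with kernel $A^{\ast}_{\R}$, so one uses the quasi-isomorphism $\ker\hookrightarrow s(-)$), whence $H^{n}_{\fD}(W,\R(0))=H^{n}(W(\C),\R)$, which vanishes for $n<0$ and equals $\R^{\pi_{0}(W)}$ for $n=0$.

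\textbf{Purity.} Next I would invoke the Gysin isomorphism: for a closed immersion $W\hookrightarrow U$ of smooth varieties of pure codimension $c$ one has $H^{n}_{\fD,W}(U,\R(p))\cong H^{n-2c}_{\fD}(W,\R(p-c))$. This is standard for Deligne--Beilinson cohomology; I would either cite it or, to stay within the formalism of the paper, construct it from the logarithmic Deligne complexes of \cite{Burgos:CDB,Burgos:CDc}. Together with the vanishing above this yields $H^{n}_{\fD,W}(U,\R(p))=0$ for $n<2c$, and for $c=p$ and $n=2p$ it gives $H^{2p}_{\fD,W}(U,\R(p))\cong H^{0}(W(\C),\R)$, which is $\R$ when $W$ is connected, with the fundamental class corresponding to $1$.

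\textbf{The inductive step for part (1).} Let $Z$ be irreducible of codimension $p$, and choose a proper closed subset $Z_{\mathrm{sing}}\subsetneq Z$ with $Z^{\circ}:=Z\setminus Z_{\mathrm{sing}}$ smooth and (necessarily) connected, so that $\codim_{X}Z_{\mathrm{sing}}\ge p+1$. Feeding $Z_{\mathrm{sing}}\subseteq Z$ into the localization sequence and using the inductive hypothesis that $H^{m}_{\fD,Z_{\mathrm{sing}}}(X,\R(p))=0$ for $m<2(p+1)$, we obtain $H^{n}_{\fD,Z}(X,\R(p))\cong H^{n}_{\fD,Z^{\circ}}(X\setminus Z_{\mathrm{sing}},\R(p))$ for $n\le 2p$, which by purity is $0$ for $n<2p$ and $\R\cdot[Z]$ for $n=2p$ (the class $[Z]$ represented by $\delta_{Z}$ as in \eqref{eq:8}, restricting to $1\in H^{0}(Z^{\circ},\R)$, hence nonzero). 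The induction runs on $\dim Z$: for an arbitrary closed $W$ of codimension $\ge q$ the same localization sequence, applied to one irreducible component at a time, reduces everything to smooth connected strata of codimension $\ge q$, handled by purity; this simultaneously proves $H^{m}_{\fD,W}(X,\R(p))=0$ for $m<2q$ and $H^{2q}_{\fD,W}(X,\R(p))=\bigoplus_{i}\R\cdot[W_{i}]$ over the codimension-$q$ irreducible components $W_{i}$ of $W$.

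\textbf{Part (2) and the main obstacle.} Since filtered colimits are exact, $H^{n}_{\fD,\caZ^{p}}(X,\R(p))=\varinjlim_{Z\in\caZ^{p}}H^{n}_{\fD,Z}(X,\R(p))$; for $n<2p$ every term vanishes, and for $n=2p$ the previous step gives $H^{2p}_{\fD,Z}(X,\R(p))=\bigoplus_{Y}\R\cdot[Y]$ over the codimension-$p$ irreducible components $Y$ of $Z$. Since a codimension-$p$ irreducible subvariety contained in some $Z'\in\caZ^{p}$ is necessarily one of the irreducible components of $Z'$, the transition maps in the system are inclusions of sub-direct-summands, so the colimit is $\bigoplus_{Y}\R\cdot[Y]$ over all codimension-$p$ irreducible $Y\subseteq X$, that is, $Z^{p}(X)\otimes\R$. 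The part I expect to be genuinely delicate is setting up purity and the localization triangle \emph{at the level of the explicit complexes used in the paper}: because $U$ and $W$ are only quasi-projective, one is forced to work with the logarithmic Deligne complexes $\fD_{t,\log}$ and to know that they compute Deligne--Beilinson cohomology functorially in the pair $(W,U)$; once purity is granted, the remainder is routine induction-and-colimit bookkeeping.
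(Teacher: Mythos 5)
Your proposal is correct, but it follows a genuinely different (and heavier) route than the paper. The paper's proof is a two-step reduction: it invokes semipurity of Betti cohomology with supports for the possibly singular irreducible $Z$, namely $H^{n}_{Z}(X,\R)=0$ for $n<2p$ and $H^{2p}_{Z}(X,\R)=\R\cdot[Z]$, records that $[Z]\in F^{p}H^{2p}_{Z}(X)$ and $[Z]\in H^{2p}_{Z}(X,(2\pi i)^{p}\R)$, and then feeds this into the long exact sequence of Proposition \ref{prop:5} applied to the supported complex $\fD_{t,Z}(X,p)$; for $n<2p$ all neighbouring terms vanish, and for $n=2p$ the group is the intersection of the real and Hodge conditions on $\C\cdot[Z]$, i.e.\ $\R\cdot[Z]$. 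Statement (2) is then the same colimit bookkeeping you carry out. You instead re-derive this semipurity inside Deligne--Beilinson cohomology by Noetherian induction, stratifying $Z$ by its singular locus, using a localization triangle (correct, and indeed just the octahedron for iterated simples of the logarithmic Deligne complexes) together with the Gysin/purity isomorphism $H^{n}_{\fD,W}(U,\R(p))\cong H^{n-2c}_{\fD}(W,\R(p-c))$ for smooth closed pairs, which you would cite (it is available, e.g.\ in \cite{delignebeilinson}) or construct. Be aware that this Gysin isomorphism is the real content of your argument, and its usual proof is precisely the paper's mechanism (Betti purity for the smooth pair plus the defining long exact sequence), so your route amounts to a d\'evissage reduction of singular semipurity to the smooth case rather than a shortcut. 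What your approach buys: you never need purity with singular supports, your observation that $\fD_{t}(A,0)$ is quasi-isomorphic to $A^{\ast}_{\R}$ and the identification of the generator with $[Z]$ are correct, and your treatment of (2) --- transition maps are inclusions of direct summands because a codimension-$p$ component of $Z$ remains a component of any larger $Z'\in\caZ^{p}$ --- makes explicit what the paper dispatches with ``follows from statement (1)''. What the paper's approach buys: no induction, no stratification, and no appeal to a Deligne--Beilinson Gysin theorem; only standard Borel--Moore semipurity for an irreducible subvariety together with the long exact sequence already established in the text.
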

\begin{proof}
  By purity, the cohomology of a smooth variety with support on an irreducible
  subvariety of codimension $p$ satisfies
  \begin{displaymath}
    H^{n}_{Z}(X,\R)=
    \begin{cases}
      0,& \text{ if }n< 2p,\\
      \R\cdot[Z],& \text{ if }n= 2p,\\
    \end{cases}
  \end{displaymath}
  Moreover, the fundamental class satisfies
  \begin{displaymath}
    [Z]\in  F^{p}H^{2p}_{Z}(X), \quad
    [Z]\in  H^{2p}_{Z}(X,(2\pi i)^{p}\R).
  \end{displaymath}
  Therefore statement \eqref{item:4} follows from the long exact
  sequence of Proposition \ref{prop:5}.

  Statement \eqref{item:5} follows from statement \eqref{item:4}.
\end{proof}

\begin{rmk}\label{rem:6} The Deligne-Beilinson cohomology groups in
  degree bigger that $2p$ can be seen as pathological (see
  \cite{Beilinson:naHc}). It is 
  convenient to truncate the total Deligne complex in degree $2p$.
  Let $\tau _{\leq 2p}$ be the canonical truncation given by
  \eqref{eq:30}. 

We will use the notation
\begin{displaymath}
  \tfD_{t}(X,p)=\tau _{\leq 2p}\fD_{t}(X,p),\quad  \tfD_{t,Z}(X,p)=\tau _{\leq
      2p}\fD_{t,Z}(X,p).
\end{displaymath}
\end{rmk}

\begin{ex}\label{exm:3} Assume that $X $ is projective. Thus we can
  use the complex $\fD_{t,D}(X,p)$ to compute the Deligne
  cohomology of $X$. Let $Z$ be a closed subvariety of $X$ of
  codimension $p$. Recall that, following the notation \eqref{eq:8},
  the current $\delta _{Z}$ already incorporates a twist. Therefore it
  belongs at the same 
  time to $(2\pi i)^{p}D^{2p}_{X,\R}$ and to $
  F^{p}D^{2p}_{X}$ and it represents  
  the image of the fundamental class $[Z]$ in
  $H^{2p}(X,\C)$. The triple 
  \begin{equation}\label{eq:75}
    (\delta _{Z},\delta _{Z},0)\in \fD_{t,D}(X,p).
  \end{equation}
  is closed and represents a class in $H_{\fD}^{2p}(X,p)$ that is
  called the fundamental class of $Z$ and is denoted as $[Z]$.
  
  By abuse of language, the triple \eqref{eq:75} will also be denoted
  $\delta _{Z}$.  
\end{ex}

\subsection{The Thom-Whitney simple for Deligne cohomology }  
\label{sec:thom-whitney-simple}

Assume that $A$ is a Dolbeault algebra. This is the case for the Hodge complexes 
\eqref{item:1} and \eqref{item:2} of
Example \ref{ex:1}. Then the complex $\fD_{t}(A,p)$ has
several product structures \cite{Beilinson:naHc},
\cite{delignebeilinson},  one for each $\beta \in \R$. All of them are
homotopically eqivalent. The one for $\beta =1/2$ is graded commutative while
the ones with $\beta =0,1$ are associative, but none of them is graded
commutative and associative at the same time. To have a graded commutative and
associative algebra we use the Thom-Whitney simple as in
\cite[\S 6]{BurgosWang:hBC}.

  Denote by
  $L_{\R}=(L_{\R}^{\ast},d)$ the algebraic de Rham complex of
  $\A^{1}_{\R}$, that is,
  \begin{displaymath}
    L^{0}_{\R}=\R[\varepsilon ],\quad L^{1}_{\R}=\R[\varepsilon ]d\varepsilon, 
  \end{displaymath}
  where $\varepsilon $ is an indeterminate.

\begin{df} Let $A$ be a Dolbeault complex and $p$ an integer.  Then the
  Thom-Whitney Deligne complex of $A$ twisted by $p$, $\fDTW(A,p)$ is
  the subcomplex of
  \begin{displaymath}
    (2\pi i)^{p} A_{\R}^{\ast}\,\oplus \,
      F^{p}A^{\ast}_{\C}\, \oplus\, L^{\ast}_{\R}\otimes A^{\ast}_{\C}
  \end{displaymath}
given by
  \begin{displaymath}
    \fDTW(A,p)=\left \{(r,f,\omega )
       \middle | \,
     \begin{gathered}
       \omega \mid_{\varepsilon =0}=\alpha (r), \\
       \omega \mid_{\varepsilon =1}=\iota(f)
     \end{gathered}
     \right\},       
  \end{displaymath}
\end{df}
where $\iota_{p} $ and $\alpha_{p} $ are as in Definition \ref{def:2}. 
Since in a Dolbeault complex, the maps $\alpha_{p} $ and $\iota_{p} $ are
injective, the information conveyed by $r$ and $f$ is
redundant. Therefore we will simplify the notation by writing
   \begin{equation}\label{eq:9}
     \fDTW(A,p)=\left\{\omega \in L^{\ast}_{\R}\otimes 
       A_{\C}^{\ast }\middle |
       \begin{gathered}
       \omega |_{\varepsilon =0}\in  A^{\ast}_{\R}(p),\\
     \omega |_{\varepsilon =1}\in  F^{p}A^{\ast}_{\C}.
   \end{gathered}
   \right\}
   \end{equation}

\begin{ex}\label{exm:1} Let $X$ be a smooth projective variety over
  $\C$. The Thom-Whitney simple $\fDTW(E_{X},p)$ will
    be denoted by $\fDTW(X,p)$. The
    Thom-Whitney simple $\fDTW(D_{X},p)$ will be denoted by
    $\fD_{\TW,D}(X,p)$.
  Then
   \begin{displaymath}
     \fDTW^{\bullet}(X,\ast)=\bigoplus_{p}\fDTW^{\bullet}(X,p)
   \end{displaymath}
    is a
    bigraded associative algebra. It is graded-commutative with
    respect to the first degree. That is, if $\omega \in
    \fDTW^{n}(X,p)$ and $\omega '\in \fDTW^{m}(X,q)$, then
    \begin{displaymath}
      \omega \cdot \omega '=(-1)^{nm} \omega' \cdot \omega.
    \end{displaymath}
    The Thom-Whitney simple $\fD_{\TW,D}(X,\ast)$ is a module over
    $\fDTW(X,\ast)$.

    Following the notation introduced in Remark \ref{rem:6} we will
    use the notation
    \begin{equation}\label{eq:31}
      \tfDTW^{\bullet}(X,p)=\tau _{\leq
        2p}\fDTW^{\bullet}(X,p),\quad
      \tfDTW^{\bullet}(X,\ast)=\bigoplus_{p}\tfDTW^{\bullet}(X,p). 
    \end{equation}
    Clearly $\tfDTW^{\bullet}(X,\ast)$ is still an associative,
    graded commutative algebra. 
  \end{ex}

\begin{ex}\label{exm:4} Let $X$ be a smooth projective variety over
  $\C$ and $Z$ a codimension $p$ subvariety. Then, using the shorthand
  \eqref{eq:9}, the class $[Z]$ is represented in $\fD_{\TW,D}^{2p}(X,p)$ by
  \begin{displaymath}
    \delta _{Z}=1\otimes \delta _{Z}+d\varepsilon \otimes 0.
  \end{displaymath}
\end{ex}

\subsection{The Deligne complex}
\label{sec:deligne-beil-cohom}

Following \cite{Deligne:dc}, in \cite{Burgos:CDB} a concise complex
that computes Deligne cohomology was introduced.  If $A $ is a
Dolbeault complex, the associated Deligne complex is denoted
  $\fD(A,p)$. It is given by
\begin{displaymath}
  \fD^{n}(A,p)=
  \begin{cases}\displaystyle
    A^{n-1}_{\R}(p-1)\cap
    \bigoplus_{\substack{p'+q'=n-1\\p'<p,\ q'<p}}A^{p',q'}_{\C},&\text{
      if }n < 2p,\\
    \displaystyle
    A^{n}_{\R}(p)\cap
    \bigoplus_{\substack{p'+q'=n\\p'\ge p,\ q'\ge p}}A^{p',q'}_{\C},&\text{
      if }n \ge 2p.
  \end{cases}
\end{displaymath}
with differential $d$ given, for $x\in \fD^{n}(A,p)$ by
\begin{displaymath}
  dx=
  \begin{cases}
    -\pi (dx),&\text{ if }n<2p-1,\\
    -2\partial\bar \partial x,&\text{ if }n=2p-1,\\
    d(x),&\text{ if }n>2p-1,
  \end{cases}
\end{displaymath}
where, for $n<2p-1$, $\pi\colon A^n(\C)\rightarrow \fD^{n}(A,p)$ is the 
projection $\pi =\pi _{p-1}\circ F^{p-1,p-1}$. There are homotopy
equivalences
\begin{displaymath}
      \xymatrix{ 
      \DB_{t}(A,p)\ar@<5pt>[r]^{H } & \DB(A,p)\ar@<5pt>[l]^{G }
    }       
\end{displaymath}
given, for $(r,f,\omega )\in \fD_{t}^{n}(A,p)$, by 
 \begin{displaymath}
    H(r,f,\omega)=
    \begin{cases}
      \pi(\omega), &\text{if }n\le 2p-1,\\
      F^{p,p}r+2\pi _{p}(\partial
      \omega^{p-1,n-p+1}),
      &\text{if }n\geq 2p,
    \end{cases}
  \end{displaymath}
and, for $x\in \fD^{n}(A,p)$, by
\begin{displaymath}
  G (x)=
  \begin{cases}
    (\partial x^{p-1,n-p}-\bar \partial x^{n-p,p-1},2\partial
    x^{p-1,n-p},x),&\text{ if }n\le 2p-1,\\
    (x,x,0),&\text{ if }n\ge 2p.
  \end{cases}
\end{displaymath}
If $A$ is a Dolbeault algebra, then $\fD(A,\ast)$ has a graded
commutative product given by
\begin{equation}
  \label{eq:47}
  x\cdot y=H(G (x) \star_{1/2} G(y)).
\end{equation}
This product is only associative up to homotopy. See
  \cite[\S 2]{Burgos:CDB} for more details.

The construction above can be applied to the Dolbeault complexes 
  $E_{X}$, 
  $E_{X}(\log Y)$, $E_{U,\log}$ and $D_{X}$. We will use the notation
  \begin{displaymath}
    \DB(X,p)=\DB(E_{X},p),\quad
    \DB_{\log}(U,p)=\DB(E_{U,\log},p),\quad
    \DB_{D}(X,p)=\DB(D_{X},p).
  \end{displaymath}
  
\begin{ex}\label{exm:6}
  Summarizing, let $A$ be a Dolbeault complex, Then we have at our 
disposal the following diagram of complexes and morphisms
  \begin{equation}\label{eq:43}
    \begin{gathered}
    \xymatrix{ 
      \DB_{\TW}(A,p) \ar@<5pt>[r]^{I} &
      \DB_{t}(A,p)\ar@<5pt>[r]^{H } \ar@<5pt>[l]^{E}&
      \DB(A,p)\ar@<5pt>[l]^G    } ,      
    \end{gathered}
  \end{equation}
  where the arrows are homotopy equivalences. The leftmost 
  complex has the advantage that, when $A$ is a Dolbeault algebra,  it
  is an associative and 
  graded commutative algebra. On the middle complex, we have several
  product structures, but none is at the same time graded commutative
  and associative.
  The rightmost 
  complex is the smallest one and gives a more concise description of
  Deligne cohomology but again has the disadvantage that the product
  \eqref{eq:47} is only associative up to homotopy.

  In particular, if $X$ be a smooth projective variety over $\C$, we
  can specialize diagram \eqref{eq:43} to the case $A=E^{\ast}_{X}$ to
  obtain a diagram 
  \begin{equation}\label{eq:33}
    \begin{gathered}
    \xymatrix{
      \DB_{\TW}(X,p) \ar@<5pt>[r] &
      \DB_{t}(X,p)\ar@<5pt>[r] \ar@<5pt>[l]& \DB(X,p)\ar@<5pt>[l]
    }.     
    \end{gathered}
  \end{equation}
\end{ex}

\begin{ex}\label{exm:7}
  There are several variants of the diagram in Example
  \ref{exm:6}. First we can use 
  currents instead of differential forms, in this case we obtain the
  diagram
  \begin{equation}\label{eq:34}
    \begin{gathered}
    \xymatrix{ 
      \DB_{\TW,D}(X,p) \ar@<5pt>[r] &
      \DB_{t,D}(X,p)\ar@<5pt>[r] \ar@<5pt>[l]& \DB_{D}(X,p)\ar@<5pt>[l]
    }.
    \end{gathered}
  \end{equation}
  The complexes of this diagram are covariant with respect to proper
  morphisms.

  Similarly, if $U$ is quasi-projective, we can use differential forms
  with logarithmic singularities at infinity as in Example \ref{exm:2}
  \ref{item:7} to obtain a diagram
  \begin{equation}\label{eq:35}
    \begin{gathered}
    \xymatrix{   \DB_{\TW,\log}(U,p) \ar@<5pt>[r] &
      \DB_{t,\log}(U,p)\ar@<5pt>[r] \ar@<5pt>[l]& \DB_{\log}(U,p)\ar@<5pt>[l]
    }
    \end{gathered}
  \end{equation}
  of complexes that compute the Deligne-Beilinson cohomology of $U$.
  
   We can also define complexes that
  compute Deligne-Beilinson cohomology with support in a subvariety or in
  a family of supports using examples \ref{exm:2}~\ref{item:8} and
  \ref{exm:2}~\ref{item:9}.
  \end{ex}

\subsection{An analytic lemma}
\label{sec:an-analytic-lemma}

We gather in this section some analytic formulas that will allow us
later to make formal computations with respect differential forms and
currents.

First we adapt the notion of basic Green form from
\cite[4.6]{Burgos:Gftp} to the Thom-Whitney complex. Note that this is
a variant of the notion of Green form of logarithmic type in
\cite[1.3.2]{GilletSoule:ait}.

\begin{df}\label{def:23}
  Let $X$ be a complex projective manifold, $D$ a simple normal
crossings divisor and $U=X\setminus D$.
  Let $Y$ be a codimension $p$ cycle on $U$. A \emph{Green form} for
  $Y$ is an element $g_{Y}\in 
  \DB_{\TW,\log}^{2p-1}(U\setminus |Y|,p)$ of the form
  \begin{displaymath}
    (\varepsilon + 1) \otimes \partial g +
    (\varepsilon - 1) \otimes \bar \partial g + d\varepsilon \otimes
    g
  \end{displaymath}
  such that, on $U$,
  \begin{displaymath}
    d [g_{Y}] + \delta _{Y}=[\omega _{Y}],
  \end{displaymath}
  where $\omega _{Y}\in \DB_{\TW,\log}^{2p}(U\setminus |Y|,p)$. 
  We say that the Green form $g_{Y}$ is a \emph{basic Green form} if
  there exists a resolution of singularities $(\widetilde U,E)$ of
  $(U,Y)$ such that, locally in any coordinate neighborhood with
  coordinates $(z_{1},\dots, z_n)$ where $E$ is given by $z_1\dots
  z_{k}=0$, then $g$ can be written as
  \begin{displaymath}
    g=\sum_{i=1}^{k}-\alpha _{i}\log |z_{i}| + \beta 
  \end{displaymath}
  where $\alpha _{i}$, $i=1,\dots,k$ and $\beta $ are smooth forms and
  $\alpha _{i}|_{\{z_i=0\}}$ is $ \partial$-closed and $\bar
  \partial$-closed. 
\end{df}

We also define the product of a form with logarithmic singularities and the current
integration along a cycle.

\begin{df}\label{def:22} Let $X,D, U$ as in Definition
  \ref{def:23}. Let $Y$ be a prime cycle on $U$ and $g\in
  \DB_{\TW,\log}^{n}(U,p)$. Let $\overline Y$ be the closure of $Y$ on
  $X$ and $\widetilde Y$ a resolution of singularities of $\overline
  Y$. Let $\eta\colon \widetilde Y \to X$ the induced map. If 
  $\eta^{\ast}g$ is locally integrable in $\widetilde Y$ then we define the product
  \begin{displaymath}
    \delta _{Y}\cdot g = \eta_{\ast}[\eta^{\ast} g].
  \end{displaymath}
  Note that, even if $Y$ is a cycle on $U$, the current $\delta
  _{Y}\cdot g $ is a current in the whole $X$. 
  If $g$ is also locally integrable on $X$, then this product is also
  denoted as $\delta _{Y}\cdot [g]$. We extend this product to
  arbitrary cycles by linearity. Since $\delta _{Y}$ is of even
  degree, we write
  \begin{displaymath}
    g_{1}\cdot \delta _{Y}\cdot g_{2}=\delta _{Y}\cdot g_{1}\cdot g_{2}.
  \end{displaymath}
\end{df}

We use a similar notation for inverse images.

\begin{df}\label{def:25} Let $X,D, U$ and  $g$ be as before. Let $f\colon Y\to X$
  be a morphism such that $f^{-1}(D)$ is a simple normal crossings
  divisor.  If $g$ and $f^{\ast}g$ are locally integrable, we write 
  \begin{displaymath}
    f^{\ast}[g]= [f^{\ast} g].
  \end{displaymath}
\end{df}

\begin{lem}\label{lemm:13} Let $X$, $D$ and $U$ be as before. 
  Let $Y_{0},\dots,Y_{r}$ be cycles on $U$ intersecting properly of
  codimension $p_{0},\dots,p_{r}$ respectively,
  $g_{i}\in \DB_{\TW,\log}^{n_{i}}(U\setminus |Y_{i}|,q_{i})$ and
  $1\le s \le r$. Assume that, for $1\le i< s$, the condition
  $n_{i}<2p_{i}-1$ hold, while for $s\le i\le r$, $g_{i}$ is a basic
  Green form for the cycle $Y_{i}$, satisfying the equation
  $d[g_{i}]+\delta _{Y_{i}}=\omega _{i}$. Assume furthermore that, for
  each irreducible component $D_{j}$ of $D$ there is an index $i(j)$
  such that $g_{i(j)}$ is smooth in a neighbourhood of $D_{j}$ and
  $g_{i(j)}|_{D_{j}}=0$. Then the following statements hold:
  \begin{enumerate}
  \item The form $g_{1}\cdots g_{r}$, and the forms $g_1\cdots dg_i\cdots g_r$ for $i=1,\cdots,r$ are locally integrable on $X$, and
    \begin{multline*}
      d[g_{1}\cdots g_{r}]=\sum_{i=1}^{s-1}(-1)^{n_{1}+\dots +
        n_{i-1}}[g_{1}\cdots dg_{i}\cdots
      g_{r}]\\
      +\sum _{i=s}^{r}(-1)^{n_{1}+\dots + n_{i-1}}([g_{1}\cdots \omega _{i}\cdots
      g_{r}] - g_{1}\cdots \delta _{Y_{i}}\cdots
      g_{r}).
    \end{multline*}
    \item Let $f\colon Y \to X$ be a morphism of projective complex
    manifolds such that $f^{-1}(D)$ is a simple normal crossings
    divisor. Write $V=Y\setminus f^{-1}(D)$ and $f_{0}\colon V\to
    U$ for the restriction of $f$ to $V$. Assume furthermore that 
    $\codim_{V}f^{-1}_{0}Y_{i}\ge \codim_{U}Y_{i}$, $i=1,\dots,r$ and that
    the subvarieties $f_{0}^{-1}Y_{i} $ intersect properly in $V$.
    In particular there are well defined cycles $f_{0}^{\ast}Y_{i}$. Then the forms
    \begin{displaymath}
      f^{\ast} (g_{1}\cdots g_{r})\quad\text{and}\quad
      f^{\ast}(g_{1}\cdots d g_{i}\cdots  g_{r}),\ i=1,\dots,r
    \end{displaymath}
 are locally integrable on $Y$ and
    \begin{multline*}
      df^{\ast} [g_{1}\cdots g_{r}]=\sum_{i=1}^{s-1}(-1)^{n_{1}+\dots
        + n_{i-1}}f^{\ast}[g_{1}\cdots dg_{i}\cdots 
      g_{r}]\\
      +\sum _{i=s}^{r}(-1)^{n_{1}+\dots + n_{i-1}}(f^{\ast}[g_{1}\cdots \omega _{i}\cdots
      g_{r}] - f^{\ast}g_{1}\cdots \delta _{f_0^{\ast}Y_{i}}\cdots
      f^{\ast}g_{r}).
    \end{multline*}
  \item Let $\widetilde Y_{0}$ be a resolution of singularities on the
    closure $\overline Y_{0}$ and $\eta_{0}\colon \widetilde Y_{0}\to
    X$ as in Definition \ref{def:22}. Then the forms
    $$\eta_{0}^{\ast}
    (g_{1}\cdots g_{r})\quad\text{and}\quad\eta_{0}^{\ast}
    (g_{1}\cdots dg_{i}\cdots g_{r}),\hspace{0.1cm}i=1,\dots,r$$
    are locally
    integrable on $\widetilde Y_{0}$ and 
    \begin{multline*}
      d\delta _{Y_{0}}\cdot [g_{1}\cdots g_{r}]=\sum_{i=1}^{s-1}(-1)^{n_{1}+\dots + n_{i-1}}\delta _{Y_{0}}\cdot [g_{1}\cdots dg_{i}\cdots
      g_{r}]\\
      +\sum _{i=s}^{k}(-1)^{n_{s}+\dots + n_{i-1}}(\delta _{Y_{0}}\cdot [g_{1}\cdots \omega _{i}\cdots
      g_{r}] - [g_{1}\cdots \delta _{Y_{0}\cdot Y_{i}}\cdots
      g_{r}]),
    \end{multline*}
    where the intersection $Y_{0}\cdot Y_{i}$ means the intersection of
    cycles in $U$.
  \end{enumerate}
\end{lem}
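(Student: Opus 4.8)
The plan is to reduce all three statements to a single local computation near the normal crossings divisor $D$ together with the already-established theory of basic Green forms from \cite[4.6]{Burgos:Gftp}, via a Poincar\'e--Lelong type argument. First I would treat statement (1), which is the core; statements (2) and (3) follow by the same mechanism after the appropriate reductions. The key point is that local integrability is a local question, so we may work in a coordinate polydisc and assume each $Y_i$ is cut out by a single coordinate after passing to the resolution $(\widetilde U_i, E_i)$ of $(U, Y_i)$ granted by the definition of a basic Green form. For $s\le i\le r$ the basic form $g_i$ has the shape $\sum_j -\alpha_{ij}\log|z_j| + \beta_i$ with $\alpha_{ij}, \beta_i$ smooth and $\alpha_{ij}|_{\{z_j=0\}}$ closed for $\partial$ and $\bar\partial$; this is exactly the local normal form that guarantees $\log|z|$-type singularities are locally integrable and that their products remain so, since the product of finitely many functions each of the form $\prod |z|^{\text{small}}\cdot(\text{bounded})$ with at worst logarithmic poles is still locally $L^1$ in several variables. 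For $1\le i<s$ the hypothesis $n_i<2p_i-1$ places $g_i$ in the lower range of the Deligne complex where, by the explicit description of $\fD^n(A,p)$ recalled in Section~\ref{sec:deligne-beil-cohom}, $g_i$ is a \emph{smooth} form on $U\setminus|Y_i|$ with no current to account for; these factors contribute only the ordinary Leibniz term $[g_1\cdots dg_i\cdots g_r]$.

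The next step is the differentiation formula itself. Having established local integrability, I would compute $d[g_1\cdots g_r]$ by testing against a compactly supported smooth form and integrating by parts on the resolution. The Leibniz rule in the Thom-Whitney complex gives the expected alternating-sign sum $\sum_i (-1)^{n_1+\dots+n_{i-1}}[g_1\cdots dg_i\cdots g_r]$ of smooth contributions; the difference between the naive Leibniz rule and the current-level identity is precisely a sum of boundary terms, one for each basic factor $g_i$ with $s\le i\le r$. For each such $i$, the local computation near $\{z_j=0\}$ reproduces the classical Poincar\'e--Lelong identity $d[-\alpha_{ij}\log|z_j|] = [\text{smooth}] - \delta_{\{z_j=0\}}\cdot(\text{closed form})$, using equations~\eqref{eq:77} and the closedness of $\alpha_{ij}$ on the divisor; summing over $j$ and over the components of the resolution yields exactly $[g_1\cdots \omega_i \cdots g_r] - g_1\cdots \delta_{Y_i}\cdots g_r$, where $\omega_i$ is the form with $d[g_i]+\delta_{Y_i}=\omega_i$ from the hypothesis and the product $g_1\cdots\delta_{Y_i}\cdots g_r$ is the one defined in Definition~\ref{def:22} (well-defined because the $Y_i$ intersect properly, so $\eta_i^\ast g_k$ stays locally integrable on the resolution of $Y_i$). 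The extra hypothesis that for each component $D_j$ of $D$ some factor $g_{i(j)}$ is smooth near $D_j$ and vanishes there is what prevents spurious boundary contributions along $D$ itself: it forces the product $g_1\cdots g_r$ to extend with controlled singularities across $D$ and kills any current supported on $D$ that would otherwise appear when integrating by parts on the compactification $X$.

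For statement (2), I would first observe that the properness and codimension hypotheses on $f_0$ guarantee that $f^\ast g_i$ is again locally integrable on $Y$ and, more importantly, that $f_0^\ast$ of a basic Green form for $Y_i$ is a basic Green form for $f_0^\ast Y_i$ (one pulls back the resolution and checks the local normal form is preserved, shrinking coordinate neighbourhoods as needed); the condition $f^{-1}(D)$ normal crossings together with the vanishing $g_{i(j)}|_{D_j}=0$ ensures the analogous hypothesis holds downstairs on $Y$. Then (2) is just (1) applied on $Y$ with the cycles $f_0^\ast Y_i$ and the forms $f^\ast g_i$, combined with Definition~\ref{def:25} which defines $f^\ast[g]=[f^\ast g]$ precisely in this situation, and the compatibility $f^\ast \delta_{Y_i} = \delta_{f_0^\ast Y_i}$ for proper intersections. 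Statement (3) is the special case of (2) where one takes $f=\eta_0\colon\widetilde Y_0\to X$ to be a resolution of $\overline Y_0$: the pullback $\eta_0^\ast(g_1\cdots g_r)$ is locally integrable on $\widetilde Y_0$ by the same argument, and pushing forward by $\eta_0$ the identity from (1) on $\widetilde Y_0$ gives the stated formula, with $\delta_{Y_0}\cdot(-) = (\eta_0)_\ast \eta_0^\ast(-)$ by Definition~\ref{def:22} and $Y_0\cdot Y_i = \eta_0^\ast Y_i$ as cycles in $U$.

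The main obstacle I expect is the careful bookkeeping of local integrability of the \emph{products} $g_1\cdots g_r$ and the mixed expressions $g_1\cdots dg_i\cdots g_r$ near the strata where several $Y_i$ and components of $D$ meet simultaneously: one must resolve all the $Y_i$ compatibly (or resolve them one at a time and track how the singularities of the other factors transform), verify that on the common resolution each factor is a sum of $\log|z|$-terms times smooth forms plus smooth forms, and then invoke the elementary fact that a product of such expressions in disjoint-ish coordinate directions remains $L^1_{\mathrm{loc}}$ — this last step is where the proper-intersection hypothesis is doing real work, since it is what keeps the logarithmic singularities of distinct factors ``transverse'' rather than piling up on a common locus. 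Everything else is a Leibniz-rule computation plus the Poincar\'e--Lelong formula~\eqref{eq:77}, both routine once the integrability is secured; the sign conventions $(-1)^{n_1+\dots+n_{i-1}}$ follow mechanically from the graded Leibniz rule in the Thom-Whitney complex.
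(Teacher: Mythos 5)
Your overall route coincides with the one the paper takes (unwrap the Thom--Whitney complex into statements about ordinary forms, prove local integrability on a joint resolution by the techniques of \cite[Prop.~3.3]{Burgos:Gftp}, then get the identities by Stokes/Poincar\'e--Lelong as in \cite[2.1.4, 2.2.2]{GilletSoule:ait}, and deduce (2) and (3) by pulling back), but two steps as you wrote them would fail. The first is the integrability claim: it is \emph{not} true that a product of finitely many forms with ``at worst logarithmic poles'' is locally $L^{1}$ --- logarithmic forms contain $dt/t$-type terms, and already $\frac{dt}{t}\wedge\frac{d\bar t}{\bar t}$, a product of two log forms, has a $1/|t|^{2}$ singularity. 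This is exactly the danger the extra hypothesis on $D$ is there to avert: along a component $D_{j}$ \emph{all} the factors may simultaneously carry arbitrary logarithmic poles, and it is the factor $g_{i(j)}$, smooth near $D_{j}$ and vanishing on $D_{j}$, that renders the product integrable there. So the vanishing condition is needed for local integrability itself (this is the point the paper's proof emphasizes), not merely, as you use it, to suppress a boundary current along $D$ in the integration by parts; along the $|Y_{i}|$ integrability is indeed rescued by proper intersection (after resolution the singular divisors of distinct factors are distinct transverse components), as you say in your last paragraph.

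The second gap is your treatment of the factors with $1\le i<s$. These $g_{i}$ lie in $\DB^{n_{i}}_{\TW,\log}(U\setminus|Y_{i}|,q_{i})$ and are in general genuinely singular along $|Y_{i}|$ and $D$; the ``explicit description of $\fD^{n}(A,p)$'' concerns the bidegrees appearing in the concise Deligne complex and says nothing about smoothness, so your assertion that for these indices there is ``no current to account for'' is unjustified as stated. What actually kills the residue term for such a factor is the degree hypothesis $n_{i}<2p_{i}-1$: any residue current created along $|Y_{i}|$ when integrating by parts is the pushforward, from a resolution of a set of codimension $p_{i}$, of a locally integrable form of degree $(n_{i}+1)-2p_{i}<0$, hence is zero. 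Without this degree-versus-codimension argument the splitting of the right-hand side into Leibniz terms for $i<s$ and Poincar\'e--Lelong terms for $i\ge s$ has no justification. Once these two points are repaired, the remainder of your plan --- the local Poincar\'e--Lelong computation via \eqref{eq:77}, the reduction of (2) to (1) after checking that the basic normal form and the vanishing hypothesis pull back along $f$, and (3) as the case $f=\eta_{0}$ together with $\delta_{Y_{0}}\cdot(-)=(\eta_{0})_{\ast}\eta_{0}^{\ast}(-)$ --- matches the intended proof.
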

\begin{proof}
  To prove the lemma, first one unwraps the definition of the
  Thom-Whitney complex to obtain statements about usual differential
  forms. Then the proof that all the involved differential forms are locally
  integrable uses the techniques of \cite[Proposition
  3.3]{Burgos:Gftp}. Note that an arbitrary form with logarithmic
  singularities does not need  to be locally integrable. The key point
  in the proof of local integrability is that wherever one of the
  forms may have arbitrary logarithmic singularities (any component of $D$),  another 
  of the forms vanishes. Thus the
  vanishing condition of some of 
  the forms along the divisor $D$ is a necessary condition for the
  local integrability. The proof of the equations involving the
  differential use the techniques of the
  proof of \cite[2.1.4]{GilletSoule:ait},
  \cite[2.2.2]{GilletSoule:ait}. Notationally the proof is more
  involved because it deals with a product of an arbitrary number of
  functions while in \emph{loc. cit.} it involves at most three
  functions. Conceptually is simpler because here we are assuming that
  all the intersections are proper.  
\end{proof}

\section{The cycle class map}
\label{sec:regulator-map-cycles}

The aim of this section is to recall two explicit incarnations of the
cycle class map from higher Chow groups to Deligne
cohomology. The first is a cubical version of Goncharov cycle class
map defined in \cite{Goncharov:prAmc}, while the second is a minor
variant of a construction by Bloch in \cite{Bloch:acbc}, that was used
in \cite{BurgosFeliu:hacg}. After composing with the isomorphism
between $K$ theory and higher Chow groups, both maps induce
Beilinson's regulator.

\subsection{Wang forms and Goncharov regulator}
\label{sec:wang-forms-goncharov}

Consider the inclusion $\G_{m}(\C)\subset \P^{1}(\C)$, with absolute
coordinate $t$ and write
$D=\P^{1}(\C)\setminus \G_{m}(\C)=\{0,\infty\}$. Let
$\lambda \in (L_{\C}\otimes E_{\P^{1}}(\log D))^{1}$ be the element given by
\begin{equation}
  \label{eq:10}
  \lambda =\frac{-1}{2}\left((\varepsilon +1)\otimes \frac{dt}{t}+
  (\varepsilon -1)\otimes \frac{d\bar t}{\bar t}
  +d\varepsilon\otimes \log t\bar t\right).
\end{equation}
Then
\begin{align*}
  \lambda |_{\varepsilon =0}&=\frac{-1}{2}\left(\frac{dt}{t}-
    \frac{d\bar t}{\bar t}\right)\in (2\pi i)E^{1}_{\P^{1},\R}(\log
                              D),\\
  \lambda |_{\varepsilon =1}&=\frac{-dt}{t}\in F^{1}E^{1}_{\P^{1}}(\log
                              D).
\end{align*}
Therefore, following the shorthand \eqref{eq:9}, we deduce that
$$\lambda \in \DB^{1}_{\TW}(E_{\P^{1}}(\log D),1).$$
The form $\lambda $
satisfies the properties
\begin{align}
  \lambda \mid _{t=1}&=0\label{eq:11}\\
  d \lambda &=0 \label{eq:12}.
\end{align}
But we are more interested in the associated current. Write
\begin{displaymath}
  [\lambda ]= \frac{-1}{2}\left((\varepsilon +1)\otimes \left[
    \frac{dt}{t}\right ]+
  (\varepsilon -1)\otimes \left[ \frac{d\bar t}{\bar t}\right]
  +d\varepsilon\otimes [ \log t\bar t ]\right).
\end{displaymath}
One easily verifies that $[\lambda ]\in \DB_{\TW,D}^{1}(\P^{1},1)$
and, using equations \eqref{eq:77},
that it satisfies
\begin{equation}\label{eq:13}
  d[\lambda ]=-1\otimes \delta _{\Div t}=\delta _{\infty}-\delta _{0}.
\end{equation}
In conclusion $\lambda $ is a basic Green form for the cycle
$[0]-[\infty]$. 

We now consider the cocubical complex manifold $\P^{1}(\C)^{\cdot}$
and its open cocubical submanifold $\square ^{\cdot}$. Let $D_{n}\subset
\P^{1}(\C)^{n}$ be the normal crossing divisor given as the union of
all cofaces of $\P^{1}(\C)^{\cdot}$. That is
\begin{displaymath}
  D_{n}=\left\{(t_{1},\dots,t_{n})\in \P^{1}(\C)^{n}\, \middle |\,  \exists i,\
  t_{i}\in \{0,\infty\}\right\}
\end{displaymath}

On $(\P^{1}(\C))^{n}\setminus D_{n}$, for $n\ge 0$, we consider the forms
\begin{align*}
  W_{0}&=1\\
  W_{n}&=\pi _{1}^{\ast}\lambda \cdots \pi _{n}^{\ast}\lambda, \ n>0, 
\end{align*}
where $\pi _{i}\colon (\P^{1})^{n}\to \P^{1}$ is the projection onto
the $i$-th factor.
Clearly  $W_{n}\in \DB^{n}_{\TW}(E^{\ast}_{(\P^{1})^{n}}(\log
D_{n}),n)$ and satisfies
\begin{align}
  W_{n}|_{(\P^{1})^{n}\setminus \square^{n}}&=0,\label{eq:15}\\
  d W_{n}&=0,\label{eq:16}
\end{align}
while the associated current $[W_{n}]\in \DB_{\TW,D}((\P^{1})^{n},n)$
satisfies
\begin{equation}
  \label{eq:17}
  d [W_{n}]=\sum _{i=1}^{n}\sum _{j=0,1} (-1)^{i+j} (\delta
  ^{i}_{j})_{\ast}[W_{n-1}]. 
\end{equation}

This formula follows from the fact that $\lambda $ is a basic Green
form for the cycle $[0]-[\infty]$ and Lemma
\ref{lemm:13}. Alternatively, the currents
$[W_{n}]$ can be defined using the  
\emph{Cartesian product} (also called tensor product or direct
product) of currents
\begin{displaymath}
  [W_{n}]=[W_{1}]\times \dots \times [W_{1}].
\end{displaymath}
Then equation \eqref{eq:17} follows from \eqref{eq:13} and the fact
that the Cartesian product satisfies Leibnitz rule
\cite[4.1.8]{Federer:GMT}. 

Let $\fA$ be any of the complexes of diagram \eqref{eq:43}. Then we
will denote by $W_{n,\fA}$ the image of $W_{n}$ in the corresponding
complex of forms in \eqref{eq:33} and $[W_{n}]_{\fA}$ the image of
$[W_{n}]$ in the  
complex of currents in  \eqref{eq:34}. For instance
\begin{displaymath}
  [W_{n}]_{ \fD}\in \fD_{D}^{n} ((\P^{1})^{n},n)
\end{displaymath}
is the cubical version of the current originally used by
Goncharov. For further reference we note
that
\begin{equation}\label{eq:88}
  [W_{1}]_{\DB}=\left[-\frac{1}{2}\log t\bar t\right].
\end{equation}

All the different families of currents $[W_{n}]_{ \fA}$ satisfy the analogue
of equation \eqref{eq:17}. For shorthand, If there is no decoration,
we will  mean that Thom-Whitney version. That is
\begin{displaymath}
  [W_{n}]=[W_{n}]_{\fDTW}.
\end{displaymath}

We can now describe the cubical version of Goncharov's regulator for
projective varieties. 

\begin{df}\label{def:7}
  Let $X$ be a smooth projective complex variety of dimension $d$. We denote by
  $p_{1},p_{2}$ the two projections from $X\times (\P^{1})^{n}$ to the
  first and second factor. Let $Z\subset X\times \square ^{n}$ be a
  codimension $p$ subvariety that meets properly all the faces. We
  define the current $\caP(Z)\in
 \DB_{\TW,D}^{2p-n}(X,p) $ as 
  \begin{equation}\label{Regulator:Eqn}
    \caP(Z)=  (p_{1})_{\ast}\delta _{Z}\cdot [p_{2}^{\ast}W_{n}]. 
  \end{equation}
  This current is well defined thanks to Lemma \ref{lemm:13}. 
The map $\caP$ is extended by linearity to define the \emph{cubical
  Goncharov regulator map} $\caP\colon
Z^{p}(X,\ast)\to \DB_{\TW,D}^{2p-\ast}(X,p)$.

Definition \ref{def:7} can be adapted to any of the complexes of
currents of Diagram \eqref{eq:34}. We denote any such complex as
$\fA_D$ and $\caP_{\fA}(Z)$ will be the
image of $Z$ in any $\fA_D$. 
For instance 
\begin{displaymath}
  \caP_{\fD}(Z)\in\fD_{D}^{2p-n}(X,p) 
\end{displaymath}
is the cubical version of the original Goncharov regulator.
\end{df}

\begin{prop}\label{prop:6}\
  
  \begin{enumerate}
  \item   The map $\caP\colon
    Z^{P}(X,\ast)\to \DB_{\TW,D}^{2p-\ast}(X,p)$ is a morphism of
    complexes. 
  \item If $\zeta \in \widetilde Z^{p}(X,\ast)$ is a
    degenerate pre-cycle then $\caP(\zeta)=0$.
  \item $\caP$ induces a morphism of complexes
    \begin{displaymath}
      \widetilde Z^{p}(X,\ast)\to
      \DB_{\TW,D}^{2p-\ast}(X,p) 
    \end{displaymath}
    that agrees with the restriction of
    $\caP$ to $NZ^{P}(X,\ast)$.
  \end{enumerate}
\end{prop}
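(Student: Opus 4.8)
The plan is to verify the three assertions in order, building each on the previous one. First, for statement (1), I would unwind the definition $\caP(Z) = (p_1)_\ast\,\delta_Z \cdot [p_2^\ast W_n]$ and compute $d_s\caP(Z)$, keeping careful track of the sign conventions in \eqref{eq:23} and of the twist in the current $\delta_Z$. The key analytic input is Lemma \ref{lemm:13}(1)--(3): since $W_n$ is a product of the basic Green forms $\pi_i^\ast\lambda$, applying the Leibniz-type formula there yields $d[p_2^\ast W_n] = p_2^\ast d[W_n]$, and then equation \eqref{eq:17} expresses $d[W_n]$ as the alternating sum $\sum_{i,j}(-1)^{i+j}(\delta^i_j)_\ast[W_{n-1}]$. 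Pushing forward along $p_1$ and using that $\delta_Z$ is closed (so the only contribution to $d$ of the product comes from $[p_2^\ast W_n]$, up to the sign coming from the even degree of $\delta_Z$), one gets that $d_s\caP(Z)$ equals $\pm\caP(\delta Z)$, where $\delta = \sum_{i,j}(-1)^{i+j}\delta^i_j$ is exactly the differential of the cubical chain complex $Z^p(X,\ast)$. Matching the global sign so that $\caP$ commutes with differentials is the routine-but-delicate part; it is forced once one fixes that $\caP$ lands in degree $2p-n$ and that the cubical differential lowers $n$ by one.

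For statement (2), I would argue that if $\zeta$ is degenerate, say $\zeta = \sigma_i^\ast(\zeta')$ for some $\zeta' \in Z^p(X,n-1)$, then its support $|\zeta|$ is a pullback under the $i$-th projection collapsing the factor $\square_i$, so $\delta_\zeta$ is (up to sign) the Cartesian product of $\delta_{\zeta'}$ with the current of integration along $\square^1$ in the $i$-th slot. On the other hand the form $\lambda$ in that same slot contributes the factor $\pi_i^\ast\lambda$ to $W_n$, and one computes $\int_{\square^1}\lambda = 0$ because $\lambda|_{t=1}=0$ by \eqref{eq:11} and, more to the point, $\lambda$ restricted to the fiber is an exact form whose integral over the (noncompact but with the right boundary behaviour) fiber vanishes; concretely $[W_1]_{\fD} = [-\tfrac12\log t\bar t]$ by \eqref{eq:88}, and fibre integration of this against the collapsed direction kills the contribution. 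Hence the pushforward $(p_1)_\ast(\delta_\zeta \cdot [p_2^\ast W_n])$ vanishes. This is essentially the cubical analogue of the fact that the Goncharov regulator vanishes on degenerate cycles, and the cleanest formulation uses the Cartesian-product description of $[W_n]$ together with Fubini for currents.

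Statement (3) is then formal: by (2), $\caP$ descends to the quotient $\widetilde Z^p(X,\ast) = Z^p(X,\ast)/DZ^p(X,\ast)$, and the decomposition \eqref{eq:25}, $Z^p(X,\ast) \cong NZ^p(X,\ast) \oplus DZ^p(X,\ast)$, together with the identification $NZ^p(X,\ast) \cong \widetilde Z^p(X,\ast)$, shows that the induced map on $\widetilde Z^p(X,\ast)$ corresponds under this isomorphism to the restriction of $\caP$ to $NZ^p(X,\ast)$. I would also note that (1) guarantees the descended map is still a morphism of complexes, since the differential of $\widetilde Z^p(X,\ast)$ is induced from that of $Z^p(X,\ast)$.

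The main obstacle I anticipate is statement (1): the bookkeeping of signs when combining \eqref{eq:23}, the twisting convention for $\delta_Z$ from \S\ref{sec:conv-diff-forms}, the relative-dimension shift in the pushforward $(p_1)_\ast$, and the sign $(-1)^{i+j}$ in \eqref{eq:17} is genuinely intricate, and getting $\caP$ to commute \emph{on the nose} with $d_s$ rather than merely up to sign requires care. Everything else — the local integrability, which is quoted from Lemma \ref{lemm:13}, and the vanishing on degenerates — is comparatively routine.
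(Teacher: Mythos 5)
Your plan is essentially the paper's own route: part (1) via Lemma \ref{lemm:13} together with the fact that $\lambda$ is a basic Green form for $[0]-[\infty]$ vanishing at $t=1$ (formally, equations \eqref{eq:15} and \eqref{eq:17}), part (2) as a direct check, and part (3) as a formal consequence of (1) and (2) using the decomposition \eqref{eq:25}.

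Two points need sharpening. In (1), the phrase ``$\delta_Z$ is closed, so the only contribution to $d$ of the product comes from $[p_2^{\ast}W_n]$'' cannot be read as a naive Leibniz rule on $X\times(\P^1)^n$: the product $\delta_Z\cdot[p_2^{\ast}W_n]$ is defined by pulling $W_n$ back to a resolution of the closure $\overline Z$ (Definition \ref{def:22}), and its differential is given by the residue formula of Lemma \ref{lemm:13}(3), whose residue terms are the integration currents $\delta_{Z\cdot Y_i}$ supported on the intersections of $Z$ with the faces $t_i\in\{0,\infty\}$; it is these terms, rather than a literal product $\delta_Z\cdot(\delta^i_j)_{\ast}[W_{n-1}]$, that reassemble into $\caP(\delta Z)$ (the proper intersection of $Z$ with all faces is what makes them well-defined), and the hypothesis of Lemma \ref{lemm:13} that along each component of the divisor at infinity some factor vanishes is supplied precisely by $\lambda|_{t=1}=0$, i.e.\ \eqref{eq:15}, which also excludes spurious residues along $t_i=1$ where $\overline Z$ may be badly behaved --- this is the role of the vanishing at $1$ that the paper's proof emphasizes and that your outline omits in (1). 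In (2), your conclusion is right but the justification as written would not stand: the collapsed slot contributes the pushforward of the degree-one current $[\lambda]$ along $\P^1\to \mathrm{pt}$, which vanishes purely for degree reasons (pushforward of currents lowers the degree by two), not because $\lambda$ is exact or because of \eqref{eq:11}; with that correction the Cartesian-product/Fubini argument is exactly the intended direct check. Part (3) is handled as in the paper.
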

\begin{proof}
    The first statement follows from equations Lemma \ref{lemm:13} and
  the fact that $\lambda $ is a basic Green form for the cycle
  $[0]-[\infty]$ that vanishes at the point $1$. Formally it follows
  from follows from equations \eqref{eq:15} and
  \eqref{eq:17}. The second statement is a direct check while the third
  statement is consequence of the other two. 
\end{proof}

\begin{rmk}\label{ReglRmk}
The regulator $\caP_{c}$ used in the paper \cite{BurgosFeliuTakeda} (in
section 7.2) agrees with $\mathcal{P}_{\fD}.$
\end{rmk}

\begin{rmk}
  The Goncharov regulator is very concrete and well suited for
  computations. Nevertheless it has some drawbacks. In particular, due
  to the use of currents, it is difficult to establish its
  contravariant functoriality and its multiplicativity.  In the paper
  \cite{BurgosFeliu:hacg}, E. Feliu and the first author have
  introduced another map that uses only differential forms and not
  currents, making it easier to establish the contravariance and
  multiplicative properties of the regulator. In the paper
  \cite{BurgosFeliuTakeda}, in joint work with Takeda they have used
  this new map to prove that the cubical version of Goncharov's
  regulator agrees after composing with the isomorphism with
  $K$-theory with Beilinson's regulator. The map of Burgos-Feliu
  is a minor modification  of a map introduced by Bloch
  \cite{Bloch:acbc}. 
\end{rmk}

Goncharov regulator is compatible with direct images with respect to
proper maps.

\begin{prop}\label{prop:3}
  Let $f\colon X\to Y$ be a morphism between smooth projective
  varieties over $\C$ of relative dimension $e$. Thus the map is
  proper and, for integers $p,n$, there are direct image maps
  \begin{displaymath}
    f_{\ast}\colon Z^{p}(X,n)_{0} \to Z^{p-e}(Y,n)_{0},\quad
    f_{\ast}\colon \fA_{D}^{2p-n}(X,p) \to \fA_{D}^{2p-n-2e}(X,p-e).
  \end{displaymath}
  Then the compatibility
  \begin{displaymath}
    \caP_{\fA}\circ f_{\ast}= f_{\ast}\circ \caP_{\fA}
  \end{displaymath}
  holds.
\end{prop}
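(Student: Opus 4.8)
The plan is to reduce the statement to the single proper map $g := f\times\Id_{(\P^1)^n}\colon X\times(\P^1)^n\to Y\times(\P^1)^n$ (proper because $f$ is) and then to combine three standard ingredients: the compatibility of currents of integration with proper push-forward, a projection formula for currents in our mildly singular setting, and the evident identities $p_1^Y\circ g = f\circ p_1^X$, $p_2^Y\circ g = p_2^X$, where $p_i^X$, $p_i^Y$ denote the two projections of $X\times(\P^1)^n$ and $Y\times(\P^1)^n$. It suffices to treat the undecorated $\caP=\caP_{\fDTW}$; for a general current complex $\fA_D$ of diagram~\eqref{eq:34} the argument is word for word the same, because the morphisms of that diagram commute with proper push-forward and with the product by $W_n$.

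First I would unwind the definitions. By Definition~\ref{def:7} and Definition~\ref{def:22}, for a prime cycle $Z\in Z^p(X,n)_0$ (the general case following by linearity),
\[
  \caP(Z) = (p_1^X)_\ast\bigl(\delta_Z\cdot[(p_2^X)^\ast W_n]\bigr) = (p_1^X\circ\eta)_\ast[\eta^\ast(p_2^X)^\ast W_n],
\]
where $\eta\colon\widetilde Z\to X\times(\P^1)^n$ is a resolution of singularities of $|Z|$, and likewise $\caP(f_\ast Z) = (p_1^Y)_\ast\bigl(\delta_{g_\ast Z}\cdot[(p_2^Y)^\ast W_n]\bigr)$, where $g_\ast Z := (f\times\Id)_\ast Z$ is the cycle-theoretic push-forward. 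Everything then comes down to the identity of currents on $Y\times(\P^1)^n$
\[
  \delta_{g_\ast Z}\cdot[(p_2^Y)^\ast W_n] = g_\ast\bigl(\delta_Z\cdot[(p_2^X)^\ast W_n]\bigr),
\]
after which the proposition follows from the functoriality of push-forward and $p_1^Y\circ g = f\circ p_1^X$:
\begin{align*}
  \caP(f_\ast Z) &= (p_1^Y)_\ast\,g_\ast\bigl(\delta_Z\cdot[(p_2^X)^\ast W_n]\bigr) = (p_1^Y\circ g)_\ast\bigl(\delta_Z\cdot[(p_2^X)^\ast W_n]\bigr)\\
  &= f_\ast\,(p_1^X)_\ast\bigl(\delta_Z\cdot[(p_2^X)^\ast W_n]\bigr) = f_\ast\caP(Z).
\end{align*}

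To prove the displayed identity I would argue on the level of resolutions. If $g$ is generically finite of degree $m$ on $|Z|$, so $g_\ast Z = mZ'$ with $Z' = g(|Z|)$, I may choose $\eta\colon\widetilde Z\to X\times(\P^1)^n$, a resolution $\eta'\colon\widetilde{Z'}\to Y\times(\P^1)^n$ and a generically finite degree-$m$ morphism $\phi\colon\widetilde Z\to\widetilde{Z'}$ with $\eta'\circ\phi = g\circ\eta$. Since $p_2^Y\circ g = p_2^X$ one has $g^\ast(p_2^Y)^\ast W_n = (p_2^X)^\ast W_n$, hence $\phi^\ast\bigl((\eta')^\ast(p_2^Y)^\ast W_n\bigr) = \eta^\ast(p_2^X)^\ast W_n$; all these pulled-back forms are locally integrable, which is exactly what Lemma~\ref{lemm:13} provides, applied to the basic Green form $\lambda$ for $[0]-[\infty]$ and its external products $W_n$. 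Using $\phi^\ast[\beta] = [\phi^\ast\beta]$ and the elementary change-of-variables relation $\phi_\ast[\phi^\ast\beta] = m[\beta]$ for a locally integrable form $\beta$ on $\widetilde{Z'}$, one obtains
\begin{align*}
  \delta_{g_\ast Z}\cdot[(p_2^Y)^\ast W_n] &= m\,(\eta')_\ast[(\eta')^\ast(p_2^Y)^\ast W_n] = (\eta'\circ\phi)_\ast[\eta^\ast(p_2^X)^\ast W_n]\\
  &= g_\ast\bigl(\delta_Z\cdot[(p_2^X)^\ast W_n]\bigr),
\end{align*}
which is the claim (and which, for $W_n$ replaced by $1$, specializes to $g_\ast\delta_Z = \delta_{g_\ast Z}$). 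The complementary case, in which $g$ has positive-dimensional generic fibre over $g(|Z|)$, is immediate: then $g_\ast Z = 0$, while $g_\ast\bigl(\delta_Z\cdot[(p_2^X)^\ast W_n]\bigr) = 0$ since the image of $\widetilde Z$ has dimension strictly smaller than $\dim\widetilde Z$.

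The main obstacle is this displayed identity itself: one must make sense of the projection formula for the product of the current of integration $\delta_Z$ with the logarithmically singular form $(p_2^X)^\ast W_n$ under the non-flat map $g$, and control the local integrability of all pulled-back forms. The plan handles this by never manipulating singular currents abstractly but always writing them as direct images of honest locally integrable forms on smooth resolutions, where Lemma~\ref{lemm:13} supplies the integrability and the ordinary projection formula for forms applies. (Minor point: in the statement the target of $f_\ast$ on currents should read $\fA_D^{2p-n-2e}(Y,p-e)$.)
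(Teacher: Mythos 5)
The paper states Proposition \ref{prop:3} without giving a proof (it is left as a consequence of the definitions and of the covariance of currents), so there is no argument of the authors to compare yours against; judged on its own, your proof is correct and is the argument one would expect to write. Reducing to $g=f\times\Id_{(\P^{1})^{n}}$, unwinding Definition \ref{def:7} and Definition \ref{def:22} through a resolution $\eta\colon\widetilde Z\to X\times(\P^{1})^{n}$, and proving the key identity $g_{\ast}\bigl(\delta_{Z}\cdot[(p_{2}^{X})^{\ast}W_{n}]\bigr)=\delta_{g_{\ast}Z}\cdot[(p_{2}^{Y})^{\ast}W_{n}]$ on compatible resolutions via $\phi_{\ast}[\phi^{\ast}\beta]=m[\beta]$ is exactly the right mechanism, and the final step only uses $p_{1}^{Y}\circ g=f\circ p_{1}^{X}$ and functoriality of $(\cdot)_{\ast}$. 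Two small points are worth tightening. In the collapsing case ($g$ with positive-dimensional generic fibre on $|Z|$), the dimension of the image alone does not force the vanishing of the push-forward of an arbitrary current $\eta_{\ast}[\alpha]$; here you should say explicitly that $\eta^{\ast}(p_{2}^{X})^{\ast}W_{n}=(g\circ\eta)^{\ast}(p_{2}^{Y})^{\ast}W_{n}$ (a consequence of $p_{2}^{Y}\circ g=p_{2}^{X}$, which you already use in the finite case), so that the integrand against any test form is the pullback of a top-degree form through a map whose image has dimension strictly less than $\dim\widetilde Z$, hence vanishes almost everywhere. Second, in the generically finite case you implicitly need that the image cycle $Z'=g(|Z|)$ again meets all faces properly, so that $(\eta')^{\ast}(p_{2}^{Y})^{\ast}W_{n}$ is locally integrable on $\widetilde{Z'}$ by the argument of Lemma \ref{lemm:13}; this follows from properness, since $g(|Z|)\cap(Y\times F)\subseteq g\bigl(|Z|\cap(X\times F)\bigr)$ for every face $F$, and is the same fact underlying the covariance $f_{\ast}\colon Z^{p}(X,n)_{0}\to Z^{p-e}(Y,n)_{0}$. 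Your observation that the target of the second map in the statement should read $\fA_{D}^{2p-n-2e}(Y,p-e)$ is also correct.
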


\subsection{Differential forms and affine lines}
\label{diffformsaffine}
Let $X$ be a smooth complex projective variety and let 
$\fA$ denote any complex appearing in diagram
\eqref{eq:43}.
Then for every $n,p\geq 0$, let $\fA_{\log}^*(X\times
\square^n,p)$ denote the construction $\fA$ applied to the Dolbeault
complex  $E^{\ast}_{X\times
\square^n,\log}$. Let $\tfA_{\log}^*(X\times
\square^n,p)$ be its canonical truncation in degree $2p$. That is
\begin{displaymath}
  \tfA_{\log}^*(X\times\square^n,p)=\tau _{\leq 2p}\fA^*(E_{X\times
\square^n,\log},p).
\end{displaymath}
The structural maps of the cocubical
scheme $\square^{\cdot}$ induce a cubical structure
on $\tfA_{\log}^r(X\times \square^*,p)$ for every $r$ and $p$.

Following the convention of \S \ref{sec:cubic-coch-compl}, we consider
the $2$-iterated cochain complex 
\begin{displaymath}
  \tfA_{\A}^{r,-n}(X,p)\coloneqq \tfA_{\log}^r(X\times \square^n,p),
\end{displaymath}
with differentials $(d,\delta=\sum_{i=1}^n
(-1)^{i}(\delta_i^0-\delta_i^1))$. So $d$ is the differential in the
complex $\fA$, while $\delta $ denotes the differential in the
cubical complex.  
Let
\begin{displaymath}
  \tfA_{\A}^*(X,p)=s(\tfA_{\A}^{*,*}(X,p))
\end{displaymath}
be the associated simple complex. Its differential $d_s$ is given, for
every $\alpha\in 
\tfA_{\A}^{r,-n}(X,p)$, by $d_s(\alpha) = d(\alpha)
+(-1)^r \delta (\alpha)$.
Since we are using cubical structures, this complex does not compute the
right cohomology and we have to normalize it.

For every $r,n$, we write
\begin{displaymath}
  \tfA_{\A}^{r,-n}(X,p)_0=\tfA_{\log}^r(X\times \square^n,p)_0\coloneqq
N\tfA_{\log}^r(X\times \square^n,p).
\end{displaymath}
Hence $\tfA_{\A}^{\ast,\ast}(X,p)_0$ is the normalized $2$-iterated
complex and we denote by $\tfA^*_{\A}(X,p)_0$ the associated simple
complex. The next result is proved in \cite[Proposition
2.8]{BurgosFeliu:hacg} for Deligne-Beilinson cohomology. The proof
applies mutatis mutandis to any of the complexes of diagram
\eqref{eq:43}.  

\begin{prop}\label{affine3}\
  
  \begin{enumerate}
  \item \label{item:24} For every $n>0$ the complex $\tfA_{\A}^{*,-n}(X,p)_0$ is
    acyclic.
  \item \label{item:23} The natural morphism of complexes
  \begin{displaymath}
    \tfA^*(X,p)\coloneqq \tau _{\le 2p}\fA^*(X,p)= \tfA_{\A}^{*,0}(X,p)_0 \rightarrow
    \tfA_{\A}^{*}(X,p)_0 
  \end{displaymath}
  is a quasi-isomorphism. 
  \end{enumerate}
\end{prop}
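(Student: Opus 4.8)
The plan is to deduce the proposition from the homotopy invariance of (real) Deligne--Beilinson cohomology together with the formal properties of normalization of cubical cochain complexes recorded in Section~\ref{sec:cubic-coch-compl}, following \cite[Proposition 2.8]{BurgosFeliu:hacg} essentially verbatim; the only point to check is that that argument never uses a specific model for the complex computing Deligne--Beilinson cohomology, so that it applies equally to each of the complexes of diagram~\eqref{eq:43}.

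For part~\ref{item:24} I would fix a cochain degree $r$ and consider the cubical abelian group $n\mapsto H^{r}(\tfA_{\log}^{\ast}(X\times\square^{n},p))$. Since $\square\cong\A^{1}$, for every $i$ and every $j\in\{0,1\}$ the coface $\delta^{i}_{j}\colon\square^{n-1}\to\square^{n}$ is a section of the projection $\square^{n}\to\square^{n-1}$ forgetting the $i$-th coordinate. Because $\fA_{\log}$ computes the Deligne--Beilinson cohomology of the open variety $X\times\square^{n}\cong X\times\A^{n}$, which is homotopy invariant, that projection induces an isomorphism on cohomology, and hence so does the pull-back $(\delta^{i}_{j})^{\ast}$, for \emph{both} values of $j$. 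In particular each face map $\delta^{1}_{i}$ of the cubical abelian group $H^{r}(\tfA_{\log}^{\ast}(X\times\square^{\cdot},p))$ is injective, so its normalized complex $\bigcap_{i=1}^{n}\ker\delta^{1}_{i}$ is zero for $n>0$ and equals $H^{r}(\tfA^{\ast}(X,p))$ for $n=0$. By Proposition~\ref{cubchain2}, $H^{r}(\tfA_{\A}^{\ast,-n}(X,p)_{0})=H^{r}(N\tfA_{\log}^{\ast}(X\times\square^{n},p))$ is the degree-$n$ term of that normalized complex, hence vanishes for $n>0$; and for $r>2p$ everything vanishes by the truncation. This gives~\ref{item:24}.

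For part~\ref{item:23} I would view $\tfA_{\A}^{\ast}(X,p)_{0}$ as the total complex of the double complex with columns $\tfA_{\A}^{\ast,-n}(X,p)_{0}$, $n\ge 0$: the column $n=0$ is $\tfA^{\ast}(X,p)$, the differential inside a column is $d$, and $\delta$ maps the column $n$ to the column $n-1$, so that $\tfA^{\ast}(X,p)$ is a subcomplex of the total complex and the natural map in question is this inclusion. Since the Dolbeault complexes in play are bounded below and everything is truncated above in degree $2p$, only finitely many columns are nonzero in each total degree, so there are no convergence problems. By part~\ref{item:24} every column with $n\ge 1$ is acyclic, so in the spectral sequence of the (increasing) column filtration the $E_{1}$-page is concentrated in the column $n=0$, where it equals $H^{\ast}(\tfA^{\ast}(X,p))$; it degenerates there, and the resulting edge morphism is precisely the inclusion $\tfA^{\ast}(X,p)=\tfA_{\A}^{\ast,0}(X,p)_{0}\hookrightarrow\tfA_{\A}^{\ast}(X,p)_{0}$, which is therefore a quasi-isomorphism.

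The step I expect to carry the actual content is the homotopy-invariance input used in part~\ref{item:24}: one needs that each coface induces an isomorphism on the cohomology of $\tfA_{\log}^{\ast}(X\times\square^{\cdot},p)$, and for this it is essential that the complexes of diagram~\eqref{eq:43} are built from differential forms with \emph{logarithmic} singularities at infinity, so that they genuinely compute the Deligne--Beilinson cohomology of the open variety $X\times\square^{n}$ (the naive Deligne complex is not homotopy invariant in the $\square$-directions). Granting the right complexes, the verification is short: by the defining long exact sequence of Proposition~\ref{prop:5} and the K\"unneth formula it reduces to the homotopy invariance of Betti cohomology and of filtered algebraic de Rham cohomology, together with the fact that $\A^{1}=\P^{1}\setminus\{1\}$ has the (mixed) Hodge structure of a point. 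Everything else is the formal cubical homological algebra of Section~\ref{sec:cubic-coch-compl}, which is model-independent, so the passage from the Deligne--Beilinson-cohomology case of \cite{BurgosFeliu:hacg} to the three complexes of diagram~\eqref{eq:43} needs no new idea.
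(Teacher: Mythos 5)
Your proposal is correct and follows essentially the same route as the paper, which proves this proposition simply by citing \cite[Proposition 2.8]{BurgosFeliu:hacg} and observing that the argument applies mutatis mutandis to all the complexes of diagram \eqref{eq:43}; that cited argument is precisely the one you reconstruct, namely $\A^{1}$-homotopy invariance of real Deligne--Beilinson cohomology making all cubical face maps isomorphisms on cohomology, the compatibility of normalization with cohomology (Proposition \ref{cubchain2}) to get acyclicity of the columns with $n>0$, and the bounded column filtration of the simple complex to conclude that the inclusion of the $n=0$ column is a quasi-isomorphism. Your remark that the model-dependence only enters through the fact that each complex built from logarithmic forms genuinely computes Deligne--Beilinson cohomology of $X\times\square^{n}$ is exactly the ``mutatis mutandis'' point the paper leaves implicit.
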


\subsection{Cycle class map with differential forms}
\label{sec:cycle-class-map}

Let $X$ be a smooth projective variety over $\C$ as in the previous section.
Let $\caZ^p_{n,X}$ be the set of all 
Zariski closed subsets of $W\subset X\times \square^n$ of codimension
$\ge p$ such that, for every face $F$ of $\square^{n}$,
\begin{displaymath}
  \codim_{X\times F}(W\cap X\times F) \ge p.
\end{displaymath}
We order $\caZ^p_{n,X}$ by inclusion. It is a directed set and the
subsets of codimension exactly equal to $p$ is a cofinal subset. 
When there is no
source of confusion, we simply write $\caZ^p_n$ or even
$\caZ^p$ for $\caZ^p_{n,X}$. Consider the cubical abelian group 
\begin{displaymath}
\caH^p(X,*)\coloneqq \lim_{\substack{\longrightarrow\\Z\in \mathcal{Z}^p_*}}H^{2p}_{\fD,Z}(X\times
\square^*,\R(p)),
\end{displaymath}
with faces and degeneracies induced by those of $\square^{\ast}$. Let
$\caH^p(X,*)_0$ be the
associated normalized complex.

\begin{lem}\label{lemm:1} Let $X$ be a smooth projective complex
  variety. For every $p\geq 0$, there is a natural isomorphism of chain
  complexes,
  \begin{displaymath}
    Z^p(X,*)_{0}\otimes \R  \xrightarrow{\cong}  \caH^p(X,*)_{0}
  \end{displaymath}
sending $Z$ to $\cl(Z)=[Z]$.
\end{lem}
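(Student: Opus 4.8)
The plan is to exhibit, for each $n$, a natural isomorphism $Z^p(X,n)_0 \otimes \R \xrightarrow{\cong} \caH^p(X,n)_0$ and to check that these maps are compatible with the differentials, so that they assemble into an isomorphism of chain complexes. The starting point is Proposition \ref{prop:4}\eqref{item:5}, which identifies $H^{2p}_{\fD,\caZ^p}(X\times\square^n,\R(p))$ with $Z^p(X\times\square^n)\otimes\R$, the cycle group of $X\times\square^n$, via the fundamental-class map $Z\mapsto[Z]$. Passing to the limit over the cofinal subsystem of supports of codimension exactly $p$ (as in Example \ref{exm:2}\eqref{item:9}), this gives for each $n$ a canonical isomorphism
\begin{displaymath}
  \caH^p(X,n)\cong Z^p(X\times\square^n)\otimes\R.
\end{displaymath}
First I would record that this isomorphism is compatible with the cubical structure: the face and degeneracy maps on $\caH^p(X,\ast)$ are induced by those of $\square^\ast$, and under the identification above they correspond to the pull-back/push-forward operations on cycles, because the fundamental class is functorial for the relevant flat pull-backs and proper push-forwards (here all faces $X\times\square^{n-1}\hookrightarrow X\times\square^n$ are regular closed immersions, so the pull-back of cycles meeting them properly is defined, and this is exactly the cubical structure on $Z^p(X,\ast)$).

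Next I would restrict to the normalized subcomplexes. On the cycle side, $Z^p(X,n)_0 = NZ^p(X,n) = \bigcap_i\ker\delta^1_i$ is by definition the group of cycles in $X\times\square^n$ meeting all faces properly and lying in the kernel of the $\delta^1_i$. On the cohomology side, $\caH^p(X,n)_0 = N\caH^p(X,n)$ is the same intersection of kernels of faces $\delta^1_i$, now acting on $\caH^p(X,\ast)$. Since the identification $\caH^p(X,\ast)\cong Z^p(X\times\square^\ast)\otimes\R$ is an isomorphism of cubical abelian groups, it carries $\bigcap_i\ker\delta^1_i$ on one side to $\bigcap_i\ker\delta^1_i$ on the other; but the former is (the $\R$-linearization of) $Z^p(X,n)_0$, since a cycle meeting all faces properly that also lies in $\caZ^p_{n,X}$ is just an element of $Z^p(X,n)$, and conversely. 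Thus the normalized complexes match degreewise, and the differential $\delta = \sum_i(-1)^i\delta^0_i$ is the same on both sides by the compatibility of the previous paragraph. This yields the desired isomorphism of chain complexes, and by construction it sends $Z$ to its fundamental class $\cl(Z)=[Z]$; naturality in $X$ (for morphisms inducing the appropriate pull-backs) follows from naturality of the fundamental-class map.

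The main obstacle is the bookkeeping in the second step: one must be careful that the directed system defining $\caH^p(X,n)$ — a colimit over \emph{all} supports $Z\in\caZ^p_{n,X}$ — genuinely reproduces the free abelian group on codimension-$p$ subvarieties meeting all faces properly, rather than something larger. This is where Proposition \ref{prop:4}\eqref{item:4} is essential: the vanishing $H^n_{\fD,Z}(X\times\square^n,\R(p))=0$ for $n<2p$ together with the one-dimensionality in degree $2p$ guarantees that the colimit picks out exactly one generator per irreducible component of codimension $p$ and nothing from components of codimension $>p$, so the colimit is $Z^p(X\times\square^n)\otimes\R$ on the nose. One must also verify that the proper-intersection condition built into $Z^p(X,n)$ (intersecting \emph{all} faces properly, including iterated faces) is exactly the condition defining membership in the cofinal part of $\caZ^p_{n,X}$ compatibly across all $n$; this is a direct unwinding of definitions but needs to be stated carefully. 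Once these points are in place, the compatibility with differentials and the matching of normalized complexes is formal.
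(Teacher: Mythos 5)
Your overall strategy --- compute the colimit defining $\caH^p(X,n)$ via the purity statement of Proposition \ref{prop:4} and then pass to normalized complexes --- is essentially the paper's intended argument, but your key degreewise identification is wrong as stated, and the way you then try to recover the proper-intersection condition does not work. The colimit in the definition of $\caH^p(X,n)$ is taken only over supports $W\in\caZ^p_{n,X}$, i.e.\ closed subsets of codimension $\ge p$ meeting every face $X\times F$ in codimension $\ge p$. By part (1) of Proposition \ref{prop:4}, applied to the codimension-$p$ components of such a $W$ (each of which again meets all faces properly, being contained in $W$), the group $H^{2p}_{\fD,W}(X\times\square^n,\R(p))$ is freely spanned by the classes of those components; conversely a prime codimension-$p$ cycle meeting all faces properly is itself a member of $\caZ^p_{n,X}$. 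Hence the colimit is $Z^p(X,n)\otimes\R$, Bloch's cubical cycle group, and \emph{not} $Z^p(X\times\square^n)\otimes\R$, the group of all codimension-$p$ cycles of $X\times\square^n$, as your first display claims. Part (2) of Proposition \ref{prop:4}, which you invoke, concerns the full family of supports of codimension $\ge p$ and does not compute this restricted colimit; the proper-intersection condition enters already at the level of the directed system of supports, degree by degree.

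This matters because your second step tries to recover $Z^p(X,n)_0$ from the (alleged) identification with all cycles by intersecting the kernels of the $\delta^1_i$, and that fails: for $p=n=1$ the divisor $X\times\{0\}\subset X\times\square^1$ lies in $\ker\delta^1_1$ (its pullback to $t=\infty$ is empty), yet it does not meet the face $t=0$ properly, so it is not in $Z^1(X,1)_0$; normalization does not impose properness. (Moreover the cubical structure is not even defined on the full cycle group, since pullback along a face is only defined for cycles meeting it properly, so the claimed ``isomorphism of cubical abelian groups'' with $Z^p(X\times\square^\ast)\otimes\R$ has no meaning.) Once the degreewise identification is corrected to $\caH^p(X,n)\cong Z^p(X,n)\otimes\R$, compatibly with faces and degeneracies via the functoriality of the cycle class with supports under the face pullbacks, applying the normalization functor $N$ to this isomorphism of cubical abelian groups yields the lemma at once, and your remarks on differentials and naturality then go through unchanged --- your closing paragraph in fact states the correct target (cycles meeting all faces properly), but it is not what your displayed identification says, and the argument as written hinges on the incorrect version.
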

\begin{proof}
Follows from Proposition \ref{prop:4}.
\end{proof}

\begin{rmk}
Observe that the complex $\caH^{p}(X,*)_0$ has the same functorial
properties as $Z^p(X,*)_{0}\otimes \R$.
\end{rmk}
 
For $n\ge 0$ let $\fA^{*,-n}_{\A,\mathcal{Z}^p}(X,p)$ be the complex
\begin{equation}\label{eq:50}
  \fA^{*,-n}_{\A,\mathcal{Z}_{n}^p}(X,p)=
  \lim_{\substack{\longrightarrow\\Z\in \caZ^{p}_{n}}}s\left(
    \fA_{\A}^{\ast,-n}(X,p)\to \fA(E^{\ast}_{X\times
    \square^{n}\setminus Z, \log},p)
  \right)
\end{equation}
and write
\begin{equation}
  \label{eq:32}
  \tfA^{*,-n}_{\A,\mathcal{Z}_{n}^p}(X,p)=\tau _{\le
    2p}\fA^{*,-n}_{\A,\mathcal{Z}_{n}^p}(X,p).
\end{equation}
Note that we are truncating after taking the simple.
Varying $n$ the complexes \eqref{eq:32} form a cubical cochain complex
and its normalization 
is a $2$-iterated  cochain complex,
\begin{displaymath}
  \tfA_{\A,\caZ^p}^{r,-n}(X,p)_0
=N\tfA_{\A,\caZ^p}^{r,-n}(X,p).
\end{displaymath}
Following the convention of \S \ref{sec:cubic-coch-compl}, the
differential will be denoted by $(d,\delta )$ where $d$ is the 
differential coming from the complex direction and $\delta $ the
differential coming from the cubical direction.

We denote by $\tfA^{*}_{\A,\caZ^p}(X,p)_0$ the associated simple
complex and by $d_s$ its differential.

We shift from a cochain complex to a chain complex and denote by
$\tfA^{2p-*}_{\A,\caZ^p}(X,p)_0$ be the chain complex
whose $n$-graded piece is $\tfA^{2p-n}_{\A,\caZ^p}(X,p)_0$.

The fact that we are using the truncation implies that an element
\begin{math}
  \beta \in \tfA^{2p-n}_{\A,\mathcal{Z}^p}(X,p)_0
\end{math}
is of the form
\begin{displaymath}
\beta =  ((\omega_n,g_n),\dots,(\omega_0,g_0)),
\end{displaymath}
with $(\omega _{i},g_{i})\in \fA^{2p-n+i,-i}_{\A,\caZ^{p}_{i}}(X,p)$
and the top element
$(\omega _{n},g_{n})\in \fA^{2p,-n}_{\A,\caZ^{p}_{n}}(X,p)$
satisfies $d(\omega _{n},g_{n})=0$, thus is a cycle in the complex
$\fA^{\ast,-n}_{\A,\caZ^{p}_{n}}(X,p)$ .

\begin{prop}\label{difcubs}
For every $p\geq 0$, the family of morphisms
\begin{eqnarray*}
\tfA^{2p-n}_{\A,\mathcal{Z}^p}(X,p)_0 & \xrightarrow{\gamma'_1}&
\caH^{p}(X,n)_0 \\
((\omega_n,g_n),\dots,(\omega_0,g_0))  & \mapsto &
[(\omega_n,g_n)]
\end{eqnarray*}
defines a surjective quasi-isomorphism of chain complexes.
\end{prop}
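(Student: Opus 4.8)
\emph{Step 1 (the columns).} The plan is to read $\gamma'_1$ off from the cohomology of the columns of the $2$-iterated complex $\tfA^{r,-n}_{\A,\caZ^p}(X,p)_0$, the cochain direction $r$ being the one computing Deligne--Beilinson cohomology with supports. Fix $n$. For each $Z\in\caZ^p_n$ the simple complex $s\bigl(\fA^{*,-n}_{\A}(X,p)\to\fA(E^{*}_{X\times\square^{n}\setminus Z,\log},p)\bigr)$ computes the Deligne--Beilinson cohomology of $X\times\square^{n}$ with support along $Z$, which by Proposition~\ref{prop:4} vanishes in degrees $<2p$ and is $\R\cdot[Z]$ in degree $2p$. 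Passing to the filtered colimit over $\caZ^{p}_{n}$ and truncating at $2p$ --- a truncation applied \emph{after} forming the simple, hence leaving the degree-$2p$ terms untouched --- the column $(\tfA^{*,-n}_{\A,\caZ^{p}}(X,p)_0,d)$ has cohomology concentrated in degree $2p$, where Proposition~\ref{cubchain2} (normalization commutes with cohomology) identifies it with the degree-$n$ graded piece of $\caH^{p}(X,*)_{0}$. I record two consequences: every element of $\tfA^{2p,-n}_{\A,\caZ^{p}}(X,p)_0$ is a $d$-cocycle, so the assignment $\beta\mapsto[\beta_n]$ sending $\beta$ to the class of its top component $\beta_n=(\omega_n,g_n)$ is well defined with values in $\caH^{p}(X,n)_0$; and the endomorphism induced on this column cohomology by the cubical differential $\delta$ is, under the identification, exactly the differential of the normalized complex $\caH^{p}(X,*)_0$ (both being $\sum_i(-1)^i(\delta^0_i)^\ast$ on the common kernel of the $\delta^1_i$).

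\emph{Step 2 ($\gamma'_1$ is a surjective chain map).} Next I would check that $\gamma'_1$ is a surjective morphism of chain complexes. Well-definedness is the first consequence above. Compatibility with differentials is a sign check: the component of $d_s\beta$ landing in $\tfA^{2p,-(n-1)}_{\A,\caZ^{p}}(X,p)_0$ is $d g_{n-1}\pm\delta(\omega_n,g_n)$, and since $d g_{n-1}$ is a $d$-boundary its class equals $\pm\delta[\beta_n]$, which by the second consequence is the differential of $\caH^p(X,*)_0$ applied to $\gamma'_1(\beta)$. Degreewise surjectivity is immediate: any $h\in\caH^{p}(X,n)_0$ is the class of some $z\in\tfA^{2p,-n}_{\A,\caZ^{p}}(X,p)_0$ (all such $z$ are cocycles and their classes exhaust $\caH^p(X,n)_0$), and then $\gamma'_1(z,0,\dots,0)=h$.

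\emph{Step 3 ($\gamma'_1$ is a quasi-isomorphism).} It then remains to see that $\gamma'_1$ is a quasi-isomorphism. Let $K$ be its kernel. Because the cochain and cubical differentials of the $2$-iterated complex commute, replacing in $\tfA^{r,-n}_{\A,\caZ^{p}}(X,p)_0$ each degree-$2p$ term $\tfA^{2p,-n}_{\A,\caZ^{p}}(X,p)_0$ by its subgroup of $d$-boundaries produces a sub-$2$-iterated complex whose associated simple complex is exactly $K$. By Step~1 the columns of this sub-complex are acyclic: the degree-$2p$ cohomology is killed by construction and the cohomology in degrees $<2p$ already vanished. Since the double complex is bounded below in the cochain direction and supported in the region $r\le 2p$, $n\ge 0$, its total complex $K$ is acyclic, so $\gamma'_1$ is a quasi-isomorphism. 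Equivalently, one may filter $\tfA^{r,-n}_{\A,\caZ^{p}}(X,p)_0$ by cubical degree: the spectral sequence has $E_1$ concentrated in the row $r=2p$, equal there to $\caH^{p}(X,*)_0$ with $d_1$ its differential, so it degenerates at $E_2$ and $\gamma'_1$ is the corresponding edge morphism.

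\emph{Main obstacle.} The step I expect to require the most care is Step~1: one must make sure that the combination of the colimit over the family of supports, the truncation taken after the simple, and the cubical normalization genuinely yields a resolution of $\caH^{p}(X,n)_0$ all of whose degree-$2p$ elements are cocycles, and that the cubical structure on the cohomology of the columns is matched with the one defining $\caH^{p}(X,*)_0$. Once this is secured, Steps~2 and~3 are routine double-complex bookkeeping.
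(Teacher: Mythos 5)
Your proof is correct and follows essentially the same route as the paper, which delegates this statement to \cite[Proposition 2.13]{BurgosFeliu:hacg} and names exactly the two ingredients you use: Proposition \ref{prop:4} (vanishing of the column cohomology with supports below degree $2p$) and Proposition \ref{cubchain2} (normalization commutes with cohomology), after which your surjectivity check and kernel/spectral-sequence bookkeeping are the standard completion. Only a wording slip: the canonical truncation $\tau_{\le 2p}$, taken after the simple, does not leave the degree-$2p$ term untouched but replaces it by $\ker d$ --- which is precisely the property you then rely on, namely that the top components $(\omega_n,g_n)$ are $d$-cocycles.
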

\begin{proof}
  Except for the surjectivity, this statement for the complex $\fD$ is \cite[Proposition
  2.13]{BurgosFeliu:hacg}. The proof in \emph{loc. cit.} works for any
  of the complexes of Example \ref{exm:6} and shows the
  surjectivity. The key ingredients for the proof are
  Proposition \ref{prop:4} and Proposition \ref{cubchain2}. 
\end{proof}

We now consider the map of forgetting supports
\begin{displaymath}
  \begin{matrix}
    \rho\colon &  \tfA^{2p-n}_{\A,\mathcal{Z}^p}(X,p)_0 &
    \longrightarrow &  \tfA^{2p-n}_{\A}(X,p)_0\\
    & ((\omega_n,g_n),\dots,(\omega_0,g_0))  & \longmapsto &
    (\omega_n,\dots,\omega _{0}) 
  \end{matrix}
\end{displaymath}

\begin{thm}\label{Thm5.12}
  The map induced in cohomology by the zig-zag diagram
\begin{equation}
\xymatrix@C=1.5em{
& \caH^p(X,\ast )_0 & & \tfA^{2p-\ast }_{\mathbb{A}}(X,p)_0 \\
Z^p(X,\ast )_0 \ar[ur]^{\gamma _1} & & \tfA^{2p-\star
}_{\mathbb{A},\mathcal{Z}^p}(X,p)_0 \ar[ul]_{\gamma '_1}^{\sim}  \ar[ur]^{\rho} & 
}
\end{equation}
agrees with one induced by the map $\caP_{\fA}$ of Definition  \ref{def:7}
(Goncharov regulator).
\end{thm}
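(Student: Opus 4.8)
The plan is to attach to a cycle one explicit cocycle, built from the Wang currents just as in the definition of $\caP$, and to check that it computes both maps; concretely one compares the two resulting homomorphisms $\CH^p(X,n)\otimes\R\to H^{2p-n}_{\fD}(X,\R(p))$. First reduce to currents: since $\caP_{\fA}$ takes values in $\fA_{D}^{2p-\ast}(X,p)$, replace the two form-complexes appearing in the zig-zag, $\tfA^{2p-\ast}_{\A,\caZ^p}(X,p)_0$ and $\tfA^{2p-\ast}_{\A}(X,p)_0$, by their currents analogues; the ``associated current'' morphisms are quasi-isomorphisms (Propositions \ref{affine3} and \ref{difcubs} hold verbatim for the currents versions) and commute with $\gamma_1$, $\gamma'_1$ and $\rho$, while the inclusion of the cube-degree-zero part $\fA_{D}^{2p-\ast}(X,p)\hookrightarrow \tfA^{2p-\ast}_{D,\A}(X,p)_0$ is a quasi-isomorphism by Proposition \ref{affine3}. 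Hence it suffices to produce, for each cycle $Z\in Z^p(X,n)_0$, a $d_s$-cocycle $\Phi(Z)\in\tfA^{2p-n}_{D,\A,\caZ^p}(X,p)_0$ with $\gamma'_1(\Phi(Z))=\gamma_1(Z)=\cl(Z)$ whose cube-degree-zero component equals $\caP_{\fA}(Z)$.

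Such a $\Phi(Z)$ is built from the Wang currents $[W_m]$, imitating Definition \ref{def:7}: its component over $X\times\square^k$ is obtained from $\delta_Z$ by multiplying with pullbacks of the $[W_m]$ in the sense of Definition \ref{def:22} and pushing forward onto $X\times\square^k$, paired with the basic Green piece coming from the Thom--Whitney form \eqref{eq:10}, so that the resulting pair lies in $\fA^{2p-n+k,-k}_{D,\A,\caZ^p_k}(X,p)$. Over $X\times\square^n$ the top component is the class $\delta_Z$, so $\gamma'_1(\Phi(Z))=\cl(Z)$; over $X\times\square^0=X$ the bottom component is exactly $(p_1)_\ast(\delta_Z\cdot[p_2^\ast W_n])=\caP_{\fA}(Z)$ by Definition \ref{def:7}. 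That $\Phi$ is a morphism of complexes is verified by the computation proving Proposition \ref{prop:6}: one unwinds the Thom--Whitney product, applies the Leibniz rule and the local-integrability statements of Lemma \ref{lemm:13}, the Wang relation $d[W_m]=\sum_{i,j}(-1)^{i+j}(\delta^i_j)_\ast[W_{m-1}]$ of \eqref{eq:17}, and uses that $Z$ lies in the normalized complex so that $(\delta^i_1)^\ast Z=0$ kills the faces at infinity (cf.\ \eqref{eq:15}), leaving exactly the cubical differential $\delta=\sum_i(-1)^i(\delta^i_0)^\ast$.

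Granting this, $\Phi$ is a chain-level lift of $\gamma_1$ along the surjective quasi-isomorphism $\gamma'_1$, so on homology the zig-zag map is $\rho_\ast\circ H_\ast(\Phi)$ and sends $[Z]$ to $[\rho\Phi(Z)]$. Because the cube-degree-zero component of $\rho\Phi(Z)$ is $\caP_{\fA}(Z)$ and $\tfA^{\ast,-k}_{D,\A}(X,p)_0$ is acyclic for $k>0$ (Proposition \ref{affine3}), a diagram chase identical to the one comparing the Burgos--Feliu zig-zag with $\caP_{\fD}$ in \cite[\S 2]{BurgosFeliu:hacg} shows that $[\rho\Phi(Z)]$ corresponds to $[\caP_{\fA}(Z)]$ under the identification $H_n(\tfA^{2p-\ast}_{D,\A}(X,p)_0)\cong H^{2p-n}_{\fD}(X,\R(p))$; this is the assertion for $\fA=\fD$. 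For a general $\fA$ in diagram \eqref{eq:43} it then follows because the homotopy equivalences $E,I,H,G$, the compatible families $W_{m,\fA}$ and $[W_m]_\fA$, and the truncations all commute with $\gamma_1$, $\gamma'_1$ and $\rho$, exactly as in the proof of Proposition \ref{difcubs}.

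The delicate point is the cocycle verification of the second paragraph: controlling the Koszul signs, the interplay between the cofaces $\delta^i_j$ and \eqref{eq:17}, and the local-integrability hypotheses needed to handle the currents obtained by multiplying $\delta_Z$ with pullbacks of the $[W_m]$ over the tower of cubes. All of this is furnished by Lemma \ref{lemm:13}, but it is notationally heavy because it involves products of an unbounded number of forms and currents.
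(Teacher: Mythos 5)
The central reduction in your first paragraph does not go through, and the rest of the argument is built on it. You propose to replace $\tfA^{2p-\ast}_{\A}(X,p)_0$ and $\tfA^{2p-\ast}_{\A,\caZ^p}(X,p)_0$ by ``currents analogues'' and assert that Propositions \ref{affine3} and \ref{difcubs} hold verbatim for them. Two obstructions: (i) the cubical structure on these complexes is given by pullback along the cofaces $\delta^i_j\colon\square^{k-1}\to\square^{k}$, which are closed immersions, and currents cannot be pulled back along closed immersions; so the cubical object, its normalization $N(-)$, the differential $\delta$, and hence the whole $2$-iterated complex simply have no current-theoretic counterpart over $X\times\square^{\bullet}$. (ii) Even setting that aside, currents on the non-compact varieties $X\times\square^{k}$ with the naive Hodge filtration do not compute Deligne--Beilinson cohomology; that is exactly why the paper computes it with the logarithmic complexes of Example \ref{ex:1}~\eqref{item:2} and Example \ref{exm:2}~\eqref{item:7}, and uses $\fA_D$ only for projective varieties, defining $\caP_{\fA}$ in Definition \ref{def:7} by a proper pushforward from $X\times(\P^1)^n$ down to $X$. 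Consequently the cocycle $\Phi(Z)$, whose intermediate components are supposed to be currents on $X\times\square^{k}$, has no complex to live in; the claims $\gamma'_1(\Phi(Z))=\cl(Z)$, the acyclicity used in the final step, and the concluding ``diagram chase'' all rest on undefined objects. (A further, smaller point: the projections $X\times\square^{n}\to X\times\square^{k}$ are not proper, so even the intermediate pushforwards you invoke would have to be performed on $X\times(\P^{1})^{\bullet}$ and controlled via the vanishing of the Wang forms on the faces at $1$.)

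For comparison, the paper never moves the zig-zag into currents. It forms the simple complex $\tfA^{2p-\ast}_{\A,\caH}(X,p)_{0}=s(\rho-\gamma'_1)[-1]$ of the form-level zig-zag, observes that $Z\mapsto(0,\gamma_1(Z),0)$ represents the zig-zag map there, and then constructs a single explicit morphism of complexes $\psi$ landing in $\tfA^{2p-\ast}_{D}(X,p)$, i.e.\ currents on the projective $X$ only, given by \eqref{eq:28}: $\caP(Z)$ plus the correction terms $(p_1)_{\ast}[\alpha_i\cdot W_{i,\fA}-g_i\cdot W_{i,\fA}]$, with local integrability supplied by Lemma \ref{lemm:13}; the commutative square \eqref{eq:29} then yields the theorem, the verification that $\psi$ is a chain map being the content of \cite[Theorem 7.7]{BurgosFeliuTakeda}. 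If you want to carry out your ``lift the cycle and push to cube degree zero'' idea, you must keep every intermediate object inside the logarithmic form complexes (or the simple complex $\tfA_{\A,\caH}$), and only pass to currents after the final proper pushforward to $X$ --- which is, in effect, what the map $\psi$ does.
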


\begin{proof}
  This is essentially the content of \cite[Theorem
  7.7]{BurgosFeliuTakeda}.

  Since the tools used in the proof of Theorem \ref{Thm5.12} will be used
  later we summarize then here.  Consider the diagram of complexes
\begin{displaymath}
  \begin{gathered}
  \xymatrix@C=1.5em{
\caH^p(X,\ast )_0 & & \tfA^{2p-\ast }_{\mathbb{A}}(X,p)_0 \\
& \tfA^{2p-\star
}_{\mathbb{A},\mathcal{Z}^p}(X,p)_0 \ar[ul]_{\gamma '_1}^{\sim}  \ar[ur]^{\rho}
}  
  \end{gathered} 
\end{displaymath}
and denote
\begin{displaymath}
  \tfA^{2p-\ast}_{\A,\caH}(X,p)_{0}=s(\rho - \gamma '_{1})[-1]_{\ast}.
\end{displaymath}
Composing the map $\caP_{\fA}$ with the inverse of the isomorphism $\gamma _{1}$ of Lemma
\ref{lemm:1}, we obtain a morphism 
\begin{displaymath}
  \caP_{\fA}\colon \caH^p(X,\ast )_0 \longrightarrow \tfA_{D}^{2p-\ast}(X,p).
\end{displaymath}

There is a morphism that also represents the regulator map
\begin{displaymath}
  \rho \colon Z^p(X,\ast )_0 \longrightarrow \tfA^{2p-\ast}_{\A,\caH}(X,p)_{0}
\end{displaymath}
given by $\rho (Z)=(0,\gamma _{1}(Z),0)$.

Finally there is a morphism of complexes (\cite[Theorem 7.7]{BurgosFeliuTakeda}
) 
\begin{displaymath}
  \psi \colon \tfA^{2p-\ast}_{\A,\caH}(X,p)_{0} \longrightarrow
  \tfA^{2p-\ast}_{D}(X,p)
\end{displaymath}
given by
\begin{multline}\label{eq:28}
  \psi ((\omega _{n-1},g_{n-1}),\dots,(\omega _{0},g_{0}),Z,(\alpha
  _{n},\dots,\alpha _{0}))\\=
  \caP(Z)+\sum_{i=0}^{n}(p_{1})_{\ast}[\alpha _{i}\cdot W_{i,\fA}-g_{i}\cdot W_{i,\fA}].
\end{multline}
The fact that the forms appearing in \eqref{eq:28} are locally
integrable follows from Lemma \ref{lemm:13}. 

Putting together the above morphisms we obtain a commutative diagram,
from which the theorem is derived,

\begin{equation}\label{eq:29}
  \xymatrix{
    Z^{p}(X,\ast)_{0} \ar[r]^{\caP_{\fA}}\ar[d]_{\rho} &
    \tfA^{2p-\ast}_{D}(X,p)_{0}\ar@{=}[d]\\
    \tfA_{\A,\caH}^{2p-\ast}(X,p)\ar[r]^{\psi }&
    \tfA^{2p-\ast}_{D}(X,p)\hspace{0.1cm}.\\
  }
\end{equation}
\end{proof}

\begin{rmk}\label{rem:8}
  In \cite{BurgosFeliu:hacg}, it is the complex $\fD^{\ast}(X,\ast)$
  that is used. When discusing the product we will adopt the
  Thom-Whitney complex. 
\end{rmk}

\section{Green currents and  Green forms}
\label{sec:green-currents-green}

In this section we will introduce Green currents and Green forms of
logarithmic type for higher cycles and study their basic
properties. 

\medskip
\textbf{Notation}: The notations in this section will be the same that
has been used in \S \ref{sec:regulator-map-cycles}. Let $\fA$ be any
of the complexes that appear in Example \ref{exm:6} and Example
\ref{exm:7}. To fix ideas we
will be thinking mainly in 
the complex $\fD_{\TW}$ because it has better
multiplicative properties.  
We will use the
notation $\fA_{D}$ for the corresponding complex with currents
(Diagram \eqref{eq:34}). 

The
differential of any such complex (irrespective of the definition by
forms or
by currents) will be denoted by $d$. The elements of $\fA^*(X,\bullet)$
will be called forms while the elements of  $\fA^*_D(X,\bullet)$ will
be called currents.

We recall also the map
\begin{displaymath}
  \caP_{\fA}\colon Z^{p}(X,\ast)\to \fA_{D}^{2p-\ast}(X,p)
\end{displaymath}
given in Definition \ref{def:7}. We will also denote by $\caP_{\fA}$ the
induced morphism $
  \caP\colon Z^{p}(X,\ast)_0\to \fA_{D}^{2p-\ast}(X,p)$ (see Proposition
  \ref{prop:6}). 

  For any cochain complex $(A^{\ast},d_{A})$, we will denote
\begin{equation}
  \label{eq:85}
  \widetilde A^{n}=A^{n}/\im d_{A}.
\end{equation}

\subsection{Green currents}
\label{sec:green-currents}

Let $X$ be a smooth and projective variety of dimension $d$ defined
over $\C$.

\begin{df}\label{def:8}
  Let $Z\in Z^p(X,n)_{0}$ be a higher cycle (that is a pre-cycle $Z$
  such that  $\delta
  Z=0$). Then an \emph{$\fA$-Green 
    current} for $Z$ is a current
  \begin{displaymath}
    g_{Z}\in \fA ^{2p-n-1}_{D}(X,p)
  \end{displaymath}
  such that
  \begin{equation}
    \label{eq:21}
   \caP_{\fA}(Z)+dg_Z=[\omega_{Z}],  \text{
     for}\hspace{0.1cm}\omega_{Z} \in \fA^{2p-n}(X,p).
  \end{equation}
  In other words $\caP_{\fA}(Z)+dg_Z$ is the image of a smooth
  form. 
  A \emph{class of Green currents} is the class of a Green current in
  the quotient
  \begin{displaymath}
    \widetilde  {\fA} ^{2p-n-1}_{D}(X,p)=\fA ^{2p-n-1}_{D}(X,p)/\im d.
  \end{displaymath}
  If $g_{Z}$ is a Green current, its class will be denoted $\widetilde
  g_{Z}$.
  If $g_{Z}$ is a Green current for the cycle $Z$ and $\widetilde
  g_{Z}$ is its class, we denote
  $\omega (g_{Z})=\omega (\widetilde g_{Z})$ the form in
  $\fA^{2p-n}(X,p)$ such that
  \begin{displaymath}
    [\omega (g_{Z})]= \caP_{\fA}(Z)+dg_Z.
  \end{displaymath}
\end{df}

Note the parallel
between this definition and \cite[Definition
1.2.3]{GilletSoule:ait}. Observe also that equation \eqref{eq:21}
implies that $\omega _{Z}$ represents the cycle class of $Z$ in
$H_{\fA}^{2p-n}(X,\R(p))$.

\begin{rmk}\label{rem:2} From the Green current alone, it is not
  always possible to recover the cycle $Z$ and the smooth form $\omega
  _{Z}$. For instance in Remark \ref{rem:13} we will see that $0$ can
  be a Green current for a nonzero cycle. Therefore the cycle $Z$ is
  part of the data of the Green current and will always be either
  explicit or implicit.
\end{rmk}

\begin{rmk}\label{rem:1}
  If $Z\in Z^p(X,n)_{0}$ is a higher cycle, it is also a cycle
  in $X\times \square^{n} $. Therefore a Green current for $Z$ could
  also mean a current $g$ on $X\times \square^{n}$ satisfying
  $dg+\delta _{Z}$ smooth. To distinguish between both cases, the
  second will be called a Green current for $Z$ on $X\times
  \square^{n}$. This ambiguity will cause more problems for Green
  forms latter, and we will use the same method to distinguish both
  meanings of Green form.  
\end{rmk}

\begin{lem}\label{lemm:2} Let $Z\in Z^p(X,n)_{0}$ be a cycle. Then an
  $\fA$-Green current for $Z$ exists. Moreover if 
  $\widetilde g_{Z}$ and $\widetilde g'_{Z}$ are  two classes of
  $\fA$-Green currents for $Z$ then the difference 
  \begin{displaymath}
    \widetilde g_{Z}-\widetilde g'_{Z}\in \widetilde \fA^{2p-n-1}(X,p),
  \end{displaymath}
  is the class of a smooth form.
\end{lem}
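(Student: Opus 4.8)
The plan is to deduce both assertions from a single input: for every complex $\fA$ appearing in Examples \ref{exm:6} and \ref{exm:7}, the canonical map $[\,\cdot\,]\colon \fA^{\ast}(X,p)\to\fA^{\ast}_{D}(X,p)$ from differential forms to currents is a quasi-isomorphism. This is the content of Example~\ref{exm:2} for $\fD_{t}$, and it propagates to the Thom--Whitney and logarithmic variants through the homotopy equivalences of the diagrams \eqref{eq:33}--\eqref{eq:34}.

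For existence, I would first observe that, since $\delta Z=0$ and $\caP_{\fA}$ is a morphism of complexes by Proposition \ref{prop:6}, the current $\caP_{\fA}(Z)\in\fA^{2p-n}_{D}(X,p)$ is a cocycle and hence defines a class in $H^{2p-n}$ of the current complex. By the quasi-isomorphism above this class is represented by a smooth form $\omega_{Z}\in\fA^{2p-n}(X,p)$ (automatically closed, as one sees by applying $d$), so there is a current $h\in\fA^{2p-n-1}_{D}(X,p)$ with $\caP_{\fA}(Z)=[\omega_{Z}]+dh$. Then $g_{Z}:=-h$ satisfies \eqref{eq:21}, i.e.\ is the desired $\fA$-Green current for $Z$.

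For the second assertion, let $g_{Z},g'_{Z}$ be $\fA$-Green currents for $Z$ with associated forms $\omega_{Z},\omega'_{Z}$. Subtracting the two instances of \eqref{eq:21} gives $d(g_{Z}-g'_{Z})=[\beta]$ with $\beta:=\omega_{Z}-\omega'_{Z}\in\fA^{2p-n}(X,p)$ a closed form. Since $[\beta]$ is then a coboundary in the current complex, the quasi-isomorphism forces the class of $\beta$ in $H^{2p-n}(\fA^{\ast}(X,p))$ to vanish, so $\beta=d\eta$ for some smooth $\eta\in\fA^{2p-n-1}(X,p)$. Hence $g_{Z}-g'_{Z}-[\eta]$ is a \emph{closed} current, and invoking the quasi-isomorphism once more it is cohomologous in $\fA^{\ast}_{D}(X,p)$ to a closed smooth form $\eta'\in\fA^{2p-n-1}(X,p)$; writing $g_{Z}-g'_{Z}-[\eta]=[\eta']+dS$ we conclude $g_{Z}-g'_{Z}=[\eta+\eta']+dS$, so that $\widetilde g_{Z}-\widetilde g'_{Z}$ equals the class of the smooth form $\eta+\eta'$ in $\widetilde\fA^{2p-n-1}_{D}(X,p)$, as claimed.

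I do not expect a real obstacle here; the argument is entirely formal once the forms-to-currents comparison is in hand. The only points that require attention are bookkeeping of the signs in \eqref{eq:21} and the remark that the relevant quasi-isomorphism must be applied to the untruncated complexes $\fA$, $\fA_{D}$ of Definition \ref{def:8} (not the truncations $\tfA$ used elsewhere), which is harmless since the forms-versus-currents comparison is insensitive to truncation.
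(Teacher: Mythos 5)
Your proof is correct and follows essentially the same route as the paper: both parts rest on the quasi-isomorphism $[\cdot]\colon \fA^{\ast}(X,p)\to \fA_{D}^{\ast}(X,p)$, first to represent the class of $\caP_{\fA}(Z)$ by a smooth form (existence), and then applied twice to show the difference of two Green currents is a smooth form plus an exact current. Your $\eta$, $\eta'$, $S$ correspond exactly to the paper's $u_{1}$, $u_{2}$, $v$, so no further comment is needed.
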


\begin{proof}
  Since $\caP_{\fA}$ is a morphism of complexes, the condition $\delta Z=0$
  implies $d \caP_{\fA}(Z)=0$. Since the inclusion $[\cdot]\colon \fA^{\ast}(X,p)\to
  \fA_{D}^{\ast}(X,p)$ is a quasi isomorphism, the cohomology class
  of $\caP_{\fA}(Z)$ can be represented by en element of
  $\fA^{\ast}(X,p)$. In other words,  there exist elements
  $g_{Z}\in \fA_{D}^{2p-n-1}(X,p)$ and  $\omega _{Z}\in
  \fA^{2p-n}(X,p)$ such that $\caP_{\fA}(Z)+d g_{Z}=[\omega _{Z}]$,
  showing the existence of Green currents. 

  Let now $\widetilde g_{Z}$ and $\widetilde g'_{Z}$ be two classes of
  Green currents for $Z$. Write $\omega =\omega (\widetilde g_{Z})$
  and $\omega' =\omega (\widetilde g_{Z}')$ and let
  $g_{Z}$ and $g'_{Z}$ be two representatives of the given
  classes. Then 
  \begin{displaymath}
    d(g_{Z}-g'_{Z})=[\omega -\omega']. 
  \end{displaymath}
  Thus $[\omega -\omega ']$ is exact in the complex
  $\fA_{D}^{2p-n}(X,p)$, therefore $\omega -\omega '$ is exact in the complex
  $\fA^{2p-n}(X,p)$, and there exists $u_{1}\in \fA^{2p-n-1}(X,p)$ with
  \begin{displaymath}
    d(g_{Z}-g'_{Z})=d [u_{1}].
  \end{displaymath}
  The current $g_{Z}-g'_{Z} - [u_{1}]$ is closed, so its cohomology
  class can be represented by a form, and there are elements
  $u_{2}\in \fA^{2p-n-1}(X,p)$ and $v\in \fA_{D}^{2p-n-2}(X,p)$ with
  \begin{displaymath}
    g_{Z}-g'_{Z}- [u_{1}]=[u_{2}]+ d v
  \end{displaymath}
  proving the second statement.
\end{proof}

\subsection{Green forms}
\label{sec:green-forms}

As we will see latter, the definition of Green current is well suited to
define direct images. Nevertheless, as was 
the case for the classical arithmetic Chow groups, in order to define
inverse images and products we need to represent a Green current by a
differential form with singularities only along the cycle. The
difficulty here is that a higher cycle lives in $X\times \square^{n}$. In
order to go from $X\times \square^{n}$ to $X$ we need a ``staircase'' of
differential forms.

Let $Z\in Z^{p}(X,n)_{0}$ be a pre-cycle.   
We denote by $|Z|$ the support of $Z$ and, for each $k=0,\dots,n$ we
denote by $|Z|_{k}\subset X\times \square^{k}$ the codimension $p$
subset given as
\begin{displaymath}
  |Z|_{k}=\bigcup_{i_{1},\dots,i_{n-k}} (\delta
  ^{i_{1}}_{0})^{-1}\dots (\delta ^{i_{n-k}}_{0})^{-1}|Z|.
\end{displaymath}
In particular $|Z|=|Z|_{n}$. By abuse of notation, if $W\in
\caZ^{p}_{n}$ is a closed Zariski subset, we will denote
\begin{displaymath}
  |W|_{k}=\bigcup_{i_{1},\dots,i_{n-k}} (\delta
  ^{i_{1}}_{0})^{-1}\dots (\delta ^{i_{n-k}}_{0})^{-1}W.
\end{displaymath}

For each $k=0,\dots ,n$, let $H_{k}\subset X\times
  \square^{k}$ be the normal crossing divisor
  \begin{displaymath}
    H_{k}=\bigcup_{i=1,\dots,k} X\times \delta ^{i}_{1}(\square ^{k-1}).
  \end{displaymath}
  In particular $H_{0}=0$.

We denote  by 
  \begin{equation}\label{eq:6}
    \fA_{\log}^{\ast}(X\times \square^{k}\setminus |Z|_{k},p)_{0}\subset
    \fA_{\log}^{\ast}(X\times \square^{k}\setminus |Z|_{k},p)
  \end{equation}
  the subcomplex of elements $x$
  whose restriction
  $x|_{H_{k}}=0$. As in the case of a normalized complex, there is a
  differential
  \begin{displaymath}
    \delta \colon \fA_{\log}^{r}(X\times \square^{k}\setminus
    |Z|_{k},p)_{0}\to
    \fA_{\log}^{r}(X\times \square^{k-1}\setminus
    |Z|_{k-1},p)_{0}
  \end{displaymath}
 given by
 \begin{displaymath}
   \delta x= \sum_{i=1}^{k} (-1)^{i}(\delta _{0}^{i})^{\ast}x,
 \end{displaymath}
 that turns $\fA_{\log}^{\ast}(X\times \square^{\bullet}\setminus
 |Z|_{\bullet},p)_{0}$ into a 2-iterated complex.

  Analogously, we denote by
  \begin{equation}\label{eq:18}
    \fA_{|Z|_{k}}^{\ast}(X\times \square^{k},p)_{0}\subset
    \fA_{|Z|_{k}}^{\ast}(X\times \square^{k},p),
  \end{equation}
  the subcomplex of elements $x$ whose restriction
  $x|_{H_{k}}=0$.  Again it has a differential $\delta $ turning it
  into a 2-iterated complex. 

\begin{df}\label{GF}
Given a cycle $Z \in Z^p(X,n)_0$, an \emph{$\fA$-Green form} (or just a
Green form is $\fA$ is understood) 
for $Z$ is an $n$-tuple
\begin{displaymath}
  \fg_{Z}\coloneqq (g_n, g_{n-1},\cdots , g_0)\in \bigoplus^0_{k=n}\fA
^{2p-n+k-1}_{\log}(X\times \square ^{k}\setminus |Z|_{k},p)_0,
\end{displaymath}
Such that, if $n>0$,
\begin{enumerate}
\item\label{item:20} $\delta _{Z}+d[g_n]=0$, where $\delta _{Z}$ is viewed as an
    element in $\fA_{D}$ (see example \ref{exm:4}).
\item\label{item:21} $(-1)^{n-k+1}\delta g_k+dg_{k-1}=0,\quad k=2,\cdots ,n$.
\item\label{item:22} $(-1)^{n}\delta g_1+dg_0\in
  \fA^{2p-n}(X,p)$. In words, the form  $(-1)^{n}\delta g_1+dg_0$
  extends to a smooth form on the whole $X$. 
\end{enumerate}
While, if $n=0$ the previous conditions collapse to the condition
\begin{displaymath}
 \delta _{Z}+d[g_n]\in [\fA^{2p}(X,p)]. 
\end{displaymath}
The case $n=0$ will be
implicitly understood in the sequel and will not be treated separately.
As was the case for Green currents, we will use the notation
\begin{displaymath}
 \omega(\fg _{Z})\coloneqq (-1)^{n}\delta  g_1+dg_0\in \fA^{2p-n}(X,p).
\end{displaymath}

If $Z\in Z^p(X,n)_{00}$ is a cycle in the refined normalized complex,
then a \emph{refined Green form} 
is defined as a Green form satisfying the stronger condition
\begin{equation}\label{eq:56}
  \fg_Z\in \oplus^n_{k=0}\fA
^{2p-n+k-1}_{log}(X\times \square ^k\setminus |Z|'_{k},p)_{00}.
\end{equation}
A Green form or a refined Green form will be called \emph{basic} if
$g_{n}$ is a basic Green form for $Z$ on $X\times \square^{n}$ in the
sense of Definition \ref{def:23} (see Remark \ref{rem:1}).  
\end{df}

For defining inverse images and products it is important to have
control on where the singularities of a Green form are. This is the
reason we ask in the definition of a Green form that
\begin{displaymath}
g_{k}\in  \fA
^{2p-n+k-1}_{\log}(X\times \square ^{n}\setminus |Z|_{n},p)_0,
\end{displaymath}
but in intermediate steps, it is useful to relax this condition.
\begin{df}\label{def:4}
  Given a cycle $Z \in Z^p(X,n)_0$, a  \emph{loose $\fA$-Green form} (or just a
loose Green form is $\fA$ is understood) 
for $Z$ is an $n$-tuple
\begin{displaymath}
  \fg_{Z}\coloneqq (g_n, g_{n-1},\cdots , g_0)\in \bigoplus^0_{k=n}\fA
^{2p-n+k-1}_{\log}(X\times \square ^{k}\setminus \caZ^{p},p)_0,
\end{displaymath}
satisfying conditions \eqref{item:20}, \eqref{item:21} and
\eqref{item:22}. 
\end{df}

Let $\fg_{Z}=(g_n,\dots , g_0)$ be a loose Green form for $Z$. By the
definition of $\fA
^{2p-n+k-1}_{\log}(X\times \square ^{k}\setminus \caZ^{p},p)_0$ as
direct limit, for each $k\ge 0 $, we can choose a Zariski closed
subset $W_{k}\in \caZ^{p}_{k}$ such that
\begin{displaymath}
  g_{k}\in \fA
^{2p-n+k-1}_{\log}(X\times \square ^{k}\setminus W_{k},p)_0.
\end{displaymath}
Adding to $W_{n}$ sets of the form $\pi ^{\ast}W_{k}$, where $\pi
\colon X\times \square^{n}\to X\times \square ^{k}$ is the projection,
we can find a subset $W\in \caZ^{p}_{n}$ such that $W_{k}\subset
|W|_{k}$.
\begin{df}\label{def:5}
  We say that a loose Green form for $Z$ has \emph{singular support}
  on a subset $W\in \caZ^{p}_{n}$ with $|Z|\subset W$ if
\begin{displaymath}
  g_{k}\in \fA
^{2p-n+k-1}_{\log}(X\times \square ^{k}\setminus |W|_{k},p)_0,\  k=1,\dots,n.
\end{displaymath}
\end{df}

\begin{prop}\label{prop:15}
  Let $Z \in Z^p(X,n)_0$ be a cycle (that is, $\delta Z=0$). Then a
  loose Green form for $Z$ exists.
\end{prop}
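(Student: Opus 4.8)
The plan is to assemble the staircase $\fg_{Z}=(g_{n},\dots,g_{0})$ by lifting the cycle class of $Z$ through the chain of quasi-isomorphisms of Section~\ref{sec:regulator-map-cycles} and then putting it in normal form. First I would observe that, since $\delta Z=0$, Lemma~\ref{lemm:1} makes $\cl(Z)=[Z]$ a cocycle of degree $n$ in the chain complex $\caH^{p}(X,\ast)_{0}$, and that by Proposition~\ref{difcubs} the morphism $\gamma'_{1}\colon\tfA^{2p-\ast}_{\A,\caZ^{p}}(X,p)_{0}\to\caH^{p}(X,\ast)_{0}$ is a \emph{surjective} quasi-isomorphism. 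The standard lifting trick for surjective quasi-isomorphisms --- take any preimage of $\cl(Z)$, then correct it by a boundary in the acyclic complex $\ker\gamma'_{1}$ so that it becomes a cocycle --- then produces a cocycle
\[
  \beta=\bigl((\omega_{n},g_{n}),\dots,(\omega_{0},g_{0})\bigr)\in\tfA^{2p-n}_{\A,\caZ^{p}}(X,p)_{0}
\]
with $\gamma'_{1}(\beta)=\cl(Z)$. Unwinding $d_{s}\beta=0$ componentwise, the $g$-parts already satisfy the staircase relations \eqref{item:21} and the bottom relation $(-1)^{n}\delta g_{1}+dg_{0}=\omega_{0}\in\fA^{2p-n}(X,p)$, and $\gamma'_{1}(\beta)=\cl(Z)$ says that the top pair $(\omega_{n},g_{n})$ represents the fundamental class of $Z$ in $H^{2p}_{\fD,|Z|}(X\times\square^{n},\R(p))$ (Proposition~\ref{prop:4}).

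The second step is to make the intermediate forms vanish. The forms $\rho(\beta)=(\omega_{n},\dots,\omega_{0})$ constitute a cocycle in $\tfA^{2p-n}_{\A}(X,p)_{0}$, and by Proposition~\ref{affine3}\,\eqref{item:24} all columns of positive cubical degree are acyclic, so this cocycle is cohomologous to one supported in cubical degree $0$, that is, to a single closed form in $\fA^{2p-n}(X,p)$. Because $\gamma'_{1}$ only reads off the top pair and $\rho$ admits liftings through $\ker\gamma'_{1}$, one can arrange the corresponding coboundary correction to lie in $\ker\gamma'_{1}$; replacing $\beta$ by that corrected cocycle does not move $\gamma'_{1}(\beta)=\cl(Z)$ but achieves $\omega_{1}=\dots=\omega_{n}=0$, leaving only $\omega_{0}=\omega(\fg_{Z})$. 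The cocycle identities for the new $\beta$ are now exactly \eqref{item:21} and \eqref{item:22}, while the top one degenerates to $dg_{n}=0$ on $X\times\square^{n}\setminus|Z|_{n}$.

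The last step is to promote this form-level statement to the current identity \eqref{item:20}. At this point $(0,g_{n})$ is a cocycle of the support complex representing $[Z]$; using the theory of \cite{Burgos:Gftp} (compare Definition~\ref{def:23} and Proposition~\ref{prop:8}) I would replace $g_{n}$, inside its cohomology class, by a \emph{basic} Green form for $Z$ on $X\times\square^{n}$. Such a form is locally integrable, so $[g_{n}]$ is an honest current, and the Poincar\'e--Lelong formula for basic Green forms together with the analytic Lemma~\ref{lemm:13} gives $d[g_{n}]+\delta_{Z}=[\omega_{n}]$ with $\omega_{n}$ a smooth form representing the support-free image of $[Z]$ in $H^{2p}_{\fD}(X\times\square^{n},\R(p))$; one checks --- and this is where $Z\in NZ^{p}(X,n)$ is used --- that this image vanishes, so after subtracting a global primitive of $\omega_{n}$ from $g_{n}$ one gets $d[g_{n}]+\delta_{Z}=0$. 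Modifying $g_{n}$ this way changes $\delta g_{n}$ by a $d$-exact form of logarithmic type on the next level, which I would absorb by a compensating adjustment of $g_{n-1}$, and so on down the staircase; each adjustment solves an equation of the form $d(\cdot)=d(\text{known})$ on a suitable Zariski open set, so the process terminates and yields a loose Green form for $Z$.

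I expect the real difficulty to be this last step: reconciling the output of the cohomology-with-supports zig-zag, where the $g_{k}$ are a priori only logarithmic forms on open sets and need not be locally integrable, with the current normalization \eqref{item:20}. This is precisely the point at which one is forced to pass to basic Green forms and to invoke Lemma~\ref{lemm:13}. A secondary, purely homological, subtlety is carrying out the column-normalization of the second step inside $\ker\gamma'_{1}$, so that the cycle class $\gamma'_{1}(\beta)=\cl(Z)$ is preserved.
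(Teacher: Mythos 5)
Your first two steps are the paper's own argument: lift $\cl(Z)$ to a $d_{s}$-cocycle $\beta=((\omega_{n},g_{n}),\dots,(\omega_{0},g_{0}))$ through the surjective quasi-isomorphism $\gamma'_{1}$ of Proposition \ref{difcubs}, then use Proposition \ref{affine3} to subtract a coboundary killing $\omega_{n},\dots,\omega_{1}$. Two corrections there: before the $\omega$'s are removed, the cocycle condition does \emph{not} give the staircase relations \eqref{item:21}--\eqref{item:22} with zero right-hand side, but with the restrictions of the $\omega_{k-1}$'s appearing (the differential of the simple of a morphism mixes the two columns); and the class is not preserved by "arranging the coboundary to lie in $\ker\gamma'_{1}$" (you never verify this is possible) but, in the paper, by choosing the correcting element of the explicit form $((\eta_{n+1},\tilde\eta_{n+1}),(\eta_{n},0),\dots,(\eta_{0},0))$ with $\tilde\eta_{n+1}$ and the $\eta_{k}$ smooth, so that the change in the top pair is a coboundary of the support complex and hence $\cl(0,g'_{n})=\cl(\omega_{n},g_{n})=\cl(Z)$; this needs both parts of Proposition \ref{affine3}, not only the acyclicity of the columns.

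The genuine gap is your third step. The paper obtains condition \eqref{item:20} in one stroke: by \cite[Theorem 4.4~(1)]{Burgos:Gftp} (see also \cite[Theorem 5.9]{Burgos:CDB}), a cocycle of the support complex representing $\cl(Z)$ automatically satisfies the current identity $\delta_{Z}+d[g_{n}]=[\omega_{n}]$ --- this citation is exactly what disposes of the local-integrability worry you flag --- so once $\omega_{n}=0$ and $\cl(0,g'_{n})=\cl(Z)$, the equation $\delta_{Z}+d[g'_{n}]=0$ follows and there is nothing left to do. Your substitute derivation (pass to a basic Green form, apply Lemma \ref{lemm:13} and Poincar\'e--Lelong, subtract a primitive of the resulting smooth form, cascade compensations down the staircase) is not carried out and hides real work: (i) modifying $g_{n}$ within its class and subtracting a primitive breaks the relations with $g_{n-1},\dots,g_{0}$, and restoring them requires solving $du_{k}=(\text{given closed form})$ on $X\times\square^{k}\setminus|W|_{k}$, which needs the semipurity/acyclicity input used in the proof of Proposition \ref{prop:16} and is not supplied; (ii) the vanishing of the support-free class of $Z$ is only asserted --- it holds for $n\ge 1$ by normalization plus homotopy invariance (the argument used in the proof of Theorem \ref{thm:6}), but you give no proof, and for $n=0$ it is false (and also unneeded, since then only $\delta_{Z}+d[g_{0}]\in[\fA^{2p}(X,p)]$ is required); (iii) the appeal to Proposition \ref{prop:8} is circular at this point, since that statement rests on Theorem \ref{thm:11} and hence on the present proposition; the legitimate input is \cite[Lemma 4.6]{Burgos:Gftp}, as in Proposition \ref{prop:18}.
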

\begin{proof}
  Consider the class $\cl(Z)\in \caH^{0}(X,n)$. Since $Z$ is a cycle, by Proposition
  \ref{difcubs}, there is an element
  \begin{displaymath}
    \beta = ((\omega_n,g_n),\dots,(\omega_0,g_0))\in \tfA^{2p-n}_{\A,\mathcal{Z}^p}(X,p)_0
  \end{displaymath}
  with $d_{s}\beta =0$ and
  \begin{equation}\label{eq:19}
    \cl(\omega_n,g_n)=\gamma _{1}'(\beta )=\cl(Z).
  \end{equation}
  By \cite[Theorem 4.4~(1)]{Burgos:Gftp} (see also \cite[Theorem
  5.9]{Burgos:CDB}), the equation \eqref{eq:19} implies that
  \begin{displaymath}
    \delta _{Y}+d [g_{n}] = [\omega _{n}].
  \end{displaymath}
  Consider now the element
  \begin{displaymath}
    \rho (\beta ) = (\omega_n,\dots,\omega _{0})\in \tfA^{2p-n}_{\A}(X,p)_0.
  \end{displaymath}
  Since $\beta $ is a cycle, $\rho (\beta )$ is a cycle. By
  Proposition \ref{affine3}~\eqref{item:23} there is an element
  \begin{displaymath}
    (\eta _{n+1},\cdots , \eta _0)\in \fA^{2p-n-1}_{\mathbb{A}}(X,p)_0
  \end{displaymath}
  such that
  \begin{displaymath}
    d_{s}(\eta _{n+1},\cdots , \eta _0)=(\omega_n,\dots,\omega _{0}) - (0,\dots,0,\omega ).
  \end{displaymath}
In particular $d\eta_{n+1}=0$.
 Since the complex $\tfA ^{\ast,-n }_{\A}(X,p)_0$ is
 acyclic for all $n>0$ (Proposition \ref{affine3}) we can find an element
 $\tilde{\eta }_{n+1}\in \fA ^{2p-1,-n-1 }_{\A}(X,p)_0$
 such that $d\tilde{\eta }_{n+1}=\eta 
 _{n+1}$. Then, one can verify that
 \begin{displaymath}
   \beta '\coloneqq \beta -d_{s}\left((\eta _{n+1},\tilde{\eta }_{n+1}), (\eta _n,0),\cdots ,
     (\eta _0,0)\right)=((0,g'_{n}),\dots,(0,g'_{1}),(\omega ,g'_{0})).
 \end{displaymath}
 Since $\tilde{\eta }_{n+1}$ is a smooth form,
 $\cl(0,g_{n}')=\cl(\omega _{n},g_{n})=\cl(Z)$. Therefore
 \begin{displaymath}
   \delta _{Z}+d[g'_{n}]=0.
 \end{displaymath}
 This equation, plus the condition that $d_{s}\beta '=0$ implies that
 $(g'_{n},\dots,g'_{0})$ is a loose Green form for $Z$. 
\end{proof}

\begin{prop}\label{prop:16}
  Let $Z\in Z^{p}(X,n)_{0}$ be a cycle and $\fg=(g _n,\dots,g_0)$ a
  loose Green form for $Z$. Assume that $\fg$ has singular support in
  $W\in \caZ^{p}_{n}$.  
    Then there are elements $u_{n},\dots,u_{0}$,with
    \begin{displaymath}
      u_{k}\in \fA^{2p-n+k-2}_{\log}(X\times \square^{k}\setminus
      |W|_{k},p)_{0} 
    \end{displaymath}
    such that
    \begin{align}
      g_{n}'&\coloneqq g_{n}+d u_{n}\in \fA^{2p-1}_{\log}(X\times \square^{n}\setminus
      |Z|_{n},p)_{0},\label{eq:39}\\
      g_{k}'&\coloneqq g_{k}+(-1)^{n-k-1}\delta u_{k+1}+d u_{k}
      \in \fA^{2p-n+k-1}_{\log}(X\times \square^{k}\setminus
      |Z|_{k},p)_{0}, \label{eq:40}
    \end{align}
    for $k=0,\dots,n-1.$ In consequence $\fg'=(g' _n,\dots,g'_0)$ is a
    Green form for $Z$.
\end{prop}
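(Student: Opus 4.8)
The plan is to recognise that the prescription \eqref{eq:39}--\eqref{eq:40} is precisely that of replacing $\fg$ by $\fg+d_{s}\mathfrak u$, where $\mathfrak u=(u_{n},\dots,u_{0})$ and $d_{s}$ is the differential of the simple complex attached to the $2$-iterated complex $\fA^{\ast}_{\log}(X\times\square^{\bullet}\setminus|W|_{\bullet},p)_{0}$, with $d$ as first differential and the cubical differential $\delta$ as second one (the absence of a component $u_{n+1}$ is why \eqref{eq:39} carries no $\delta$-term and why $\delta_{Z}$ is untouched). Since $d_{s}^{2}=0$, such a modification automatically preserves conditions \eqref{item:20}--\eqref{item:22}, so $\fg'$ is again a loose Green form for $Z$ for every choice of $\mathfrak u$; the whole content of the statement is that $\mathfrak u$ can be chosen so that each $g'_{k}$ extends to an element of $\fA^{2p-n+k-1}_{\log}(X\times\square^{k}\setminus|Z|_{k},p)_{0}$, i.e.\ has logarithmic singularities only along $|Z|_{k}$.

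First I would dispose of the top component. Exactly as in the proof of Proposition \ref{prop:15}, the equation $\delta_{Z}+d[g_{n}]=0$ together with \cite[Theorem 4.4]{Burgos:Gftp} yields a basic Green form $h_{n}$ for $Z$ on $X\times\square^{n}$ with $\delta_{Z}+d[h_{n}]=0$ and $h_{n}\in\fA^{2p-1}_{\log}(X\times\square^{n}\setminus|Z|_{n},p)_{0}$. Then $g_{n}-h_{n}$ is a $d$-closed form on $X\times\square^{n}\setminus|W|_{n}$, and its class in $H^{2p-1}_{\fD}(X\times\square^{n}\setminus|W|_{n},\R(p))$ lies in the image of $H^{2p-1}_{\fD}(X\times\square^{n}\setminus|Z|_{n},\R(p))$: the obstruction is the image of this class in $H^{2p}_{\fD,|W|_{n}\setminus|Z|_{n}}(X\times\square^{n}\setminus|Z|_{n},\R(p))$, and it vanishes because the currents $d[g_{n}]$ and $d[h_{n}]$ coincide and are supported on $|Z|_{n}$, so $g_{n}-h_{n}$ has no residue along $|W|_{n}\setminus|Z|_{n}$. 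Choosing $u_{n}\in\fA^{2p-2}_{\log}(X\times\square^{n}\setminus|W|_{n},p)_{0}$ realising this exactness, set $g'_{n}=g_{n}+du_{n}\in\fA^{2p-1}_{\log}(X\times\square^{n}\setminus|Z|_{n},p)_{0}$.

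Then I would descend by induction on $k=n-1,\dots,0$. Suppose $u_{n},\dots,u_{k+1}$ have been constructed so that $g'_{j}\coloneqq g_{j}+(-1)^{n-j-1}\delta u_{j+1}+du_{j}$ lies in $\fA^{2p-n+j-1}_{\log}(X\times\square^{j}\setminus|Z|_{j},p)_{0}$ for $j>k$, and put $\gamma_{k}=g_{k}+(-1)^{n-k-1}\delta u_{k+1}$ (with $\gamma_{0}=g_{0}+(-1)^{n-1}\delta u_{1}$), an element of $\fA^{2p-n+k-1}_{\log}(X\times\square^{k}\setminus|W|_{k},p)_{0}$. A direct computation from \eqref{item:21}--\eqref{item:22} gives $d\gamma_{k}=(-1)^{n-k-1}\delta g'_{k+1}$ (and, at the bottom, $d\gamma_{0}$ equals $(-1)^{n-1}\delta g'_{1}$ plus a form extending smoothly over $X$); since $g'_{k+1}$ has logarithmic singularities only along $|Z|_{k+1}$ and $\delta$ pulls $|Z|_{k+1}$ back into $|Z|_{k}$, this differential already lies in $\fA^{2p-n+k}_{\log}(X\times\square^{k}\setminus|Z|_{k},p)_{0}$. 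Hence the image of $\gamma_{k}$ is a cocycle of the quotient complex $\fA^{\ast}_{\log}(X\times\square^{k}\setminus|W|_{k},p)_{0}/\fA^{\ast}_{\log}(X\times\square^{k}\setminus|Z|_{k},p)_{0}$, whose cohomology in degree $2p-n+k-1$ is isomorphic to $H^{2p-n+k}_{\fD,|W|_{k}\setminus|Z|_{k}}(X\times\square^{k}\setminus|Z|_{k},\R(p))$; this vanishes by purity (Proposition \ref{prop:4}), because $|W|_{k}\setminus|Z|_{k}$ has codimension $\ge p$ and $2p-n+k<2p$ for $k<n$. Therefore there is $u_{k}\in\fA^{2p-n+k-2}_{\log}(X\times\square^{k}\setminus|W|_{k},p)_{0}$ with $g'_{k}\coloneqq\gamma_{k}+du_{k}\in\fA^{2p-n+k-1}_{\log}(X\times\square^{k}\setminus|Z|_{k},p)_{0}$; here one uses that the $(\,\cdot\,)_{0}$-complexes are direct summands of the full ones and that cohomology commutes with cubical normalisation (Propositions \ref{cubchain} and \ref{cubchain2}), so that the purity vanishing survives. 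This completes the induction and produces the desired $\fg'$.

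The main obstacle is the tightness of the degree count, entangled with the sign and normalisation bookkeeping. Every component $g_{k}$ sits in cohomological degree at most $2p-1$, i.e.\ exactly on the boundary of the range in which cohomology with supports in a codimension-$\ge p$ subset vanishes; this makes the inductive step automatic for $k<n$ but renders the top case $k=n$ genuinely delicate, as it depends on the vanishing of the residue of $g_{n}$ along $|W|_{n}\setminus|Z|_{n}$, which I would extract from $\delta_{Z}+d[g_{n}]=0$ via \cite[Theorem 4.4]{Burgos:Gftp}. The rest is the routine but lengthy verification that the signs in \eqref{eq:39}--\eqref{eq:40} are exactly those for which $\fg'-\fg=d_{s}\mathfrak u$, and hence that conditions \eqref{item:20}--\eqref{item:22} — in particular that $(-1)^{n}\delta g'_{1}+dg'_{0}$ extends to a smooth form on $X$ — are preserved.
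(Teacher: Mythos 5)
Your overall strategy is the paper's: read \eqref{eq:39}--\eqref{eq:40} as $\fg'=\fg+d_{s}u_{\bullet}$ in the $2$-iterated complex (so the Green-form equations are formally preserved), construct $u_{n}$ first, and then descend by induction using semipurity of Deligne--Beilinson cohomology with supports in the codimension $\ge p$ sets $|W|_{k}$. Your inductive step is correct and is in substance identical to the paper's: the identity $d\gamma_{k}=(-1)^{n-k-1}\delta g'_{k+1}$ checks out, and the vanishing of $H^{2p-n+k-1}$ of the quotient complex $\fA^{\ast}_{\log}(X\times\square^{k}\setminus|W|_{k},p)_{0}/\fA^{\ast}_{\log}(X\times\square^{k}\setminus|Z|_{k},p)_{0}$ (i.e.\ of $H^{2p-n+k}_{\fD,|W|_{k}\setminus|Z|_{k}}$) is, via the long exact sequence of the pair, exactly the injectivity in degree $2p-n+k$ and surjectivity in degree $2p-n+k-1$ of the restriction map that the paper feeds into its Lemma \ref{lemm:14}; your caveat about normalization being a direct summand is also how one transfers the vanishing to the $(\cdot)_{0}$-complexes.

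The weak point is the top step, which is where you diverge from the paper. The paper lifts the class $[Z]$ to $H^{2p}$ of the support complex $\fA^{\ast}_{|Z|}(X\times\square^{n},p)_{0}$ (using $[Z]|_{H_{n}}=0$ and semipurity) and compares it there with the class of $(0,g_{n})$ in $\fA^{\ast}_{|W|_{n}}(X\times\square^{n},p)_{0}$; this works uniformly in $n$. You instead invoke a basic Green form $h_{n}$ for $Z$ on $X\times\square^{n}$ with $\delta_{Z}+d[h_{n}]=0$, with singularities only along $|Z|_{n}$ \emph{and} lying in the subcomplex of forms vanishing on $H_{n}$. Neither Proposition \ref{prop:15} nor \cite[Theorem~4.4]{Burgos:Gftp}, as you cite them, gives this: you additionally need that the fundamental class of $Z$ in $H^{2p}_{\fD}(X\times\square^{n},\R(p))$ vanishes (true for $n\ge 1$ because $Z$ is normalized and restriction to a face is a cohomology isomorphism --- the argument used in the proof of Theorem \ref{thm:6}) in order to absorb the smooth form $\omega$ of a basic Green form, and you must still correct $h_{n}$ so that it vanishes on $H_{n}$. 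Moreover, for $n=0$ no such $h_{0}$ exists in general, since the class of $Z$ in $H^{2p}_{\fD}(X,\R(p))$ need not vanish, and the defining condition is only that $\delta_{Z}+d[g_{0}]$ be smooth; your residue argument then needs a patch (e.g.\ take $h_{0}$ a basic Green form with smooth $\omega$ and observe that a smooth $d[g_{0}-h_{0}]$ still produces no residue along $|W|_{0}\setminus|Z|_{0}$), or one simply follows the paper's support-complex argument, which covers $n=0$ with $H_{0}=\emptyset$. With these repairs your proof goes through, and the rest coincides with the paper's.
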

\begin{proof} We first note that the cohomology
  of the complex $\fA_{|Z|_{k}}^{\ast}(X\times \square^{k},p)_{0}$,
  fits in a long exact sequence
  \begin{displaymath}
    \to     H^{r}(\fA_{|Z|_{k}}^{\ast}(X\times \square^{k},p)_{0})
    \to H^{r}_{\DB,|Z|_{k}}(X\times \square^{k},p) \to
    H^{r}_{\DB,|Z|_{k}\cap H_{k}}(H_{k},p) \to \dots
  \end{displaymath}
  We now prove the existence of $u_{n}$. Since $Z\in Z^{p}(X,n)_{0}$
  the cohomology class of $Z$ with supports
  \begin{displaymath}
    [Z]\in H^{2p}_{\DB,|Z|}(X\times \square^{n},p)
  \end{displaymath}
  satisfies $[Z]\mid_{H_{n}}=0$. Therefore it lifts to a class in
  \begin{displaymath}
    H^{2p}(\fA_{|Z|}^{\ast}(X\times \square^{n},p)_{0}).
  \end{displaymath}
  This lifting is unique because $H^{2p-1}_{\fA,|Z|\cap
    H_{n}}(H_{n},p)=0$ by semipurity and the hypothesis that $Z$
  intersects properly all the faces.

  The pair $(0,g_{n})$ represents the class of $Z$ in the cohomology of
  the complex  $\fA_{|W|_{n}}^{\ast}(X\times
  \square^{n},p)_{0}$. Therefore there exists a pair
  \begin{displaymath}
    (\eta_{n},u_{n})\in \fA_{|W|_{n}}^{2p-1}(X\times
  \square^{n},p)_{0} 
  \end{displaymath}
  such that
  \begin{displaymath}
    (0,g_{n})-d(\eta_{n},u_{n})\in \fA_{|Z|}^{2p}(X\times
    \square^{n},p)_{0} .
  \end{displaymath}
  In particular equation \eqref{eq:39} is satisfied. Note that this
  proof work also when $n=0$, where $H_{n}=\emptyset$. 

  Assume now that $n>0$, that $0\le k \le n-1$ and that  the forms
  $u_{n},\dots, 
  u_{k+1}$ have been 
  found. We have to show the existence of $u_{k}$. Using the
  conditions satisfied by the $g_{k}$ and the $u_{k}$ one readily
  checks that
  \begin{equation}\label{eq:42}
    d(g_{k}+(-1)^{n-k+1}\delta u_{k+1})\in \fA^{2p-n+k}_{\log}(X\times
    \square^{k}\setminus 
      |Z|_{k},p)_{0}
  \end{equation}
  Using semi-purity of Deligne-Beilinson  cohomology and Proposition
  \ref{affine3} one can prove that
  the map
  \begin{displaymath}
    H^{r}(\fA^{\ast}_{\log}(X\times \square^{k}\setminus 
      |Z|_{k},p)_{0})\to
H^{r}(\fA^{\ast}_{\log}(X\times \square^{k}\setminus 
      |W|_{k},p)_{0})
  \end{displaymath}
  is injective for $r=2p-1$ and an isomorphism for $r<2p-1$. In
  particular, since $k\ne n-1$, it is injective for $r=2p-n+k$ and surjective for
  $r=2p-n+k-1$.  Thus the existence of $u_{k}$ follows from Lemma
  \ref{lemm:14} below. The last statement is clear form the definition
  of the $g'_{k}$.
\end{proof}

\begin{lem}\label{lemm:14}
  Let $A^{\ast}\to B^{\ast}$ be an injective morphism of complexes
  such that the map $H^{r}(A^{\ast})\to H^{r}(B^{\ast})$ is injective
  for $r=n$ and surjective for $r=n-1$. Let $x\in B^{n-1}$ such that
  $dx\in A^{n}$. Then there is an element $y\in B^{n-2}$ such that
  \begin{displaymath}
    x+dy\in A^{n-1}.
  \end{displaymath}
\end{lem}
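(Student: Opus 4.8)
The plan is to reduce the statement to a routine diagram chase in the long exact cohomology sequence attached to the inclusion $A^{\ast}\hookrightarrow B^{\ast}$. Since the morphism is injective, I would identify $A^{\ast}$ with a subcomplex of $B^{\ast}$ and form the quotient complex $C^{\ast}=B^{\ast}/A^{\ast}$, so that there is a short exact sequence $0\to A^{\ast}\to B^{\ast}\xrightarrow{\pi}C^{\ast}\to 0$ and hence a long exact sequence
\begin{displaymath}
  \cdots \to H^{n-1}(A^{\ast})\to H^{n-1}(B^{\ast})\xrightarrow{\pi_{\ast}} H^{n-1}(C^{\ast})\xrightarrow{\partial} H^{n}(A^{\ast})\to H^{n}(B^{\ast})\to \cdots
\end{displaymath}
Everything will be extracted from this sequence together with the two hypotheses: injectivity of $H^{n}(A^{\ast})\to H^{n}(B^{\ast})$ and surjectivity of $H^{n-1}(A^{\ast})\to H^{n-1}(B^{\ast})$.

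First I would observe that $\xi\coloneqq\pi(x)\in C^{n-1}$ is a cocycle: indeed $d\xi=\pi(dx)=0$ because $dx\in A^{n}$ by hypothesis, so $\xi$ defines a class $[\xi]\in H^{n-1}(C^{\ast})$. Next, by the description of the connecting homomorphism, $\partial[\xi]$ is the class of $dx$ in $H^{n}(A^{\ast})$ (note that $dx$ is a cocycle of $A^{\ast}$ since $d^{2}=0$). But $dx$ is a coboundary in $B^{\ast}$, hence its image in $H^{n}(B^{\ast})$ vanishes; by injectivity of $H^{n}(A^{\ast})\to H^{n}(B^{\ast})$ this forces $[dx]=0$ in $H^{n}(A^{\ast})$, i.e. $\partial[\xi]=0$. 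By exactness at $H^{n-1}(C^{\ast})$, the class $[\xi]$ therefore lies in the image of $\pi_{\ast}\colon H^{n-1}(B^{\ast})\to H^{n-1}(C^{\ast})$. Now the second hypothesis enters: since $H^{n-1}(A^{\ast})\to H^{n-1}(B^{\ast})$ is surjective, exactness at $H^{n-1}(B^{\ast})$ shows that $\pi_{\ast}$ is the zero map, and consequently $[\xi]=0$ in $H^{n-1}(C^{\ast})$.

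Finally, $[\xi]=0$ means that $\xi$ is a coboundary in $C^{\ast}$, say $\xi=d\zeta$ with $\zeta\in C^{n-2}$. I would lift $\zeta$ to some $y_{0}\in B^{n-2}$ (possible since $\pi$ is surjective in each degree); then $\pi(x-dy_{0})=\xi-d\zeta=0$, so $x-dy_{0}\in A^{n-1}$, and $y\coloneqq -y_{0}$ does the job: $x+dy\in A^{n-1}$. The argument is a pure diagram chase valid in any abelian category, so I do not expect a real obstacle; the only points that need care are to invoke each of the two hypotheses in the correct place (injectivity of $H^{n}$ to kill the connecting map, surjectivity of $H^{n-1}$ to make $\pi_{\ast}$ vanish) and to keep track of the sign in the final lifting step.
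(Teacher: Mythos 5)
Your proof is correct, and it is essentially the paper's argument repackaged: the paper performs the same diagram chase directly with elements, using injectivity of $H^{n}(A^{\ast})\to H^{n}(B^{\ast})$ to write $dx=dz$ with $z\in A^{n-1}$ and then surjectivity of $H^{n-1}(A^{\ast})\to H^{n-1}(B^{\ast})$ to correct the closed element $x-z$ by a coboundary into $A^{n-1}$, while you route the same two steps through the long exact sequence of the quotient $C^{\ast}=B^{\ast}/A^{\ast}$ (the connecting map vanishing, respectively $\pi_{\ast}=0$). Both chases are valid and use the hypotheses in the same places, so nothing is missing in your version.
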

\begin{proof}
  The element $dx\in A^{n}$ is closed in the complex $A^{\ast} $ and
  exact in the complex $B^{\ast}$. By the injectivity condition it is
  exact in $A^{\ast}$. So there is $z\in A^{n-1}$ with $dx=dz$. The
  element $x-z\in B^{n-1}$ is closed. By the surjectivity condition
  its cohomology class lies in the image of the cohomology of
  $A^{\ast}$. Thus there is an element $y\in B^{n-2}$ such that
  \begin{displaymath}
    x-z+dy\in A^{n-1}
  \end{displaymath}
proving the lemma.
\end{proof}

\begin{prop}\label{prop:18}
  Let $Z\in Z^{p}(X,n)_{0}$ be a cycle and $\fg=(g _n,\dots,g_0)$ a
  Green form for $Z$.   
    Then there is an element 
    \begin{displaymath}
      u\in \fA^{2p-2}_{\log}(X\times \square^{n}\setminus
      |Z|,p)_{0} 
    \end{displaymath}
    such that
    \begin{math}
      (g_{n}+d u,g_{n-1}+\delta u,g_{n-2},\dots,g_{0})
    \end{math}
    is a basic Green form for $Z$.  
\end{prop}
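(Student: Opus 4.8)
The plan is to modify only the top two slots of $\fg$: keep $g_{n-2},\dots,g_0$ fixed, replace $g_n$ by $g_n+du$ so that it becomes a basic Green form for $Z$ on $X\times\square^n$ in the sense of Definition \ref{def:23}, and replace $g_{n-1}$ by $g_{n-1}+\delta u$, the latter correction being forced on us if we want condition \eqref{item:21} (with $k=n$) to survive. First I would reformulate what has to be produced. Condition \eqref{item:20} of Definition \ref{GF} says $\delta_Z+d[g_n]=0$, i.e.\ $g_n\in\fA^{2p-1}_{\log}(X\times\square^n\setminus|Z|,p)_0$ is a Green current on $X\times\square^n$ (Remark \ref{rem:1}) for the algebraic cycle $Z$, with vanishing associated form and vanishing restriction to $H_n$; equivalently, the pair $(0,g_n)$ is a cycle of $\fA^{2p}_{|Z|}(X\times\square^n,p)_0$ representing the class $[Z]$. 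Exactly as in the proof of Proposition \ref{prop:16}, this class is the \emph{unique} lift to $H^{2p}(\fA^{\ast}_{|Z|}(X\times\square^n,p)_0)$ of the cohomology class with supports: here one uses that $Z$ meets all faces of $X\times\square^n$ properly (so that $H^{2p-1}_{\fA,|Z|\cap H_n}(H_n,p)=0$ by semipurity) together with $(\delta^i_1)^{\ast}Z=0$ (so that $[Z]|_{H_n}=0$).

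The heart of the proof will be the following claim: there is a basic Green form $h$ for $Z$ on $X\times\square^n$ which lies in $\fA^{2p-1}_{\log}(X\times\square^n\setminus|Z|,p)_0$ and satisfies $d[h]+\delta_Z=0$. Granting this, $(0,h)$ is a cycle of $\fA^{2p}_{|Z|}(X\times\square^n,p)_0$ that represents $[Z]$ after forgetting supports, hence by the uniqueness of the lift it represents the same class as $(0,g_n)$ in $H^{2p}(\fA^{\ast}_{|Z|}(X\times\square^n,p)_0)$. Therefore there is an element $(\eta,u)\in\fA^{2p-1}_{|Z|}(X\times\square^n,p)_0$ with $d_s(\eta,u)=(0,g_n)-(0,h)$; unwinding the definition of the simple \eqref{eq:71} this gives $d\eta=0$ and $g_n-h=\eta-du$, with $\eta\in\fA^{2p-1}(X\times\square^n,p)$ smooth, closed and vanishing on $H_n$, and $u\in\fA^{2p-2}_{\log}(X\times\square^n\setminus|Z|,p)_0$ as required by the statement. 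Since $h$ is a basic Green form and $\eta$ is smooth and closed, $h+\eta=g_n+du$ is again a basic Green form for $Z$ on $X\times\square^n$, still lies in $\fA^{2p-1}_{\log}(X\times\square^n\setminus|Z|,p)_0$, and satisfies $d[g_n+du]+\delta_Z=d[h]+\delta_Z=0$. I would deduce the claim from the existence theory of basic Green forms, \cite[Theorem 4.4]{Burgos:Gftp} (see also \cite[Theorem 5.9]{Burgos:CDB}): any class with supports in the relevant degree is represented by a basic Green form, the vanishing on $H_n$ being the same semipurity bookkeeping as above (using $(\delta^i_1)^{\ast}Z=0$), and the vanishing of the associated form following because $\delta_Z=-d[g_n]$ is already exact, so $[Z]=0$ in $H^{2p}_{\fA}(X\times\square^n,p)$ and the associated form of any basic Green form can be killed by subtracting a smooth closed form — a subtraction which, in the argument above, is simply absorbed into the term $\eta$, so it causes no conflict with the shape prescribed in Definition \ref{def:23}.

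Finally I would check that $\fg'\coloneqq(g_n+du,\,g_{n-1}+\delta u,\,g_{n-2},\dots,g_0)$ is again a Green form for $Z$; this is routine once one recalls that in the $2$-iterated complex $\fA^{\ast}_{\log}(X\times\square^{\bullet}\setminus|Z|_{\bullet},p)_0$ one has $d\delta=\delta d$ and $\delta^2=0$ and that $\delta$ preserves the subcomplexes of forms vanishing on the $H_k$. Condition \eqref{item:20} holds because $g_n+du$ is a basic Green form with associated form $0$. For \eqref{item:21} with $k=n$, $-\delta(g_n+du)+d(g_{n-1}+\delta u)=(-\delta g_n+dg_{n-1})+(d\delta u-\delta du)=0$; for $k=n-1$, $\delta(g_{n-1}+\delta u)+dg_{n-2}=\delta g_{n-1}+dg_{n-2}=0$; for $k\le n-2$ nothing changes; and condition \eqref{item:22} is unaffected, with $\omega(\fg')=\omega(\fg)$ (when $n=1$ the same cancellation $d\delta u=\delta du$ gives $\omega(\fg')=-\delta(g_1+du)+d(g_0+\delta u)=\omega(\fg)$). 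The main obstacle is the claim of the middle paragraph: producing a basic Green form for $Z$ on $X\times\square^n$ that \emph{simultaneously} has logarithmic singularities only along $|Z|$, vanishes on the normal crossings divisor $H_n$, and has vanishing associated form — that is, running the construction of \cite{Burgos:Gftp} relative to $H_n$, which is precisely where the hypotheses $(\delta^i_1)^{\ast}Z=0$ and proper intersection with the faces enter.
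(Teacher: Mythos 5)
Your reduction, the uniqueness-of-lift argument inside $\fA^{\ast}_{|Z|}(X\times \square^{n},p)_{0}$, and the final verification that $(g_{n}+du,\,g_{n-1}+\delta u,\,g_{n-2},\dots,g_{0})$ again satisfies Definition \ref{GF} are all sound (indeed more explicit than what the paper records, and working in the subcomplex of elements vanishing on $H_{n}$ is exactly what guarantees $u|_{H_{n}}=0$). The problem is the claim of your middle paragraph, which you yourself call the main obstacle: the existence of a basic Green form $h$ for $Z$ \emph{on} $X\times\square^{n}$, with logarithmic singularities only along $|Z|$, with $h|_{H_{n}}=0$, and with $d[h]+\delta_{Z}=0$ exactly. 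This is essentially the content of the proposition itself, and it is not delivered by the existence theory you cite: \cite{Burgos:Gftp} produces a basic Green form for the closure $\overline{Z}$ on the compactification $X\times(\P^{1})^{n}$ with no control on its restriction to $H_{n}$ and with an associated form that is merely closed. Killing the associated form is fine (though note you subtract a smooth form $\beta$ with $d\beta=\omega$, not a \emph{closed} form), but forcing $h|_{H_{n}}=0$ while preserving basicness and the singular support $|Z|$ is not ``semipurity bookkeeping'': semipurity, as used in Proposition \ref{prop:16}, gives uniqueness of the lift of a cohomology class to the subcomplex, not the existence of a representative with prescribed local singularity structure in that subcomplex. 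The obvious normalization (subtracting the degenerate pullbacks of $h|_{H_{n}}$) reintroduces logarithmic singularities along subsets not contained in $|Z|$ and spoils the Green equation, so one is back to a loose Green form. And if you weaken the claim by dropping $h|_{H_{n}}=0$, the comparison can only be run in $\fA^{\ast}_{|Z|}(X\times\square^{n},p)$ without the subscript $0$, and then the resulting $u$ need not vanish on $H_{n}$, so $g_{n}+du$ and $g_{n-1}+\delta u$ are no longer seen to lie in the groups demanded by Definition \ref{GF}.

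For comparison, the paper proves the statement the other way around: it takes \emph{any} basic Green form $g'$ for $\overline{Z}$ on $X\times(\P^{1})^{n}$ (\cite[Lemma 4.6]{Burgos:Gftp}), with no normalization along $H_{n}$ and no condition on its associated form, and then invokes the comparison result \cite[Proposition 5.5~(1)]{Burgos:CDB} to write $g_{n}+du=g'+\alpha$ on $X\times\square^{n}$ with $\alpha$ smooth; basicness is transported from $g'$, while the exact equation $\delta_{Z}+d[g_{n}+du]=0$ and the boundary behaviour come from $g_{n}$ rather than being built into the auxiliary form. So the auxiliary object never has to satisfy the three conditions simultaneously, which is precisely the step your argument leaves unproved. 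To repair your proof you would either have to establish your claim (essentially redoing the proposition), or restructure it along the paper's lines, i.e.\ compare $g_{n}$ directly with an unnormalized basic Green form for $\overline{Z}$ via the cited comparison proposition and then deal separately with the vanishing along $H_{n}$.
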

\begin{proof}
  By simplicity we assume that $Z$ is a prime cycle. The general case
  follows from this one by linearity.
  By \cite[Lemma 4.6]{Burgos:Gftp} a basic Green form $g'$  for $\overline
  Z$ on $X\times (\P^{1})^{n}$ exists. By \cite[Proposition
  5.5~(1)]{Burgos:CDB} there exist $u\in \fA^{2p-2}_{\log}(X\times
  \square^{n}\setminus |Z|,p)$ and $\alpha \in \fA^{2p-1}_{\log}(X\times
  \square^{n},p)$ such that, on $X\times \square^{n}$,
  \begin{displaymath}
    g_{n}+d u=g'+\alpha. 
  \end{displaymath}
  Since $g'+\alpha $ is a basic Green form for $Z$ on $X\times
  \square^{n}$, the proposition is proved.
\end{proof}

\begin{thm}\label{thm:11}
    Let $Z \in Z^p(X,n)_0$ be a cycle (that is, $\delta Z=0$). Then a
    basic Green form for $Z$ exists. 
\end{thm}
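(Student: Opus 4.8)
The plan is to produce the basic Green form by successively improving an arbitrary ``staircase'' of forms, chaining together the three preceding results: Proposition \ref{prop:15}, Proposition \ref{prop:16} and Proposition \ref{prop:18}. All the real work has been done in those propositions; here the task is just to line up their hypotheses.

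First I would invoke Proposition \ref{prop:15}: since $\delta Z=0$, there exists a loose $\fA$-Green form $\fg=(g_n,\dots,g_0)$ for $Z$. By construction each component $g_k$ lies in the direct limit $\fA^{2p-n+k-1}_{\log}(X\times\square^k\setminus\caZ^p,p)_0$, so there is some $W_k\in\caZ^p_k$ with $g_k\in\fA^{2p-n+k-1}_{\log}(X\times\square^k\setminus W_k,p)_0$; enlarging $W_n$ by adding the pull-backs $\pi^\ast W_k$ along the projections $\pi\colon X\times\square^n\to X\times\square^k$ produces a common $W\in\caZ^p_n$ with $W_k\subset|W|_k$ for all $k$, so that $\fg$ has singular support on $W$ in the sense of Definition \ref{def:5}. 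This passage from ``loose Green form'' to ``loose Green form with prescribed singular support'' is the one bookkeeping point that needs to be spelled out, and it is exactly the remark preceding Definition \ref{def:5}, where the directed-limit structure of the complexes $\fA_{\log}$ is used.

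Next I would apply Proposition \ref{prop:16} to this $\fg$: it furnishes forms $u_n,\dots,u_0$ such that the modified staircase $\fg'=(g'_n,\dots,g'_0)$, with $g'_n=g_n+du_n$ and $g'_k=g_k+(-1)^{n-k-1}\delta u_{k+1}+du_k$ for $k=0,\dots,n-1$, has each component with singularities only along $|Z|_k$; by the last sentence of that proposition $\fg'$ is then a genuine $\fA$-Green form for $Z$. Finally I would apply Proposition \ref{prop:18} to $\fg'$: it yields an element $u\in\fA^{2p-2}_{\log}(X\times\square^n\setminus|Z|,p)_0$ for which $(g'_n+du,\,g'_{n-1}+\delta u,\,g'_{n-2},\dots,g'_0)$ is a basic Green form for $Z$, i.e.\ its top component is a basic Green form for $Z$ on $X\times\square^n$ in the sense of Definition \ref{def:23}. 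This is the desired object, completing the proof. (For $n=0$ the chain of propositions degenerates but each step still applies, as noted in the proof of Proposition \ref{prop:16}.)

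The genuinely hard input — controlling the singularities while staying inside the subcomplexes of forms vanishing on the faces $H_k$ — has already been carried out in Proposition \ref{prop:16}, via the semipurity of Deligne--Beilinson cohomology and the acyclicity statement of Proposition \ref{affine3}, and the final passage to a \emph{basic} representative is handled in Proposition \ref{prop:18} using \cite[Lemma 4.6]{Burgos:Gftp} and \cite[Proposition 5.5(1)]{Burgos:CDB}. Consequently no further obstacle remains; the only content of the present proof is the observation that these three statements compose.
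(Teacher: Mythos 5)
Your proposal is correct and follows exactly the paper's own argument: the paper's proof of this theorem is precisely the concatenation of Propositions \ref{prop:15}, \ref{prop:16} and \ref{prop:18}, and your spelling out of the singular-support bookkeeping before applying Proposition \ref{prop:16} is the intended reading.
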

\begin{proof}
  Follows from propositions \ref{prop:15}, \ref{prop:16} and \ref{prop:18}.
\end{proof}

Given a
Green form $\fg_{Z}$, we next want to extract a Green 
current corresponding to it. 

\begin{prop}\label{GreenCurrent}
  Let $Z \in Z^p(X,n)_0$ be a cycle and
  $\fg_{Z}=(g_{n},\dots,g_{0})$ a loose Green form for $Z$.
  Then the current
    \begin{displaymath}
      [\fg_{Z}]\coloneqq \sum _{i=0}^n(p_{1})_{\ast}[g_{i}\cdot
      W_{i}]
    \end{displaymath}
    is a Green current for $Z$. 
\end{prop}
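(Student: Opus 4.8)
The plan is to show that $[\fg_{Z}]\in\fA^{2p-n-1}_{D}(X,p)$ and that $\caP_{\fA}(Z)+d[\fg_{Z}]=[\omega(\fg_{Z})]$, so that $[\fg_{Z}]$ is a Green current for $Z$ with associated form $\omega(\fg_{Z})$. The degree claim is immediate: $g_{i}$ has degree $2p-n+i-1$, $p_{2}^{\ast}W_{i}$ has degree $i$, and $(p_{1})_{\ast}\colon\fA^{\ast}_{D}(X\times\square^{i},p)\to\fA^{\ast-2i}_{D}(X,p)$, so every summand $(p_{1})_{\ast}[g_{i}\cdot W_{i}]$ has degree $2p-n-1$. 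The case $n=0$ is the classical situation and follows at once from the defining equation of $g_{0}$, so assume $n\ge 1$.

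First I would reduce to the case in which $\fg_{Z}$ is a \emph{basic} Green form. Fix a common singular support $W\in\caZ^{p}_{n}$ for $\fg_{Z}$ (possible as in the remark before Definition \ref{def:5}); Propositions \ref{prop:16} and \ref{prop:18} then yield a basic Green form $\fg'_{Z}$ whose components differ from those of $\fg_{Z}$ by expressions of the form $du_{k}+(-1)^{n_{k}}\delta u_{k+1}$, where $n_{k}=2p-n+k-1$ and each $u_{k}$ lies in $\fA^{2p-n+k-2}_{\log}(X\times\square^{k}\setminus|W|_{k},p)_{0}$. Running the computation below with the $u_{k}$ in place of the $g_{k}$ — now without any Green-form relations, using only that the $u_{k}$ vanish on the $H_{k}$ and have degree $<2p-1$ — shows that $[\fg_{Z}]-[\fg'_{Z}]=\mp\,d\sum_{k}(p_{1})_{\ast}[u_{k}\cdot W_{k}]$ is an exact current, while $d\delta=\delta d$ and $\delta^{2}=0$ give $\omega(\fg_{Z})=\omega(\fg'_{Z})$. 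Hence it suffices to treat the basic case, where Lemma \ref{lemm:13} applies literally.

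So let $\fg_{Z}$ be basic. For $1\le i\le n$ apply Lemma \ref{lemm:13}(1) to $g_{i}\cdot p_{2}^{\ast}W_{i}=g_{i}\cdot\pi_{1}^{\ast}\lambda\cdots\pi_{i}^{\ast}\lambda$ on $X\times(\P^{1})^{i}$: each $\pi_{a}^{\ast}\lambda$ is a basic Green form for $\pi_{a}^{-1}([0]-[\infty])$ with associated form $0$; $g_{i}$ sits in the range $n_{i}<2p-1$ when $i<n$, and for $i=n$ it is basic with $\omega_{g_{n}}=0$ by \eqref{item:20}. Since $(\delta^{a}_{1})^{\ast}g_{i}=0$, all face contributions of $\pi_{a}^{\ast}\lambda$ along $t_{a}=\infty$ vanish; the $t_{a}=0$ contributions, after pushing forward along $p_{1}$ (which commutes with the face maps) and matching the sign $(-1)^{a}$ in \eqref{eq:17} with $\delta g_{i}=\sum_{a}(-1)^{a}(\delta^{a}_{0})^{\ast}g_{i}$, combine to $(-1)^{n_{i}}(p_{1})_{\ast}[\delta g_{i}\cdot W_{i-1}]$. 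Using $d[g_{n}]=-\delta_{Z}$ together with $(p_{1})_{\ast}(\delta_{Z}\cdot[p_{2}^{\ast}W_{n}])=\caP_{\fA}(Z)$ for $i=n$, and $d(p_{1})_{\ast}[g_{0}]=[dg_{0}]$ for $i=0$, I obtain
\[
  \sum_{i=0}^{n}d(p_{1})_{\ast}[g_{i}\cdot W_{i}]
  =-\caP_{\fA}(Z)+\sum_{i=0}^{n-1}(p_{1})_{\ast}\bigl[\bigl(dg_{i}+(-1)^{n_{i+1}}\delta g_{i+1}\bigr)\cdot W_{i}\bigr].
\]

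Finally the defining relations of a Green form collapse the sum: for $1\le i\le n-1$, \eqref{item:21} with $k=i+1$ gives $dg_{i}=(-1)^{n-i+1}\delta g_{i+1}$ and $(-1)^{n-i+1}+(-1)^{n_{i+1}}=0$, so those terms cancel; for $i=0$, $(-1)^{n_{1}}=(-1)^{n}$, so \eqref{item:22} reads $dg_{0}+(-1)^{n_{1}}\delta g_{1}=\omega(\fg_{Z})$ and contributes $(p_{1})_{\ast}[\omega(\fg_{Z})]=[\omega(\fg_{Z})]$. Hence $\caP_{\fA}(Z)+d[\fg_{Z}]=[\omega(\fg_{Z})]$, completing the proof. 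The one genuinely delicate point throughout is the sign bookkeeping: the signs in \eqref{eq:17}, the Koszul signs $(-1)^{n_{i}}$ coming from the degrees of the $g_{i}$, and the signs built into the cubical differential $\delta$ and into \eqref{item:20}–\eqref{item:22} must be reconciled so that all intermediate terms telescope and only $[\omega(\fg_{Z})]-\caP_{\fA}(Z)$ survives; the analytic content — local integrability of each $g_{i}\cdot W_{i}$ on $X\times(\P^{1})^{i}$ and the boundary formulas for the associated currents — is precisely what Lemma \ref{lemm:13} provides in the basic case.
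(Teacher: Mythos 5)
Your proof is correct and follows essentially the same route as the paper: reduce to a situation where Lemma \ref{lemm:13} applies literally and then telescope the residue relations against the defining equations of the Green form, exactly as in the paper's computation. The only organizational difference is that the paper corrects just the top component $g_{n}$ (citing \cite[Proposition 3.3]{Burgos:Gftp} for local integrability of the loose products $g_{i}\cdot W_{i}$), whereas you pass to a fully basic Green form via Propositions \ref{prop:16} and \ref{prop:18}; this is equivalent in substance, and your exactness computation for the correction terms also supplies the integrability needed to define $[\fg_{Z}]$, since Lemma \ref{lemm:13} covers the products involving $du_{k}$.
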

\begin{proof} For simplicity we
  compute on the Thom-Whitney simple. Strictly speaking, Lemma
  \ref{lemm:13} does not apply here because $\fg_{Z}$ is not a basic
  Green form. Nevertheless \cite[Proposition 3.3]{Burgos:Gftp} still
  implies that all the forms $g_{i}\cdot W_{i}$ are locally
  integrable.  Let $W$ be the singular support of $\fg_{Z}$. Arguing
  as 
  in Proposition \ref{prop:18}, there is an 
  element 
    \begin{displaymath}
      u\in \fA^{2p-2}_{\log}(X\times \square^{n}\setminus
      |W|_{n},p)_{0} 
    \end{displaymath}
    such that $g_{n}'\coloneqq g_{n}+d u $ is a basic Green form for
    $Z$ on $X\times \square^{n}$. Write $g_{n-1}'=g_{n-1}+\delta u$
    and $g'_{k}=g_{k}$ for $k\le 2$. Since, using Lemma \ref{lemm:13}
    and~\eqref{eq:17},
    \begin{displaymath}
      d(p_{1})_{\ast}[u\cdot W_{n}]=(p_{1})_{\ast}[d u\cdot
      W_{n}]+(p_{1})_{\ast}[\delta u\cdot W_{n-1}], 
    \end{displaymath}
    it is enough to prove the result for
    $\fg'_{Z}=(g'_{n},\dots,g'_{0})$. Thus from now on we assume that
    $g_{n}$ is a basic Green form for $Z$ on $X\times \square^{n}$. 

    By Lemma \ref{lemm:13} again we
    have the residue relation
    \begin{displaymath}
      d[g_n\cdot W_n]=-\delta_Z\cdot W_n-[\delta g_n\cdot W_{n-1}],
    \end{displaymath}
    and for $r<n$, the relations
    \begin{displaymath}
      d[g_r\cdot W_r]-[dg_r\cdot W_r]=(-1)^{n-r+1}[\delta g_r\cdot W_{r-1}].
    \end{displaymath}
    Using the above residue relations, and the step relations
    defining $\fg_Z$, we compute 
      \begin{align*}
    d[\fg_Z]&=\sum _{i=0}^{n}d(p_{1})_{\ast}[g_{i} \cdot W_{i}]\\
    &= \sum _{i=0}^{n}(p_{1})_{\ast}d[g_{i}\cdot W_{i}]\\
       &=-(p_1)_{\ast}(\delta_Z\cdot
         W_n)+\underbrace{\sum^n_{i=1}(-1)^{n-i+1}(p_1)_{\ast}[\delta
         g_i\cdot W_{i-1}]}_{u}
         \\ 
       &\phantom{AA}+\underbrace{\sum^{n-1}_{i=0}(-1)^{n-i-1}(p_1)_{\ast}[\delta
         g_{i+1}\cdot W_i]}_{v}+\omega(\fg_{Z})
         \\ 
    &= \omega (\fg_{Z})-(p_{1})_{\ast}(\delta _{Z}\cdot W_{n})\\
        &=\omega (\fg_{Z})-\caP(Z),
      \end{align*}
      since $u$ and $v$ have opposite signs, they cancel. This proves
      the statement. 
    \end{proof}
    
\begin{prop}\label{prop:8}
  Let $Z \in Z^p(X,n)_0$ be a cycle, and
  $\widetilde g_{Z}$ a class of $\fA$-Green currents for $Z$. Then there
  exists a basic $\fA$-Green form 
  $\fg_{Z}$  such that
  \begin{displaymath}
    [\fg_{Z}]\in \widetilde g_{Z}.
  \end{displaymath}
\end{prop}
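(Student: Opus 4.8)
The plan is to obtain the desired representative by starting from \emph{any} basic Green form and correcting it, in its lowest slot, by a globally smooth form on $X$.

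First I would invoke Theorem~\ref{thm:11} to fix a basic $\fA$-Green form $\fg_{Z}=(g_{n},\dots,g_{0})$ for $Z$. By Proposition~\ref{GreenCurrent} the associated current
\begin{displaymath}
  [\fg_{Z}]=\sum_{i=0}^{n}(p_{1})_{\ast}[g_{i}\cdot W_{i}]
\end{displaymath}
is an $\fA$-Green current for $Z$, so it represents a class of $\fA$-Green currents for $Z$. Now I would apply Lemma~\ref{lemm:2} to the two classes $\widetilde g_{Z}$ and the class of $[\fg_{Z}]$: both are classes of $\fA$-Green currents for the same cycle $Z$, hence their difference is represented by a smooth form, i.e.\ there is $\eta\in\fA^{2p-n-1}(X,p)$ with
\begin{displaymath}
  \widetilde g_{Z}-\widetilde{[\fg_{Z}]}=\widetilde{[\eta]}\qquad\text{in }\widetilde{\fA}^{2p-n-1}_{D}(X,p).
\end{displaymath}

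Next I would put $\fg'_{Z}\coloneqq(g_{n},\dots,g_{1},g_{0}+\eta)$ and check that it is still a basic Green form for $Z$. Since $H_{0}=\emptyset$ and $\eta$ is smooth on all of $X$, the form $g_{0}+\eta$ still belongs to $\fA^{2p-n-1}_{\log}(X\times\square^{0}\setminus|Z|_{0},p)_{0}$; conditions \eqref{item:20} and \eqref{item:21} involve only $g_{n},\dots,g_{1}$ and are unchanged, while the left-hand side of \eqref{item:22} passes from $\omega(\fg_{Z})$ to $\omega(\fg_{Z})+d\eta$, which is still a smooth form on $X$; and $g_{n}$ is untouched, so $\fg'_{Z}$ is still basic. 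Because $W_{0}=1$ and $p_{1}\colon X\times\square^{0}\to X$ is the identity, the only summand of $[\fg'_{Z}]$ that differs from the corresponding one of $[\fg_{Z}]$ is the $i=0$ term, which becomes $[g_{0}+\eta]=[g_{0}]+[\eta]$; hence $[\fg'_{Z}]=[\fg_{Z}]+[\eta]$, and therefore, at the level of classes, $\widetilde{[\fg'_{Z}]}=\widetilde{[\fg_{Z}]}+\widetilde{[\eta]}=\widetilde g_{Z}$, i.e.\ $[\fg'_{Z}]\in\widetilde g_{Z}$, as required. The case $n=0$ is identical, the correction $\eta$ now being added to the basic Green form $g_{0}$ itself, which does not affect basicness.

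I do not expect a genuine obstacle here: all the substance is packaged into Theorem~\ref{thm:11}, Proposition~\ref{GreenCurrent} and Lemma~\ref{lemm:2}. The only point that needs a short verification is that the slot-$0$ modification by a globally smooth form simultaneously preserves the (basic) Green-form conditions and, at the level of currents, realizes exactly the addition of $[\eta]$ — both of which follow immediately once the definitions are unwound and one uses $W_{0}=1$ together with the identification $X\times\square^{0}=X$.
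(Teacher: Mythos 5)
Your argument is correct and is essentially the paper's own proof: take a basic Green form via Theorem~\ref{thm:11}, pass to its Green current via Proposition~\ref{GreenCurrent}, use Lemma~\ref{lemm:2} to write the discrepancy with $\widetilde g_{Z}$ as the class of a smooth form, and absorb that form into the degree-zero slot $g_{0}$. The extra verifications you spell out (that adding a globally smooth form to $g_{0}$ preserves the basic Green-form conditions and shifts the associated current exactly by $[\eta]$, using $W_{0}=1$ and $X\times\square^{0}=X$) are left implicit in the paper but are exactly the right checks.
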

\begin{proof}
  By Theorem \ref{thm:11}, a basic Green form $\fg'_{Z}=(g_{n},\dots,g_{0})$
  for $Z$ exists. By
  Proposition \ref{GreenCurrent} the current $[\fg'_{Z}]$ is a Green
  current for $Z$. By Lemma \ref{lemm:2}, there is a current $u\in
  \fA^{2p-n-2 }_{D}(X,p)$ and a 
  form $v\in \fA^{2p-n-1 }(X,p)$ such that
  \begin{displaymath}
    g_{Z}- [\fg'_{Z}] = v + du.
  \end{displaymath}
  Writing $\fg_{Z}=(g_{n},\dots,g_{0}+v)$ we obtain the result.
\end{proof}

We next discuss when two Green forms give rise to the same Green
current.

\begin{prop}\label{prop:17} Let $Z\in Z^{p}(X,n)_{0}$ be a cycle and
  let $\fg=(g_{n},\dots,g_{0})$ and $\fg'=(g'_{n},\dots,g'_{0})$ be
  two loose Green forms for the cycle $Z$ with singular support
  contained in a subset $W\in \caZ^{p}_{n}$. Then
  \begin{equation}\label{eq:20}
    [\fg]^{\sim}=[\fg']^{\sim}
  \end{equation}
  if and only if there are elements $u_{n},\dots,u_{0}$,with
  \begin{displaymath}
    u_{k}\in \fA^{2p-n+k-2}_{\log}(X\times \square^{k}\setminus
    |W|_{k},p)_{0} 
  \end{displaymath}
  such that
  \begin{align*}
    g_{n}'-g_{n}&=d u_{n}\\
    g_{k}'-g_{k}&= (-1)^{n-k-1}\delta u_{k+1}+d u_{k},\quad
                  k=0,\dots,n-1.\\
  \end{align*}
  If $\fg$ and $\fg'$ are Green forms for $Z$ then we can choose
  \begin{displaymath}
    u_{k}\in \fA^{2p-n+k-2}_{\log}(X\times \square^{k}\setminus
    |Z|_{k},p)_{0}.
  \end{displaymath}
\end{prop}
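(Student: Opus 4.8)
The plan is to prove the two implications separately: the forward (``if'') direction is a direct differentiation, while the backward (``only if'') direction is where the comparison machinery of Section~\ref{sec:regulator-map-cycles} has to be invoked. For the ``if'' direction, assume the forms $u_{n},\dots,u_{0}$ with the stated relations are given, and set $U=\sum_{k=0}^{n}(p_{1})_{\ast}[u_{k}\cdot W_{k}]\in\fA^{2p-n-2}_{D}(X,p)$. I would then compute $dU$ exactly as in the proof of Proposition~\ref{GreenCurrent}: by Lemma~\ref{lemm:13} together with the residue identity~\eqref{eq:17} for $[W_{k}]$, differentiating the $k$-th term yields a ``diagonal'' piece $(p_{1})_{\ast}[du_{k}\cdot W_{k}]$ and an ``off-diagonal'' piece $\pm(p_{1})_{\ast}[\delta u_{k}\cdot W_{k-1}]$; after re-indexing, the off-diagonal contributions reassemble into the $\delta u_{k+1}$-terms of the hypotheses, so that $dU=\sum_{k}(p_{1})_{\ast}[(g'_{k}-g_{k})\cdot W_{k}]=[\fg']-[\fg]$, i.e. $[\fg]^{\sim}=[\fg']^{\sim}$. (Since each $u_{k}$ has cohomological degree $\le 2p-2$, its current carries no residue along $|W|_{k}$, so no extra boundary terms appear; this is the part of Lemma~\ref{lemm:13} with ``$n_{i}<2p_{i}-1$''.)

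For the ``only if'' direction, assume $[\fg']-[\fg]=dT$ for a current $T$. The first step is to observe that $\omega(\fg)=\omega(\fg')$: applying $d$ to $[\fg']-[\fg]=dT$ and using Definition~\ref{def:8} gives $0=[\omega(\fg')]-[\omega(\fg)]$ in $\fA^{2p-n}_{D}(X,p)$, and since $[\,\cdot\,]\colon\fA^{2p-n}(X,p)\hookrightarrow\fA^{2p-n}_{D}(X,p)$ is injective this forces $\omega(\fg')=\omega(\fg)$. Consequently $\fh:=\fg'-\fg=(h_{n},\dots,h_{0})$, $h_{k}=g'_{k}-g_{k}$, satisfies (subtracting \eqref{item:20}, \eqref{item:21}, \eqref{item:22} for $\fg$ and $\fg'$) the ``homogeneous'' staircase relations $dh_{n}=0$ on $X\times\square^{n}\setminus|W|_{n}$, $(-1)^{n-k+1}\delta h_{k}+dh_{k-1}=0$, and $(-1)^{n}\delta h_{1}+dh_{0}=0$. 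Equivalently, with $\omega_{i}=0$ for $i\ge1$ and $\omega_{0}=\omega(\fg)$, the tuples $\beta_{\fg}=((0,g_{n}),\dots,(0,g_{1}),(\omega(\fg),g_{0}))$ and $\beta_{\fg'}$ are $d_{s}$-cycles in $\tfA^{2p-n}_{\A,\mathcal{Z}^{p}}(X,p)_{0}$ with $\gamma'_{1}(\beta_{\fg})=\gamma'_{1}(\beta_{\fg'})=\cl(Z)$ --- here I use condition~\eqref{item:20} and \cite[Theorem 4.4~(1)]{Burgos:Gftp}, exactly as in the proof of Proposition~\ref{prop:15} --- so that $\mathfrak{d}:=\beta_{\fg'}-\beta_{\fg}=((0,h_{n}),\dots,(0,h_{0}))$ is a $d_{s}$-cycle with $\gamma'_{1}(\mathfrak{d})=0$.

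Now I would run the disentangling argument of Proposition~\ref{prop:15}. Since $\gamma'_{1}$ is a quasi-isomorphism (Proposition~\ref{difcubs}) and $\gamma'_{1}(\mathfrak{d})=0$, we may write $\mathfrak{d}=d_{s}\mathfrak{e}$ with $\mathfrak{e}=((\eta_{n},v_{n}),\dots,(\eta_{0},v_{0}))$. The first components of $d_{s}\mathfrak{e}$ must vanish, so $(\eta_{n},\dots,\eta_{0})$ is a $d_{s}$-cycle in $\tfA^{2p-n-1}_{\A}(X,p)_{0}$; using the acyclicity of $\tfA^{\ast,-k}_{\A}(X,p)_{0}$ for $k>0$ (Proposition~\ref{affine3}\eqref{item:24}) and subtracting a suitable $d_{s}$-coboundary from $\mathfrak{e}$, I can arrange $\eta_{k}=0$ for all $k\ge1$, leaving only a \emph{closed smooth} form $\eta_{0}\in\fA^{2p-n-1}(X,p)$. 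Unwinding $d_{s}\mathfrak{e}=\mathfrak{d}$ then gives $h_{k}=\pm du_{k}\pm\delta u_{k+1}$ for $k\ge1$ and $h_{0}=\eta_{0}\pm du_{0}\pm\delta u_{1}$ with $u_{k}:=\pm v_{k}$ of singular support in some $W'\supseteq W$. To kill $\eta_{0}$ I use the hypothesis a second time: as in the ``if'' computation (each $v_{k}$ has degree $\le 2p-2$, hence no residue), $[\fg']-[\fg]=[\eta_{0}]+d\big(\sum_{k}(p_{1})_{\ast}[v_{k}\cdot W_{k}]\big)$, so $[\eta_{0}]$ is exact; being a smooth closed form, $\eta_{0}=d\sigma$ for a smooth $\sigma$, and replacing $u_{0}$ by $u_{0}\mp\sigma$ absorbs it. Finally I descend the singular support from $W'$ to $|W|_{k}$ (and, when $\fg,\fg'$ are honest Green forms so that already $h_{k}$ lives on $|Z|_{k}$, down to $|Z|_{k}$) by the semi-purity argument of Proposition~\ref{prop:16}: the restriction maps $H^{r}(\fA^{\ast}_{\log}(X\times\square^{k}\setminus|W|_{k},p)_{0})\to H^{r}(\fA^{\ast}_{\log}(X\times\square^{k}\setminus W'_{k},p)_{0})$ are injective for $r=2p-1$ and isomorphisms for $r<2p-1$, which lets one rewrite the staircase with the smaller support inductively from the top down.

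The principal obstacle is this backward direction, and within it the bookkeeping rather than any genuinely new idea: one must (a) recognize the difference of two Green forms as a $d_{s}$-cycle with vanishing class in $\tfA_{\A,\mathcal{Z}^{p}}$, (b) disentangle a bounding tuple so that only smooth data in cubical degree $0$ survives, (c) use exactness of the associated current a \emph{second} time to annihilate that residual smooth form, and (d) shrink supports. The real friction is keeping the signs consistent across the cubical differential, the Thom--Whitney twists, and the $(-1)^{n-k}$ factors in the staircase relations; all the substantive input is already contained in Propositions~\ref{GreenCurrent}, \ref{prop:15}, \ref{prop:16}, \ref{difcubs}, \ref{affine3} and Lemma~\ref{lemm:13}.
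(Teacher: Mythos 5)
Your forward direction coincides with the paper's (set $u=\sum_{k}(p_{1})_{\ast}[u_{k}\cdot W_{k}]$ and compute as in Proposition \ref{GreenCurrent}), and your reduction of the converse to the homogeneous staircase $\fh=\fg'-\fg$ with $\omega(\fh)=0$ is also how the paper begins. The gap is in the converse, at the point where you write $\mathfrak{d}=d_{s}\mathfrak{e}$ with $\mathfrak{e}=((\eta_{n},v_{n}),\dots,(\eta_{0},v_{0}))$. An element of total degree $2p-n-1$ of $\tfA^{2p-\ast}_{\A,\caZ^{p}}(X,p)_{0}$ may also have a component in cubical degree $n+1$, of form degree $2p$, which survives the truncation; the paper's proof of Proposition \ref{prop:15} has to handle exactly this term (the $\eta_{n+1}$ there). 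If such a component $(\eta_{n+1},v_{n+1})$ is present, unwinding $d_{s}\mathfrak{e}=\mathfrak{d}$ at the top cubical level yields $h_{n}=\pm dv_{n}\pm\delta v_{n+1}$ after the $\eta$'s are killed, not the required $g'_{n}-g_{n}=du_{n}$; the class of $(\eta_{n+1},v_{n+1})$ is a real higher cycle in $Z^{p}(X,n+1)_{0}\otimes\R$ and cannot simply be dropped. It can be removed (for instance, subtract from $\mathfrak{e}$ a $d_{s}$-cycle of the same total degree whose top component represents that class, which exists because $\gamma'_{1}$ is a surjective quasi-isomorphism; the top component is then $d$-exact in its column and can be stripped off), but this is an extra argument your proposal never makes. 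A second soft spot is the final descent of singular supports from $W'$ to $|W|_{k}$: below the top level, bare injectivity of the restriction map on $H^{2p-1}$ does not conclude, because the element you need to integrate at level $k<n$ is not visibly exact in the large-support complex (it differs from $dv_{k}$ by $\delta$ of a non-closed correction). What does work is the two-step mechanism of Lemma \ref{lemm:14}, exactly as in the proof of Proposition \ref{prop:16}: correct $v_{k}$ by $ds_{k}$ so that it lies in the small-support complex and propagate $\mp\delta s_{k}$ to $v_{k-1}$; this should be spelled out rather than asserted.

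Note that both difficulties evaporate in the paper's own argument, which never leaves the fixed support $W$: one applies Proposition \ref{prop:16} to $\fg''=\fg'-\fg$, a loose Green form for the zero cycle with singular support in $W$ and $\omega(\fg'')=0$, obtaining corrections $u'_{k}$ with singularities only along $|W|_{k}$ that make the staircase globally smooth; then the closed smooth tuple is pushed to cubical degree $0$ using the column acyclicity of Proposition \ref{affine3}~\eqref{item:24}, and the hypothesis \eqref{eq:20} is used once, at the very end, to see that the remaining closed smooth form is a boundary in $\fA^{\ast}(X,p)$; finally $u_{k}=u'_{k}+u''_{k}$. With that route the last statement of the proposition is literally the case $W=|Z|$, with no further descent needed. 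Your strategy can be repaired along the lines indicated, but as written the converse direction is incomplete.
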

\begin{proof}
  Assume that such elements exist. Write
  \begin{displaymath}
    u=\sum _{i=0}^n(p_{1})_{\ast}[u_{i}\cdot
      W_{i,\fA}].
    \end{displaymath}
    A computation similar to the one in the proof of Proposition
    \ref{GreenCurrent} shows that 
    \begin{displaymath}
      [\fg']-[\fg]=d u
    \end{displaymath}
    so $[\fg']$ and $[\fg]$ represent the same class of Green
    currents.

    Conversely, assume that equation \eqref{eq:20} is satisfied. Write
    $\fg''=\fg'-\fg$. Then $\fg''$ is a loose Green form  for the
    cycle zero ans singulr support contained in $W$. Moreover $\omega
    (\fg'')=0$. Applying Proposition 
    \ref{prop:16} to $\fg''$ we find elements $u'_{n},\dots ,u'_{0}$
    such that
    \begin{align*}
      \alpha _{n}&\coloneqq g_{n}-g'_{n}+d u'_{n}\in
                   \fA^{2p-1}_{\log}(X\times \square^{n},p)_{0},\\
      \alpha _{k}&\coloneqq g_{k}-g'_{k}+(-1)^{n-k-1}\delta u'_{k+1}+d u'_{k}
                   \in \fA^{2p-n+k-1}_{\log}(X\times \square^{k},p)_{0},
    \end{align*}
    for $k=0,\dots,n-1.$
    Therefore
    \begin{displaymath}
      \alpha \coloneqq (0,\alpha _{n},\dots,\alpha _{0})\in \tfA_{\A}^{2p-n-1}(X,p)_{0}.
    \end{displaymath}
    The properties of Green form imply that $\alpha $ is closed.
    By Proposition \ref{affine3}~\eqref{item:24} and induction over
    $n-k$, we can find elements 
    $u''_{n},\dots ,u''_{1}$ such that 
    \begin{align*}
      0&= \alpha _{n} +d u''_{n},\\
      0&=\alpha _{k}+(-1)^{n-k-1}\delta u''_{k+1}+d u''_{k},\quad k=1,\dots,n-1.
    \end{align*}
    So $\alpha $ is cohomologous to the element
    \begin{displaymath}
      (0,\dots,0,\alpha_{0}+(-1)^{n-1}\delta u''_{1})
    \end{displaymath}
    Now equation \eqref{eq:20} implies that $\alpha
    _{0}+(-1)^{n-1}\delta u''_{1}$ is a boundary in the complex
    $\fA^{\ast}(X,p)$, so there is a $u''_{0}$ such that
    \begin{displaymath}
      \alpha _{0}+(-1)^{n-1}\delta u''_{1} + du''_{0}=0.
    \end{displaymath}
    Writing $u_{k}=u_{k}'+u_{k}''$ we obtain the elements needed to
    prove the converse statement.

    The last statement is the particular case when $W=|Z|$.
\end{proof}

\subsection{Functorial properties of Green currents and Green forms} 
\label{sec:funct-prop-green}
We next study the functorial properties of Green currents and Green
forms. For the direct image it is better to work with Green currents,
while for inverse images it is necessary to work with Green forms.

As in the classical case, for the direct image to exists at the level
of Green currents, we need a smooth proper map.

\begin{prop}\label{prop:14} Let $f$ be a map between smooth
  projective complex varieties. Let $Z\in Z^{p}(X,n)$ be a cycle and
  $g_{Z}$ a Green current for $Z$. Then, if $f$ is smooth of $\omega
  (g_{Z})=0$, then the current $f_{\ast}g_{Z}$ is
  a Green current for the cycle $f_{\ast}Z$.
\end{prop}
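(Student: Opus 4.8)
The plan is to derive the statement formally from the defining identity of a Green current together with the compatibility of the Goncharov regulator with proper direct images (Proposition \ref{prop:3}). First I would record that a morphism $f\colon X\to Y$ of smooth projective complex varieties is automatically proper, of some relative dimension $e$, so that $f_{\ast}$ is defined on cycles with a shift in grading ($\S\ref{functchow}$), sending $Z^{p}(X,n)_{0}$ to $Z^{p-e}(Y,n)_{0}$, and on the current-valued Deligne complexes $\fA_{D}$, which are covariant with respect to proper morphisms (Example \ref{exm:7}); moreover $f_{\ast}$ commutes with the differential $d$ in all these complexes. Since $Z$ is a cycle, i.e.\ $\delta Z=0$, and $f_{\ast}$ is a chain map on $Z^{p}(X,\ast)_{0}$, we get $\delta(f_{\ast}Z)=f_{\ast}(\delta Z)=0$, so $f_{\ast}Z$ is again a cycle and it is legitimate to ask whether $f_{\ast}g_{Z}$ is a Green current for it.

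Next I would apply $f_{\ast}$ to the defining equation $\caP_{\fA}(Z)+dg_{Z}=[\omega_{Z}]$ of Definition \ref{def:8}, where $\omega_{Z}=\omega(g_{Z})\in\fA^{2p-n}(X,p)$ is smooth. Using that $f_{\ast}$ commutes with $d$ and the compatibility $\caP_{\fA}\circ f_{\ast}=f_{\ast}\circ\caP_{\fA}$ of Proposition \ref{prop:3}, this yields
\begin{displaymath}
  \caP_{\fA}(f_{\ast}Z)+d(f_{\ast}g_{Z})=f_{\ast}[\omega_{Z}]\in\fA^{2(p-e)-n}_{D}(Y,p-e).
\end{displaymath}
Thus it suffices to show that $f_{\ast}[\omega_{Z}]$ lies in the image of the inclusion $[\cdot]\colon\fA^{2(p-e)-n}(Y,p-e)\to\fA^{2(p-e)-n}_{D}(Y,p-e)$, i.e.\ is represented by a smooth form on $Y$. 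If $\omega(g_{Z})=0$ this is trivial, since then $f_{\ast}[\omega_{Z}]=f_{\ast}[0]=0=[0]$, and we conclude that $f_{\ast}g_{Z}$ is a Green current for $f_{\ast}Z$ with $\omega(f_{\ast}g_{Z})=0$. If instead $f$ is smooth, then $f$ is a proper submersion, and I would invoke the fact that integration along the fibres of such a map carries smooth forms to smooth forms, is compatible with the pushforward of currents (with the sign and twist conventions of $\S\ref{sec:conv-diff-forms}$), and respects the Hodge filtration $F^{\bullet}$ and the real structures cutting out the Thom--Whitney Deligne complex; hence $f_{\ast}[\omega_{Z}]=[f_{\ast}\omega_{Z}]$ with $f_{\ast}\omega_{Z}\in\fA^{2(p-e)-n}(Y,p-e)$ smooth. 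In either case the displayed identity exhibits $f_{\ast}g_{Z}$ as a Green current for $f_{\ast}Z$, proving the proposition.

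I expect the only non-formal point to be precisely this last assertion in the smooth case, namely that fibre integration along a proper submersion is compatible with the current conventions of $\S\ref{sec:conv-diff-forms}$ and sends the subcomplexes defining $\fA$ into the corresponding subcomplexes for $Y$; everything else is a manipulation of the defining equation using Proposition \ref{prop:3}. This is a standard fact about integration along the fibre, and it is exactly where smoothness of $f$ (rather than mere properness) enters, which is why the hypothesis ``$f$ smooth or $\omega(g_{Z})=0$'' is the natural one.
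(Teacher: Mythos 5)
Your proposal is correct and follows essentially the same route as the paper's proof: push forward the defining equation $\caP(Z)+dg_{Z}=[\omega(g_{Z})]$ using the compatibility of $f_{\ast}$ with $d$ and with $\caP$ (Proposition \ref{prop:3}), and then observe that the right-hand side is smooth either trivially (when $\omega(g_{Z})=0$) or because the direct image of a smooth form under a smooth proper map is smooth. The extra remarks on gradings, twists, and fibre integration are just a more explicit spelling-out of the same argument.
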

\begin{proof}
  Assume that $f$ is smooth,
  then the compatibility of direct images with the differential of
  currents,  Proposition \ref{prop:3} and the fact that the direct image of
  a smooth form by a smooth map is again a smooth from we obtain
  \begin{displaymath}
    d f_{\ast }g_{Z}=f_{\ast}d g_{Z}=f_{\ast}[\omega
    (g_{Z})]-f_{\ast}\caP(Z)= [f_{\ast}\omega
    (g_{Z})]-\caP(f_{\ast}Z)
  \end{displaymath}
  proving the result in this case.
  If $f$ is not smooth but $\omega (g_{Z})=0$, then
  \begin{displaymath}
    d f_{\ast }g_{Z}=f_{\ast}d g_{Z}=-f_{\ast}\caP(Z)=-\caP(f_{\ast}Z)
  \end{displaymath}
  proving the result in the remaining case.
\end{proof}

We next study the inverse image. The inverse image is only defined for
classes of Green currents and not for Green currents. Let $f\colon
X\to Y$ be a morphism of smooth projective varieties 
  and $Z\in Z_{f}^{p}(X,n)_{0}$ be a cycle that is in good position
  with respect to $f$. Let $\widetilde g_{Z}$ be a class of Green
  currents for $Z$. Choose a basic Green form  $\fg_{Z}=(g_{n},\dots,g_{0})$
  for $Z$ such that $[\fg_{Z}]\in \widetilde g_{Z}$. We write
  \begin{displaymath}
    f^{\ast}\fg \coloneqq ((f\times \Id_{\square^{n}})^{\ast}g_{n},\dots, f^{\ast}g_{0}).
  \end{displaymath}
  For shorthand write $g_{k}'=(f\times
  \Id_{\square^{k}})^{\ast}g_{k}$.  By the functoriality of all the
  operations involved and Lemma \ref{lemm:13}, it is easy to check
  that $f^{\ast}\fg_{Z} $ 
  satisfies the conditions 
  \begin{enumerate}
  \item $\delta _{f^{\ast }Z}+d[g'_n]=0$, 
  \item $(-1)^{n-k+1}\delta g'_k+dg'_{k-1}=0,\quad k=2,\cdots ,n$.
  \item $(-1)^{n}\delta g'_1+dg'_0=f^{\ast}(\omega
    (\widetilde g_{Z})).$
  \end{enumerate}
  Since $Z$ is in good position with respect to $f$, $f^{-1}|Z|_{k}$
  has codimension $p$ and  $f^{\ast}\fg_{Z}$ a loose Green form for
  $f^{\ast}Z$. In general it is not a Green form because
  $|f^{\ast}Z|_{k}$ may not agree with $f^{-1}|Z|_{k}$.
  
\begin{prop} \label{prop:20} With the previous notation, the current
  $[f^{\ast}\fg_{Z}]$ is a Green current for 
  $f^{\ast}Z$. Moreover, $\fg'_{Z}$ is another choice of Green form
  for $Z$ with $[\fg_{Z}]^{\sim}=[\fg'_{Z}]^{\sim}$, then
   \begin{displaymath}
     [f^{\ast}\fg_{Z}]^{\sim}=[f^{\ast}\fg'_{Z}]^{\sim}.
   \end{displaymath}
   That is,
   the class of $[f^{\ast}\fg_{Z}]$ does not
   depend on the choice of $\fg_{Z}$. 
\end{prop}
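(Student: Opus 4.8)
The plan is to reduce both assertions to results established earlier in the section. For the first one I would use that the discussion preceding the statement already checks that $f^{\ast}\fg_{Z}$ satisfies conditions \eqref{item:20}--\eqref{item:22} of Definition \ref{GF} relative to the cycle $f^{\ast}Z$ (with $\omega$ replaced by $f^{\ast}\omega(\widetilde g_{Z})$); since $Z$ is in good position with respect to $f$, each form $(f\times\Id_{\square^{k}})^{\ast}g_{k}$ has singular support on $f^{-1}|Z|_{k}$, which has codimension $p$, so $f^{\ast}\fg_{Z}$ is a loose Green form for $f^{\ast}Z$ in the sense of Definition \ref{def:4}. Proposition \ref{GreenCurrent} then gives directly that $[f^{\ast}\fg_{Z}]=\sum_{i=0}^{n}(p_{1})_{\ast}[(f\times\Id_{\square^{i}})^{\ast}g_{i}\cdot W_{i}]$ is a Green current for $f^{\ast}Z$, local integrability of the forms involved being \cite[Proposition 3.3]{Burgos:Gftp}, exactly as in the proof of Proposition \ref{GreenCurrent}.

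For the independence statement I would first reduce to the case where $\fg_{Z}$ and $\fg'_{Z}$ are both \emph{basic}. By Proposition \ref{prop:18} any Green form for $Z$ is turned into a basic one by a modification of the shape $(g_{n}+du,\,g_{n-1}+\delta u,\,g_{n-2},\dots,g_{0})$; this is an exact modification in the sense of Proposition \ref{prop:17} (take the corrector $u_{n}=u$ and $u_{k}=0$ for $k<n$), so it changes neither $[\fg_{Z}]^{\sim}$ nor, after applying $f^{\ast}$ and using that $(f\times\Id_{\square^{\bullet}})^{\ast}$ commutes with $d$ and with the cubical differential $\delta$, the class $[f^{\ast}\fg_{Z}]^{\sim}$; similarly for $\fg'_{Z}$. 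So assume $\fg_{Z}=(g_{n},\dots,g_{0})$ and $\fg'_{Z}=(g'_{n},\dots,g'_{0})$ are basic Green forms for $Z$ with $[\fg_{Z}]^{\sim}=[\fg'_{Z}]^{\sim}$. By Proposition \ref{prop:17} there are forms $u_{k}\in\fA^{2p-n+k-2}_{\log}(Y\times\square^{k}\setminus|Z|_{k},p)_{0}$ with $g'_{n}-g_{n}=du_{n}$ and $g'_{k}-g_{k}=(-1)^{n-k-1}\delta u_{k+1}+du_{k}$ for $k=0,\dots,n-1$.

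Then I would pull these relations back along the maps $f\times\Id_{\square^{k}}$. Setting $\tilde u_{k}=(f\times\Id_{\square^{k}})^{\ast}u_{k}$, and using that $(f\times\Id_{\square^{\bullet}})^{\ast}$ is a morphism of $2$-iterated complexes and that the divisor $H_{k}$ involves only the $\square$-coordinates (so $u_{k}|_{H_{k}}=0$ forces $\tilde u_{k}|_{H_{k}}=0$), one gets $\tilde u_{k}\in\fA^{2p-n+k-2}_{\log}(X\times\square^{k}\setminus f^{-1}|Z|_{k},p)_{0}$ satisfying $(f\times\Id_{\square^{n}})^{\ast}(g'_{n}-g_{n})=d\tilde u_{n}$ and $(f\times\Id_{\square^{k}})^{\ast}(g'_{k}-g_{k})=(-1)^{n-k-1}\delta\tilde u_{k+1}+d\tilde u_{k}$ for $k<n$. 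Good position means precisely that $W\coloneqq(f\times\Id_{\square^{n}})^{-1}|Z|$ lies in $\caZ^{p}_{n,X}$ (Definition \ref{def:18}), while $|W|_{k}=f^{-1}|Z|_{k}$ since $f\times\Id$ commutes with the coface maps $\delta^{i}_{0}$. Hence $f^{\ast}\fg_{Z}$ and $f^{\ast}\fg'_{Z}$ are loose Green forms for $f^{\ast}Z$ with singular support contained in $W$, and the $\tilde u_{k}$ are the correctors required by Proposition \ref{prop:17} applied on $X$; therefore $[f^{\ast}\fg_{Z}]^{\sim}=[f^{\ast}\fg'_{Z}]^{\sim}$, as claimed.

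The one point that needs care — and the reason for reducing to basic Green forms at the outset — is the compatibility of $f^{\ast}$ with the operations $g\mapsto[g]$, $(p_{1})_{\ast}$ and multiplication by the Wang forms $W_{i}$, i.e.\ that pulling back a product $g_{i}\cdot W_{i}$ along $f\times\Id$ yields a locally integrable form whose associated current is $f^{\ast}$ of the original. For basic Green forms this is Lemma \ref{lemm:13}\,(2) (with $D$ the union of the cofaces, and with $f^{-1}$ of the pertinent normal crossing divisors again normal crossing, as implicit in the setup); granted this, the computation proving Proposition \ref{prop:17} commutes with $f^{\ast}$ term by term and the rest is formal bookkeeping with the $2$-iterated complex structure.
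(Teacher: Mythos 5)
Your argument is correct and is essentially the paper's own proof: the first claim is obtained by running the Proposition \ref{GreenCurrent} argument on the loose Green form $f^{\ast}\fg_{Z}$, and the second by taking the correctors $u_{k}$ furnished by Proposition \ref{prop:17} and pulling them back along $f\times\Id_{\square^{k}}$, so that they witness $[f^{\ast}\fg_{Z}]^{\sim}=[f^{\ast}\fg'_{Z}]^{\sim}$ via the same proposition on $X$. Your preliminary reduction to basic Green forms and the explicit check that $|W|_{k}=f^{-1}|Z|_{k}$ are harmless extra care (the paper's setup already fixes basic Green forms), not a different route.
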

\begin{proof}
   The first statement is proved using the same argument as in the
   proof of  Proposition \ref{GreenCurrent}. To prove the second
   statement Let $\fg$ and $\fg'$ be two Green forms for $Z$
   representing the same class of green currents. By Proposition
   \ref{prop:17}, there are elements $u_{k}\in
   \fA^{2p-n+k-2}_{\log}(X\times \square^{k}\setminus
   |Z|_{k},p)_{0}$, $k=0,\dots,n$
   satisfying the conditions in the proposition. Then the elements
   $f^{\ast}u_{k}$ satisfy the same conditions for the Green forms
   $f^{\ast}\fg_{Z}$ and $f^{\ast}\fg'_{Z}$, so they represent the
   same class of Green currents. 
\end{proof}
From the last proposition, and for cycles $Z\in Z^p_f(Y,n)_0$ we can define an inverse image for a class of Green currents as
$$f^\ast \widetilde{g}_Z:=[f^\ast \fg_Z]^\sim,$$
for any choice of green forms of logarithmic type $\fg_Z$ of $Z$, such that $[\fg_Z]\in \widetilde{g}_Z$.

\subsection{The $\ast$-product}
\label{sec:ast-product}
In this section we will give the  definition of the product of Green
currents. Since we want a graded-commutative and associative product
we will work in the Thom-Whitney complex. So in this section
$\fA=\fDTW$.  

 We
start by giving the definition of the $\ast$-product of Green forms
and Green currents.

\begin{df}\label{def:16}
  Let $Z\in Z^{p}(X,n)_{0}$ and $W\in Z^{q}(X,m)_{0}$
  be cycles in the normalized complex, intersecting properly as
  in Definition \ref{def:19}. Let $g_{Z}$ be a Green current for the
  cycle $Z$ and let $\fg_{W}$ be a basic Green form for the cycle $W$ with
  components
  \begin{displaymath}
    \fg_{W}=(g'_{m},\dots,g'_{0}).
  \end{displaymath}
  Let $\omega _{Z}=\omega (g_{Z})$ and $\omega _{W}=\omega
  (\fg_{W})$. 
  We denote formally the product of currents
  \begin{displaymath}
    \mathcal{P}(Z)\cdot [\fg_W]\coloneqq
    \sum^m_{j=0}(p_{1})_*\left(\delta_Z\cdot W_n\cdot g'_j\cdot W_j\right),  
  \end{displaymath}
  where $\delta_Z\cdot W_n\cdot g'_j\cdot W_j$ is seen as a current in
  $X\times (\P^{1})^{n+m}$ as in Definition \ref{def:22}  and 
  $p_{1}\colon X\times (\P^{1}) ^{n+m}\to X$ is the projection. Then
  the \emph{$\ast$-product} of the Green current $g_Z$ and the Green
  form $\fg_W$ is the current defined as
  \begin{displaymath}
    g_Z\ast \fg_W\coloneqq (-1)^n\caP(Z)\cdot [\fg_W]+g_Z\cdot
    \omega_W
  \end{displaymath}
  \end{df}

\begin{prop}\label{prop:8-33} Let $Z$, $W$, $g_{Z}$ and $\fg_{W}$ be
  as in Definition \ref{def:16}. Then the current $g_Z\ast \fg_W$ satisfies
  \begin{displaymath}
    \mathcal{P}(Z\cdot W)+d(g_Z\ast \fg_W)=[\omega_Z\cdot \omega_W].
  \end{displaymath}
  Hence is a Green current for the product $Z\cdot W$.
\end{prop}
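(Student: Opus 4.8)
The plan is to differentiate $g_Z\ast\fg_W$ explicitly and compare with $[\omega_Z\cdot\omega_W]-\caP(Z\cdot W)$, doing all the residue computations through Lemma \ref{lemm:13}, in close parallel with the proof of Proposition \ref{GreenCurrent}. Writing $\fg_W=(g'_m,\dots,g'_0)$ and using the $\fD_{\TW}(X,\ast)$-module structure on $\fD_{\TW,D}(X,\ast)$, I would first split
$$d(g_Z\ast\fg_W)=(-1)^{n}\,d\bigl(\caP(Z)\cdot[\fg_W]\bigr)+d\bigl(g_Z\cdot\omega_W\bigr).$$
The second summand is easy: by Proposition \ref{GreenCurrent} the form $\omega_W=\omega(\fg_W)$ is a cocycle of $\fD_{\TW}(X,q)$, hence $d(g_Z\cdot\omega_W)=(dg_Z)\cdot\omega_W$, and since $dg_Z=[\omega_Z]-\caP(Z)$ and $[\,\cdot\,]\colon\fD_{\TW}(X,q)\to\fD_{\TW,D}(X,q)$ is a map of $\fD_{\TW}(X,\ast)$-modules, this equals $[\omega_Z\cdot\omega_W]-\caP(Z)\cdot\omega_W$.

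The first summand is the core. Since $d$ commutes with the proper pushforward $(p_1)_{\ast}$, by Definition \ref{def:16} it suffices to differentiate each current $\delta_Z\cdot W_n\cdot g'_j\cdot W_j$ on $X\times(\P^1)^{n+m}$ and apply Lemma \ref{lemm:13}. Here $\delta_Z$ is $d$-closed, the Wang forms $W_n$ and $W_j$ are $d$-closed on the complement of the coface divisors (equation \eqref{eq:16}) but carry logarithmic singularities there, $g'_m$ is a basic Green form with $\delta_W+d[g'_m]=0$, and for $j<m$ the form $g'_j$ has logarithmic singularities of degree below the Green-form threshold, so $d[g'_j]=[dg'_j]$ with no residue. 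Applying Lemma \ref{lemm:13} termwise then produces, besides the smooth terms $[\delta_Z\cdot W_n\cdot dg'_j\cdot W_j]$, three kinds of residues: (i) for $j=m$ only, the term $-\,\delta_Z\cdot W_n\cdot\delta_W\cdot W_m$ coming from $g'_m$; (ii) residues of $W_j$ along the cofaces of its $\square^j$-block (equation \eqref{eq:17}), of which only the cofaces at $0$ survive because $g'_j$ vanishes on the cofaces at $\infty$ (the condition $g'_j|_{H_j}=0$), giving $\pm[\delta_Z\cdot W_n\cdot\delta g'_j\cdot W_{j-1}]$ as in Proposition \ref{GreenCurrent}; (iii) residues of $W_n$ along the cofaces of the $\square^n$-block (equation \eqref{eq:17}), which after pushing forward by $p_1$ yield $\sum_i(-1)^i(p_1)_{\ast}\bigl(\delta_{(\delta^i_0)^{\ast}Z}\cdot W_{n-1}\cdot g'_j\cdot W_j\bigr)$ and hence vanish, since $\sum_i(-1)^i\delta_{(\delta^i_0)^{\ast}Z}=\delta_{\delta Z}=0$ (this is exactly the mechanism that makes $\caP$ a morphism of complexes, Proposition \ref{prop:6}).

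Summing over $j$, the smooth terms $[dg'_j\cdot\cdots]$ and the coface residues of type (ii) telescope against one another through the step relations \eqref{item:21} and the bottom relation \eqref{item:22} defining $\fg_W$ — the step where ``$u$ and $v$ cancel'' in the proof of Proposition \ref{GreenCurrent} — leaving only the bottom contribution $(p_1)_{\ast}[\delta_Z\cdot W_n\cdot\omega_W]=\caP(Z)\cdot\omega_W$ by the projection formula; the type (iii) contribution vanishes; and the type (i) contribution, after rewriting $\delta_Z\cdot\delta_W=\delta_{Z\cdot W}$ (the product of the integration currents along the properly intersecting pullbacks of $Z$ and $W$ to $X\times\square^{n+m}$, cf. Definition \ref{def:19} and \S\ref{chowprod}) and $W_n\cdot W_m=W_{n+m}$, pushes forward to a sign times $\caP(Z\cdot W)$. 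Tracking signs one gets $(-1)^{n}\,d\bigl(\caP(Z)\cdot[\fg_W]\bigr)=\caP(Z)\cdot\omega_W-\caP(Z\cdot W)$; adding this to the expression for $d(g_Z\cdot\omega_W)$ the terms $\caP(Z)\cdot\omega_W$ cancel and one obtains $d(g_Z\ast\fg_W)=[\omega_Z\cdot\omega_W]-\caP(Z\cdot W)$, i.e. $\caP(Z\cdot W)+d(g_Z\ast\fg_W)=[\omega_Z\cdot\omega_W]$. As $\omega_Z$ and $\omega_W$ are smooth, so is $\omega_Z\cdot\omega_W$, so $g_Z\ast\fg_W$ is by definition a Green current for the cycle $Z\cdot W\in Z^{p+q}(X,n+m)_0$ (which is indeed a cycle, $\delta(Z\cdot W)=0$, by the Leibniz rule in the cubical direction together with $\delta Z=\delta W=0$). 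The main obstacle I anticipate is the sign bookkeeping in the termwise application of Lemma \ref{lemm:13} and the precise identification of the surviving top and bottom terms; but this is entirely governed by the computations already performed for Proposition \ref{GreenCurrent} and Proposition \ref{prop:6}, with the only genuinely new feature being that the ``prefix'' $\delta_Z\cdot W_n$ rides along passively and that its coface residues cancel because $Z$ is a cycle.
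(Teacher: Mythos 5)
Your proposal is correct and follows essentially the same route as the paper's proof: the same splitting $d(g_Z\ast\fg_W)=(-1)^{n}d(\caP(Z)\cdot[\fg_W])+d(g_Z\cdot\omega_W)$, termwise application of Lemma \ref{lemm:13} with the three kinds of residues (the $\delta_{\delta Z}$-terms killed by $\delta Z=0$, the coface residues telescoping against the $dg'_j$-terms via the step relations, and the top term giving $-\delta_{Z\cdot W}\cdot W_{n+m}$), and the final cancellation of $\caP(Z)\cdot\omega_W$. The sign bookkeeping you defer is exactly what the paper's displayed computation carries out, so nothing essential is missing.
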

\begin{proof}
We compute, using Lemma \ref{lemm:13}, the fact that $Z$ is a cycle,
the properties defining Green forms and the
properties of the Wang forms \eqref{eq:17} 
\begin{align*}
  d(g_Z\ast \fg_W)
  &=(-1)^n\sum^m_{j=0}(p_{1})_*\left(d(\delta_Z\cdot W_n\cdot g'_j\cdot
    W_j)\right)+ d(g_Z\cdot \omega_W)\\
  &=(-1)^{n}\sum^m_{j=0}(p_{1})_*\left(\delta_{\delta Z}\cdot W_{n-1}\cdot g'_j\cdot
    W_j\right) -(p_{1})_*\left(\delta _{Z\cdot W}\cdot
    W_{n+m}\right)\\
  &\phantom{AA}+\sum_{j=0}^{m-1}(p_{1})_*\left(\delta_{Z}\cdot W_{n}\cdot d g'_j\cdot
    W_j\right)\\
    &\phantom{AA}+
      \sum_{j=1}^{m}(-1)^{m-j-1}(p_{1})_*\left(\delta_{Z}\cdot
      W_{n}\cdot \delta  g'_{j}\cdot 
      W_{j-1}\right)  \\
  &\phantom{AA}+[\omega_Z\cdot \omega_W]-\caP(Z)\cdot
    \omega_W\\
  &=-\caP(Z\cdot W)+(p_{1})_{\ast}(\delta _{Z}\cdot W_{n}\cdot \omega
    _{W})
    +[\omega_Z\cdot \omega_W]-\mathcal{P}(Z)\cdot
    \omega_W\\
  &=-\caP(Z\cdot W)
    +[\omega_Z\cdot \omega_W].
\end{align*}
\end{proof}

\begin{prop}\label{prop:007} If $g'_{Z}$ is another Green current for $Z$ and
  $\fg'_{W}$ is another Green form for $W$ with
  \begin{displaymath}
    \widetilde g_{Z}=\widetilde g'_{Z}, \quad
    [\fg_{W}]^{\sim}=[\fg'_{W}]^{\sim}, 
  \end{displaymath}
then
\begin{displaymath}
  (g_{Z}\ast \fg_{W})^{\sim}=(g'_{Z}\ast \fg'_{W})^{\sim}.
\end{displaymath}
\end{prop}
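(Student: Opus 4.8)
The plan is to perturb one argument at a time: first keep $\fg_{W}$ fixed and move $g_{Z}$ to $g'_{Z}$ within its class, then keep $g_{Z}$ fixed and move $\fg_{W}$ to $\fg'_{W}$ within its class, and then combine. In both halves the first thing I would record is that the accompanying smooth form is already determined by the class: applying $d$ to $\caP(Z)+dg_{Z}=[\omega(g_{Z})]$ and to the analogous identity for $g'_{Z}$, and using that $[\cdot]\colon\fA^{\ast}(X,p)\to\fA^{\ast}_{D}(X,p)$ is injective, gives $\omega(g_{Z})=\omega(g'_{Z})=:\omega_{Z}$ whenever $\widetilde g_{Z}=\widetilde g'_{Z}$; likewise, since by Proposition \ref{GreenCurrent} the currents $[\fg_{W}]$ and $[\fg'_{W}]$ are Green currents for $W$ with accompanying forms $\omega(\fg_{W})$ and $\omega(\fg'_{W})$, the equality $[\fg_{W}]^{\sim}=[\fg'_{W}]^{\sim}$ forces $\omega(\fg_{W})=\omega(\fg'_{W})=:\omega_{W}$.

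Step 1 (varying $g_{Z}$). Here $g_{Z}-g'_{Z}=du$ for some $u\in\fA^{2p-n-2}_{D}(X,p)$, and the term $\caP(Z)\cdot[\fg_{W}]$ of Definition \ref{def:16} does not involve $g_{Z}$, so it is unchanged. Hence $(g_{Z}\ast\fg_{W})-(g'_{Z}\ast\fg_{W})=(g_{Z}-g'_{Z})\cdot\omega_{W}=du\cdot\omega_{W}=d(u\cdot\omega_{W})$, where in the last step I use that $\omega_{W}$ is closed (it represents a cycle class in Deligne cohomology). Thus the difference is exact and $(g_{Z}\ast\fg_{W})^{\sim}=(g'_{Z}\ast\fg_{W})^{\sim}$.

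Step 2 (varying $\fg_{W}$). Since $\omega(\fg_{W})=\omega(\fg'_{W})$, the term $g_{Z}\cdot\omega_{W}$ is unchanged, so it remains to show $\caP(Z)\cdot[\fg_{W}]-\caP(Z)\cdot[\fg'_{W}]$ is exact. I would invoke Proposition \ref{prop:17}: writing $\fg_{W}=(g_{m},\dots,g_{0})$ and $\fg'_{W}=(g'_{m},\dots,g'_{0})$, there are $u_{k}\in\fA^{2q-m+k-2}_{\log}(X\times\square^{k}\setminus|W|_{k},q)_{0}$ expressing $g_{k}-g'_{k}$ in terms of $du_{k}$ and $\delta u_{k+1}$ as in that proposition. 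Set $v=\sum_{j}\pm(p_{1})_{\ast}(\delta_{Z}\cdot W_{n}\cdot u_{j}\cdot W_{j})$ with the sign conventions matching Definition \ref{def:16}, a current legitimate by Lemma \ref{lemm:13} because $Z$ and $W$ intersect properly. Then I would compute $dv$ exactly as in the proof of Proposition \ref{GreenCurrent}, expanding via Lemma \ref{lemm:13} and the Wang-form Leibniz rule \eqref{eq:17}: the terms produced by differentiating the $W_{j}$ telescope and cancel in consecutive pairs; the face–residue terms of $W_{n}$ assemble to $\caP(\delta Z)=0$ since $Z$ is a cycle; and each $u_{j}$, of degree $2q-m+j-2<2q-1$, is of "low degree" in the sense of Lemma \ref{lemm:13}, hence contributes neither a current residue nor a smooth-form correction. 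What is left is precisely $dv=\pm\bigl(\caP(Z)\cdot[\fg_{W}]-\caP(Z)\cdot[\fg'_{W}]\bigr)$, so this difference is exact and $(g_{Z}\ast\fg_{W})^{\sim}=(g_{Z}\ast\fg'_{W})^{\sim}$. Combining Steps 1 and 2 yields $(g_{Z}\ast\fg_{W})^{\sim}=(g'_{Z}\ast\fg'_{W})^{\sim}$.

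The main obstacle is Step 2: conceptually it is the same $dv$-computation as in Propositions \ref{GreenCurrent} and \ref{prop:8-33}, but the bookkeeping of signs and the verification that every term other than the telescoping pair and $dv$ itself vanishes (crucially relying on proper intersection of $Z$ and $W$ so that Lemma \ref{lemm:13} applies to all the products $\delta_{Z}\cdot W_{n}\cdot u_{j}\cdot W_{j}$, and on $u_{j}$ being below the critical degree) is the part that needs to be carried out with care.
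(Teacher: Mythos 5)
Your proof is correct and follows essentially the same route as the paper: perturbing $g_{Z}$ first (the difference $(du)\cdot\omega(\fg_{W})$ is exact) and then perturbing $\fg_{W}$ via Proposition \ref{prop:17}, with the exactness of $\caP(Z)\cdot[\fg_{W}]-\caP(Z)\cdot[\fg'_{W}]$ obtained by differentiating the formal current $\sum_{j}(p_{1})_{\ast}(\delta_{Z}\cdot W_{n}\cdot u_{j}\cdot W_{j})$ by means of Lemma \ref{lemm:13}. Your explicit remark that $\omega(\fg_{W})=\omega(\fg'_{W})$ is a useful point that the paper leaves implicit, but it does not change the argument.
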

\begin{proof}
  Write $g_{Z}-g'_{Z}=dc$. Then
  \begin{displaymath}
    g_{Z}\ast \fg_{W}-g'_{Z}\ast \fg_{W}= (dc)\cdot \omega
    (\fg_{W})=d(c\cdot \omega (\fg_{W})). 
  \end{displaymath}
  Thus $(g_{Z}\ast \fg_{W})^{\sim}=(g'_{Z}\ast \fg_{W})^{\sim}$.
   Applying Proposition \ref{prop:17} to $\fg'_{W}$ and $\fg'_{W}$ we
   obtain a tuple  $u=u_{m},\dots, u_0$. Define
   \begin{displaymath}
     \caP(Z)\cdot [u]=\sum^m_{j=0}(p)_*\left(\delta_Z\cdot W_n\cdot
       u_j\cdot W_j\right) 
   \end{displaymath}
   then, using Lemma \ref{lemm:13}, one can verify that
   \begin{displaymath}
     g'_{Z}\ast \fg'_{W}-g'_{Z}\ast \fg_{W}=d (\caP(Z)\cdot [u]).
   \end{displaymath}
   Thus $(g'_{Z}\ast \fg'_{W})^{\sim}=(g'_{Z}\ast \fg_{W})^{\sim}$.
\end{proof}

In view of the previous proposition the following definition makes sense.

\begin{df}\label{def:6.21}
    Let $Z\in Z^{p}(X,n)_{0}$ and $W\in Z^{q}(X,m)_{0}$
  be cycles intersecting properly and $\widetilde g_{Z}$ and
  $\widetilde g_{W}$ classes of Green currents for $Z$ and $W$. Choose
  any representative $g_{Z}$ of $\widetilde g_{Z}$ and a Green form
  $\fg_{W}$ with  $[\fg_{W}]\in \widetilde g_{W}$. Then the
  $\ast$-product of $\widetilde g_{Z}$ and $\widetilde g_{W}$ is
  defined as
  \begin{displaymath}
    \widetilde g_{Z}\ast \widetilde g_{W}=
    \left (  (-1)^n\caP(Z)\cdot [\fg_W]+g_Z\cdot
    \omega(\fg_{W})\right)^{\sim}.
  \end{displaymath}
\end{df}

\begin{thm}\label{thm:12} Let $Z\in Z^{p}(X,n)_{0}$, $W\in
  Z^{q}(X,m)_{0}$  and $T\in Z^{r}(X,\ell)_{0}$ cycles such that $W$
  intersects properly $Z$ and $T$, $Z$ intersect properly $W\cdot T$
  and $T$ intersects properly $Z\cdot W$. Let $\widetilde g_{Z}$,
  $\widetilde g_{W}$ and $\widetilde g_{T}$ be classes of Green
  currents for $Z$, $W$ and $T$ respectively. Then
  \begin{enumerate}
  \item \label{item:28} $\widetilde g_{Z}\ast \widetilde g_{W}=
    (-1)^{nm}\widetilde g_{W}\ast \widetilde g_{Z}.$
  \item \label{item:29} $\widetilde g_{Z}\ast (\widetilde g_{W}\ast
    \widetilde g_{T})=(\widetilde g_{Z}\ast \widetilde g_{W})\ast
    \widetilde g_{T}.$ 
  \end{enumerate}
\end{thm}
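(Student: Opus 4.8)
The plan is to reduce both the graded commutativity and the associativity to identities that already hold — up to the image of $d$ — at the level of Green \emph{forms}, by exploiting the freedom to represent each class of Green currents by a basic Green form (Proposition \ref{prop:8}), together with the moving lemmas of \S\ref{movingchow} (Lemma \ref{lemm:10}, Corollary \ref{cor:2} and Remark \ref{rem:3}) to arrange that all the cycles and all the relevant intermediate products intersect properly. For part \eqref{item:28}, I would fix basic Green forms $\fg_Z$ and $\fg_W$ for $Z$ and $W$, compute $\widetilde g_Z \ast \widetilde g_W$ using $g_Z=[\fg_Z]$ as the representative current, and compare with $\widetilde g_W\ast \widetilde g_Z$ computed with $g_W=[\fg_W]$. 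Unwinding Definition \ref{def:16}, both sides are, up to $\im d$, sums of push-forwards of currents of the shape $(p_1)_\ast(\delta_Z\cdot W_n\cdot \delta_W\cdot W_m\cdot (\text{Wang factors}))$, i.e. products of $\delta_Z$, $\delta_W$, the Wang forms and the $g_i,g'_j$; since these live on $X\times(\P^1)^{n+m}$ and the Thom–Whitney product is graded commutative, swapping the two cycles introduces exactly the sign $(-1)^{nm}$ coming from the transposition of the two blocks of $\square$-coordinates, precisely as in the proof of graded commutativity of the higher Chow product (the homotopy $H_{n,m}$ of \eqref{eq:45}, \eqref{eq:44}). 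The contribution of the homotopy $H_{n,m}$ applied to $Z\cdot W$, together with the fact that $\caP$ is a morphism of complexes, accounts for the discrepancy being a boundary, hence zero in $\widetilde\fA$.

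For part \eqref{item:29}, I would again choose basic Green forms $\fg_W$ for $W$ and $\fg_T$ for $T$, and an arbitrary representative $g_Z$ of $\widetilde g_Z$; after applying the moving lemmas I may assume $W$ meets $Z$ and $T$ properly, $Z$ meets $W\cdot T$ properly, $T$ meets $Z\cdot W$ properly, and that $\fg_W\ast\fg_T$ (defined via Definition \ref{def:16}, noting $[\fg_W]\ast \fg_T$ is again represented by an explicit current built from $\delta$'s, Wang forms and the $g$'s) can be paired with $g_Z$. Expanding $\widetilde g_Z\ast(\widetilde g_W\ast\widetilde g_T)$ and $(\widetilde g_Z\ast\widetilde g_W)\ast\widetilde g_T$ via Definition \ref{def:6.21}, both become, modulo $\im d$, the single symmetric expression
\begin{displaymath}
  (-1)^{n+n'}\,(p_1)_\ast\!\left(\delta_Z\cdot W_n\cdot [\fg_W]\cdot [\fg_T]\right) + g_Z\cdot\omega(\fg_W)\cdot\omega(\fg_T),
\end{displaymath}
where $n'=m$ and the middle product $[\fg_W]\cdot[\fg_T]$ is the associative Thom–Whitney product of currents on the appropriate $X\times(\P^1)^{\bullet}$; the key inputs are that $\fD_{\TW}$ (and its current module $\fD_{\TW,D}$) is \emph{strictly} associative, that $\caP$ is multiplicative up to the homotopies already recorded, and Lemma \ref{lemm:13} to control local integrability and the behaviour of $d$ on all the mixed products $g_i\cdot g'_j\cdot \delta$. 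The terms that do not match literally differ by $d$ of an explicitly constructed current (again a push-forward of products of $u_k$'s, Wang forms and $\delta$'s, as in the proof of Proposition \ref{prop:007}), so they vanish after passing to $\widetilde\fA$.

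The main obstacle I anticipate is bookkeeping rather than conceptual: keeping track of the signs and of which cycle is represented by a \emph{current} and which by a \emph{Green form} (the $\ast$-product of Definition \ref{def:16} is not symmetric in its two arguments at the cochain level), and checking that in the triple product one may simultaneously choose \emph{basic} Green forms for two of the three cycles while the proper-intersection hypotheses of the theorem are exactly what is needed to make all the intermediate currents $\delta_Z\cdot W_n\cdot g'_j\cdot g''_k\cdots$ well defined and locally integrable via Lemma \ref{lemm:13}. I would therefore organise the computation so that the "symmetric core" expression above is isolated first, and then show each of the two bracketings equals it modulo $\im d$, using Proposition \ref{prop:007} and Proposition \ref{prop:17} to absorb the choice-dependent correction currents; this keeps the sign analysis localised and makes the graded commutativity sign $(-1)^{nm}$ in \eqref{item:28} traceable to the single coordinate transposition on $\square^{n+m}$.
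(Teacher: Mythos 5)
Your overall strategy (represent each class by a basic Green form, invoke the moving lemmas for proper intersection, and show the two sides differ by an exact current controlled by Lemma \ref{lemm:13}) is in the right spirit, but two pieces of your plan do not match what is actually needed, and one of them would not work as stated. First, for the graded commutativity you appeal to the homotopy $H_{n,m}$ of \eqref{eq:45}, \eqref{eq:44} and the commutativity-up-to-homotopy of the higher Chow product. That device plays no role here: it is needed later, for the product on $\widehat{\CH}^{\ast}(X,\ast)$ (where one must show $\caP(H_{n,m}(Z\cdot W))=0$), but Theorem \ref{thm:12}(1) compares two currents on $X$ attached to the \emph{fixed} cycles $Z$ and $W$, and the currents $\caP(Z\cdot W)$ and $(-1)^{nm}\caP(W\cdot Z)$ are literally equal after renaming the integration variables, so no cycle-level homotopy enters. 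Moreover the discrepancy between $g_Z\ast \fg_W$ and $(-1)^{nm}g_W\ast \fg_Z$ is not of the form $\caP(\text{boundary of a pre-cycle})$ at all: in the first expression $\delta_Z$ multiplies the components $g'_j$ of $\fg_W$, while in the second $\delta_W$ multiplies the components $g_i$ of $\fg_Z$, together with the terms $g_Z\cdot\omega_W$ and $g_W\cdot\omega_Z$; also your description of the terms as push-forwards of $\delta_Z\cdot W_n\cdot\delta_W\cdot W_m\cdot(\cdots)$ is not what Definition \ref{def:16} produces. The missing idea is the explicit primitive current $[\fg_Z]\cdot[\fg_W]=\sum_{i,j}(p_1)_{\ast}(g_i\cdot W_i\cdot g'_j\cdot W_j)$: applying Lemma \ref{lemm:13} to it yields the side-switching identity $(g_Z\ast\fg_W)^{\sim}=(\fg_Z\ast' g_W)^{\sim}$, where $\fg_Z\ast' g_W=[\fg_Z]\cdot\caP(W)+(-1)^n\omega_Z\cdot g_W$, and only after this switch do the two sides become the same integrals up to reordering of variables, which is where the sign $(-1)^{nm}$ comes from.

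Second, for associativity your ``symmetric core'' is asserted rather than produced: the actual content of the proof is again an explicit primitive, namely $(-1)^m[\fg_Z]\cdot\caP(W)\cdot[\fg_T]+[\fg_Z]\cdot[\fg_W]\cdot\omega_T$, whose differential (computed with Lemma \ref{lemm:13}, using that $Z$, $W$, $T$ are cycles and the hypotheses on proper intersection of the pairwise and triple products) equals $(-1)^n\bigl(\fg_Z\ast'(g_W\ast\fg_T)-(g_Z\ast\fg_W)\ast\fg_T\bigr)$; combined with the side-switching identity above this gives (2). The currents you cite from Proposition \ref{prop:007} (of the shape $\caP(Z)\cdot[u]$) serve a different purpose — independence of the choice of Green form within a class — and do not interchange which factor carries the delta current, so they cannot substitute for these primitives. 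Relatedly, note that $\widetilde g_Z\ast(\widetilde g_W\ast\widetilde g_T)$ cannot be computed by ``pairing $g_Z$ with $\fg_W\ast\fg_T$'' directly, since Definition \ref{def:6.21} requires a Green form for the cycle $W\cdot T$ representing the class $\widetilde g_W\ast\widetilde g_T$; this is not a moving-lemma issue, and it is precisely the auxiliary product $\ast'$ (form on the left, current on the right) that lets one avoid constructing such a Green form. With these two explicit primitives and the $\ast'$ device added, your outline becomes essentially the paper's proof; without them the key cancellations remain unjustified.
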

\begin{proof}
  We start by proving the commutativity. Choose $\fg_{Z}=(g_{n},\dots,g_{0})$ and
  $\fg_{W}=(g'_{m},\dots,g'_{0})$ basic Green forms for $Z$ and $W$ with
  $g_Z\coloneqq [\fg_{Z}]\in \widetilde 
  g_{Z}$ and $g_{W}\coloneqq [\fg_{W}]\in \widetilde g_{W}$.  We define 
  \begin{displaymath}
    [\fg_Z]\cdot \mathcal{P}(W) \coloneqq
    \sum^n_{i=0}(p_{1})_*\left(g_i \cdot W_i\cdot \delta_W\cdot W_m\right),  
  \end{displaymath}
  and
  \begin{displaymath}
    \fg_Z\ast' g_W\coloneqq [\fg_Z]\cdot \caP(W)+(-1)^n\omega (g_Z)\cdot
    g_W.
  \end{displaymath}
  We also define the product of currents
  \begin{displaymath}
    [\fg_{Z}]\cdot [\fg_{W}]=
    \sum^n_{i=0}\sum _{j=0}^{m}(p_{1})_*\left(g_i \cdot
      W_i\cdot g'_{j}\cdot W_j\right)
\end{displaymath}
   
  Then, using Lemma \ref{lemm:13},
  \begin{multline*}
    d([\fg_{Z}]\cdot [\fg_{W}])=\\
    \omega _{Z}\cdot g_{W}-\caP(Z)\cdot [\fg_{W}]+(-1)^{n-1}g_{Z}\cdot
    \omega _{Z})+(-1)^{n} [\fg_{Z}]\cdot \caP(W)).
  \end{multline*}
  Therefore
  \begin{equation}\label{eq:36}
    (g_{Z}\ast \fg_{W})^{\sim}=(\fg_{Z}\ast' g_{W})^{\sim}.
  \end{equation}
  So we are left to compare $\fg_{Z}\ast' g_{W}$ and
  in $(-1)^{nm}g_{W}\ast \fg_{Z}$. But inspecting all 
  the integrals that appear in both terms, one realizes that they  are
  equal except for a reordering of 
  the variables of integration. Since the value of an integral does
  not depend on the name of the integration variables we deduce that
  \begin{displaymath}
    (g_{Z}\ast \fg_{W})^{\sim}=(-1)^{nm}(g_{W}\ast \fg_{Z})^{\sim}.
  \end{displaymath}

  We next prove the associativity. We choose basic Green forms for the
  cycles as follows
  \begin{alignat*}{3}
    \fg_{Z}&=(g_{n},\dots,g_{0}),\quad&g_{Z}&\coloneqq [\fg_{Z}]\in
    \widetilde g_{Z},\quad &\omega _{Z }&\coloneqq \omega (\widetilde g_{Z}),\\
    \fg_{W}&=(g'_{n},\dots,g'_{0}),\quad&g_{W}&\coloneqq [\fg_{W}]\in
    \widetilde g_{W},\quad &\omega _{W }&\coloneqq \omega (\widetilde g_{W}),\\
    \fg_{T}&=(g''_{n},\dots,g''_{0}),\quad&g_{T}&\coloneqq [\fg_{T}]\in
    \widetilde g_{T},\quad &\omega _{T}&\coloneqq \omega (\widetilde g_{T}).
  \end{alignat*}
We define the current
\begin{displaymath}
  [\fg_{Z}]\cdot \caP(W)\cdot [\fg_{T}]=
  \sum_{i=0}^{n}\sum_{k=0}^{\ell}g_{i}\cdot W_{i}\cdot \delta
  _{W}\cdot W_{m}\cdot g''_{k}\cdot W_{k}.
\end{displaymath}
Then, using again Lemma \ref{lemm:13},
\begin{align*}
  d\big( (-1)^{m}[\fg_{Z}]\cdot \caP(W)\cdot [\fg_{T}]
  &+[\fg_{Z}]\cdot [\fg_{W}]\cdot \omega _{T}
    \big) \\
  &=(-1)^{m}\left(
    \omega _{Z}\cdot \caP(W)\cdot [\fg_{T}]-\caP(Z\cdot W)\cdot [\fg_{T}]
    \right)\\
  &\phantom{AA}+(-1)^{n}\left(
     -[\fg_{Z}]\cdot \caP(W)\cdot\omega _{T}+[\fg_{Z}]\cdot
     \caP(W\cdot T)
     \right)\\
  &\phantom{AA}+\left(
    \omega _{Z}\cdot [\fg_W]\cdot \omega _{T}-\caP(Z)\cdot [\fg_W]\cdot \omega _{T}
     \right)\\
  &\phantom{AA}+(-1)^{n}\left(
    - [\fg_{Z}]\cdot \omega _{W}\cdot \omega _{T}+[\fg_{Z}]\cdot \caP(W)\cdot\omega _{T}
     \right)\\
  &=(-1)^n\big(\fg_{Z}\ast'(g_{W}\ast \fg_{T})-
  (g_{Z}\ast \fg_{W})\ast \fg_{T}\big).
\end{align*}
Applying equation \eqref{eq:36} we obtain the second statement of the theorem.\\
\end{proof}

The product of Green forms is compatible with inverse images and there
is a projection formula that relates the product, the inverse image
and the direct image. The next result can be verified formally thanks 
to Lemma \ref{lemm:13} and the corresponding properties of forms and
currents.

\begin{prop}\label{prop:19}
  Let $f\colon Y\to X$ be a morphism of complex projective manifolds,
  Let $Z\in Z^{p}(X,n)_{0}$ and  $W\in 
  Z^{q}(X,m)_{0}$ cycles in good position with respect to $f$ that
  intersect properly and such that $Z\cdot W$ is also in good position
  with respect to $f$ and $T\in Z^{r}(Y,\ell)_{0}$ a cycle that
  intersects  properly $f^{\ast}Z$. Let $\widetilde g_{Z}$,
  $\widetilde g_{W}$ and $\widetilde g_{T}$ be classes of Green
  currents for $Z$, $W$ and $T$ respectively. Then
  \begin{enumerate}
  \item $f^{\ast}(\widetilde g_{Z}\ast \widetilde g_{W})=
    f^{\ast}\widetilde g_{Z}\ast f^{\ast}\widetilde g_{W}$;
  \item $f_{\ast}(f^{\ast}\widetilde g_{Z}\ast \widetilde g_{T})=
    \widetilde g_{Z}\ast f_{\ast}\widetilde g_{T}$.
  \end{enumerate}
\end{prop}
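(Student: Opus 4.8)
The plan is to verify both identities formally at the level of explicit currents, justifying every manipulation by Lemma \ref{lemm:13}. Throughout $\fA=\fDTW$, and we begin by fixing convenient representatives. Using Proposition \ref{prop:8} together with the moving lemmas of \S\ref{movingchow} (in particular Corollary \ref{cor:2} and Remark \ref{rem:3}), choose basic Green forms $\fg_Z$ for $Z$, $\fg_W$ for $W$ and $\fg_T$ for $T$ whose singular supports are in good position with respect to $f$ (for $\fg_Z$, $\fg_W$, and a basic Green form of $Z\cdot W$, this uses the hypotheses that $Z$, $W$ and $Z\cdot W$ are in good position w.r.t. $f$; for the second statement one also uses that $T$ and $f^{\ast}Z$ intersect properly), representing the given classes. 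By Definition \ref{def:6.21} the products $\widetilde g_Z\ast\widetilde g_W$, $f^{\ast}\widetilde g_Z\ast\widetilde g_T$, etc., are then computed by the explicit currents $[\fg_Z]\ast\fg_W$ of Definition \ref{def:16}.

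For the first statement, the key observation is that for a cycle in good position the regulator commutes with pullback: since $\caP(Z)=(p_1)_\ast(\delta_Z\cdot[W_n])$, the Wang forms $W_i$ are pulled back from the $(\P^1)^i$ factors (hence unchanged by $f\times\Id$), and $(f\times\Id)^{\ast}\delta_Z=\delta_{f^{\ast}Z}$ because $Z$ is in good position, the base-change identity $f^{\ast}(p_1)_\ast=(p_1)_\ast(f\times\Id)^{\ast}$ --- valid on these currents by Lemma \ref{lemm:13}(1)--(2) --- gives $f^{\ast}\caP(Z)=\caP(f^{\ast}Z)$. Running the same computation term by term on $[\fg_Z]\ast\fg_W=(-1)^n\caP(Z)\cdot[\fg_W]+[\fg_Z]\cdot\omega(\fg_W)$, and using that $f^{\ast}g_i=(f\times\Id)^{\ast}g_i$ and $\omega(f^{\ast}\fg_W)=f^{\ast}\omega(\fg_W)$ (noted just before Proposition \ref{prop:20}), yields the current identity
\begin{displaymath}
  f^{\ast}\big([\fg_Z]\ast\fg_W\big)=[f^{\ast}\fg_Z]\ast(f^{\ast}\fg_W),
\end{displaymath}
where the right-hand side is formed by the same formula applied to the loose Green forms $f^{\ast}\fg_Z$ and $f^{\ast}\fg_W$. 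Passing to classes, the left side represents $f^{\ast}(\widetilde g_Z\ast\widetilde g_W)$ by the definition of $f^{\ast}$ on classes of Green currents together with the argument of Proposition \ref{prop:20}, while the right side represents $f^{\ast}\widetilde g_Z\ast f^{\ast}\widetilde g_W$ by Definition \ref{def:6.21}, once one notes (by Propositions \ref{prop:16} and \ref{prop:007}) that using a genuine rather than a loose Green form does not change the class of the $\ast$-product.

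For the second statement, $f_\ast$ on classes of Green currents is understood as in Proposition \ref{prop:14} (so we take $f$ smooth, the case $\omega=0$ being analogous), and we use that $\caP$ commutes with proper direct images (Proposition \ref{prop:3}) and that the projection formula $f_\ast(f^{\ast}Z\cdot T)=Z\cdot f_\ast T$ holds for higher Chow cycles. Computing $f_\ast\big([f^{\ast}\fg_Z]\ast\fg_T\big)$ by the same strategy: since $f\circ p_1=p_1\circ(f\times\Id)$ and the projection formula $(f\times\Id)_\ast\big((f\times\Id)^{\ast}\alpha\cdot\beta\big)=\alpha\cdot(f\times\Id)_\ast\beta$ is valid here by Lemma \ref{lemm:13}, one pushes the currents $\delta_{f^{\ast}Z}=(f\times\Id)^{\ast}\delta_Z$ and the forms $(f\times\Id)^{\ast}g_i$ through $f\times\Id$, leaving the Wang forms and the components of $\fg_T$ untouched, and recognizes the outcome as $[\fg_Z]\ast(f_\ast\fg_T)$, where $f_\ast\fg_T$ is the loose Green form for $f_\ast T$ obtained by pushing forward the components of $\fg_T$. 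Passing to classes exactly as before gives $f_\ast(f^{\ast}\widetilde g_Z\ast\widetilde g_T)=\widetilde g_Z\ast f_\ast\widetilde g_T$.

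The main obstacle is not the algebra but ensuring that every step is legitimate: at each stage the cycles must be in the right position so that the currents $\delta_{f^{\ast}Z}$, the pulled-back cycles $f^{\ast}Z$ and $f^{\ast}(Z\cdot W)$, and the products $f^{\ast}Z\cdot T$ and $f_\ast T$ are all defined with the expected codimension, and every product of forms with logarithmic singularities and integration currents that appears must be locally integrable so that Lemma \ref{lemm:13} and the base-change and projection formulas apply --- this is precisely what the good-position hypotheses on $Z$, $W$, $Z\cdot W$ and $T$ guarantee. A secondary point, already isolated above, is the compatibility of the $\ast$-product of a Green current with a loose Green form with the one defined via genuine Green forms, which follows by combining Proposition \ref{prop:16} with the argument of Proposition \ref{prop:007}.
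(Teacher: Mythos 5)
Your proposal is correct and follows essentially the same route as the paper, which gives no written argument beyond the remark that the result ``can be verified formally thanks to Lemma \ref{lemm:13} and the corresponding properties of forms and currents'': you simply carry out that formal verification, with the same ingredients (basic Green form representatives chosen in good position via the moving lemmas and Proposition \ref{prop:8}, the identity $f^{\ast}\caP(Z)=\caP(f^{\ast}Z)$ from base change, the projection formula for forms and currents, and Propositions \ref{prop:16}, \ref{prop:007} and \ref{prop:20} to pass to classes). Your write-up is in fact more explicit than the paper's, and the good-position hypotheses are used exactly where the paper intends them.
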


\section{Higher arithmetic Chow groups}
\label{sec:high-arithm-chow}

In this section we present a definition of higher arithmetic Chow
groups for a smooth and projective variety $X$ of dimension $d$,
defined over an arithmetic field $F$.

Adapting Deligne-Soul\'e proposal for
higher arithmetic $K$ theory to higher arithmetic Chow groups, we can
define them as the relative homology of
the cycle class map between
Bloch cycle complex and a complex that computes Deligne-Beilinson
cohomology.
We will denote by $\widehat{\CH}^{\ast}(X,\ast)_{0}$
the groups obtained following this idea. They fit in a long exact sequence
\begin{multline*}
  \dots \to H^{2p-n-1}_{\caD}(X,\R(p))\to\\ \widehat{\CH}^{p}(X,n)_{0}\to
  {\CH}^{p}(X,n) \to  H^{2p-n}_{\caD}(X,\R(p))\to \dots
\end{multline*}
This method is followed by Goncharov \cite{Goncharov:prAmc} and
by Burgos and Feliu \cite{BurgosFeliu:hacg}.

Takeda
\cite{Takeda:haKt} has proposed a new definition of higher arithmetic
$K$-groups that map surjectively over the higher $K$
groups. Translating his idea to higher arithmetic Chow groups, one 
sought to find groups $\widehat{\CH}^{\ast}(X,\ast)$ that fit into an
exact sequence of the form
\begin{displaymath}
  {\CH}^{p}(X,n+1)
  \to \caD^{2p-n-1}(X,p)/\im d_{\caD}\to \widehat{\CH}^{p}(X,n)\to
  {\CH}^{p}(X,n) \to 0,
\end{displaymath}
where $\caD^{*}(X,*)$ is a complex that computes real
Deligne-Beilinson cohomology (or absolute Hodge cohomology). For each
$n,p$, the
group $\widehat{\CH}^{p}(X,n)_{0}$ would be recovered as a special subgroup
of $\widehat{\CH}^{p}(X,n)$. Clearly such a definition would depend on
the choice of the complex $\caD^{*}(X,*)$. 
A definition along these lines was
already proposed in Elisenda Feliu's thesis (\cite{Feliu:Thesis}, \S
3.10), where it was referred to as the \emph{modified higher arithmetic Chow
  groups}.

Below we give a definition of
such higher arithmetic Chow
groups which is analogous to the one given by Gillet and Soul\'e.

\subsection{Higher arithmetic Chow groups} Throughout this section, we
will work over an $\textit{arithmetic field}$. An arithmetic field is
a triple $(F, \Sigma , F_{\infty})$, where $F$ is a field, $\Sigma $
is a non-empty set of complex embeddings $F\hookrightarrow \C$ of $F$
and $F_{\infty}$ is a conjugate-linear $\C$-algebra automorphism of
$\C^{\Sigma }$ that leaves the image of $F$ under the diagonal immersion
invariant. Primarily, we will use the notation $F$, and the triple
will be understood. Typical examples of an arithmetic field are 
number fields, $\R$ and $\C$. Let $\fA$ be any
of the complexes that appear in Example \ref{exm:6} and 
let $X$ be a smooth and projective
variety of dimension 
$d$, defined over an arithmetic 
field $F$.

\medskip
\textbf{Real varieties.} Since $X$ is defined over an arithmetic field $F$,
we consider the associated complex space
\begin{displaymath}
  X_{\C}\coloneqq \coprod_{\sigma}X_{\sigma\in \Sigma },\quad \sigma\colon
F\hookrightarrow \C.
\end{displaymath}
Then $F_{\infty}$ induces an  antilinear involution on $X_{\C}$ also
denoted $F_{\infty}$. We will denote by
$\sigma_{F_{\infty}}$, the involution of $\fA^n(X_{\C}, p)$ given,  for any
element $\eta \in \fA^n(X_{\C}, p)$, by
\begin{displaymath}
  \sigma_{F_{\infty}}(\eta)=\overline{F_{\infty}^{\ast}\eta}.
\end{displaymath}
Then, the real Deligne-Beilinson cohomology of $X$ is defined as
$$H^n_{\DB}(X, \R(p))\coloneqq H^n_{\DB}(X_{\C}, \R(p))^{\sigma_{F_{\infty}}},$$
where the superscript $\sigma_{F_{\infty}}$ means the fixed part under
$\sigma_{F_{\infty}}$. The real Deligne cohomology of $X$ is the cohomology
of the real complex
$$\fA^n(X,p)\coloneqq \fA^n(X_{\C},p)^{\sigma_{F_{\infty}}},$$
i.e. there is an isomorphism
$$H^n_{\DB}(X, \R(p))\cong H^n(\fA^{*}(X,p), d).$$
We will consider this setting for the rest of the article.

\begin{df}\label{HCDef1}
We define the group of \emph{higher arithmetic cycles},
denoted
$\widehat{Z}^p(X,n,\fA)$ as the subgroup of $Z^p(X,n)_{0}\oplus
\widetilde{\fA} ^{2p-n-1}_{D}(X,p)$ consisting of pairs $(Z,\widetilde
g_Z)$ such that
$Z$ is a cycle and $\widetilde g_Z$ is a class of  $\fA$-Green current
for $Z$. We will frequently omit the simbol $\widetilde{\phantom g}$
over a Green current in an arithmetic cycle as it is understood. 

Let $\widehat{Z}^p_{\rat}(X,n,\fA)$ be the subgroup of
$\widehat{Z}^p(X,n,\fA)$ generated by elements of the form
\begin{alignat*}{2}
  &(\delta Z, -\widetilde{\caP}_{\fA}(Z)),& \text{ for } Z&\in Z^p(X,n+1)_{0}.
\end{alignat*}
We define the \emph{higher
arithmetic Chow groups} to be the quotient
\begin{displaymath}
  \widehat{\CH}^p(X,n,\fA)\coloneqq
\widehat{Z}^p(X,n,\fA)/\widehat{Z}^p_{\rat}(X,n,\fA).
\end{displaymath}
Again when $\fA$ is understood  we omit it from the
notation. 
\end{df}

\begin{rmk}\label{rem:9}
  Similarly, we can define $\widehat{Z}^p(X,n,\fA)_{00}$ and
  $\widehat{Z}^p_{\rat}(X,n,\fA)_{00}$ using cycles in the refined
  normalized complex  and obtain
  \begin{displaymath}
  \widehat{\CH}^p(X,n,\fA)=
\widehat{Z}^p(X,n,\fA)_{00}/\widehat{Z}^p_{\rat}(X,n,\fA)_{00}.    
  \end{displaymath}

\end{rmk}

An arithmetic field $F$ is a particular case of an arithmetic ring. Therefore we have at our disposal the arithmetic Chow groups
$\widehat{\CH}^{p}(X)$ defined in \cite{GilletSoule:ait}. We can
recover these groups using the Deligne complex $\DB$. We recall that
$d$ is the dimension of $X$.

\begin{prop}\label{case0}
  For each $p\ge 0$, the assignment
  \begin{equation}
    \label{eq:89}
    (Z,g_{Z})\mapsto (Z, 2(2\pi
  i)^{d-p+1}g_{Z})
  \end{equation}
defines an isomorphism 
\begin{displaymath}
  \widehat{\CH}^p(X,0, \DB)\xrightarrow{\cong} \widehat{\CH}^p(X).
\end{displaymath}
\end{prop}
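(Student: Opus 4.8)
The plan is to show that, when $n=0$, the definition of $\widehat{\CH}^{p}(X,0,\DB)$ given here reduces essentially verbatim to the presentation of the classical arithmetic Chow groups recalled in \cite{Burgos:CDB}, the rescaling \eqref{eq:89} being precisely the normalization factor that appears in the comparison between that presentation and the original one of \cite{GilletSoule:ait}.

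First I would unwind Definition \ref{HCDef1} for $n=0$. Since $\square^{0}$ is a point, $Z^{p}(X,0)_{0}=Z^{p}(X)$ is the usual group of codimension $p$ cycles; and since $W_{0}=1$, Definition \ref{def:7} gives $\caP_{\DB}(Z)=\delta_{Z}$. Hence a $\DB$-Green current for a cycle $Z\in Z^{p}(X)$ is exactly a class $\widetilde g_{Z}\in\widetilde\DB^{2p-1}_{D}(X,p)$ with $\delta_{Z}+dg_{Z}$ the image of a smooth form, and $\widehat Z^{p}_{\rat}(X,0,\DB)$ is generated by the pairs $(\delta T,-\widetilde{\caP}_{\DB}(T))$ with $T\in Z^{p}(X,1)_{0}$. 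On the other side, by \cite{Burgos:CDB} the group $\widehat{\CH}^{p}(X)$ has exactly the same description with $\DB$-currents: arithmetic cycles are pairs $(Z,\widetilde g_{Z})$ with $\delta_{Z}+dg_{Z}$ smooth, and one divides by the subgroup generated by $(\Div f,-g(f))$, where $f$ runs over the nonzero rational functions on the codimension $p-1$ subvarieties of $X$ and $g(f)$ is the canonical $\DB$-Green current of $\Div f$; the identification with Gillet--Soul\'e's original complex of $(p-1,p-1)$-currents is carried out by multiplying the current coordinate by $2(2\pi i)^{d-p+1}$, this factor being forced by the twists built into $\delta_{Z}$ and into $[\cdot]$ in \S\ref{sec:conv-diff-forms} together with the normalization $dx=-2\partial\bar\partial x$ of the differential of $\DB$ in degree $2p-1$. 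In particular, the defining equation of a $\DB$-Green current is transformed, under \eqref{eq:89}, into the Gillet--Soul\'e Green equation for $2(2\pi i)^{d-p+1}g_{Z}$; checking this is a direct, if notation-heavy, computation, and it is exactly what pins down the constant.

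It then remains to see that \eqref{eq:89} matches the two subgroups of rational equivalence. For one inclusion, given a codimension $p-1$ subvariety $Y$ and $f\in k(Y)^{\ast}$, the closure in $X\times\square$ of the graph of $f$, corrected by a degenerate cycle so as to land in $\ker(\delta^{1}_{1})=Z^{p}(X,1)_{0}$, gives a cycle $T_{f}$ with $\delta T_{f}=\pm\Div f$; using \eqref{eq:88}, the current $\caP_{\DB}(T_{f})=(p_{1})_{\ast}\bigl(\delta_{T_{f}}\cdot[-\tfrac12\log t\bar t]\bigr)$ restricts along the graph to $-\tfrac12\log\|f\|^{2}$ and so is $\pm g(f)$, while the degenerate correction contributes $0$ to $\caP_{\DB}$; thus $(\delta T_{f},-\widetilde{\caP}_{\DB}(T_{f}))$ maps under \eqref{eq:89} to $\pm(\Div f,-g(f))$, and every generator of $\widehat Z^{p}_{\rat}(X)$ is obtained this way. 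For the other inclusion, an arbitrary $T\in Z^{p}(X,1)_{0}$ has $\delta T$ classically rationally equivalent to zero, by Bloch's identification of $H_{0}$ of the cubical complex with the classical Chow group (\cite{Bloch:achK}, \cite{Levine:BhCgr}), and combining this with the previous computation one checks that the class of $(\delta T,-\widetilde{\caP}_{\DB}(T))$ lies, after the rescaling, in $\widehat Z^{p}_{\rat}(X)$. Putting the pieces together, \eqref{eq:89} descends to a well-defined isomorphism on the quotients.

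I expect the main obstacle to be precisely this last comparison of the rational equivalence subgroups: one has to show that the subgroup generated by $(\delta T,-\widetilde{\caP}_{\DB}(T))$ for \emph{all} $T\in Z^{p}(X,1)_{0}$ is exactly the classical one, which needs both the explicit graph computation above and enough control of the Green-current coordinate when a general $\delta T$ is rewritten as a sum of divisors of rational functions. An alternative that bypasses part of this is to fit \eqref{eq:89} into a commutative ladder between the exact sequence \eqref{eq:1002} for $n=0$ and the exact sequence \eqref{eq:1001} --- the identity on $\CH^{p}(X,1)$ and $\CH^{p}(X)$, the rescaling on $\widetilde\DB^{2p-1}(X,p)$, and the agreement of the two regulators supplied by \cite{BurgosFeliuTakeda} --- and then apply the five lemma; in that approach the delicate point instead becomes the matching of the normalizations of the Beilinson regulator on the two sides.
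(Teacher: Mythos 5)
Your normalization bookkeeping and the easy inclusion are exactly as in the paper: for $n=0$ one has $\caP_{\DB}(Z)=\delta_Z$, and the factor $2(2\pi i)^{d-p+1}$ is forced by $\delta_Z=(2\pi i)^{-(d-p)}\delta_Z^{\Top}$ together with $d_{\DB}=-2\partial\bar\partial=2(2\pi i)dd^{c}$ in degree $2p-1$; also, taking the graph of a rational function $f$ on a codimension $p-1$ subvariety $W$ gives a pre-cycle $Z_f$ with $\delta Z_f=-\Div f$ and $\caP_{\DB}(Z_f)=\tfrac12 i_{\ast}[-\log|f|^{2}]$, so every Gillet--Soul\'e generator is hit (the paper even avoids your ``degenerate correction'' by noting that degenerate elements of $Z^{p}(X,1)$ have $\delta Z=0$ and $\caP(Z)=0$, so one may work with $Z^{p}(X,1)$ instead of $Z^{p}(X,1)_{0}$).

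The genuine gap is the converse inclusion, which you flag but do not resolve. Knowing that $\delta T$ is classically rationally equivalent to zero only produces functions $f_j$ on subvarieties $W_j$ with $\delta T=\sum_j\Div f_j$; then $(\delta T,-\widetilde{\caP}_{\DB}(T))$ differs from the classical generator $\sum_j(\Div f_j,-g(f_j))$ by a pair $(0,\eta)$ with $\eta$ a $d$-closed current of degree $2p-1$, and such a class is in general \emph{not} in $\widehat Z^{p}_{\rat}(X)$ (it defines a possibly non-zero class in $H^{2p-1}_{\fD}(X,\R(p))$). So ``control of the Green-current coordinate'' is not a technicality one can wave at; it is the whole content. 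The paper settles it generator by generator, following the discussion before \cite[Proposition 1.6]{Fulton:IT}: for a prime pre-cycle $T\subset X\times\square^{1}$, the projection $p_2$ cuts out a rational function $f'$ on $T$; if the image $W\subset X$ has codimension $>p-1$ then $\delta T=0$ and $\caP(T)=0$, while if $\codim W=p-1$ one sets $f=N_{K(T)/K(W)}f'$ and shows $\delta T=-\Div f$ and, via the push-forward and the behaviour of $\log|\cdot|^{2}$ under the field norm, $\caP(T)=\tfrac12 i_{\ast}[-\log|f|^{2}]$. Thus each $(\delta T,-\caP(T))$ is, after the rescaling \eqref{eq:89}, literally (minus) a Gillet--Soul\'e generator --- not merely equivalent to one. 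Your graph computation only covers the case where $T\to W$ is birational; the norm argument is the missing idea for the general finite-degree case. (Your five-lemma alternative does not bypass this either: to have the middle vertical map at all, one must first know \eqref{eq:89} sends the new rational-equivalence subgroup into the classical one, which is the same statement.)
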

\begin{proof}
  One has to take care
  of the different normalization used here and in
  \cite{GilletSoule:ait}. Let 
  $Z\in Z^{p}(X)$ be a cycle of codimension $p$ and denote by $\delta
  _{Z}^{\Top}$ the current integration along $Z$ with the topologist
  convention. Then
  \begin{equation}\label{eq:86}
    \delta _{Z}=\frac{1}{(2\pi i)^{d-p}} \delta
  _{Z}^{\Top}
  \end{equation}
  Recall  the real operator $d^{c}=i/(4\pi)(\bar \partial-\partial)
  $. The differential
  \begin{displaymath}
    d_{\DB}\colon \DB^{2p-1}(X,p)\to \DB^{2p}(X,p)
  \end{displaymath}
  is given by
  \begin{equation}\label{eq:87}
    d_{\DB}=-2\partial\bar \partial = 2(2\pi i)dd^{c}.
  \end{equation}
  The condition for $g$ to be a Green current for $Z$ in the sense of
  \emph{loc. cit.} is that
  \begin{displaymath}
    dd^{c}g + \delta ^{\Top}_{Z}\text{ is a smooth form}.
  \end{displaymath}
  Since we are in the case $n=0$, we have that $\caP(Z)=\delta
  _{Z}$. Therefore $g$ is a Green current for $Z$, in the sense of the
  present paper, if
  \begin{displaymath}
    -2\partial\bar \partial g_{Z}+\delta _{Z}\text{ is a smooth form}.
  \end{displaymath}
  Using \eqref{eq:86} and \eqref{eq:87} we deduce that $g$ is a Green
  current for $Z$ in the sense of this paper, if and only if $2(2\pi
  i)^{d-p+1}g$ is a Green current in the sense of 
  \emph{loc. cit.} Therefore the groups of arithmetic cycles are
  the same in both cases.

  Now we have to check that the notion of
  rational equivalence in both theories are equivalent. The main
  idea is to follow the proof of \cite[Proposition 1.6]{Fulton:IT}.

  Observe that, if $Z\in Z^{p}(X,1)$ is degenerate, then
  $\delta Z=0$ and $\caP(Z)=0$. Thus in order to discuss arithmetic rational
  equivalence at the level of $\widehat{Z}^p(X,0,\fA)$ we can use
  the group $Z^p(X,1)$ instead of $Z^p(X,1)_{0}$.  

  In \cite{GilletSoule:ait} the group of arithmetic cycles rationally
  equivalent to zero is generated by cycles of the form $(\Div
  f,(2\pi i)^{d-p+1}i_{\ast}[-\log|f|^{2}])$, where $i\colon W\to X$ is
  the inclusion of a codimension $p-1$ subvariety and $f\in
  K(W)^{\times}$ is a nonzero rational function. The factor $(2\pi
  i)^{d-p+1}$ comes from \eqref{eq:7}.

  Consider $\Gamma _{f}\subset W\times \P^{1}$, the graph of $f$. By
  sending it to $X\times \P^{1}$ and restricting to $X\times \square^{1}$
  we obtain a subvariety $Z_{f}\subset X\times \square^{1}$. Since it is
  defined as the graph of a nonzero rational function, it intersect
  properly the faces of $X\times \square^{1} $ and determines a
  pre-cycle  also denoted by $Z_{f}$.

  Now we compute, using \eqref{eq:88} and \eqref{Regulator:Eqn}, and
  the notation therein,
  \begin{displaymath}
    -\caP(Z_{f})=  \frac{1}{2}(p_{1})_{\ast}\iota_{\ast}[\iota
    ^{\ast}p_{2}^{\ast}\log t\bar t]=\frac{1}{2}i_{\ast}[\log|f|^{2}]. 
  \end{displaymath}
while
\begin{displaymath}
  \delta Z_{f}=Z_{f}\cap X\times \infty-Z_{f}\cap X\times 0=-\Div(f).
\end{displaymath}
In conclusion, the assignment \eqref{eq:89} sends $(-\delta Z_{f},\caP(Z_{f}))$
to the arithmetic cycle $(\Div f,(2\pi
i)^{d-p+1}i_{\ast}[-\log|f|^{2}])$ and every
arithmetic cycle rationally equivalent to zero in \emph{loc. cit.} is
also rationally equivalent to zero here.

Conversely, let $Z$ be an irreducible subvariety of $X\times
\square^{1}$ intersecting properly all the faces. The restriction of
the projection $p_{2}\colon X\times \square^{1}\to \square^{1}$
determines a nonzero rational function $f'$ on $Z$. Let $W\subset X$ be
the closure of the image of $Z$ by the first projection and let $p\colon Z\to W$ be
the induced map. If $\codim W > p-1$ then one verifies easily that
$\delta Z=0$ and $\caP(Z)=0$. If $\codim W = p-1$, write
$f=N_{K(X)/K(W)}f'$, where $N_{K(Z)/K(W)}$ is the norm of the field
extension $K(Z)/K(W)$. then, by the discussion before
\cite[Proposition 1.6]{Fulton:IT} 
\begin{displaymath}
  \delta Z = -\Div f.
\end{displaymath}
Let $i'\colon Z\to X\times \square^{1}$ and $i\colon W\to X$ denote
the inclusions. Then, using the definition of the map $\caP$ and of
the norm of a field extension,
\begin{displaymath}
  \caP(Z)=p_{\ast}\frac{1}{2}i'_{\ast}[-\log|f'|^{2}]=\frac{1}{2}i_{\ast}[-\log|f|^{2}]. 
\end{displaymath}
In consequence the notions of rational equivalence agree and we obtain
the result.
\end{proof}

\begin{notation}\label{def:12}
  Let $F$ be an arithmetic field and $X$ a smooth projective variety  of
  dimension $d$ defined over $F$. 
  Let $\fA$ be any of the complexes in Diagram \eqref{eq:43}. When $\fA$ has been fixed, we will often drop the suffix/prefix $\fA$ from the definition. For example, the higher arithmetic Chow groups for a fixed complex $\fA$ will simply be denoted by $\widehat{\CH}^{\ast}(X,\bullet)$, and the Goncharov regulator by $\mathcal{P}$. 
\end{notation}

\begin{rmk}\label{rem:4}
  Let
  \begin{displaymath}
    \xymatrix{
      \fB \ar@<5pt>[r]^{\varphi} &
      \fA\ar@<5pt>[l]^{\psi }}
  \end{displaymath}
  be two of the complexes of Example \ref{exm:6} with the
  corresponding morphisms. Then, for each $n,p$ there are induced
  morphisms 
  \begin{displaymath}
    \xymatrix{
      \widehat{\CH}^p(X,n,\fB) \ar@<5pt>[r]^{\varphi} &
      \widehat{\CH}^p(X,n,\fA)\ar@<5pt>[l]^{\psi }}.
  \end{displaymath}
\end{rmk}

\subsection{Exact sequences}
\label{sec:exact-sequences}
Like the usual arithmetic Chow groups, the higher arithmetic version comes equipped with the following natural maps:

\begin{alignat}{2}
  \widetilde{\fA}^{2p-n-1}&\xrightarrow{\amap}\widehat{\CH}^{p}(X,n,\fA),&\quad
  b&\mapsto [(0,[b])],\notag \\ 
  \widehat{\CH}^p(X,n,\fA)&\xrightarrow{\zeta }\CH^p(X,n),&\quad [(Z,g)]&\mapsto
 [Z],  \notag  \\
 \widehat{\CH}^p(X,n,\fA)&\xrightarrow{\omega}Z\fA^{2p-n},&\quad 
 [(Z,g_{Z})]&\mapsto \omega (g_{Z}).\notag 
\end{alignat}
%
We will denote
\begin{displaymath}
  \widehat{\CH}^{p}(X,n,\fA)^{0}=\ker (\omega) \subset \widehat{\CH}^{p}(X,n,\fA),
\end{displaymath}
and write
\begin{alignat}{2}
  \widehat{\CH}^p(X,n,\fA)^{0}&\xrightarrow{\iota} ,\widehat{\CH}^p(X,n,\fA),&\quad
  [(Z,g_{Z})]&\mapsto [(Z,g_{Z})],\notag \\ 
  Z\fA^{2p-n-1}&\xrightarrow{\bmap}\widehat{\CH}^p(X,n,\fA)^{0},&\quad
  b&\mapsto [(0,b)].\notag 
\end{alignat}
\begin{rmk}\label{rmk:10}
In a later sequel, we will show that for $n>0$ the groups $\widehat{\CH}^p(X,n,\fA)^0$ are isomorphic to the higher arithmetic Chow groups defined in \cite{BurgosFeliu:hacg}. Hence they don't depend on the complex $\fA$.
\end{rmk}
Let $\rho\colon \CH^p(X,n)\rightarrow H^{2p-n}_{\fD}(X,\R(p))$ denote
the cubical Goncharov regulator. Recall that it is represented at the level of complexes
by the map 
$\caP$, and agrees, after composing
with the isomorphism between $K$-theory and higher Chow groups with
Beilinson's regulator.

Denote also by $\rho\colon \CH^p(X,n)\rightarrow
\widetilde{\fA}^{2p-n}$
the composition
\begin{displaymath}
  \CH^p(X,n)\xrightarrow{\caP}  H^{2p-n}_{\fD}(X,\R(p))
  \hookrightarrow  \widetilde{\fA}^{2p-n}. 
\end{displaymath}
This map is given by $[Z]\mapsto \widetilde{\omega}(g_Z)$, for a choice
of Green current $g_Z$ for $Z$, because $\caP(Z)$ is cohomologous to
$\omega (g_{Z})$.  

Analogous to the exact sequences that arise for arithmetic Chow
groups, we have the following exact sequences:
\begin{thm}\label{thm:3} The following sequences are exact
  \begin{enumerate}
  \item\label{item:27}
    \begin{displaymath}
    \CH^{p}(X,n+1)\xrightarrow{\rho } \widetilde
    {\fA}^{2p-n-1}\xrightarrow{\amap}
    \widehat{\CH}^{p}(X,n,\fA) \xrightarrow{\zeta}
    \CH^{p}(X,n)\to 0.
  \end{displaymath}
\item\label{item:128}
  \begin{multline*}
    \CH^{p}(X,n+1)\xrightarrow{\rho } 
    H_{\DB}^{2p-n-1}(X,\R(p)) \xrightarrow{\amap}
    \widehat{\CH}^{p}(X,n,\fA) \xrightarrow{\zeta\oplus \omega }\\
    \CH^{p}(X,n)\oplus Z\fA^{2p-n}\to H_{\DB}^{2p-n}(X,\R(p))\to 0. 
  \end{multline*}
  \item\label{item:129}
  \begin{displaymath}
  \CH^{p}(X,n+1)\xrightarrow{\rho } \widetilde
    {\fA}^{2p-n-1}\xrightarrow{\amap}
    \widehat{\CH}^{p}(X,n,\fA)^0 \xrightarrow{\zeta}
    \CH^{p}(X,n)^0\to 0,
    \end{displaymath}
  where for (\ref{item:129}), $\CH^p(X,n)^0=\text{ker}(\rho)$.

  \end{enumerate}
\end{thm}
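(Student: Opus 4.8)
The plan is to prove the three exact sequences in parallel, with \eqref{item:27} being the fundamental one from which the others follow by standard diagram-chasing, exactly as in the theory of classical arithmetic Chow groups (cf.\ \cite{GilletSoule:ait}, \cite{Burgos:CDB}). First I would set up the basic maps: the existence of $\amap$, $\zeta$ and $\omega$ has already been recorded, and the only content is exactness at each spot. For \eqref{item:27}, surjectivity of $\zeta$ is immediate: given a cycle $Z\in Z^p(X,n)_0$, Lemma~\ref{lemm:2} produces an $\fA$-Green current $g_Z$ for $Z$, so $(Z,\widetilde g_Z)\in\widehat Z^p(X,n,\fA)$ maps to $[Z]$. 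Exactness at $\widehat{\CH}^p(X,n,\fA)$ amounts to: $[(Z,\widetilde g_Z)]$ dies under $\zeta$ iff $Z$ is rationally equivalent to zero, i.e.\ $Z=\delta T$ for some $T\in Z^p(X,n+1)_0$; then $(Z,\widetilde g_Z)-(\delta T,-\widetilde{\caP}(T))=(0,\widetilde g_Z+\widetilde{\caP}(T))$, and since $\delta Z=0$ forces $d\caP(T)=\caP(\delta T)=\caP(Z)$ one checks $\widetilde g_Z+\widetilde{\caP}(T)$ is a class of Green currents for the zero cycle, hence lies in the image of $\amap$. The subtle point here is matching $\omega$-data, but $\omega$ of a Green current for the zero cycle is an exact smooth form, so the class in $\widetilde\fA^{2p-n-1}$ is well-defined. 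Finally, exactness at $\widetilde\fA^{2p-n-1}$: $\amap(\widetilde b)=0$ means $(0,[b])\in\widehat Z^p_{\rat}(X,n,\fA)$, i.e.\ $(0,[b])=\sum(\delta T_i,-\widetilde{\caP}(T_i))$ with $\sum\delta T_i=0$; setting $T=\sum T_i$, $\delta T=0$, so $T$ is a cycle in $Z^p(X,n+1)_0$ and $[b]=-\widetilde{\caP}(T)$ modulo $\im d$, which says precisely $\widetilde b=-\rho([T])$ in $\widetilde\fA^{2p-n-1}$ after identifying $\caP(T)$ with $\omega(g_T)$ up to coboundary. The image of $\rho$ here is by definition the image of $\CH^p(X,n+1)$, giving exactness.

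For \eqref{item:128}, I would deduce it from \eqref{item:27} together with the long exact sequence for Deligne cohomology and the identification of $\widehat{\CH}^p(X,n,\fA)$ with a relative-cohomology-type group. The key extra input is that the map $\omega\colon\widehat{\CH}^p(X,n,\fA)\to Z\fA^{2p-n}$ has image exactly the closed forms representing classes in $H^{2p-n}_{\DB}(X,\R(p))$ that come from the regulator on $\CH^p(X,n)$ — this follows because $\caP(Z)+dg_Z=[\omega_Z]$ forces $\omega_Z$ to represent the cycle class of $Z$, and conversely any smooth closed form cohomologous to some $\caP(Z)$ arises this way by Lemma~\ref{lemm:2}. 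The surjectivity onto $H^{2p-n}_{\DB}(X,\R(p))$ on the right and exactness at $\CH^p(X,n)\oplus Z\fA^{2p-n}$ are then a formal consequence of the snake lemma applied to the two presentations; exactness at $H^{2p-n-1}_{\DB}$ on the left uses that the composite $\CH^p(X,n+1)\to\widetilde\fA^{2p-n-1}\to\widehat{\CH}^p(X,n,\fA)$ factors through $H^{2p-n-1}_{\DB}(X,\R(p))\hookrightarrow\widetilde\fA^{2p-n-1}$ precisely when we restrict $\amap$ to the cohomology subgroup, which is what appears in this sequence.

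For \eqref{item:129}, restrict everything in \eqref{item:27} to the kernel of $\omega$. The map $\amap$ already lands in $\widehat{\CH}^p(X,n,\fA)^0$ when applied to classes in $H^{2p-n-1}_{\DB}(X,\R(p))$, but here the domain is still the full $\widetilde\fA^{2p-n-1}$; the point is that $\zeta$ restricted to the kernel of $\omega$ surjects onto $\CH^p(X,n)^0=\ker(\rho)$, because a cycle $Z$ admits a Green current $g_Z$ with $\omega(g_Z)=0$ iff $\caP(Z)$ is itself exact as a current, iff $\rho([Z])=0$ — one direction is the definition, the other uses again that $\caP(Z)$ exact implies one may solve $\caP(Z)+dg_Z=0$. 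Exactness in the middle and on the left is then inherited verbatim from \eqref{item:27}, since the subgroup relation $\widehat{\CH}^p(X,n,\fA)^0\subset\widehat{\CH}^p(X,n,\fA)$ is an equality on the image of $\amap$ composed with the projection to $\widetilde\fA$ modulo forms, and $\im\rho$ is the same subgroup in both.

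The main obstacle I expect is the bookkeeping in exactness at the middle term of \eqref{item:27}: one must carefully track that when $Z=\delta T$ the correction $(0,\widetilde g_Z+\widetilde{\caP}(T))$ really is an arithmetic cycle for the zero cycle \emph{and} that its class in $\widetilde\fA^{2p-n-1}$ is independent of the choice of $T$ and of the representatives — this independence is exactly where Lemma~\ref{lemm:2} (two Green currents for the same cycle differ by a smooth form) is essential, and it is the step that distinguishes this ``Takeda-style'' presentation (where $\zeta$ is surjective and the target is $\CH^p(X,n)$ rather than the kernel of the regulator) from the Goncharov/Burgos--Feliu version. Everything else is formal homological algebra once that compatibility is nailed down.
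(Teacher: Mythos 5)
Your proof of part (1) is essentially the paper's own argument (surjectivity of $\zeta$ from the existence of Green currents, exactness in the middle by subtracting $(\delta T,-\widetilde{\caP}(T))$ and applying Lemma~\ref{lemm:2} to see that the resulting Green current for the zero cycle is a smooth form class, exactness at $\widetilde{\fA}^{2p-n-1}$ by recognizing the current coming from the rational-equivalence relation as a Green current for $-T$), and it is correct; the only piece you skip is the easy inclusion $\im\rho\subseteq\ker\amap$, which the paper checks explicitly via $\amap(\rho([Z]))=[(0,[\omega(g_Z)])]=[(\delta(-Z),-\widetilde{\caP}(-Z))]+[(0,[dg_Z])]=0$, using $\delta Z=0$. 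Parts (2) and (3) are left to the reader in the paper itself, so your sketches are at a comparable level of detail; for (3), note (as you half-observe) that $\omega(\amap(\widetilde b))=db$, so one must see that the classes of $\widetilde{\fA}^{2p-n-1}$ actually needed land in $\widehat{\CH}^{p}(X,n,\fA)^{0}$, i.e.\ are represented by closed forms --- which your middle-exactness argument does produce, since $\omega$ is well defined on classes and vanishes on the element being hit.
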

\begin{proof}
Since the ideas involved in all the three sequences are similar in nature, we show the proof for (\ref{item:27}), leaving the rest to the reader. We first show the exactness at $\widehat{\CH}^p(X,n, \fA)$: The inclusion $\text{im}(\amap)\subset \text{ker}(\zeta)$ is obvious. So we prove the other inclusion. Notice that $\zeta([Z,g_Z])=0$ means the class $[Z]=0\in \CH^p(X,n)$. So $Z=\delta(Z')$ for some precycle $Z'\in Z^p(X,n+1)_0$. We have the following relations in $\widehat{\CH}^p(X,n,\fA)$:
\begin{displaymath}
[(Z,g_Z)]=[(Z,g_Z)]-[(\delta(Z'),-\mathcal{P}(Z'))]=[(0, g_Z+\mathcal{P}(Z'))].
\end{displaymath}
Let $g'_Z=g_Z+\mathcal{P}(Z')$, then $dg'_Z=d(g_Z+\mathcal{P}(Z'))=[\omega(g_Z)]$. Hence, from Lemma \ref{lemm:2}, $\tilde{g}'_Z=[\eta]^\sim$, for some smooth form $\eta \in \fA^{2p-n-1}$. Thus $[(Z,g_Z)]=\amap(\widetilde{\eta})$, and we are done. Next we show the exactness at $\widetilde{\fA}^{2p-n-1}$. For an element $[Z]\in \CH^p(X, n+1)$, we have $\amap(\rho([Z]))=[(0, [\omega(g_Z))]$. Now, since $Z$ is a cycle, at the level of higher arithmetic Chow groups, we have
$$[(0, \omega(g_Z))]=[(\delta(Z), \mathcal{P}(Z))]+[(0, dg_Z)]=[(\delta(-Z), -\mathcal{P}(-Z))],$$
which is zero. Thus $\text{im}(\rho)\subset \text{ker}(\amap)$. On the other hand, if $\amap(\widetilde{\eta})=0$ for any element $\widetilde{\eta}\in \widetilde{\fA}^{2p-n-1}$, then $(0, [\eta])=(\delta(Z'), -\mathcal{P}(Z'))+(0, du)$. Thus $\delta(Z')=0$, hence $[Z']\in \CH^p(X, n+1)$, and $du-\mathcal{P}(Z')=[\eta]$. This means that if we take $[-Z']\in \CH^p(X,n+1)$, then $\rho([-Z'])=\widetilde{\omega}(u)=\widetilde{\eta}$. This shows the inclusion $\text{ker}(\amap)\subset \text{im}(\rho)$.
\end{proof}

\subsection{Direct and inverse images}
\label{sec:direct-images}
We fix an arithmetic field $F$ in this discussion.
\subsubsection{Direct images}
\begin{thm}\label{thm:5}
  Let $f\colon X\to Y$ be a morphism of smooth equidimensional
  projective varieties over $F$ of relative dimension $e$.
  Since $X$ and $Y$ are assumed to be projective, the morphism
  $f$ is proper. 
  \begin{enumerate}
  \item \label{item:33} Let $Z\in Z^{p}(X,n)_{0}$ be a cycle,
    $g_{Z}$ a 
    Green current for $Z$ and $\omega _{Z}=\omega (g_{Z})$.
    If either $\omega _{Z}=0$ or the map $f$ is smooth, then 
    $f_{\ast}g_{Z}$ is a Green current for 
    the cycle $f_{\ast}Z$.
  \item \label{item:14} There is a morphism
    \begin{displaymath}
      f_{\ast}\colon \widehat{\CH}^{p}(X,n)^{0}\to
      \widehat{\CH}^{p-e}(Y,n)^{0}, 
    \end{displaymath}
    given by $(Z,g_{Z})\mapsto (f_{\ast}Z,f_{\ast}g_{Z})$.
  \item \label{item:15} If $f$ is smooth, there is a morphism
    \begin{displaymath}
      f_{\ast}\colon \widehat{\CH}^{p}(X,n)\to
      \widehat{\CH}^{p-e}(Y,n),
    \end{displaymath}
    given by $(Z,g_{Z})\mapsto (f_{\ast}Z,f_{\ast}g_{Z})$.
  \item \label{item:16} The direct image of higher arithmetic Chow groups is
    compatible with the direct image of higher Chow groups, with the 
    direct image in Deligne-Beilinson cohomology and, when $f$ is
    smooth with the direct image of differential forms. In particular,
    all the exact sequences of Theorem \ref{thm:3} give rise to
    commutative diagrams. 
    \item \label{thm:4} The direct image is functorial, that is, if
  $g\colon Y\to Z$ is
  another morphism then $(f\circ g)_{\ast}=f_{\ast}\circ g_{\ast}$.
  \end{enumerate}
\end{thm}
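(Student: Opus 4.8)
The plan is to verify each of the five items in turn, building on the material already developed. For item \eqref{item:33}, this is exactly Proposition \ref{prop:14}: if $f$ is smooth, the direct image of currents commutes with $d$, Proposition \ref{prop:3} gives $f_{\ast}\caP(Z)=\caP(f_{\ast}Z)$, and the direct image of a smooth form under a smooth proper map is smooth, so $df_{\ast}g_{Z}=[f_{\ast}\omega_{Z}]-\caP(f_{\ast}Z)$; if instead $\omega_{Z}=0$ the same computation works without needing smoothness of $f$, since no smooth form has to be pushed forward. (One also needs to note that $f_{\ast}$ sends cycles in $Z^{p}(X,n)_{0}$ to cycles in $Z^{p-e}(Y,n)_{0}$ with $\delta$ commuting with $f_{\ast}$, which follows from functoriality of the cubical structure, and that $f_{\ast}$ is compatible with the real structure coming from $F_{\infty}$.)

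For items \eqref{item:14} and \eqref{item:15}, the map on arithmetic cycles is the obvious one $(Z,\widetilde g_{Z})\mapsto (f_{\ast}Z, f_{\ast}\widetilde g_{Z})$; by item \eqref{item:33} this lands in $\widehat Z^{p-e}(Y,n)$ (and in $\widehat Z^{p-e}(Y,n)^{0}$ in case \eqref{item:14}, since the $\omega$-component of $f_{\ast}g_Z$ is $f_{\ast}\omega_Z = 0$). First I would check that $f_{\ast}$ is well defined on classes of Green currents, i.e. that it respects $\im d$: if $g_Z = g'_Z + du$ then $f_{\ast}g_Z = f_{\ast}g'_Z + d f_{\ast}u$. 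Then I would check that rational equivalence is preserved: for $T\in Z^{p}(X,n+1)_{0}$ one has $f_{\ast}(\delta T, -\widetilde\caP(T)) = (\delta f_{\ast}T, -\widetilde\caP(f_{\ast}T))$, again using $f_{\ast}\delta = \delta f_{\ast}$ and Proposition \ref{prop:3}; this is a generator of $\widehat Z^{p-e}_{\rat}(Y,n)$. In case \eqref{item:14} one must additionally observe that all cycles involved have trivial $\omega$, so the construction stays inside the superscript-$0$ groups (here one uses that $\caP(T)$ for $T$ a cycle has $\omega$-component zero on the class level, which is what the definition of $\widehat{\CH}^{p}(X,n)^0$ requires).

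Item \eqref{item:16} is a compatibility statement: the diagram comparing $\zeta$ with the direct image on higher Chow groups commutes because $\zeta$ just forgets the Green current and $f_{\ast}$ acts as the classical pushforward on the cycle component; the diagram with the connecting maps $\amap$ and with $\omega$ commutes by the compatibility of $f_{\ast}$ on currents and forms with $d$ and with $\caP$ (Proposition \ref{prop:3}), plus the standard compatibility of $f_{\ast}$ with the maps in the long exact sequence for Deligne--Beilinson cohomology. Finally item \eqref{thm:4}, functoriality $(f\circ g)_{\ast} = f_{\ast}\circ g_{\ast}$, reduces to functoriality of pushforward on each of the three constituents — Bloch's cycle complex, currents, and the Goncharov current $\caP$ — all of which are already known or immediate from the definitions. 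The only mild subtlety is checking that $f\circ g$ is again smooth when $f$ and $g$ are, which is standard. The main obstacle, such as it is, is bookkeeping: making sure at every stage that one stays inside the subgroup on which the operation is defined (the $\omega=0$ locus in case \eqref{item:14}, smoothness of $f$ in case \eqref{item:15}) and that the relative-dimension shifts in the grading are tracked correctly; none of the steps requires a genuinely new idea beyond Propositions \ref{prop:14} and \ref{prop:3}.
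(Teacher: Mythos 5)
Your proposal is correct and follows essentially the same route as the paper: item (1) is Proposition \ref{prop:14}, items (2)--(3) reduce to the compatibility $f_{\ast}\circ\caP=\caP\circ f_{\ast}$ of Proposition \ref{prop:3} together with $f_{\ast}du=d(f_{\ast}u)$ (so $f_{\ast}$ descends to classes of Green currents and sends the generators $(\delta T,-\widetilde\caP(T))$ of rational equivalence to generators), and the compatibilities and functoriality in items (4)--(5) are the same routine verifications the paper leaves to the reader. The only nitpick is a slip of wording: what keeps the construction inside the superscript-$0$ groups is that each rational-equivalence generator $(\delta T,-\widetilde\caP(T))$ (with $T$ a pre-cycle, not a cycle) has vanishing $\omega$-component, since $d\caP(T)=\caP(\delta T)$.
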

\begin{proof}
Note that \eqref{item:33} is essentially the content of Proposition \ref{prop:14}. Again by the compatibility of direct images with the differential of currents and Proposition \ref{prop:3}, we
  see that the maps in \eqref{item:14} and \ref{item:15} are well
  defined. The use of $(f_\ast Z, f_\ast g_Z)$, instead of $(f_\ast Z, f_\ast \widetilde{g}_Z)$ is justified in this definition from the commutativity relation $f_\ast du=d(f_\ast u)$, for any current $u$.

  The compatibility with the other 
  direct images and the functoriality can be verified easily. 
\end{proof}

\subsubsection{Inverse images}
\label{sec:inverse-images}

The next result is the existence of inverse images. As in the previous
section the field $F$ and the complex $\fA$ are fixed.

Let $f\colon X\to Y$ be a morphism of smooth 
  projective varieties over $F$. The higher arithmetic
  Chow groups can be defined using cycles in good position with
  respect to $f$. Denote by $\widehat{Z}^p_{f}(Y,n,\fA)$ the subgroup
  of $\widehat{Z}^p(Y,n,\fA)$ consisting of pairs $(Z,\widetilde
  g_{Z})$ with $Z\in  Z^p_f(Y,n)_0$, and by
  $\widehat{Z}^p_{f,\rat}(Y,n,\fA)$ the subgroup of
  $\widehat{Z}^p_{\rat}(Y,n,\fA)$ consisting of elements of the form
\begin{alignat*}{2}
  &(\delta Z, -\widetilde{\caP}_{\fA}(Z)),& \text{ for } Z&\in Z_{f}^p(Y,n+1)_{0}.
\end{alignat*}

\begin{lem}\label{lemm:15}
  The natural map
  \begin{displaymath}
    \frac{\widehat{Z}^p_{f}(Y,n,\fA)}{\widehat{Z}^p_{f,\rat}(Y,n,\fA)}\to
     \widehat{\CH}^{p}(Y,n,\fA)
  \end{displaymath}
is an isomorphism.
\end{lem}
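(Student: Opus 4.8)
The plan is to exhibit the stated map as an isomorphism by showing it is both surjective and injective, using the moving lemmas from Section \ref{sec:higher-chow-groups} together with the structural results on Green forms and currents from Section \ref{sec:green-currents-green}. First I would recall that by Corollary \ref{cor:3} the inclusion of complexes $Z^{p}_{f}(Y,*)_{0}\hookrightarrow Z^{p}(Y,*)_{0}$ is a quasi-isomorphism; this is the algebraic engine driving everything. Surjectivity: given an arbitrary class $[(Z,\widetilde g_{Z})]\in\widehat{\CH}^{p}(Y,n,\fA)$, since $Z\in Z^{p}(Y,n)_{0}$ is a cycle, the quasi-isomorphism provides a cycle $Z'\in Z^{p}_{f}(Y,n)_{0}$ together with $T\in Z^{p}(Y,n+1)_{0}$ with $Z-Z'=\delta T$. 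We may moreover take $T\in Z^{p}_{f}(Y,n+1)_{0}$ as well if we are slightly careful (apply the moving argument to the pair $(Z,T)$, i.e. use Remark \ref{rem:3} together with the fact that the relevant subcomplexes all satisfy the hypotheses of the moving lemma). Then $(Z,\widetilde g_{Z})-(\delta T,-\widetilde{\caP}_{\fA}(T))=(Z',\widetilde g_{Z}+\widetilde{\caP}_{\fA}(T))$, and the second entry is a class of Green currents for the new cycle $Z'$ because $\caP_{\fA}$ is a morphism of complexes; thus the class is represented by an element of $\widehat{Z}^{p}_{f}(Y,n,\fA)$.

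Injectivity: suppose $(Z,\widetilde g_{Z})\in\widehat{Z}^{p}_{f}(Y,n,\fA)$ becomes zero in $\widehat{\CH}^{p}(Y,n,\fA)$, i.e. $(Z,\widetilde g_{Z})=\sum_{\alpha}(\delta T_{\alpha},-\widetilde{\caP}_{\fA}(T_{\alpha}))$ for finitely many $T_{\alpha}\in Z^{p}(Y,n+1)_{0}$. Writing $T=\sum_{\alpha}T_{\alpha}$ we get $Z=\delta T$ in $Z^{p}_{f}(Y,n)_{0}$ and $\widetilde g_{Z}=-\widetilde{\caP}_{\fA}(T)$ in $\widetilde{\fA}^{2p-n-1}_{D}(Y,p)$. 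Now $Z=\delta T$ and $Z\in Z^{p}_{f}(Y,n)_{0}$; I want to replace $T$ by a $T'\in Z^{p}_{f}(Y,n+1)_{0}$ with $\delta T'=Z$ and $\widetilde{\caP}_{\fA}(T')=\widetilde{\caP}_{\fA}(T)$ modulo a correction that is itself arithmetically rational in the $f$-version. This is where the quasi-isomorphism $Z^{p}_{f}(Y,*)_{0}\hookrightarrow Z^{p}(Y,*)_{0}$ is used in homological form: the class of $T$ modulo $\delta$-coboundaries, relative to the cycle $Z$ that already lies in the subcomplex, can be computed inside the subcomplex. Concretely, since the inclusion is a quasi-isomorphism and $\delta T=Z$ lies in the subcomplex, there exist $T'\in Z^{p}_{f}(Y,n+1)_{0}$ and $S\in Z^{p}(Y,n+2)_{0}$ with $T-T'=\delta S$; applying $\caP_{\fA}$ and using that it is a morphism of complexes gives $\caP_{\fA}(T)-\caP_{\fA}(T')=d\caP_{\fA}(S)$, hence $\widetilde{\caP}_{\fA}(T)=\widetilde{\caP}_{\fA}(T')$. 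Therefore $(Z,\widetilde g_{Z})=(\delta T',-\widetilde{\caP}_{\fA}(T'))\in\widehat{Z}^{p}_{f,\rat}(Y,n,\fA)$, proving that the map is injective.

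The main obstacle, and the step that requires genuine care rather than bookkeeping, is the simultaneous moving of the auxiliary cycles $T$ (and in the injectivity argument also $S$) into the $f$-good-position subcomplexes while keeping the relations $\delta T=Z$, $\delta S=T-T'$ intact. This is not automatic from Corollary \ref{cor:3} alone as a statement about cohomology; one needs the statement that the pair (subcomplex $\hookrightarrow$ complex) admits compatible chain-level splittings or, equivalently, that one can solve the relevant lifting problems inside the subcomplex. This follows because $Z^{p}_{f}(Y,*)=Z^{p}_{\caC,e}(Y,*)$ for suitable $(\caC,e)$ by Lemma \ref{lemm:9}, the subcomplex is stable under the homotopies $h_{j}$ (Lemma \ref{lemm:8}), and hence the Bloch–Levine moving construction applies verbatim to produce the lifts; invoking Remark \ref{rem:3} handles the finitely many cycles $T_{\alpha}$ and $S$ at once. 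Once this chain-level moving is in hand, everything else is a formal consequence of $\caP_{\fA}$ being a morphism of complexes and of passing to the quotient $\widetilde{\fA}^{\bullet}_{D}$.
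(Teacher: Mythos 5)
Your argument is correct, but it takes a different route from the paper. The paper proves the lemma by re-running the proof of Theorem \ref{thm:3}~\eqref{item:27} with $\widehat{Z}^p_{f}$ and $\widehat{Z}^p_{f,\rat}$ in place of $\widehat{Z}^p$ and $\widehat{Z}^p_{\rat}$, which (using Corollary \ref{cor:3} to identify the higher Chow groups computed from $Z^{p}_{f}(Y,\ast)_{0}$ with the usual ones) yields a commutative ladder of four-term exact sequences with equal outer terms; the lemma is then a formal consequence (five lemma). You instead verify surjectivity and injectivity directly at the level of arithmetic cycles, which is an equally valid, more hands-on version of the same idea: both proofs ultimately rest only on Corollary \ref{cor:3}. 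Two small remarks on your write-up. First, in the surjectivity step there is no need to move $T$ into $Z^{p}_{f}(Y,n+1)_{0}$: rational equivalence in $\widehat{\CH}^{p}(Y,n,\fA)$ is allowed to use arbitrary $T\in Z^{p}(Y,n+1)_{0}$, so once $Z-Z'=\delta T$ with $Z'\in Z^{p}_{f}(Y,n)_{0}$ you are done. Second, your stated ``main obstacle'' in the injectivity step is not actually an obstacle: since the inclusion $Z^{p}_{f}(Y,\ast)_{0}\hookrightarrow Z^{p}(Y,\ast)_{0}$ is an injective quasi-isomorphism, the quotient complex is acyclic, and the relative lifting you need ($T=T'+\delta S$ with $T'\in Z^{p}_{f}(Y,n+1)_{0}$, hence $\delta T'=Z$ and $\widetilde{\caP}_{\fA}(T')=\widetilde{\caP}_{\fA}(T)$) is a purely formal consequence; no chain-level splitting, and no re-running of the Bloch--Levine moving construction via Lemmas \ref{lemm:8} and \ref{lemm:9}, is required. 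With those simplifications your argument is a clean alternative to the paper's exact-sequence comparison.
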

\begin{proof}
  Repeating the proof of Theorem \ref{thm:3}~\eqref{item:27} and using
  Corollary \ref{cor:3} one sees that there is a commutative diagram
  with exact rows
  \begin{displaymath}
    \xymatrix{
      \CH^{p}(X,n+1)\ar[r]\ar@{=}[d] 
     & \widetilde{\fA}^{2p-n-1}\ar[r]\ar@{=}[d]
    &
    \frac{\widehat{Z}^p_{f}(Y,n,\fA)}{\widehat{Z}^p_{f,\rat}(Y,n,\fA)}
    \ar[r]\ar[d] 
    & \CH^{p}(X,n)\ar[r]\ar@{=}[d] &0\\
      \CH^{p}(X,n+1)\ar[r] 
     & \widetilde{\fA}^{2p-n-1}\ar[r]
    & \widehat{\CH}^{p}(X,n,\fA) \ar[r]
    & \CH^{p}(X,n)\ar[r] &0
    }
  \end{displaymath}
  from which the lemma follows.
\end{proof}

We use the previous lemma to define inverse images.
Let $(Z,\widetilde
  g_{Z})\in \widehat{Z}^p_{f}(Y,n,\fA)$. 
  By Proposition
  \ref{prop:20}, the pair $(f^{\ast}Z,f^{\ast}\widetilde g_{Z})$ is an
  arithmetic cycle on $X$.

\begin{thm}\label{thm:6}
  Let $f\colon X\to Y$ be a morphism of smooth 
  projective varieties over $F$.
  \begin{enumerate}
  \item \label{item:17} The previous construction induces a morphism
    \begin{displaymath}
      f^{\ast}\colon \widehat{\CH}^{p}(Y,n)\to
      \widehat{\CH}^{p}(X,n)
    \end{displaymath}
    that sends $\widehat{\CH}^{p}(Y,n)^{0} $ to
    $\widehat{\CH}^{p}(X,n)^{0}$. 
  \item \label{item:26} The inverse image of higher arithmetic Chow groups is
    compatible with the inverse image of higher Chow groups, with the 
    inverse image in Deligne-Beilinson cohomology and with the inverse
    image of differential forms. In particular, 
    all the exact sequences of Theorem \ref{thm:3} give rise to
    commutative diagrams. 
    \item The inverse image is functorial, that is, if
  $g\colon Y\to Z$ is 
  another morphism then $(f\circ g)^{\ast}=g^{\ast}\circ f^{\ast}$.
  \end{enumerate}
\end{thm}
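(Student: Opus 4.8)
The plan is to verify the three statements in the order they are listed, since each relies on the preceding results about Green forms. For \eqref{item:17}, the first task is to check that the assignment $(Z,\widetilde g_Z)\mapsto (f^\ast Z, f^\ast\widetilde g_Z)$ is well defined on the level of arithmetic cycles: this is exactly Proposition \ref{prop:20}, which guarantees that $f^\ast\fg_Z$ is a loose Green form for $f^\ast Z$ (hence $[f^\ast\fg_Z]$ is a Green current for $f^\ast Z$) and that its class $[f^\ast\fg_Z]^\sim$ is independent of the choice of basic Green form $\fg_Z$ representing $\widetilde g_Z$. So one obtains a homomorphism $\widehat{Z}^p_f(Y,n,\fA)\to\widehat{Z}^p(X,n,\fA)$. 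To descend to Chow groups I would use Lemma \ref{lemm:15}: since $\widehat{\CH}^p(Y,n,\fA)\cong\widehat{Z}^p_f(Y,n,\fA)/\widehat{Z}^p_{f,\rat}(Y,n,\fA)$, it suffices to check that the subgroup of cycles rationally equivalent to zero is sent into $\widehat{Z}^p_{\rat}(X,n,\fA)$. An element of $\widehat{Z}^p_{f,\rat}(Y,n,\fA)$ has the form $(\delta T, -\widetilde{\caP}(T))$ with $T\in Z^p_f(Y,n+1)_0$; applying $f^\ast$ and using the functoriality of $\caP$ under pullback (which follows from the fact that the Wang forms $W_k$ are pulled back compatibly, and from Lemma \ref{lemm:13}), this maps to $(\delta f^\ast T, -\widetilde{\caP}(f^\ast T))$, which is rationally equivalent to zero. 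The fact that $f^\ast$ preserves $\widehat{\CH}^p(X,n)^0=\ker\omega$ is immediate from the commutation $\omega(f^\ast\widetilde g_Z)=f^\ast\omega(\widetilde g_Z)$, which was recorded in the computation preceding Proposition \ref{prop:20}.

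For \eqref{item:26}, compatibility with the inverse image on higher Chow groups is clear from the formula $\zeta(f^\ast[(Z,g_Z)])=[f^\ast Z]=f^\ast\zeta([(Z,g_Z)])$ and the definition of $f^\ast$ on $\CH^p(Y,n)$ via $Z^p_f(Y,\ast)_0$ in \S\ref{functchow}. Compatibility with the inverse image on Deligne--Beilinson cohomology follows because the cycle class map $\caP$ is itself compatible with pullback and because $\omega(f^\ast\widetilde g_Z)=f^\ast\omega(\widetilde g_Z)$ lies in $\fA^{2p-n}(X,p)$ and represents the pullback class; similarly for the inverse image of differential forms via the map $\omega$. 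The claim that the exact sequences of Theorem \ref{thm:3} give commutative diagrams then amounts to chasing these identities together with the compatibility of $\amap$ and $\rho$ with pullback, all of which reduce to the functoriality of $\caP$ and of the complexes $\fA$.

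Functoriality $(f\circ g)^\ast=g^\ast\circ f^\ast$ I would prove at the level of Green forms: given a basic Green form $\fg_Z$ for a cycle $Z$ on the target, one has $(f\circ g)^\ast\fg_Z=g^\ast(f^\ast\fg_Z)$ componentwise, because pullback of differential forms with logarithmic singularities is strictly functorial (Definition \ref{def:25}); the only subtlety is that $f^\ast\fg_Z$ need not be a basic Green form, merely a loose one, so to iterate the construction one must first replace it by a basic representative using Proposition \ref{prop:18} and then invoke Proposition \ref{prop:17} to see that the resulting class $[g^\ast f^\ast\fg_Z]^\sim$ is unchanged. This bookkeeping — keeping track of which representatives are basic and which are merely loose, and repeatedly appealing to Propositions \ref{prop:16}, \ref{prop:17}, \ref{prop:18} to move between them — is the main obstacle; the underlying geometric content is entirely standard once the good-position hypotheses on cycles are in place, which is why I would streamline it by proving functoriality first on cycles in $Z^p_{f}$ intersecting all relevant faces properly (using Corollary \ref{cor:3} and Remark \ref{rem:3} to arrange this) before passing to classes.
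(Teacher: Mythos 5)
Your overall strategy for part~(\ref{item:17}) is the same as the paper's: represent classes by cycles in $Z^p_f(Y,\ast)_0$ via Lemma \ref{lemm:15}, define $f^{\ast}$ on arithmetic cycles through Proposition \ref{prop:20}, and reduce well-definedness to showing that the generators $(\delta T,-\widetilde\caP(T))$, $T\in Z^p_f(Y,n+1)_0$, of $\widehat{Z}^p_{f,\rat}(Y,n,\fA)$ are sent into $\widehat{Z}^p_{\rat}(X,n,\fA)$. But at exactly this point you assert that the image is $(\delta f^{\ast}T,-\widetilde\caP(f^{\ast}T))$ ``by the functoriality of $\caP$ under pullback'', and no such functoriality is available: $\caP(T)=(p_{1})_{\ast}\bigl(\delta_{T}\cdot[p_{2}^{\ast}W_{n+1}]\bigr)$ is a current on $Y$, currents do not admit inverse images, and the only compatibility established for $\caP$ in the paper is with \emph{direct} images (Proposition \ref{prop:3}). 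The inverse image of a class of Green currents is defined only through a choice of basic Green form representing it, so what actually has to be proved is that $[f^{\ast}\fg]^{\sim}=-\widetilde\caP(f^{\ast}T)$ for some (hence, by Proposition \ref{prop:20}, any) basic Green form $\fg$ for $\delta T$ with $[\fg]^{\sim}=-\widetilde\caP(T)$. Appealing to the pullback-compatibility of the Wang forms together with Lemma \ref{lemm:13} does not by itself produce such a representative; this is the genuine gap in your argument.

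The paper closes it by an explicit construction: since $T\in Z^p_f(Y,n+1)_0$ satisfies $(\delta_{1}^{1})^{\ast}T=0$ and $(\delta_{1}^{1})^{\ast}$ is an isomorphism on Chow groups, the class of $T$ in $\CH^{p}(Y\times\square^{n+1})$ vanishes, so there is a basic Green form $g_{n+1}$ for $T$ on $Y\times\square^{n+1}$ with $dg_{n+1}=0$; then $\fg_{\delta T}=(\delta g_{n+1},0,\dots,0)$ is a basic Green form for $\delta T$, and Lemma \ref{lemm:13} gives $d\bigl((p_{1})_{\ast}[g_{n+1}\cdot W_{n+1}]\bigr)=-\caP(T)-[\fg_{\delta T}]$, so $[\fg_{\delta T}]^{\sim}=-\widetilde\caP(T)$. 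Pulling back this particular representative and repeating the same computation on $X$ yields $[f^{\ast}\fg_{\delta T}]^{\sim}=-\widetilde\caP(f^{\ast}T)$, which is what well-definedness requires. Your discussion of parts (\ref{item:26}) and (3) is reasonable — the paper leaves those verifications to the reader, and your bookkeeping between loose and basic Green forms via Propositions \ref{prop:16}, \ref{prop:17} and \ref{prop:18} is a sensible way to organize the functoriality — but part~(\ref{item:17}) as you wrote it is incomplete without the construction above or an equivalent substitute.
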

\begin{proof}
Let $\alpha\in \widehat{\CH}^p(Y,n)$. From
Lemma \ref{lemm:15}, we can write $\alpha=[(Z, \widetilde{g}_Z)]$, with $Z\in
Z^p_f(Y,n)_0$. Then $f^\ast(\alpha)=[(f^\ast Z, f^\ast
\widetilde{g}_Z)]$. In view of Lemma \ref{lemm:15}, 
to see that $f^\ast$ is well defined as a map between higher
arithmetic Chow groups, we need to show that, for any precycle $Z'\in
Z^p_f(Y, n+1)_0$, the relation
\begin{displaymath}
  f^{\ast} (-\widetilde \caP(Z'))=-\widetilde \caP(f^{\ast}Z')
\end{displaymath}
holds, where the inverse image in the left hand side is the inverse
image of classes of Green currents. To compute the inverse image on
the left side we need a basic Green form for $\delta(Z')$ representing
the class $-\widetilde{\mathcal{P}}(Z')$. 

Since $Z'\in Z^p_f(Y, n+1)_0$, its class in $\CH^{p}(Y\times
\square^{n+1})$ is zero, because,  $(\delta _{1}^{1})^{\ast}Z'=0$ and
$(\delta _{1}^{1})^{\ast}$ induces an isomorphism between $\CH^{p}(Y\times
\square^{n+1})$  and $\CH^{p}(Y\times
\square^{n})$. 
Therefore there exists a basic Green form  $g_{n+1}$ for $Z'$ on
$Y\times \square ^{n+1}$ (Definition \ref{def:23}) such that
$dg_{n+1}=0$. Then   
\begin{equation}\label{eq:37}
  \fg_{\delta(Z')}=(\delta g_{n+1},0,0,\cdots, 0)
\end{equation}
is a basic Green form for $\delta(Z')$. Hence, from Proposition
\ref{GreenCurrent} we know that the corresponding current is a Green
current for $\delta(Z')$. We compute, using Lemma \ref{lemm:13} 
\begin{align*}
  d((p_{1})_{\ast}[g_{n+1}\cdot W_{n+1}])
  &=-(p_{1})_\ast(\delta_{Z'}\cdot W_{n+1})-(p_{1})_\ast[\delta
    g_{n+1}\cdot W_{n}]\\
  &=-\caP(Z')-[\fg_{\delta (Z')}],
\end{align*}
which implies that $\fg_{\delta (Z')}$ represents the class
$-\widetilde \caP(Z')$. So
$f^\ast(-\widetilde{\mathcal{P}}(Z'))=[f^\ast\fg_{\delta(Z')}]^\sim$
and $f^{\ast}$ is well defined.
The compatibility with other inverse images and the functoriality are left to the reader.
\end{proof}

\subsection{Product structure}
The intersection theory of higher Chow groups and the $\ast$-product
of Green currents of higher algebraic cycles described in the last
section, combines to give an intersection theory at the level of
higher arithmetic Chow groups. To this end we first use Corollary
\ref{cor:2} to see that the higher arithmetic Chow groups can be
represented using cycles intersecting properly a given cycle.

Let $X$ be a smooth projective variety over $F$ and $W\in Z^{q}(X,m)$
a precycle. Denote by $\widehat{Z}^p_{W}(X,n,\fA)$ the subgroup
  of $\widehat{Z}^p(X,n,\fA)$ consisting of pairs $(Z,\widetilde
  g_{Z})$ with $Z\in  Z^p_W(X,n)_0$, and by
  $\widehat{Z}^p_{W,\rat}(X,n,\fA)$ the subgroup of
  $\widehat{Z}^p_{\rat}(X,n,\fA)$ consisting of elements of the form
\begin{alignat*}{2}
  &(\delta Z, -\widetilde{\caP}_{\fA}(Z)),& \text{ for } Z&\in Z_{W}^p(Y,n+1)_{0}.
\end{alignat*}

The following result is proved as Lemma \ref{lemm:15}.
\begin{lem}\label{lemm:16}
  The natural map
  \begin{displaymath}
    \frac{\widehat{Z}^p_{W}(X,n,\fA)}{\widehat{Z}^p_{W,\rat}(X,n,\fA)}\to
     \widehat{\CH}^{p}(X,n,\fA)
  \end{displaymath}
is an isomorphism.
\end{lem}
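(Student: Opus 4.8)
The plan is to imitate the proof of Lemma~\ref{lemm:15}, replacing the moving lemma used there, Corollary~\ref{cor:3}, by Corollary~\ref{cor:2}. Throughout, write
\begin{displaymath}
  Q \coloneqq \frac{\widehat{Z}^p_{W}(X,n,\fA)}{\widehat{Z}^p_{W,\rat}(X,n,\fA)}
\end{displaymath}
for the group on the left-hand side of the statement.

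First I would show that $Q$ sits in an exact sequence of exactly the shape of the one in Theorem~\ref{thm:3}~\eqref{item:27}:
\begin{displaymath}
  \CH^{p}(X,n+1)\xrightarrow{\rho } \widetilde{\fA}^{2p-n-1}\xrightarrow{\amap} Q \xrightarrow{\zeta} \CH^{p}(X,n)\to 0 .
\end{displaymath}
The verification is the same, line by line, as the proof of Theorem~\ref{thm:3}~\eqref{item:27}. The one new point is that that argument repeatedly chooses a cycle representing a given class in $\CH^{p}(X,n)$ and a precycle $Z'$ with $\delta Z'=Z$, and here these must be taken inside $Z^p_W(X,\ast)_0$. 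This is supplied by Corollary~\ref{cor:2}: the inclusion $Z^p_W(X,\ast)_0\hookrightarrow Z^p(X,\ast)_0$ is an injective quasi-isomorphism, so the quotient complex is acyclic; hence every class in $\CH^{p}(X,n)$ has a representative in $Z^p_W(X,n)_0$, and every element of $Z^p_W(X,n)_0$ that bounds in $Z^p(X,\ast)_0$ already bounds in $Z^p_W(X,\ast)_0$. If in some step good position with respect to several cycles is needed at once, Remark~\ref{rem:3} covers it. Surjectivity of $\zeta$ is then immediate from surjectivity already in $Z^p_W$.

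Next I would assemble the commutative ladder with exact rows whose top row is the sequence just established, whose bottom row is Theorem~\ref{thm:3}~\eqref{item:27}, whose four outer vertical maps are the identity, and whose middle vertical map is the natural map $Q\to\widehat{\CH}^{p}(X,n,\fA)$ of the statement. The squares commute because the natural map is, by construction, compatible with $\amap$, $\zeta$ and the regulator $\caP$. An application of the five lemma then forces the middle map to be an isomorphism, which is the assertion.

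The only non-formal step is the bookkeeping in the second paragraph: one must check that every cycle and every bounding chain produced while re-running the proof of Theorem~\ref{thm:3}~\eqref{item:27} can indeed be chosen in good position with respect to $W$. This is exactly what Corollary~\ref{cor:2} (together with Remark~\ref{rem:3} when several cycles occur simultaneously) guarantees, so once it is in place the argument is purely diagrammatic.
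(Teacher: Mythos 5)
Your proposal is correct and follows essentially the same route as the paper: the paper proves this lemma exactly as Lemma~\ref{lemm:15}, i.e.\ by re-running the proof of Theorem~\ref{thm:3}~\eqref{item:27} with the moving result Corollary~\ref{cor:2} (in place of Corollary~\ref{cor:3}) to obtain a commutative ladder with exact rows and identity outer maps, from which the isomorphism follows. Your explicit invocation of the five lemma is merely making precise what the paper leaves implicit.
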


We now fix the complex $\DB_{\TW}$ and denote
$\widehat{\CH}^p(X,n)=\widehat{\CH}^p(X,n,\DB_{\TW})$. 
\begin{thm}\label{thm:int}
There is a pairing
\begin{displaymath}
  \widehat{\CH}^p(X,n)\otimes \widehat{\CH}^q(X,m)\rightarrow
  \widehat{\CH}^{p+q}(X, n+m), 
\end{displaymath}
which is  associative and graded commutative with respect to  $n$, and
that satisfies the following properties:
\begin{enumerate}
\item \label{item:a}
The maps
\begin{displaymath}
  \zeta\colon \bigoplus_{p,n\geq 0}\widehat{\CH}^p(X,n)\rightarrow
  \bigoplus_{p,n\geq 0}\CH^p(X,n), 
\end{displaymath}
and 
\begin{displaymath}
  \omega \colon \bigoplus_{p,n\geq 0}\widehat{\CH}^p(X,n)\rightarrow
  \bigoplus_{p,n\geq 0}Z\DB_{\TW}^{2p-n}(X,p), 
\end{displaymath}
are multiplicative.
\vspace{0.5cm}
\item \label{item:b}
  For a morphism $f\colon X\rightarrow Y$, the pullback
  \begin{displaymath}
   f^\ast\colon \widehat{\CH}^p(Y,n)\rightarrow \widehat{\CH}^p(X,n) 
  \end{displaymath}
  is multiplicative, i.e., given $\alpha\in \widehat{\CH}^p(Y,n)$ and $\beta \in \widehat{\CH}^q(Y,m)$, we have
$$f^\ast(\alpha\cdot \beta)=f^\ast(\alpha)\cdot f^\ast(\beta).$$
Further if $f$ is smooth and proper, the pushforward morphism satisfies the projection formula: Let $\alpha \in \widehat{\CH}^p(Y,n)$ and $\beta\in \widehat{\CH}^q(X,m)$. Then
$$f_{\ast}(f^\ast(\alpha)\cdot \beta)=\alpha\cdot f_{\ast}(\beta).$$
\end{enumerate}
\end{thm}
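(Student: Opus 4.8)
The plan is to define the pairing at the level of arithmetic cycles by combining the intersection product on Bloch's higher Chow groups (Section~\ref{chowprod}) with the $\ast$-product of classes of Green currents (Definition~\ref{def:6.21}); working with the Thom-Whitney complex $\DB_{\TW}$ is essential here precisely because it is for this complex that the $\ast$-product is graded commutative and associative (Theorem~\ref{thm:12}). Given $\alpha\in\widehat{\CH}^{p}(X,n)$ and $\beta\in\widehat{\CH}^{q}(X,m)$, first apply the moving Lemma~\ref{lemm:16} to represent $\alpha$ by $(Z,\widetilde g_{Z})$ and $\beta$ by $(W,\widetilde g_{W})$ with $Z\in Z^{p}_{W}(X,n)_{0}$, so that $Z$ and $W$ intersect properly (Definition~\ref{def:19}), and set
\[
  \alpha\cdot\beta\;\coloneqq\;\bigl[\,(Z\cdot W,\ \widetilde g_{Z}\ast\widetilde g_{W})\,\bigr],
\]
where $Z\cdot W$ is the product of~\eqref{eq:41} and $\widetilde g_{Z}\ast\widetilde g_{W}$ is as in Definition~\ref{def:6.21}. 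Since $Z,W$ are cycles, $Z\cdot W\in Z^{p+q}(X,n+m)_{0}$ is again a cycle, and by Proposition~\ref{prop:8-33} the class $\widetilde g_{Z}\ast\widetilde g_{W}$ is a class of Green currents for $Z\cdot W$ with associated form $\omega(\widetilde g_{Z})\cdot\omega(\widetilde g_{W})$; so the pair is a higher arithmetic cycle, and its independence of the chosen current representing $\widetilde g_{Z}$ and of the chosen basic Green form representing $\widetilde g_{W}$ is Proposition~\ref{prop:007}.

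The first real task is to show that this bilinear map on arithmetic cycles descends to $\widehat{\CH}$. Using Remark~\ref{rem:3} to apply the moving lemmas to finite families, together with Lemma~\ref{lemm:16}, this reduces to showing that a generator $(\delta T,-\widetilde\caP(T))$ of $\widehat Z^{p}_{W,\rat}$, with $T\in Z^{p}_{W}(X,n+1)_{0}$, times $(W,\widetilde g_{W})$ lies in $\widehat Z^{p+q}_{\rat}(X,n+m,\DB_{\TW})$ — and, symmetrically, that the product is independent of the representative of $\beta$. On the cycle side $\delta(T\cdot W)=\pm\,\delta T\cdot W$ because $W$ is a cycle, so $(\delta T\cdot W,(-\widetilde\caP(T))\ast\widetilde g_{W})$ differs from the generator $\pm(\delta(T\cdot W),-\widetilde\caP(T\cdot W))$ of $\widehat Z^{p+q}_{\rat}$ only by an element of the form $(0,\widetilde\eta)$; and since both $-\caP(T)$ and $\pm\caP(T\cdot W)$ are Green currents for $\delta T\cdot W$ with associated form $0$ (use that $\caP$ is a chain map, Proposition~\ref{prop:6}), Lemma~\ref{lemm:2} already shows that $\widetilde\eta$ is the class of a smooth form. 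Identifying $(0,\widetilde\eta)$ with an element of $\widehat Z_{\rat}$ is the main obstacle of the whole proof: it amounts to a secondary identity — proved via Lemma~\ref{lemm:13} in the same spirit as Proposition~\ref{prop:8-33} and Theorem~\ref{thm:12} — that rewrites $(-\caP(T))\ast\fg_{W}$ as $\caP$ applied to an explicit pre-cycle lifting $\pm\,\delta T\cdot W$. Granting this, the maps of property~\eqref{item:a} are multiplicative by inspection: $\zeta(\alpha\cdot\beta)=[Z\cdot W]=[Z]\cdot[W]$ by the definition of the Bloch product, and $\omega(\alpha\cdot\beta)=\omega(\widetilde g_{Z}\ast\widetilde g_{W})=\omega(\widetilde g_{Z})\cdot\omega(\widetilde g_{W})$ by Proposition~\ref{prop:8-33}.

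For associativity and graded commutativity, choose via Remark~\ref{rem:3} cycles $Z,W,T$ for $\alpha,\beta,\gamma$ satisfying all the proper-intersection hypotheses of Theorem~\ref{thm:12}. Associativity is then immediate: the triple product of cycles in good position is associative already at the level of algebraic cycles (Section~\ref{chowprod}), and on the Green-current side it is Theorem~\ref{thm:12}\eqref{item:29}. For graded commutativity, Theorem~\ref{thm:12}\eqref{item:28} makes the Green-current parts of $\alpha\cdot\beta$ and $(-1)^{nm}\beta\cdot\alpha$ agree modulo $\im d$, while the cycle parts differ by $\delta H_{n,m}(Z\cdot W)$ by~\eqref{eq:44} (applied in the refined normalized complex, Lemma~\ref{lemm:12}); arguing as in the previous paragraph reduces $\alpha\cdot\beta-(-1)^{nm}\beta\cdot\alpha$ to an element $(0,\widetilde\eta')$ with $\widetilde\eta'$ a smooth-form class, which one disposes of by equipping the homotopy $H_{n,m}(Z\cdot W)$ of~\eqref{eq:45} with its canonical Green current $-\caP(H_{n,m}(Z\cdot W))$ and checking that the resulting element of $\widehat Z_{\rat}$ realizes the required relation. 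Finally, for~\eqref{item:b}: Corollary~\ref{cor:3} puts $Z$, $W$ and $Z\cdot W$ in good position with respect to $f$, Proposition~\ref{prop:19}(1) gives $f^{\ast}(\widetilde g_{Z}\ast\widetilde g_{W})=f^{\ast}\widetilde g_{Z}\ast f^{\ast}\widetilde g_{W}$ and Proposition~\ref{prop:19}(2) the projection formula for the $\ast$-product; combined with the analogous statements for algebraic cycles (Section~\ref{functchow}) and with the multiplicativity of $\zeta$ and $\omega$, these give the two formulas of~\eqref{item:b} after passing to classes.
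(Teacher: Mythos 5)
Your route is the paper's own: the product is defined exactly as in the paper via Lemma \ref{lemm:16} and the $\ast$-product, associativity and the Green-current side of commutativity come from Theorem \ref{thm:12}, the cycle-level commutativity defect is handled with the homotopy $H_{n,m}$ and \eqref{eq:44}, and \eqref{item:b} follows from Proposition \ref{prop:19}. The genuine gap is at graded commutativity, precisely where you write ``checking that the resulting element of $\widehat{Z}_{\rat}$ realizes the required relation.'' After subtracting the generator $\bigl(\delta H_{n,m}(Z\cdot W),\,-\widetilde{\caP}(H_{n,m}(Z\cdot W))\bigr)$ of $\widehat{Z}^{p+q}_{\rat}$ from $\bigl(\delta H_{n,m}(Z\cdot W),\,dv\bigr)$, you are left with the class of $\bigl(0,\widetilde{\caP}(H_{n,m}(Z\cdot W))\bigr)$, and by the exact sequence of Theorem \ref{thm:3}~\eqref{item:27} this vanishes in $\widehat{\CH}^{p+q}(X,n+m)$ only if $\widetilde{\caP}(H_{n,m}(Z\cdot W))$ lies (up to exact currents) in the image of the regulator on $\CH^{p+q}(X,n+m+1)$; since $H_{n,m}(Z\cdot W)$ is merely a pre-cycle, nothing formal guarantees this, so the ``check'' is exactly the assertion to be proven. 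The paper closes this by the stronger vanishing $\caP(H_{n,m}(Z\cdot W))=0$ (Lemma \ref{lemm:18}): by the definition \eqref{eq:45}--\eqref{eq:46} one reduces to $\caP(h^{\ast}_{n}(T))=0$, which is proved by a local fibration argument that factors the relevant integral through a curve $C\subset\square^{2}$ and invokes the explicit computation $p_{\ast}[W_{2}\mid_{C}]=0$ of Lemma \ref{lemm:19}. Some argument of this kind is indispensable; as written, your commutativity proof does not close.

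A lesser, sketch-level omission occurs at well-definedness. You correctly isolate the needed identity $(-\widetilde{\caP}(T))\ast\widetilde{g}_{W}=-\widetilde{\caP}(T\cdot W)$ (the paper's Lemma \ref{lemm:17}) and name the right tool (Lemma \ref{lemm:13}), but you only grant it. Its proof is short yet relies on a specific device absent from your outline: choose a $d$-closed basic Green form $g_{n+1}$ for $T$ on $X\times\square^{n+1}$ (possible because the class of $T$ in $\CH^{p}(X\times\square^{n+1})$ vanishes), represent $-\widetilde{\caP}(T)$ by the Green form $(\delta g_{n+1},0,\dots,0)$ of \eqref{eq:37}, compute $d(p_{1})_{\ast}\bigl(\delta_{W}\cdot W_{m}\cdot g_{n+1}\cdot W_{n+1}\bigr)$ with Lemma \ref{lemm:13}, and finish with the reordering-of-variables comparison between $\caP(W\cdot T)$ and $\caP(T\cdot W)$ together with Theorem \ref{thm:12}~\eqref{item:28}. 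With that identity supplied, the rest of your argument (multiplicativity of $\zeta$ and $\omega$, associativity, and the functoriality statements via Corollary \ref{cor:3} and Proposition \ref{prop:19}) matches the paper.
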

\begin{proof}
Let $\alpha\in \widehat{\CH}^p(X,n)$ and $\beta\in
\widehat{\CH}^q(X,m)$ be two elements in the higher arithmetic Chow
groups. Choose a representative $(W, \widetilde{g}_W)$  of $\beta
$. From Lemma \ref{lemm:16}, $\alpha$ can be represented as $(Z,
\widetilde{g}_Z)$ with $Z$ intersecting $W$ properly as in Definition
\ref{def:19}. We define
\begin{displaymath}
  \alpha\cdot \beta:=[(Z\cdot W, \widetilde{g}_Z\ast \widetilde{g}_W)].
\end{displaymath}
We have to show that this product does not depend on the chosen
representatives.

\begin{lem}\label{lemm:17} Let $(W,\widetilde g_{W})$ as before.
Let $Z'\in Z_{W}^p(X,n+1)_0$ be a pre-cycle. Then
\begin{displaymath}
  (-1)^{nm}\widetilde g_{W}\ast (-\widetilde{\mathcal{P}}(Z'))= 
  -\widetilde{\mathcal{P}}(Z')\ast
\widetilde{g}_W=-\widetilde{\mathcal{P}}(Z'\cdot W),
\end{displaymath}
where the $\ast$-products are defined using that  $-\mathcal{P}(Z')$
is a Green current for $\delta Z'$. 
\end{lem}

\begin{proof}[Proof of the lemma]
By Theorem \ref{thm:12}~\eqref{item:28},
$\widetilde{P}(Z')\ast \widetilde{g}_W=(-1)^{nm}\widetilde{g}_W\ast
\widetilde{P}(Z')$, so we only need to compute $\widetilde{g}_W\ast
\widetilde{P}(Z')$. Let $\fg_{\delta (Z')}$ be as in \eqref{eq:37}.
Since $W$ and $\delta(Z')$ intersect properly, we compute using Lemma
\ref{lemm:13}, the fact that $W$ is a cycle and that $dg_{n+1}=0$,
\begin{align*} 
d(p_{1})_{\ast}(\delta _{W}\cdot W_m&\cdot g_{n+1}\cdot W_{n+1})\\
&=(-1)^{m+1}(p_{1})_{\ast}\Big((\delta _{W\cdot Z'}\cdot
W_{m+n})+(\delta _{W}\cdot W_m\cdot \delta g_{n+1}\cdot W_n)\Big)\\
&=(-1)^{m}( -\caP(W\cdot Z')-g_W\ast \fg_{\delta (Z')}). 
\end{align*}
This shows that
\begin{displaymath}
  \widetilde g_{W}\ast [\fg_{\delta (Z')}]^{\sim}=-\widetilde
  {\caP}(W\cdot Z').
\end{displaymath}
Since the integrals defining $\caP(W\cdot Z')$ and
$(-1)^{nm}\caP(Z'\cdot W)$ are the same except for the ordering of
the variables, we obtain the lemma.
\end{proof}

Thanks to Lemma \ref{lemm:17} the product is well defined. Since the
product of higher 
cycles intersecting properly is associative, by \eqref{item:29} of Theorem
\ref{thm:12} we deduce that, if $Z$, $W$ and $T$ are as in Theorem
\ref{thm:12}, 
\begin{multline*}
  \big((Z,\widetilde g_{Z})\cdot (W,\widetilde g_{W})\big)\cdot (T,\widetilde
  g_{T})= \big( (Z\cdot W)\cdot T, (\widetilde g_{Z}\ast \widetilde
  g_{W})\ast \widetilde g_{T}\big)\\=
\big( Z\cdot (W\cdot T), \widetilde g_{Z}\ast (\widetilde
  g_{W}\ast \widetilde g_{T})\big)=
  (Z,\widetilde g_{Z})\cdot \big((W,\widetilde g_{W})\cdot (T,\widetilde
  g_{T})\big).
\end{multline*}

We show the graded commutativity in more details. Let
$\alpha$ and $\beta$ be as above, with representatives $[(Z,g_Z)]$ and
$[(W, g_W)]$ respectively, such that $Z\in Z^{p}(X,n)_{00}$ and $W\in
Z^{q}(X,m)_{00}$ intersect properly. Let $H_{n,m}$, given in
\eqref{eq:45}, be the homotopy that proves the graded commutativity of
the product in the higher Chow groups.  Then, using (\ref{item:28}) of
Theorem \ref{thm:12} and  \eqref{eq:44},
\begin{align*}
\alpha\cdot \beta-(-1)^{nm}\beta\cdot \alpha&=[(Z\cdot W, g_Z\ast g_W)]-(-1)^{nm}[(W\cdot Z, g_W\ast g_Z)]\\
&=[(Z\cdot W-(-1)^{nm}W\cdot Z, g_Z\ast g_W-(-1)^{nm} g_W\ast g_Z)]\\
&=[(\delta(H_{n,m}(Z\cdot W)), dv)].
\end{align*}

If we show that $\caP(H_{n,m}(Z\cdot W))=0$, then
\begin{multline*}
  [(\delta(H_{n,m}(Z\otimes W)), dv)]\\=[(\delta(H_{n,m}(Z\cdot W)), -\caP(H_{n,m}(Z\cdot W))]+[(0, dv)]=0,
\end{multline*}
and the graded commutativity follows.
\begin{lem}\label{lemm:18} The equation
  \begin{displaymath}
    \caP(H_{n,m}(Z\cdot W))=0
  \end{displaymath}
  holds.
\end{lem}
\begin{proof}[Proof of the lemma.]
In view of the definition \eqref{eq:45} of $H_{n,m}$, it is enough to
show that given a pre-cycle $T\in Z^p(X, n)_0$, and $h^\ast_n$ the
morphism given by \eqref{eq:46}, we have $\mathcal{P}(h^\ast_n(T))=0$.

Fon $n$ fixed, denote by $p_1\colon X\times \square^{n}\rightarrow X$
 and $p_{1}'\colon X\times \square^{n+1}\rightarrow X$ the first
 projection. Then
 \begin{displaymath}
   \mathcal{P}(h^\ast_n(T))=(p)_\ast\left(\delta_{h^\ast_n(T)}\cdot W_{n+1}\right).
 \end{displaymath}
Since $p'_{1}=p_1\circ h_{n}$, we rewrite
\begin{displaymath}
  \mathcal{P}(h^\ast_n(T))=(p_1)_{\ast}h_{n\ast}\left(\delta_{h^\ast_n(T)}\cdot
    W_{n+1}\right). 
\end{displaymath}
 Let
\begin{enumerate}
\item
$T'=h^{-1}_n(T)\subset X\times \square^{n+1}$.
\item
$\widetilde{T}\rightarrow T'$ a resolution of singularities of $T'$.
\item
$\overline{T}$ a smooth compactification of $\widetilde{T}$,
\end{enumerate}
such that there is a map
$$\overline{h}_n\colon \overline{T}\rightarrow X\times (\P^1)^n,$$
extending $h_n$. We write $\eta_n\colon \widetilde{T}\rightarrow
X\times \square^{n+1}$. Then, by definition,
\begin{displaymath}
  h_{n\ast}\left(\delta_{h^\ast_n(T)}\cdot W_{n+1}\right) =\overline{h}_{n\ast}[\eta^\ast_nW_{n+1}].
\end{displaymath}
So it is enough to prove that 
$\overline{h}_{n\ast}[\eta^\ast_nW_{n+1}]=0$.  Let $\omega$ be an
arbitrary smooth test form in $X\times (\P^1)^n$. We have to show that
\begin{displaymath}
  \overline{h}_{n\ast}[\eta^\ast_nW_{n+1}](\omega)
  =\int_{\overline{T}}\eta^\ast_nW_{n+1}\cdot
  \overline{h}^\ast_n\omega=0.
\end{displaymath}
The right hand side is an improper integral that can be computed in an
open dense subset of $\overline{T}$.

There are open dense subsets $U_T\subset T$ and $W_T\subset
\widetilde{T}$, such that the induced map
\begin{displaymath}
  \overline{h}_n\colon W_T\rightarrow U_T,
\end{displaymath}
is a fibration.
Locally in analytic topology, one can write $U_T=\cup_iU_{i,T},$
such that
\begin{displaymath}
  \overline{h}^{-1}_n (U_{i,T})\cap W_{T}=U_{i,T}\times C.
\end{displaymath}
Here $C$ is the fibre, a smooth curve inside $\square^2$ corresponding
to the first and the last coordinate of $\square^{n+1}$. Recall that
\begin{displaymath}
  W_{n+1}\cdot \overline{h}^\ast_n \omega=\pi^\ast_1 \lambda \cdot \pi^\ast_2 \lambda\cdots \pi^\ast_{n+1}\lambda \cdot \overline{h}^\ast_n \omega,
\end{displaymath}
where $\pi_i\colon X\times \square^{n+1}\rightarrow \square$ is the
projection to the $i$-th coordinate, and
\begin{displaymath}
 \lambda =-\frac{1}{2}\left((\varepsilon +1)\otimes \frac{dt}{t}+
  (\varepsilon -1)\otimes \frac{d\bar t}{\bar t}
  +d\varepsilon\otimes \log t\bar t\right),
\end{displaymath}
were $t$ is the coordinate of $\square$.
From the definition of $h_n$, and the fact that it is locally a
fibration, $\{\pi^\ast_i\lambda\}^n_{i=2}$ and
$\overline{h}^\ast_n\omega$ do not depend on the coordinate of
$C$. Thus we can write
\begin{displaymath}
  \overline{h}_{n\ast}(\eta^\ast W_{n+1}\cdot \overline{h}^\ast_n
  \omega)_{b_{i,T}}=\pm (\pi^\ast_2\lambda \cdots\pi^\ast_n\lambda\cdot
  \omega)_{b_{i,T}}\int_{C}\pi^\ast_1\lambda\cdot
  \pi^\ast_{n+1}\lambda. 
\end{displaymath}
Let $\overline{C}\subset (\P^1)^2$ be the closure of $C$ inside
$(\P^1)^2$, and $p\colon \overline{C}\rightarrow \Spec(F)$ be the
structural morphism. Then, the integral on the right hand side is the
same as $\caP(C)=p_\ast[W_2|_C]$.
Thus Lemma \ref{lemm:18} follows from Lemma \ref{lemm:19} below. 
\end{proof}

\begin{lem}\label{lemm:19}
  Let $C\subset \square ^{2}$ be a curve that intersects properly all
  the faces of $\square^{2}$ and $\overline C\subset
  (\P^{1})^{2}$ its closure. Let $p\colon \overline C \to \Spec F$ be the
  structural morphism. Then
  \begin{displaymath}
    p_{\ast}[W_{2}\mid _{C}]=0.
  \end{displaymath}
\end{lem}
\begin{proof}
Let $x,y$ be the coordinates of $\square ^{2}$. Then, the part of
  type $(1,1)$ of $W_{2}$ is given by
  \begin{align*}
    (W_{2})^{(1,1)}&=\frac{1}{4}(\varepsilon ^{2}-1)\otimes
    \left(\frac{dx}{x}\land \frac{d\bar y}{\bar y}-
      \frac{dy}{y}\land \frac{d\bar x}{\bar x})\right)\\
    &=\frac{1}{4}(\varepsilon ^{2}-1)\otimes d\left(\log x\bar
      x\left(\frac{dy}{y}+ \frac{d\bar y}{\bar
          y}\right)\right). 
  \end{align*}
  The coordinates $x$ and $y$ restricted to $C$ give rational
  functions on $C$ that we denote also by $x$ and $y$. The fact that
  $C$ intersects properly the faces of
  $\square^{2}$ implies that $\Div x$ and $\Div y$ are
  disjoint. Denote by $f=\log x\bar x$. As is customary, if
  $\Div(y)=\sum n^{p}p$, then we write
  \begin{displaymath}
    f(\Div(y))=\sum n_{p}f(p).
  \end{displaymath}
  Since
  \begin{displaymath}
    \frac{1}{2\pi i}\int _{\overline C}d\left(f
      \frac{dy}{y}\right)=-f(\Div y),\qquad
    \frac{1}{2\pi i}\int _{\overline C}d\left(f
      \frac{d\bar y}{\bar y}\right)=f(\Div y),
  \end{displaymath}
  we obtain the lemma.
\end{proof}
  We have proved that the product is graded commutative. The
  compatibility of the product with the morphisms $\zeta $ and $\omega
  $ is a direct computation.

  We leave to the reader to check that the product is compatible with
  inverse images and the projection formula.
\end{proof}

\begin{rmk}
  Following Remark \ref{rem:4}, let $\fB$ be another of the complexes
  of Example \ref{exm:6} and let
  \begin{displaymath}
        \xymatrix{
      \widehat{\CH}^p(X,n,\fB) \ar@<5pt>[r]^-{\varphi} &
      \widehat{\CH}^p(X,n,\DB_{\TW})\ar@<5pt>[l]^-{\psi }}.
  \end{displaymath}
  be the morphisms of Remark \ref{rem:4}. We can define a product
  in $\widehat{\CH}^\ast(X,\ast,\fB)$ by the rule
  \begin{displaymath}
    \alpha \cdot \beta =\psi (\varphi(\alpha )\cdot \varphi(\beta )).
  \end{displaymath}
  This product is still graded commutative, but in general, it is not
  associative. Using $\fB=\DB$ and $n=0$ we recover the original
  arithmetic intersection product of \cite{GilletSoule:ait} through
  Proposition \ref{case0}. 
\end{rmk}

\subsection{Height pairing of cycles with trivial real
  regulator}\label{HigherChow:Product} In this subsection, we assume
that the arithmetic field $F$ is a number field. 

We denote by $Z^{\ast}(X,\bullet)^{0}_{0}$ the subgroup of cycles whose
image by the real regulator to Deligne cohomology is zero. That is
\begin{displaymath}
  Z^{\ast}(X,\bullet)^{0}_{0}=\left\{ Z\in Z^{\ast}(X,\bullet)_{0}
    \,\middle |\, \delta(Z) = 0,\ \caP (Z) \text{ is a boundary.}\right \}
\end{displaymath}
if $Z\in Z^{p}(X,n)^{0}_{0}$, since $\caP(Z)$ is a boundary, 
there exists a current $g_{Z}$
such that $\mathcal{P}(Z)+d(g_Z)=0$. This current is a Green current
for $Z$. Then $[(Z,g_{Z})]\in
\widehat{\CH}^{p}(X,n)^{0}$, since $\omega (Z,g_{Z})=0$.

For two elements $Z\in Z^{p}(X,n)^{0}_{0}$ and $W\in
Z^{q}(X,m)^{0}_{0}$. we
define
\begin{equation}\label{eq:90}
  Z\bullet W\coloneqq [(Z, g_Z)]\cdot[(W, g_W)],
\end{equation}
where $g_{Z}$ and $g_{W}$ are Green currents satisfying
\begin{equation}\label{eq:91}
  \mathcal{P}(Z)+d(g_Z)=0,\qquad \mathcal{P}(W)+d(g_W)=0.
\end{equation}
If $Z$ and $W$ intersect properly, then
\begin{displaymath}
   Z\bullet W = [(Z\cdot W, g_Z\ast g_W)].
\end{displaymath}

To see that this product is well defined we have to show that this
definition is independent of the choice of Green currents for the
cycles $Z$ and $W$. It boils down to the following lemma.
\begin{lem}\label{HigherChowP}
Let $\eta
\in Z\fD_{\TW}^{2p-n-1}(X,p)$ be a closed form, so we can see the
current $[\eta]$ as
a Green current for the cycle $0\in Z^{p}(X,n)_{00}$, and $[(W,g_{W})]\in
\widehat{\CH}^{q}(X,m)^{0}$, so $\omega (g_{W})=0$.
Then the relations
\begin{align}
  \label{eq:92}
  ([\eta]\ast g_{W})^\sim &=(g_{W}\ast [\eta])^\sim =0\\
  \amap(\eta)\cdot[(W,g_W)]&=[(W,g_W)]\cdot \amap(\eta)=0\label{eq:93}
\end{align}
hold true.
\end{lem}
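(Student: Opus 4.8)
The plan is to verify the two displayed identities by direct computation using the definitions of the $\ast$-product and Lemma \ref{lemm:13}, exactly in the style of the proofs of Propositions \ref{prop:8-33}, \ref{prop:007} and \ref{thm:12}. First I would treat \eqref{eq:92}. Since $0\in Z^p(X,n)_{00}$ is the zero cycle, we have $\caP(0)=0$, so by Definition \ref{def:16} the $\ast$-product of the Green current $[\eta]$ for the cycle $0$ with a basic Green form $\fg_W$ representing $g_W$ reduces to
\begin{displaymath}
  [\eta]\ast \fg_W = (-1)^n\,\caP(0)\cdot[\fg_W] + [\eta]\cdot\omega(\fg_W) = [\eta]\cdot\omega_W.
\end{displaymath}
By hypothesis $[(W,g_W)]\in\widehat{\CH}^q(X,m)^0$ means $\omega(g_W)=\omega_W=0$, so $[\eta]\ast\fg_W=0$ on the nose, and in particular its class is $0$. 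For the other order, by Definition \ref{def:6.21} and the symmetry established in \eqref{eq:36} in the proof of Theorem \ref{thm:12}, $(g_W\ast[\eta])^\sim=(\fg_W\ast'[\eta])^\sim$, and $\fg_W\ast'[\eta] = [\fg_W]\cdot\caP(0)+(-1)^m\omega(g_W)\cdot[\eta] = 0$ since both $\caP(0)=0$ and $\omega(g_W)=0$. Alternatively one can simply invoke the graded commutativity $\widetilde g_Z\ast\widetilde g_W=(-1)^{nm}\widetilde g_W\ast\widetilde g_Z$ of Theorem \ref{thm:12}\eqref{item:28} applied to the classes of $[\eta]$ and $g_W$, together with the first computation; I would present the direct computation as the primary argument and mention the appeal to Theorem \ref{thm:12} as a cross-check. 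Note that for this step I should first replace $g_W$ by a representative of the form $[\fg_W]$ for a basic Green form $\fg_W$, which is legitimate by Proposition \ref{prop:8}, and it is harmless that $\eta$ is a form rather than a loose Green form since $\omega_W=0$ kills every term anyway.

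For \eqref{eq:93}, recall that $\amap(\eta)=[(0,[\eta])]\in\widehat{\CH}^p(X,n)$, so by the definition of the product in Theorem \ref{thm:int},
\begin{displaymath}
  \amap(\eta)\cdot[(W,g_W)] = [(0\cdot W,\ \widetilde{[\eta]}\ast\widetilde g_W)] = [(0,\ (\,[\eta]\ast g_W)^\sim)].
\end{displaymath}
By the first part, $([\eta]\ast g_W)^\sim=0$, hence $\amap(\eta)\cdot[(W,g_W)]=[(0,0)]=0$ in $\widehat{\CH}^{p+q}(X,n+m)$. The reversed product $[(W,g_W)]\cdot\amap(\eta)$ equals $(-1)^{nm}$ times the former by the graded commutativity of the intersection product (Theorem \ref{thm:int}), hence is also $0$; or one computes it directly as $[(W\cdot 0,(g_W\ast[\eta])^\sim)]=[(0,0)]$ using the second identity in \eqref{eq:92}. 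One small point to check is that the zero cycle $0$ and $W$ trivially intersect properly in the sense of Definition \ref{def:19} (the relevant intersection is empty, so the codimension condition is vacuous), so the product is defined without invoking the moving lemma; I would note this explicitly.

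The computations here are entirely routine, so there is no serious obstacle; the only thing requiring a little care is bookkeeping of the signs and of which representative (current versus basic Green form) is used in each slot of the $\ast$-product, since Definition \ref{def:16} is asymmetric in its two arguments. The cleanest exposition is: (i) reduce $g_W$ to $[\fg_W]$ for a basic Green form via Proposition \ref{prop:8}; (ii) observe $\caP(0)=0$; (iii) observe $\omega(g_W)=0$; (iv) conclude that every summand in both $[\eta]\ast\fg_W$ and $\fg_W\ast'[\eta]$ vanishes identically, giving \eqref{eq:92}; (v) feed this into the definition of the arithmetic intersection product to get \eqref{eq:93}. The key structural fact making it all collapse is precisely that $\omega(g_W)=0$, which is the defining condition of $\widehat{\CH}^q(X,m)^0$, together with $\caP(\text{zero cycle})=0$.
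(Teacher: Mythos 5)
Your argument is correct and follows essentially the same route as the paper: you kill both terms of the $\ast$-product using $\caP(0)=0$ and $\omega(g_W)=0$, obtain the reversed order by graded commutativity (the paper uses Theorem \ref{thm:12}~\eqref{item:28} for exactly this step), and deduce \eqref{eq:93} from \eqref{eq:92} via the observation that the zero cycle intersects every cycle properly. The extra details you supply (choosing a basic Green form via Proposition \ref{prop:8}, the alternative $\ast'$ computation) are consistent with, and only elaborate on, the paper's proof.
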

\begin{proof} Since the cycle zero intersects properly any other
  cycle, the second equation
  follows from the first.

 We compute the product in one direction, choosing a Green form
 $\fg_W$ such that $[\fg_W]\in \widetilde{g}_W$. We obtain 
  \begin{displaymath}
    [\eta]\ast g_{W}
    =\caP(0)\cdot [\fg_{W}]+[\eta]\cdot \omega(g_W),
  \end{displaymath}
up to a boundary. By definition the formal product $\caP(0)\cdot [\fg_{W}]$ is zero. Moreover
  $\omega(g_W)=0$ concluding that $([\eta]\ast \fg_{W})^\sim=0$. On the other
  direction, since $([\eta]\ast g_W)^\sim=(-1)^{nm}(g_W\ast [\eta])^\sim$, we conclude that $(g_W\ast [\eta])^\sim=0$.
\end{proof}

We can now define an analogue of Beilinson's height pairing for higher
cycles. We take the following notational liberty: $\CH^{\ast}(\Spec(F),
\bullet)$ will be denoted simple by $\CH^{\ast}(F, \bullet)$ when there
is no cause for confusion. For the higher arithmetic Chow groups and
Deligne cohomology of $F$, we will follow the same convention. 
\begin{df}\label{def:17}
Assume that $X$ is equidimensional of dimension $d$. Let
$p,q,m,n\ge 0$  be integers satisfying the relation
$2(p+q-d-1)=n+m$. Let $\pi \colon X\to \Spec F$ be the structural
morphism. For
cycles  $Z\in Z^{p}(X,n)^{0}_{0}$ and $W\in Z^{q}(X,m)^{0}_{0}$, the height pairing
is defined as 
\begin{displaymath}
  \langle Z,W \rangle = \pi _{\ast}(Z\bullet W)\in
  \widehat{\CH}^{p+q-d}(F, n+m)^{0}.
\end{displaymath}
\end{df}
Assume that $Z$ and $W$ intersect properly. Then we can give a formula
for the class of the above height pairing in $\widehat{\CH}^{p+q-d}(F,
n+m)^{0}\otimes \Q$.
Since $n+m=2(p+q-d-1)$ is even, we know that the group $\CH^{p+q-d}(F,
n+m)$ is torsion.  We
have the exact sequence 
\begin{equation}\label{eq:1729}
  0\to 
  \frac{H^1_{\DB}(F, \R(p+q-d))}{(\text{im}(\rho_{\text{Be}}))}\to
  \widehat{\CH}^{p+q-d}(F, n+m)\to
  \CH^{p+q-d}(F, n+m)\to 0, 
\end{equation}
where $\rho_{\text{Be}}$ denotes Beilinson's regulator. The pairing
$\langle Z,W\rangle$ is given, at the level of higher arithmetic cycles,
by $[\left(\pi_{\ast}(Z\cdot W), \pi_{\ast}(g_Z\ast
  g_W)\right)]$. Since the cycle $\pi_{\ast}(Z\cdot W)\in
\CH^{p+q-d}(F, n+m)$ is torsion, some integral multiple of it is the
boundary of a pre-cycle. Formally,
let $N$ be the order of $\pi_{\ast}(Z\cdot W)$, then there is a  $T\in
Z^{p+q-d}(F, n+m+1)_0$ such that 
\begin{displaymath}
 N(\pi_{\ast}(Z\cdot W))=\delta(T).
\end{displaymath}
Consider the element
\begin{displaymath}
  N[\left(\pi_{\ast}(Z\cdot W), \pi_{\ast}(g_Z\ast
  g_W)\right)]-[\delta(T), \mathcal{P}(T)].
\end{displaymath}
This element represents the same class as $N\langle Z, W\rangle$, but
is of the form
\begin{displaymath}
  \amap([N\pi_{\ast}(g_Z\ast g_W)-\mathcal{P}(T)]).
\end{displaymath}

The class of the current $\left(N\pi_{\ast}(g_Z\ast
  g_W)-\mathcal{P}(T)\right)^\sim$ belongs to the group 
$\widetilde{\fD}^{1}_{\TW,D}(F,p+q-d)$. Since
$\fD_{\TW,D}^{2}(F,p+q-d)=0$, this current 
is closed and gives us a class
\begin{displaymath}
  [N\pi_{\ast}(g_Z\ast g_W)-\mathcal{P}(T)]\in H^1_{\DB}(F,
  \R(p+q-d)). 
\end{displaymath}
Thus, we the rational height pairing is given by
\begin{multline*}
  \langle Z, W\rangle_{\Q} =
\frac{1}{N}\left(\overline{N\pi_{\ast}(g_Z\ast
    \fg_W)-\mathcal{P}(T)}\right)\\
\in
\frac{H^1_{\DB}(F, \R(p+q-d))}{(\text{Im}(\rho_{\text{Be}}))}\otimes \Q=
  \widehat{\CH}^{p+q-d}(F, n+m)_{\Q}.
\end{multline*}
Here $\overline{\phantom{A} }$ denotes the class in $H^1_{\DB}(F,
\R(p+q-d))/\text{Im}(\rho_{\text{Be}})$.\\

The height pairing defined above has two components, the one coming from
intersection theory, $\pi_{\ast}(Z\cdot W)$, that we have been able to
write in term of $\mathcal{P}(T)$ and the one coming from the Green
currents $\pi_{\ast}(g_Z\ast 
g_W)$.  The fact that we have written the intersection
theoretical part as $\mathcal{P}(T)$ used that some higher
Chow groups of the ground field are torsion. Of course, if the ground
field is not a number field, this may not be possible. However, the
component $\pi_{\ast}(g_Z\ast 
g_W)$ can be defined for any arithmetic field. For instance when
$F=\C$,  since all that we have used to define it involves complex 
geometry. In fact the term $\pi_{\ast}(g_Z\ast g_W)$
resembles the Archimedean component of Beilinson's height pairing. By
the same reasons as before, the current $\pi_{\ast}(g_Z\ast g_W)$ is
closed. 

\begin{df}\label{arcdf} Let $Z\in Z^{p}(X,n)^{0}_{0}$ and $W\in
  Z^{q}(X,m)^{0}_{0}$ be cycles intersecting properly, then the
  \emph{Archimedean component of the higher height pairing} is defined
  as
  \begin{displaymath}
   \langle Z, W\rangle _{\infty }\coloneqq \left(\pi_{\ast}(g_Z\ast
   g_W)\right)^\sim \in H^1_{\DB}(F, p+q-d),  
  \end{displaymath}
  for any choice of Green current $g_{Z}$ for $Z$ and a Green current $g_{W}$ for $W$.
  \end{df}
\begin{prop}\label{commarc}
The Archimedean component $\langle Z,
W\rangle_{\infty}$ of the height pairing, does not depend on the choice of Green
currents $g_{Z}$ and $g_{W}$ satisfying $\omega (g_{Z})=0$ and
$\omega (g_{W})=0$.
\end{prop}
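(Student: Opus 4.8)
We must show that the class $\langle Z,W\rangle_{\infty}=(\pi_{\ast}(g_Z\ast g_W))^{\sim}$ is unchanged when $g_Z$ is replaced by another Green current for $Z$ with $\omega(g_Z)=0$, and likewise when $g_W$ is so replaced. Since $\pi_{\ast}$ is compatible with the differential of currents, it descends to classes of Green currents and sends the class of Green currents $\widetilde g_Z\ast\widetilde g_W$ for $Z\cdot W$ to a well-defined class for $\pi_{\ast}(Z\cdot W)$; as $\fD_{\TW,D}^{2}(F,p+q-d)=0$ this target group is precisely $H^{1}_{\DB}(F,\R(p+q-d))$, and it carries $\widetilde g_Z\ast\widetilde g_W$ to $\langle Z,W\rangle_{\infty}$. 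So it suffices to prove that the class of Green currents $\widetilde g_Z\ast\widetilde g_W$ (which, by Proposition \ref{prop:007}, depends only on the classes $\widetilde g_Z$ and $\widetilde g_W$) does not change under the above replacements.

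For the first variable this is immediate. Choose a Green form $\fg_W$ with $[\fg_W]\in\widetilde g_W$; since $\omega(g_W)=0$ we have $\omega(\fg_W)=\omega([\fg_W])=\omega(g_W)=0$. Then Definition \ref{def:6.21} gives $\widetilde g_Z\ast\widetilde g_W=\big((-1)^{n}\caP(Z)\cdot[\fg_W]+g_Z\cdot\omega(\fg_W)\big)^{\sim}=\big((-1)^{n}\caP(Z)\cdot[\fg_W]\big)^{\sim}$, in which no Green current for $Z$ appears; hence $\widetilde g_Z\ast\widetilde g_W$ is the same for every choice of $g_Z$ with $\omega(g_Z)=0$.

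For the second variable I would first record that $\ast$ is biadditive on classes of Green currents: from the formula in Definition \ref{def:6.21} together with the additivity of $\caP$ and the linearity of the formal product $\caP(Z)\cdot(-)$ in $Z$, one has $(\widetilde g_{Z_1}+\widetilde g_{Z_2})\ast\widetilde g_W=\widetilde g_{Z_1}\ast\widetilde g_W+\widetilde g_{Z_2}\ast\widetilde g_W$ whenever all the relevant cycles are in good position, and by the graded commutativity of Theorem \ref{thm:12} the same holds in the second argument. Now let $g_W,g_W'$ be Green currents for $W$ with $\omega(g_W)=\omega(g_W')=0$. Then $dg_W=-\caP(W)=dg_W'$, so $g_W-g_W'$ is closed, and by Lemma \ref{lemm:2} we may write $\widetilde g_W=\widetilde g_W'+\widetilde{[\eta_W]}$ with $\eta_W\in\fD_{\TW}^{2q-m-1}(X,q)$ a form; since $[\cdot]$ is injective on forms and $d(g_W-g_W')=0$, the form $\eta_W$ is closed, i.e. $\eta_W\in Z\fD_{\TW}^{2q-m-1}(X,q)$, so $[\eta_W]$ is a Green current for the zero cycle $0\in Z^{q}(X,m)_{00}$ with $\omega([\eta_W])=0$. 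Using biadditivity, $\widetilde g_Z\ast\widetilde g_W=\widetilde g_Z\ast\widetilde g_W'+\widetilde g_Z\ast\widetilde{[\eta_W]}$, and the last term vanishes by equation \eqref{eq:92} of Lemma \ref{HigherChowP}, applied with $[\eta_W]$ in the role of the Green current for the zero cycle and with $g_Z$ (for which $\omega(g_Z)=0$) in the role of the Green current of the other cycle. Hence $\widetilde g_Z\ast\widetilde g_W=\widetilde g_Z\ast\widetilde g_W'$, and combined with the first variable this proves the proposition.

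The only step that is not purely formal is the appeal to Lemma \ref{HigherChowP}; there one must be slightly careful to match the cohomological degree of $\eta_W$ with the hypotheses of that lemma and to note that the zero cycle is automatically in good position with respect to $Z$. Everything else — biadditivity of $\ast$, the description via Lemma \ref{lemm:2} of the ambiguity in a Green current with vanishing $\omega$, and the compatibility of $\pi_{\ast}$ with $d$ — is bookkeeping with the definitions already in place.
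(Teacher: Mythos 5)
Your proof is correct and follows the route the paper intends: the paper's own proof simply says the proposition is a direct consequence of equation \eqref{eq:92} of Lemma \ref{HigherChowP} and omits the details, and your argument supplies exactly those details (writing the ambiguity in a Green current with $\omega=0$ as a closed smooth form via Lemma \ref{lemm:2} and killing its contribution with \eqref{eq:92}). The only difference is cosmetic: for the first variable you observe directly that the defining formula does not involve $g_Z$ when $\omega(\fg_W)=0$, whereas one could equally apply \eqref{eq:92} symmetrically; both are fine.
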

\begin{proof}
  This proposition is a direct consequence of equation \eqref{eq:92}
  in Lemma \ref{HigherChowP}, and the details are omitted.
\end{proof}

\begin{rmk}
  The higher height pairing $\langle Z,W \rangle$ only depends on the
  classes $[Z],\,[W]\in \CH^{\ast}(X,\bullet)^{0}$. Indeed, if
  $W'=W+\delta T$, then $g_{W'}=g_{W}-\caP(T)$ is a Green current for
  $W'$ satisfying the analogue of conditions \eqref{eq:91}. Thus
  \begin{displaymath}
    Z\bullet W'=[(Z,g_{Z})]\cdot [(W',g_{W'})]=[(Z,g_{Z})]\cdot
    [(W,g_{W})]=Z\bullet W. 
  \end{displaymath}
By contrast the Archimedean component of the height pairing $\langle
Z, W\rangle _{\infty }$ depends on the cycles $Z$ and $W$.
\end{rmk}

\begin{rmk}
The Archimedean part of Beilinson's height pairing has two other
incarnations. One via the product in real-Deligne cohomology, and
another via the theory of biextensions of mixed Hodge structures of
weight $-1$ (see \cite{Hain_ArcHeight} for details). It would be
interesting to study if such incarnations are possible for the
Archimedean component of the higher height pairing. This will be the subject of a
future project.  
\end{rmk}

\section{Example: the case of dimension zero}
\label{sec:examples}


Let $F$ be a number field and $X$ a smooth projective variety over $F$.
For each of the complexes $\fA$ in diagram \eqref{eq:43} we have
defined higher arithmetic Chow groups $\widehat{\CH}^{p}(X,n,\fA)$
equipped with direct images, inverse images and products. In
the case of $\widehat{\CH}^{p}(X,n,\fDTW)$ the product is 
associative and graded commutative. For the other complexes,
the product fails to be associative in general. In this section we
specialize to the case $X=\Spec F$ and derive the first non trivial 
examples of a higher arithmetic intersection pairing. For simplicity in the
notation, when $X=\Spec F$ we will write
\begin{displaymath}
  H^{\ast}_{\fD}(F,\R(\ast))\coloneqq
  H^{\ast}_{\fD}(X,\R(\ast)),
\end{displaymath}
and similarly, $\CH^{p}(F,n)$ and $\widehat{\CH}^{p}(F,n)$. 

\subsection{Higher Arithmetic Chow groups of a number
  field}\label{dimension:Zero}

We now look for the groups
$\widehat{\CH}^p(F,n,\fA)$ tha may be non-zero. To simplify the
discussion, we will neglect 
torsion and work after tensoring with $\Q$.

Let $r_1$ and $r_2$ denote the number of real and (non-conjugate)
complex embeddings of $F$ respectively. 

We start by writing down the exact sequence of Theorem
\ref{thm:3}~\eqref{item:27} 
in case of $X=\Spec(F)$ and after tensoring with $\Q$:  
\begin{multline}\label{eqnF}
\CH^p(F,n+1)_{\Q}\xrightarrow{\rho _{Be}}\fA
^{2p-n-1}(F,p)/\im(d)\xrightarrow{\amap}\\
\widehat{\CH}^p(F,n,\fA)_{\Q}\xrightarrow{\zeta}\CH^p(F,n)_{\Q}\rightarrow
0.
\end{multline}

By Borel's Theorem (see for instance \cite[Theorem 9.9 ]{Burgos:bb}),
after applying the isomorphism between higher Chow groups and motivic
cohomology, we have that
\begin{equation}\label{eq:63}
    \CH^{p}(F,n)_{\Q}=
  \begin{cases}
    \Q,& \text{ if }p=n=0.\\
    F^{\times}\underset{\Z}\otimes \Q,&  \text{ if }p=n=1,\\
    \Q^{r_{1}+r_{2}},&  \text{ if }2p-n=1,\ p \text{ odd,}\\
    \Q^{r_{2}},&  \text{ if }2p-n=1,\ p \text{ even,} \\
    0,& \text{ otherwise.}
  \end{cases}
\end{equation}

We treat first the simplest case, when $\fA=\fD$. Note that
\begin{equation}\label{eq:64}
  \fD^{n}(F,p)=
  \begin{cases}
    \R^{r_{1}+r_{2}},&\text{ if }n=p=0,\\
    \R(p-1)^{r_{1}+r_{2}},&\text{ if }n=1, p\text{ odd,}\\
    \R(p-1)^{r_{2}},&\text{ if }n=1,  p\text{ even,}\\
    0,&\text{ otherwise.}
  \end{cases}
\end{equation}
In particular, $\fD^{1}(F,p)=H^{1}_{\fD}(F,\R(p))$.

Using the exact sequence \eqref{eqnF} together with \eqref{eq:63} and
\eqref{eq:64} we deduce that
\begin{equation}
  \label{eq:65}
  \widehat{\CH}^{p}(F,n,\fD)_{\Q}=
  \begin{cases}
    \CH^{0}(F)_{\Q},&\text{ if }p=n=0,\\
    \CH^{p}(F,n)_{\Q},&\text{ if }2p-n=1,\ p>0,\\
    H^{1}_{\fD}(F,\R(p))/\im(\rho _{Be,\Q}), &\text{ if }2p-n=2,\ p>0,\\
    0,&\text{ otherwise.}
  \end{cases}
\end{equation}

We now turn our attention to $\fA=\fD_{TW}$. This case is more
complicated because
\begin{displaymath}
  \fD^{n}_{TW}(F,p)\not =\{0\} \Longleftrightarrow p=n=0, \text{ or
  }p>0, n=0,1.
\end{displaymath}
In this case we obtain that $\widehat{\CH}^{p}(F,n,\fDTW)_{\Q}$ satisfies,
\begin{enumerate}
\item for $p=n=0$, $\widehat{\CH}^{0}(F,0,\fDTW)_{\Q}=\CH^{0}(F)_{\Q}=\Q$; 
\item for $p>0$, $n=2p-1$, there is a short exact sequence
  \begin{equation}\label{item:25}
    0\to \fD_{TW}^{0}(F,p)\to \widehat{\CH}^{p}(F,n,\fDTW)_{\Q}\to
    \CH^{p}(F,n)_{\Q}\to 0;
  \end{equation}
\item for $p>0$, $n=2p-2$;
  \begin{displaymath}
    \widehat{\CH}^{p}(F,2p-2,\fDTW)_{\Q}=H^{1}_{\fD}(F,\R(p))/\im(\rho _{Be,\Q});
  \end{displaymath}
\item for other values of $p$ and $n$, is zero.
\end{enumerate}

The groups $\widehat{\CH}^p(F, n)^0$ do not depend
on  the complex used to  compute the cohomology. They are
given by the kernel of the Beilinson regulator map.
\begin{enumerate}
\item for $p=n=0$,
  $\widehat{\CH}^{0}(F,0,\fDTW)_{\Q}=\CH^{0}(F)^{0}_{\Q}=\{0\}$; 
\item for $p>0$ and  $n=2p-2$,
\begin{displaymath}
  \widehat{\CH}^p(F, 2p-2)^0=\widehat{\CH}^p(F,
  2p-2)=H^{1}_{\fD}(F,\R(p))/\im(\rho _{Be,\Q}); 
\end{displaymath}
\item for $p>1$ and  $n=2p-1$, the groups $\widehat{\CH}^p(F, 2p-1)^0$ are torsion by
Borel's theorem; 
\item For $p=1$, the group $\widehat{\CH}^1(F,1)^0$ is
given by the kernel of the logarithm map
\begin{displaymath}
  \log|\cdot|\colon F^{\times}\rightarrow \R^{r_1+r_2},\quad x\mapsto
(n_{\sigma }\log|x|_{\sigma})_{\sigma : F\hookrightarrow \C},
\end{displaymath}
where $n_{\sigma } =1$ if $\sigma$ is a real embedding and $n_{\sigma } =2$ if
$\sigma$ is a complex embedding.
\end{enumerate}
   As an example, for $F=\Q(i)$, where
$i$ is a solution of the equation $x^2+1=0$, $\widehat{\CH}^1(F,
1)^0$ consists of all elements $\frac{a}{c}+i\frac{b}{c}\in \Q(i)$, such that $a^2+b^2=c^{2}$
(Pythagorean triplets).

\begin{rmk}\label{rem:13}
 Let $Z\in Z^{p}(F,2p-1)_{0}$. Since
$X=\Spec(F)$ has dimension zero, then
$\fD_{\TW,D}^{1}(F,p)= \fD_{\TW}^{1}(F,p)$. Therefore $\caP(Z)$ already belongs
to $\fD_{\TW}^{1}(F,p)$, and, in consequence, we can take $0$ as the
Green current for $Z$, with the result that  $\omega _{Z}=\caP(Z)$. We
note that, although it is the zero 
Green current it is not trivial, in the sense that the corresponding
Green form may be non-zero. We caution the reader also that this in not a splitting of
the exact sequence \eqref{item:25} because, if $Z_{1},\ Z_{2}\in
Z^{p}(F,2p-1)_{0}$ are two cycles with the same class in
$\CH^{p}(F,2p-1)$, then $(Z_{1},0)$ and $(Z_{2},0)$ may give different
classes in $\widehat{\CH}^{p}(F,2p-1,\fDTW)$.
\end{rmk}

\subsection{Examples of intersection products in dimension zero}
\label{sec:exampl-inters-prod}

For $X=\Spec F$, the diagonal map $X\to X\times X$
is an isomorphism and there is no difference between the external and
the intersection product. In particular all (admissible) pre-cycles
intersect properly. 

\begin{rmk}\label{rem:14}
  From the computations on the previous section we observe that the
  only non trivial products among higher arithmetic Chow groups in the
  case of number fields are the product that involve $\widehat{\CH}^{0}(F,0)$
  and the products of the form 
  \begin{multline*}
    \widehat{\CH}^{p}(F,2p-1,\fDTW)_{\Q}\otimes
    \widehat{\CH}^{q}(F,2q-1,\fDTW)_{\Q}\to\\
    \widehat{\CH}^{p+q}(F,2(p+q)-2,\fDTW)_{\Q}. 
  \end{multline*}

  If we start with two classes, one in $\widehat{\CH}^p(F, 2p-1)^0$
  and one  in $\widehat{\CH}^q(F, 2q-1)^0$ then the intersection product in 
  $\widehat{\CH}^{p+q}(F,2(p+q)-2,\fDTW)_{\Q}$ is always zero.
  Indeed, for $p>1$ or $q>1$,
  we know that at least one of the groups
   $\widehat {\CH}^p(F,2p-1)^0 $ or $\widehat{\CH}^q(F,2q-1)^0$ 
is torsion, and so will be the intersection product. While for $p=q=1$,
the groups $\widehat{\CH}^1(F,1)^0$ are not torsion. Since
$\widehat{\CH^1(F,1)^0}\subset F^{\times}$ is the kernel of the
logarithm map, we can consider $\{\alpha\},\{\beta\} \in
\CH^1(F,1)^0$, with $\alpha ,\,\beta \in F^{\times}$ and $\alpha$, $
\beta$ in the kernel of the regulator. Choose and 
embedding $\sigma\colon F\to \C $. After enlarging $F$ if needed, we
can assume that $F$ is stable under complex conjugation and let $h$
be the automorphism of
$F$ induced by complex conjugation. Since $|\sigma
(\alpha )|=|\sigma (\beta  )|=1$, then 
$$h^{\ast}\{\alpha \}=\{\alpha^{-1}\},\quad
h^{\ast}\{\beta\}=\{\beta^{-1}\}.$$ 
Identifying $\{\alpha^{-1}\}$ with $-\{\alpha\}$ and
$\{\beta^{-1}\}=-\{\beta\}$ in $CH^1(F,1)$ (which of course, amounts
to adding a boundary $\delta T$. But as we will see in Lemma \ref{lemm:5}, all
such boundaries have trivial image under $\caP$) we deduce
\begin{displaymath}
 h^{\ast}(\langle \{\alpha\},\{\beta\}\rangle )=\langle
 \{\alpha\},\{\beta\}\rangle  \in \widehat{\CH}^2(F, 2). 
\end{displaymath}
Since any element $\gamma\in H^1_{\DB}(F, \R(2))$ satisfies
$h^{\ast}(\gamma)=-\gamma$, for parity reasons, the product $\langle
\{\alpha\},\{\beta\}\rangle=0$. 
\end{rmk}

By Remark \ref{rem:14}, if we want to have a non-zero higher
arithmetic intersection product, we need to look at cycles whose image
by the regulator map is non-zero.

As observed in Remark \ref{rem:13}, the zero dimensional case is
special because for a cycle $\alpha \in Z^p(F,
2p-1)_{0}$ we can always choose the Green current $0$. Therefore we
define a pairing
\begin{multline*}
(\ ,\ )_{p,q}\colon   Z^{p}(F,2p-1)_0\otimes
    Z^{q}(F,2q-1)_0\to\\
    \widehat{\CH}^{p+q}(F,2(p+q)-2,\fDTW)_{\Q}=H^{1}_{\fD}(F,\R(p+q))/\im(\rho
    _{Be,\Q}).
\end{multline*}
given, for $\alpha \in Z^p(F,
2p-1)_0$ and $\beta  \in Z^q(F,
2q-1)_0$, by 
\begin{displaymath}
  (\alpha ,\beta )_{p,q}=[(\alpha ,0)]\cdot [(\beta ,0)].
\end{displaymath}

The next lemma
will be  useful in concrete computations.

\begin{lem}\label{0-0lemma}
  Let $\alpha \in Z^p(F,
2p-1)_0$ and $\beta  \in Z^q(F,
2q-1)_0$, then
  the product $[(\alpha ,0)]\cdot [(\beta ,0)]$
is given by $[(\alpha\cdot \beta, 0)]$.
\end{lem}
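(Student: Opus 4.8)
The plan is to unwind the definition of the intersection product on higher arithmetic Chow groups from Theorem \ref{thm:int} and to check that its Green‑current component collapses in dimension zero. First I would record that, since $X=\Spec F$ is zero‑dimensional, any two admissible pre‑cycles intersect properly, so $\alpha\cdot\beta$ is defined and, by the construction of the product,
\[
 [(\alpha,0)]\cdot[(\beta,0)]=\big[(\alpha\cdot\beta,\ \widetilde 0_{\alpha}\ast\widetilde 0_{\beta})\big],
\]
where $\widetilde 0_{\alpha}$ and $\widetilde 0_{\beta}$ are the classes of the zero Green current for $\alpha\in Z^{p}(F,2p-1)_{0}$ and $\beta\in Z^{q}(F,2q-1)_{0}$; these are legitimate classes of Green currents because in dimension zero $\caP(\alpha)$ and $\caP(\beta)$ are already smooth forms (Remark \ref{rem:13}). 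Thus it suffices to prove that $\widetilde 0_{\alpha}\ast\widetilde 0_{\beta}$ is the zero class of Green currents for $\alpha\cdot\beta$.

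Next I would evaluate $\widetilde 0_{\alpha}\ast\widetilde 0_{\beta}$ by Definition \ref{def:6.21}: choose a basic Green form $\fg_{\beta}$ for $\beta$ with $[\fg_{\beta}]\in\widetilde 0_{\beta}$, which exists by Proposition \ref{prop:8}. Since $X$ is zero‑dimensional, the currents in negative degree vanish, so the quotient defining classes of Green currents for $\beta$ has no nonzero boundaries; hence $[\fg_{\beta}]=0$ as a current. Taking the zero current as representative of $\widetilde 0_{\alpha}$, Definition \ref{def:6.21} gives
\[
 \widetilde 0_{\alpha}\ast\widetilde 0_{\beta}=\big((-1)^{2p-1}\caP(\alpha)\cdot[\fg_{\beta}]+0\cdot\omega(\fg_{\beta})\big)^{\sim}=\big(-\caP(\alpha)\cdot[\fg_{\beta}]\big)^{\sim},
\]
so everything reduces to showing that the formal product $\caP(\alpha)\cdot[\fg_{\beta}]$ of Definition \ref{def:16} vanishes.

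Finally I would treat this product. Writing $\fg_{\beta}=(g'_{m},\dots,g'_{0})$ with $m=2q-1$, Definition \ref{def:16} reads
\[
 \caP(\alpha)\cdot[\fg_{\beta}]=\sum_{j=0}^{m}(p_{1})_{*}\big(\delta_{\alpha}\cdot W_{n}\cdot g'_{j}\cdot W_{j}\big),
\]
a current on $X\times(\P^{1})^{n+m}$ pushed forward along $p_{1}\colon X\times(\P^{1})^{n+m}\to X$, with $n=2p-1$. Because $X$ is a point, the factor $\delta_{\alpha}\cdot W_{n}$ depends only on the first $n$ copies of $\P^{1}$ while $g'_{j}\cdot W_{j}$ depends only on the last $j$ of them; pushing forward first along $Z\times(\P^{1})^{m}\to X\times(\P^{1})^{m}$ (on a resolution of $\alpha$) contracts $\delta_{\alpha}\cdot W_{n}$ to the number $\caP(\alpha)\in\fD^{1}_{\TW,D}(F,p)$, and the remaining push‑forward yields $\caP(\alpha)\cdot\sum_{j}(p_{1})_{*}[g'_{j}\cdot W_{j}]=\caP(\alpha)\cdot[\fg_{\beta}]$, the genuine product in the $\fD_{\TW}$‑module $\fD_{\TW,D}$ of the scalar $\caP(\alpha)$ with the Green current $[\fg_{\beta}]$ of Proposition \ref{GreenCurrent}, which is $0$. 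Hence $\caP(\alpha)\cdot[\fg_{\beta}]=0$, so $\widetilde 0_{\alpha}\ast\widetilde 0_{\beta}=\widetilde 0$ and $[(\alpha,0)]\cdot[(\beta,0)]=[(\alpha\cdot\beta,0)]$. The one delicate step is this last projection‑formula identification: one must track which $\P^{1}$‑coordinates each factor depends on and carry along the $2\pi i$‑twists of the conventions of \S\ref{sec:conv-diff-forms}, but conceptually it is just Fubini for a product of currents one of whose factors is supported on the fibre $\Spec F$ and is therefore constant in the transverse directions.
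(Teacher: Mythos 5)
Your proposal is correct and follows essentially the same route as the paper: reduce to the $\ast$-product $(-1)^{2p-1}\caP(\alpha)\cdot[\fg_{\beta}]+0\cdot\omega(\fg_{\beta})$, note the second term vanishes, and kill the first by observing that $\caP(\alpha)$ is a smooth form on the point so the projection formula factors it out of the push-forward, leaving $\caP(\alpha)\cdot\sum_{j}(p_{j})_{\ast}(g'_{j}\cdot W_{j})=\caP(\alpha)\cdot[\fg_{\beta}]=0$. Your extra remark that in degree $0$ on $\Spec F$ there are no boundaries, so $[\fg_{\beta}]\in\widetilde 0_{\beta}$ forces $[\fg_{\beta}]=0$ on the nose, is a correct justification of what the paper simply builds into its choice of $\fg_{\beta}$.
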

\begin{proof}
  Let $\fg_{\beta }=\{g'_{2q-1},\cdots ,g'_0\}$ be a Green form for $\beta $ such that the
  corresponding Green current satisfies $[\fg_{\beta }]=0$. In other words
 \begin{displaymath}
  \sum_{j=0}^{2q-1}(p_{j})_\ast(g'_j\cdot W_j)=0.
\end{displaymath} 
 The lemma
  amounts to showing that, writing $g_{\alpha }=0$ and $\omega _{\beta
  }=\caP(\beta )$, 
  \begin{displaymath}
    g_{\alpha } \ast \fg_{\beta }= (-1)^{2p-1}\mathcal{P}(\alpha)\cdot
    [\fg_{\beta}]+g_{\alpha}\cdot \omega_{\beta}=0.
  \end{displaymath}
  Clearly,  $g_{\alpha}\cdot \omega_{\beta}=0$. We now denote by
  \begin{displaymath}
    p_{k}\colon \square^{k}\to \Spec(F)
  \end{displaymath}
  the structural morphism.
  From the definition
 \begin{displaymath}
   \mathcal{P}(\alpha)\cdot [\fg_{\beta}]=
   \sum^{2q-1}_{j=0}(p_{2p+j-1})_{\ast}(p_{2p-1}^{\ast}(\delta_{\alpha}\cdot W_{2p-1})\cdot
   p_{j}^{\ast}(g'_j\cdot W_j)).
 \end{displaymath}
Since $\mathcal{P}(\alpha )$ is smooth, using the projection formula
for forms and currents, we can write
\begin{displaymath}
  \mathcal{P}(\alpha)\cdot [\fg_{\beta}]=\caP(\alpha )\cdot
  \left(\sum^{2q-1}_{j=0}(p_{j})_{\ast}(g'_j\cdot W_j)\right)=0.
\end{displaymath}
This concludes the proof.
\end{proof}

Fix $\alpha $ and $\beta $ as in the previous lemma. we want to devise
a method to compute $(\alpha ,\beta )_{p,q}$. Since the group 
$\CH^{p+q}(F,2(p+q)-2)$ is torsion, there
exist an integer $N>0$ and
a pre-cycle $T\in Z^{p+q}(F,2(p+q)-1)_0$, satisfying
$N(\alpha\cdot\beta)= \delta(T)$. Therefore, in the group
$\widehat{\CH}^{p+q}(F,2(p+q)-2)_{\Q}$, by Lemma \ref{0-0lemma} we
derive the equation  
\begin{displaymath}
  (\alpha ,0)\cdot (\beta ,0)=
  (\alpha \cdot \beta ,0)=\frac{1}{N}(\delta
  (T),0)=\frac{1}{N}(0,-\caP(T)). 
\end{displaymath}
Using the isomorphism 
\begin{displaymath}
  \widehat{\CH}^{p+q}(F,2(p+q)-2)_{\Q}\simeq
  \left[H^{1}_{\fD}(F,\R(p+q))/\im(\rho _{Be})\right]\otimes \Q, 
\end{displaymath}
we obtain a formula for the intersection product
\begin{equation} \label{eq:66}
  (\alpha ,\beta )_{p,q}=(\alpha ,0)\cdot (\beta ,0)=
  \frac{1}{N}(\overline{-\caP(T)}).
\end{equation}
Here $\overline{-\caP(T)}$ means the class of $-\caP(T)$ in
$\left[H^{1}_{\fD}(F,\R(p+q))/\im(\rho _{Be})\right]\otimes \Q$.\\

The pre-cycle $T$ is equation \eqref{eq:66} can be computed in many
cases as a linear combination of pre-cycles that explain known
relations in higher Chow groups, like the Totaro pre-cycle that
explains the Steinberg relations. Among them, there are the pre-cycles
that explain the multilinearity relations. The next lemma shows that we
do  not need to worry about them.

\begin{lem}\label{lemm:5}
  Let $\alpha,\ \beta \in F^{\times}\setminus \{1\}$ and $Z\in
  Z^{p}(F,2p-1)_{0}$ a cycle. Let $T\in Z^{p+1}(F,2p+1)_0$ be a pre-cycle
  such that
  \begin{equation}\label{eq:67}
    \delta T=\{\alpha\beta \} \times Z - \{\alpha\} \times Z -\{\beta
    \} \times Z.  
  \end{equation}
  Then $\overline {\caP(T)}=0$.
\end{lem}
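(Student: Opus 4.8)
The plan is to reduce the statement to a single convenient choice of $T$, and then to compute the Goncharov regulator of that choice using the vanishing of the regulator of a curve in $\square^{2}$ (Lemma \ref{lemm:19}).

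First I would observe that the class $\overline{\caP(T)}\in H^{1}_{\fD}(F,\R(p+1))/\im(\rho_{\Be})$ does not depend on the choice of $T$ subject to \eqref{eq:67}. Indeed, if $T'$ is another pre-cycle with $\delta T'=\delta T$, then $T-T'\in Z^{p+1}(F,2p+1)_{0}$ is a cycle and hence defines a class in $\CH^{p+1}(F,2p+1)$. Since $X=\Spec F$ has dimension zero, currents coincide with forms and $\fDTW^{2}(F,p+1)=0$, so $\caP(T-T')\in\fDTW^{1}(F,p+1)$ is automatically a smooth closed form; by Theorem \ref{Thm5.12} its class in $H^{1}_{\fD}(F,\R(p+1))$ is the image of $[T-T']$ under the Goncharov regulator, which is $\rho_{\Be}([T-T'])$. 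Thus $\caP(T)-\caP(T')\in\im(\rho_{\Be})$ and $\overline{\caP(T)}=\overline{\caP(T')}$, so it suffices to compute $\overline{\caP(T_{0})}$ for one explicit $T_{0}$.

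Next I would exhibit such a $T_{0}$. By the standard explicit description of the multilinearity relation in $\CH^{1}(F,1)\cong F^{\times}$ (the Bloch--Nesterenko--Suslin--Totaro curve; this is one of the ``pre-cycles that explain the multilinearity relations'' referred to before the lemma), there is a pre-cycle $C=C_{\alpha,\beta}\in Z^{1}(F,2)_{0}$ whose support is a curve in $\square^{2}$ meeting all faces properly and with $\delta C=\{\alpha\beta\}-\{\alpha\}-\{\beta\}$. Since $X=\Spec F$, the cycles $C$ and $Z$ intersect properly, and the product $T_{0}\coloneqq C\times Z\in Z^{p+1}(F,2p+1)_{0}$ of \S\ref{chowprod} satisfies, by the Leibniz rule for $\delta$ and $\delta Z=0$,
\begin{displaymath}
  \delta T_{0}=(\delta C)\times Z=\{\alpha\beta\}\times Z-\{\alpha\}\times Z-\{\beta\}\times Z,
\end{displaymath}
so $T_{0}$ is an admissible choice of $T$. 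It then remains to compute $\caP(T_{0})$. Ordering the coordinates of $\square^{2p+1}$ as those of $\square^{2}$ followed by those of $\square^{2p-1}$, one has $W_{2p+1}=W_{2}\boxtimes W_{2p-1}$ by definition of the Wang forms, while $\delta_{C\times Z}=\delta_{C}\boxtimes\delta_{Z}$ for the Cartesian product of integration currents (\cite[4.1.8]{Federer:GMT} as used for \eqref{eq:17}), and the first projection $p_{1}$ factors through the two projections $\Spec F\times(\P^{1})^{2}\to\Spec F$ and $\Spec F\times(\P^{1})^{2p-1}\to\Spec F$. Hence, by Fubini, $\caP(C\times Z)$ is (up to sign) the external product of $\caP(C)=p_{\ast}[W_{2}|_{C}]$ (in the notation of Lemma \ref{lemm:19}, $p$ being the structural morphism of the closure of $C$) and $\caP(Z)$; since $\caP(C)=0$ by Lemma \ref{lemm:19}, we get $\caP(T_{0})=0$, and therefore $\overline{\caP(T)}=\overline{\caP(T_{0})}=0$.

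The main technical point to be settled is the bookkeeping in the last two paragraphs: verifying that the explicit multilinearity curve has boundary exactly $\{\alpha\beta\}-\{\alpha\}-\{\beta\}$ in the cubical complex normalized with $\square=\P^{1}\setminus\{1\}$ (rather than only up to degenerate pieces or a reparametrization of the points), and that the normalized external product $C\times Z$ interacts with $\delta$ and with the Cartesian product of currents exactly as claimed, including signs; the edge case $\alpha\beta=1$ (where $\{\alpha\beta\}$ must be read as $-\{\alpha^{-1}\}$) should be handled separately but by the same argument. Once the external product is set up, the identity $\caP(C\times Z)=\pm\,\caP(C)\boxtimes\caP(Z)$ and the appeal to Lemma \ref{lemm:19} are formal, so this vanishing of $\caP(C)$ for a curve in $\square^{2}$ is really the crux, and it has already been isolated as Lemma \ref{lemm:19}.
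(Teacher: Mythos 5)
Your proposal is correct and follows essentially the same route as the paper: reduce to a single choice of $T$ (the difference of two choices is a cycle, whose image under $\caP$ lies in $\im(\rho_{\Be})$), take $T_{0}=C\times Z$ with $C$ the explicit multilinearity curve in $\square^{2}$ (the paper writes it as $t\mapsto\bigl(t,(t-\alpha)(t-\beta)/(t-1)^{2}\bigr)$), and conclude via multiplicativity that $\caP(T_{0})=\caP(C)\cdot\caP(Z)=0$ by Lemma \ref{lemm:19}. Your Fubini/Cartesian-product formulation of the multiplicativity step is just a more explicit rendering of the paper's appeal to the multiplicativity of the Thom-Whitney complex, so no substantive difference remains.
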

\begin{proof}
  Let $T'$ be another pre-cycle satisfying \eqref{eq:67}. Then $T-T'$
  is a cycle, and
  \begin{displaymath}
    \caP(T)-\caP(T')=\caP(T-T')\in \im(\rho _{Be,\Q}).
  \end{displaymath}
  Thus if we prove the lemma for a particular choice of pre-cycle, it
  is also true for any other pre-cycle satisfying \eqref{eq:67}.
  Consider the pre-cycle  $T=C\times Z\subset \square^{2p+1}$, where
  $C\subset \square^{2}$ is the curve parameterized by
  \begin{displaymath}
    t\mapsto \left(t, \frac{(t-\alpha )(t-\beta )}{(t-1)^2}\right).
  \end{displaymath}
  It is straigtforward to check that, being $Z$ a cycle in the
  normalized complex, then $T\in Z^{p+1}(F,2p+1)_{0}$ satisfies
  \eqref{eq:67}.
  Using the multiplicativity of the Thom-Whitney complex we deduce.
  \begin{displaymath}
    \caP(T)=\caP(C)\cdot \caP(Z).
  \end{displaymath}
Finally, $\caP(C)=0$ by Lemma \ref{lemm:19}, and we obtain this result. 
\end{proof}
\begin{rmk}\label{ambiguity}
The pairing $(\ ,\ )_{p,q}$ above is a pairing between cycles and does
not descend to a pairing of higher Chow groups. There is an obvious
problem: Suppose $Z\in Z^p(F, 2p-1)_0$ is a boundary, i.e., there
exists a (pre)-cycle $Z'\in Z^p(F, 2p)_0$, such that
$Z=\delta(Z')$. Now, by assigning the $0$ Green current to $Z$, the
height pairing for any cycle $W\in Z^q(F, 2q-1)_0$ is given by 
$$(Z, W)_{p,q}=\overline{-\mathcal{P}(Z'\cdot W)},$$
which for appropriate choices of $W$, may be non-trivial (even after
taking the class mod the image of the regulator).
\end{rmk}

At the level of cycles, we can compute specific examples of this
intersection pairing $(\ ,\ )_{p,q}$. We do so in two
situations: the case $p=q=1$ and the case $p=1$, $q=2$.

\bigskip
\textbf{Case $p=q=1$.}
\vspace{0.5cm}
We want to compute examples of the pairing
\begin{displaymath}
  (\ ,\ )_{1,1}\colon Z^1(F, 1)_0\otimes Z^1(F,
1)_0\rightarrow \frac{H^1_{\DB}(F, \R(2))}{\im(\rho_{Be})}\otimes \Q.
\end{displaymath}
given by
\begin{displaymath}
  (\alpha ,\beta )_{1,1}=(\alpha ,0)\cdot(\beta ,0)
\end{displaymath}

We start with the case $\beta =1-\alpha $. As we will see later, this
is the main ingredient of the general case.

It is well known and easily verifiable that $\alpha \cdot (1-\alpha
)=\delta (C_{\alpha })$, where
\begin{displaymath}
  C_{\alpha}\coloneqq  \left(z, 1-\frac{\alpha}{z}, 1-z\right),
\end{displaymath}
is the Totaro pre-cycle. Thus the height pairing is given by
\begin{displaymath}
  (\alpha, 1-\alpha )_{1,1}=\overline{-\mathcal{P}(C_{\alpha})}.
\end{displaymath}

We can identify
\begin{displaymath}
H^1_{\DB}(F, \R(2))=\R(1)^{r_{2}},  
\end{displaymath}
where $r_{2}$ is the number of non-equivalent complex immersions of $F$. Let
$\caL_{2}$ be the Bloch-Wigner dilogarithm function.  By
\cite[Theorem 3.6]{Goncharov:cpr} (see also \S 2 and \S 3 of
\emph{loc. cit.})
\begin{equation}\label{eq:69}
  \caP(C_{\alpha })=-\frac{1}{2\pi}\left(i\caL_{2}(\sigma _{1}(\alpha )),\dots,
  i\caL_{2}(\sigma _{r_{2}}(\alpha )\right).
\end{equation}

We specialize to the case $F=\Q(i)$. As we will recall in the next
section, by  \cite{Petras:dilog}, we know that
$i\mathcal{L}_2(i)$ (up to factors of $\pi$) generates
$$\im (\rho_{Be,\Q}\colon \CH^2(F,
3)_{\Q}\rightarrow H^1_{\DB}(F, \R(2))_{\Q}).$$
It will be interesting to prove
that, for certain $\alpha\in \Q(i)^{\times }\setminus \{1\}$, the
value $\mathcal{L}_2(\alpha)$ is not a (rational) 
multiple of $\mathcal{L}_2(i)$. This would be an example of the
non-triviality of the defined intersection pairing.
By a conjecture of Milnor (stated in
\cite{Milnor:hypgeom}, end of page 21), such a $\alpha$ exists. 

\medskip
Now, let $\alpha, \beta$ in $Z^1(F,1)$. We want to reduce the
computation of $(\alpha ,\beta )_{1,1}$ to the previous case.
Using that
$K_{2}(F)=F^{\times} \otimes F^{\times}/\sim$, where $\sim$ is
generated by the Steinberg relations, is torsion  and that 
$\CH^{2}(F,2)_{\Q}=K_{2}(F)_{\Q}$, we know that there is an integer $N>0$ such
that $\alpha\times \beta \in Z^2(F,2)_{\Q}$ can be written as
\begin{displaymath}
  \alpha\times  \beta= \frac{1}{N}\sum _i\gamma_i\times (1-\gamma_i)+
  \sum_j\delta(T_j), 
\end{displaymath}
where the pre-cycles $T_j$ are of the form discussed in Lemma
\ref{lemm:5}. By Lemma \ref{lemm:5}, the height pairing $(\alpha, 
\beta)_{1,1}$ is given by
\begin{displaymath}
  (\alpha, \beta)_{1,1}= \sum_i\overline{-\caP(C_{\gamma_i})}. 
\end{displaymath}

To simplify the computation latter we need another lemma.

\begin{lem}\label{lemm:7} Let $\alpha,\ \beta \in F^{\times }\setminus
  1$.
  \begin{enumerate}
  \item For any pre-cycle $T\in Z^{2}(F,3)_{0}$ with
    \begin{equation}
      \label{eq:68}
      \delta T=\{\alpha \}\times \{-\alpha \},
    \end{equation}
    we have $\overline{\caP(T)}=0$.
  \item For any pre-cycle $T\in Z^{2}(F,3)_{0}$ with
    \begin{equation}
      \label{eq:70}
      \delta T=\{\alpha \}\times \{\beta  \}+\{\beta  \}\times \{\alpha \},
    \end{equation}
    we have $\overline{\caP(T)}=0$.
  \end{enumerate}
\end{lem}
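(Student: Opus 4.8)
\textbf{Proof proposal for Lemma \ref{lemm:7}.}

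The plan is to reduce both statements to the vanishing result already proved in Lemma \ref{lemm:19}, namely that $p_{\ast}[W_{2}|_{C}]=0$ for any curve $C\subset \square^{2}$ meeting the faces properly, together with the argument structure of Lemma \ref{lemm:5}. As in the proof of Lemma \ref{lemm:5}, the first observation is that $\overline{\caP(T)}$ depends only on the class $\delta T$: if $T$ and $T'$ are two pre-cycles with the same boundary, then $T-T'$ is a cycle, so $\caP(T-T')\in \im(\rho_{Be,\Q})$ and the two classes agree in $H^1_{\fD}(F,\R(2))/\im(\rho_{Be,\Q})\otimes\Q$. Hence it suffices to exhibit, for each of \eqref{eq:68} and \eqref{eq:70}, one convenient pre-cycle $T$ of the form $C\times(\text{point})$ — or rather a single admissible curve $C\subset\square^{2}$ — whose boundary is the prescribed combination of symbols, and then to invoke the multiplicativity of $\caP$ on the Thom--Whitney complex to write $\caP(T)=\caP(C)$, which vanishes by Lemma \ref{lemm:19}.

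For part (1), the relevant geometric input is the classical fact (the ``first Steinberg-type'' relation, cf.\ the graph of $z\mapsto(z,-z)$ style constructions) that $\{\alpha\}\times\{-\alpha\}$ is a boundary: one takes the curve
\begin{displaymath}
  C\;:\; t\longmapsto \left(t,\ \frac{-\alpha(t-1)}{t-\alpha}\right)
\end{displaymath}
or a similar rational parametrization, checks that $C$ meets the faces $\{t_{i}\in\{0,\infty\}\}$ of $\square^{2}$ properly (this uses $\alpha\ne 0,1$), and verifies by a direct evaluation of the coordinate functions at the zeros and poles that $\delta(C)=\{\alpha\}\times\{-\alpha\}$ in $Z^{2}(F,2)_{0}$, up to the degenerate and multilinearity terms handled by Lemma \ref{lemm:5}. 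For part (2), one uses instead the ``symmetry'' relation: the symbol $\{\alpha\}\times\{\beta\}+\{\beta\}\times\{\alpha\}$ is a boundary of a curve, e.g.\ a suitable component of the locus parametrized by $t\mapsto\bigl(\tfrac{t-\alpha}{t-1},\tfrac{t-\beta}{t-1}\bigr)$ or the standard cycle realizing antisymmetry of the Milnor symbol; again one checks proper intersection with faces and computes the boundary. In both cases, once $T=C\times\{\mathrm{pt}\}$ (so effectively $T$ is the curve $C$ viewed in $\square^{3}$ via a degeneracy, or $C$ itself when the symbol has the right cubical degree), multiplicativity gives $\caP(T)=\caP(C)\cdot(\text{constant})$, and Lemma \ref{lemm:19} finishes the argument.

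The main obstacle I anticipate is purely bookkeeping rather than conceptual: producing explicit admissible curves $C$ with exactly the prescribed boundary, and correctly tracking the cubical degrees so that the symbol lives in $Z^{2}(F,3)_{0}$ (note the statement is about $Z^{2}(F,3)_{0}$, i.e.\ the product symbols sit in $\square^{2}$ and the pre-cycle $T$ lives one cubical dimension higher). One must be careful that the curves chosen lie in the \emph{refined} normalized complex if one wants to invoke the multiplicative homotopy formalism cleanly, and that the auxiliary degenerate/multilinearity pieces that appear when one computes $\delta(C)$ are genuinely of the form covered by Lemma \ref{lemm:5} so that they contribute nothing to $\overline{\caP(T)}$. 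Modulo this, the proof is a two-line reduction: $\overline{\caP(T)}$ depends only on $\delta T$, a representative $T=C\times\{\mathrm{pt}\}$ exists, and $\caP(C)=0$ by Lemma \ref{lemm:19}.
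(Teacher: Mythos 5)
Your opening reduction is fine and matches the paper: since any two pre-cycles with the same boundary differ by a cycle, and $\caP$ of a cycle lands in $\im(\rho_{Be,\Q})$, the class $\overline{\caP(T)}$ depends only on $\delta T$, so one may verify the statement on a single convenient representative. Everything after that, however, rests on a reduction to Lemma \ref{lemm:19} that cannot work. There is first a degree mismatch: an admissible curve $C\subset\square^{2}$ is an element of $Z^{1}(F,2)$, so $\delta C$ is a sum of points of $\square^{1}$, i.e.\ symbols $\{\gamma\}$; it can never equal $\{\alpha\}\times\{-\alpha\}$, which is a point of $\square^{2}$. To get that boundary from a product pre-cycle $C\times\{\mathrm{pt}\}$ or $\{\mathrm{pt}\}\times C$ in $\square^{3}$ you would need $\delta C$ to be the single point $\{\alpha\}$ (or $\{-\alpha\}$), which forces $\alpha$ (or $-\alpha$) to be torsion in $F^{\times}$ --- false for general $\alpha$. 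Your proposed parametrizations such as $t\mapsto\bigl(t,\tfrac{-\alpha(t-1)}{t-\alpha}\bigr)$ are curves in $\square^{2}$ and so suffer exactly this problem; the difficulty is conceptual, not bookkeeping.

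Second, and more importantly, the proposal misses the actual mechanism of the proof, which is analytic rather than geometric. The paper writes $\{\alpha\}\times\{-\alpha\}=\delta C_{\alpha}+\delta C_{\alpha^{-1}}+\delta T'$, using $-\alpha=(1-\alpha)/(1-\alpha^{-1})$, where $C_{\alpha}$, $C_{\alpha^{-1}}$ are Totaro pre-cycles and $T'$ is a combination of multilinearity pre-cycles covered by Lemma \ref{lemm:5}. The individual terms $\caP(C_{\alpha})$ and $\caP(C_{\alpha^{-1}})$ are \emph{not} zero: by \eqref{eq:69} they are Bloch--Wigner dilogarithm values, and the vanishing of $\overline{\caP(T)}$ comes from the inversion functional equation $\caL_{2}(u^{-1})=-\caL_{2}(u)$. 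Part (2) is then reduced to part (1) via the identity $\alpha\otimes\beta+\beta\otimes\alpha=\alpha\beta\otimes(-\alpha\beta)-\alpha\otimes(-\alpha)-\beta\otimes(-\beta)$, again with Lemma \ref{lemm:5} absorbing the multilinearity corrections. No choice of witness makes the vanishing ``free'' in the style of Lemma \ref{lemm:19}; your proof never invokes any dilogarithm identity, so the key step $\caP(T)=\caP(C)=0$ is unsubstantiated and, for the witnesses that actually exist, false termwise.
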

\begin{proof}
  By the same argument as in the proof of Lemma \ref{lemm:5}, it is
  enough to prove the result for one single $T$ for each
  equation. For the first statement we use that
  \begin{displaymath}
    \alpha \otimes (-\alpha )=\alpha \otimes (1-\alpha )+\alpha
    ^{-1}\otimes 1-\alpha ^{-1},
  \end{displaymath}
  thanks to the identity $-\alpha =(1-\alpha)/(1-\alpha ^{-1}) $.
  Therefore
  \begin{displaymath}
    \{\alpha\} \times \{-\alpha \}=\delta C_{\alpha }+  \delta
    C_{\alpha ^{-1}} +\delta  T,  
  \end{displaymath}
  where $T$ is a linear combination of pre-cycles of the form
  considered in Lemma \ref{lemm:5}.  Using Lemma \ref{lemm:5},
  equation \eqref{eq:69} together with the fact that
  $\caL_{2}(u^{-1})=-\caL_{u}(u)$, we obtain the first statement.

  The second statement follows from the first statement  and Lemma
  \ref{lemm:5}, using the identity
  \begin{displaymath}
    \alpha \otimes \beta + \beta \otimes \alpha =
    \alpha \beta \otimes (-\alpha \beta )-\alpha \otimes (-\alpha )-
    \beta \otimes (-\beta ).
  \end{displaymath}
\end{proof}

We ilustrate this procedure with a prototypical example. Again we
consider the case $F=\Q(i)$ and let $\alpha =2+3i$ and $\beta
=1-2i$. We want to compute $(\alpha ,\beta )_{1,1}$. Using the identities
\begin{align*}
  1-(-1-i)^6&=(2+i)(2+3i),\\
  1-(2+3i)&=i(-1-i)(1-2i),
\end{align*}
one can check that, in $F^{\times }\otimes F^{\times}$,
the following relation holds true.
\begin{align*}
  12\, \left(\alpha \otimes \beta\right) = -&12\, \left( (-1-i)\otimes (1-(-1-i))\right)+\\
                             &2\, \left((-1-i)^{6}\otimes (1-(-1-i)^6)\right) +\\
                             &12\, \left((2+3i)\otimes (1-(2+3i))\right)+\dots,
\end{align*}
where $\dots$ contains terms of the form $u\otimes v+v\otimes u$. Using
lemmas \ref{lemm:5}, \ref{lemm:7}, and equation \eqref{eq:69}, we deduce that
\begin{displaymath}
  \frac{1}{i}(\alpha ,\beta )_{1,1}=-\frac{1}{2\pi}\caL_{2}(-1-i)
  +\frac{1}{12\pi}\caL_{2}((-1-i)^6)+\frac{1}{2\pi}\caL_{2}(2+3i),
\end{displaymath}
modulo of course, the image of the regulator.


\bigskip
\textbf{Case $p=1, q=2$, $F=\Q(i)$}.
\bigskip
We consider the higher cycles
$$i\in Z^1(F,1)_0,\quad Z_{i}\in Z^2(F, 3)_0,$$
where
$$Z_i\coloneqq 4\left(z, 1-\frac{i}{z}, 1-z\right)- \left(z, \frac{(z-i)^4}{(z-1)^4}, 1-z\right).$$
By \cite{Petras:dilog}, the element $[Z_i]$ is a generator of the free
part of $\CH^2(F,3)$. Therefore, $\caP(Z_i)=i\mathcal{L}_2(i)$ generates
$$\im \left(\rho_{Be,\Q}\colon \CH^2(F,
3)_{\Q}\rightarrow H^1_{\DB}(F, \R(2))_{\Q}\right).$$

Further, the intersection $i\cdot Z_{i}$ is given
by 
$$i\cdot Z_i= \delta \left(\underbrace{4(C'_i-C''_i)-\Xi_{i}}_{\in Z^3(F, 5)_0}\right),$$
where
$$C'_i=\left(z_2, 1-\frac{i}{z_2},z_1, 1-\frac{z_2}{z_1},1-z_1\right),$$
$$C''_i=\left(z_1, 1-\frac{i}{z_2},z_2, 1-\frac{z_2}{z_1},1-z_1\right),$$
and
$$\Xi_i=\left(z_2, 1-\frac{i}{z_2}, z_1, \frac{(z_1-i)^4}{(z_1-1)^4}, 1-z_2\right).$$
The cycles $C'_i$, $C''_i$ are higher analogues of the Totaro cycle. They are closely related to Goncharov's programme of associating the Bloch group to the higher Chow group of points (see \cite{Goncharov:prAmc} for details).\\
Hence, the intersection pairing is given by
$$(i, Z_i)_{1,2}= \overline{-(4\mathcal{P}(C'_i-C''_i)- \mathcal{P}(\Xi_i))}.$$
Using Lemma \ref{lemm:19}, now for the curve $C=\left(z_1, \frac{(z_1-i)^4}{(z_1-1)^4}\right)$, and the multiplicativity of the Thom-Whitney complex, it is easy to show that $\mathcal{P}(\Xi_i)=0$.
Thus we get the pairing
$$(i, Z_i)_{1,2}=\overline{-4\mathcal{P}(C'_i-C''_i)}=\overline{\frac{2}{\pi^2}(\mathcal{L}_3(i))},$$
where $\mathcal{L}_3$ denotes the (Bloch-Wigner) tri-logarithm. The last equality follows from Theorem 3.6 of \cite{Goncharov:cpr} once again.
\begin{rmk}
Based on Goncharov's programme (\cite{Goncharov:prAmc}) and Zagier's
conjecture \cite{Zagier:conjpoly}, one can ask whether the
intersection pairings $(\alpha
,\beta )_{p,q}$ can be written as a combination of polylogarithms.
\end{rmk}

\newcommand{\noopsort}[1]{} \newcommand{\printfirst}[2]{#1}
  \newcommand{\singleletter}[1]{#1} \newcommand{\switchargs}[2]{#2#1}
  \def\cprime{$'$}
\providecommand{\bysame}{\leavevmode\hbox to3em{\hrulefill}\thinspace}
\providecommand{\MR}{\relax\ifhmode\unskip\space\fi MR }
\providecommand{\MRhref}[2]{%
  \href{http://www.ams.org/mathscinet-getitem?mr=#1}{#2}
}
\providecommand{\href}[2]{#2}

\end{document}
